\newtheorem{theorem}{Theorem}[section]
\newtheorem*{theorem*}{Theorem}
\newtheorem*{lemma*}{Lemma}
\newtheorem*{theorema*}{Theorem A}
\newtheorem*{theoremb*}{Theorem B}
\newtheorem*{theoremc*}{Theorem C}
\newtheorem*{mainlemma*}{Main Lemma}
\newtheorem{lemma}[theorem]{Lemma}
\newtheorem{corollary}[theorem]{Corollary}
\newtheorem{proposition}[theorem]{Proposition}
\newtheorem{claim}{Claim}
\newtheorem*{claim*}{Claim}
\newtheorem*{conjecture*}{Conjecture}
\theoremstyle{definition}
\newtheorem{definition}[theorem]{\bf Definition}
\theoremstyle{remark}
\newtheorem{remark}[theorem]{\bf Remark}
\numberwithin{equation}{section}
\newcommand{\vv}{\vspace{2mm}}
\newcommand{\vvv}{\vspace{4mm}}
\newcommand{\supp}{\operatorname{supp}}
\newcommand{\dist}{{d}}
\newcommand{\pom}{{\partial\Omega}}
\newcommand{\diam}{\operatorname{diam}}
\newcommand{\R}{\mathbb R}
\newcommand{\HH}{\mathcal H}
\newcommand{\wt}{\widetilde}
\newcommand{\haj}{Haj{\l}asz}
\begin{document}

\title
[Extrapolation of solvability]
{Extrapolation of solvability of the regularity and the Poisson regularity problems in rough domains}

\author[Josep M. Gallegos]{Josep M. Gallegos}
\address{Departament de Matem\`atiques\\ Universitat Aut\`onoma de Barcelona
 \\ Edifici C Facultat de Ci\`encies\\08193 Bellaterra 
}
\email{jgallegosmath@gmail.com}
\thanks{M.M. was supported by IKERBASQUE and partially supported by the grant PID2020-118986GB-I00 of the Ministerio de Econom\'ia y Competitividad (Spain) and by the grant IT-1615-22 (Basque Government). X.T. and J.G. were supported by the European Research Council (ERC) under the European Union's Horizon 2020 research and innovation programme (grant agreement 101018680) and partially supported
by MICINN (Spain) under grant PID2020-114167GB-I00, and 2021-SGR-00071 (Catalonia). X.T. was also partly supported by the Mar\'ia de Maeztu Program for units of excellence (Spain) (CEX2020-001084-M)}

\author[Mihalis Mourgoglou]{Mihalis Mourgoglou}
\address{Departamento de Matem\'aticas, Universidad del Pa\' is Vasco, UPV/EHU, Barrio Sarriena s/n 48940 Leioa, Spain and\\
IKERBASQUE, Basque Foundation for Science, Bilbao, Spain.}
\email{michail.mourgoglou@ehu.eus}
\author[Xavier Tolsa]{Xavier Tolsa}
\address{ICREA, Barcelona\\
Dept. de Matem\`atiques, Universitat Aut\`onoma de Barcelona \\
and Centre de Recerca Matem\`atica, Barcelona, Catalonia.}
\email{xtolsa@mat.uab.cat}

\begin{abstract}
    Let $\Omega\subset \R^{n+1}$, $n\geq2$, 
    be an open set satisfying the corkscrew condition with {$n$-Ahlfors regular boundary} $\pom$, but without any connectivity assumption. We study the connection between solvability of the regularity problem for {divergence form elliptic operators} with boundary data in the \haj-Sobolev space $M^{1,1}(\partial\Omega)$ and the {weak-$\mathcal A_\infty$} property of the {associated elliptic} measure.
	In particular, we show that solvability of the regularity problem in {$M^{1,1}(\pom)$ is equivalent to the solvability of the regularity problem in $M^{1,p}(\pom)$ for some $p>1$}. 
	{We also prove analogous extrapolation results for the Poisson regularity problem defined on tent spaces.}
	Moreover, under the hypothesis that $\pom$ supports a weak $(1,1)$-Poincar\'e inequality, we show that the solvability of the regularity problem in the \haj-Sobolev space $M^{1,1}(\pom)$ is equivalent to a stronger solvability	in a Hardy-Sobolev space of tangential derivatives. 
\end{abstract}

\maketitle

\section{Introduction}

In this paper we study the solvability of the regularity problem for {divergence form elliptic operators} in corkscrew domains $\Omega$ with {$n$-Ahlfors regular} boundary $\pom$ in the \haj-Sobolev space $M^{1,1}(\pom)$. 
In the recent work \cite{MT}, the second and third named authors proved the equivalence between the solvability of the Dirichlet problem in $L^{p'}(\pom)$ and the solvability of the regularity problem in $M^{1,p}(\pom)$ for the Laplacian in this type of domains (where $p,p'>1$ are H\"older conjugate exponents). This result opened up the study of the regularity problem for such non-smooth domains, which had only been investigated in Lipschitz domains for classical Sobolev spaces before. 
Here we focus on the endpoint case $p=1$, not considered in \cite{MT}.
The analogous result for Lipschitz domains and classical Sobolev spaces obtained in the pioneering work \cite{DK1} of Dahlberg and Kenig was that solvability in a Hardy-Sobolev space $HS^{1,1}(\pom)$ is equivalent to solvability in a classical Sobolev space for some $p>1$. {In the present setting,} it turns out that the \haj-Sobolev space $M^{1,1}(\pom)$ is the adequate space to consider the endpoint case $p=1$. {This is not completely unexpected since $\dot M^{1,1}(\R^{n+1}) = HS^{1,1}(\R^{n+1})$ or, more generally, $\dot M^{1,1}(\wt M) = HS^{1,1}(\wt M)$ when $\wt M$ is a manifold satisfying a doubling condition and supporting a strong Poincar\'e inequality (see the work of Koskela and Saksman \cite{KS} and Badr and Dafni \cite{BD1} for these results).}\color{black}

\vv

\subsection{Definitions}\label{section:definitions}
We introduce some definitions and notations. 
A set $E\subset \R^{n+1}$ is called $n$-{\textit {rectifiable}} if there are Lipschitz maps
$f_i:\R^n\to\R^{n+1}$, $i=1,2,\ldots$, such that 
$$
\HH^n\biggl(E\setminus\bigcup_i f_i(\R^n)\biggr) = 0,
$$
where $\HH^n$ stands for the $n$-dimensional Hausdorff measure. We will assume $\HH^n$ to be normalized so that it coincides with $n$-dimensional Lebesgue measure in
$\R^n$.

All measures in this paper are assumed to be Radon measures.
A measure $\mu$ in $\R^{n+1}$ is called 
 $n$-{\textit {Ahlfors}} {\textit {regular}}  if there exists some
constant $C_{0}>0$ such that
$$C_0^{-1}r^n\leq \mu(B(x,r))\leq C_0\,r^n\quad \mbox{ for all $x\in
\supp\mu$ and $0<r\leq \diam(\supp\mu)$.}$$

The measure $\mu$ is \textit{uniformly  $n$-rectifiable} if it is 
$n$-Ahlfors regular and
there exist constants $\theta, M >0$ such that for all $x \in \supp\mu$ and all $0<r\leq \diam(\supp\mu)$ 
there is a Lipschitz mapping $g$ from the ball $B_n(0,r)$ in $\R^{n}$ to $\R^{n+1}$ with $|g|_{\operatorname{Lip}} \leq M$ such that
\[
\mu (B(x,r)\cap g(B_{n}(0,r)))\geq \theta r^{n}
\]
where $|g|_{\operatorname{Lip}}$ is the Lipschitz seminorm of $g$.

A set $E\subset \R^{n+1}$ is $n$-Ahlfors regular if $\HH^n|_E$ is $n$-Ahlfors regular.
 Also, 
$E$ is uniformly $n$-rectifiable if $\HH^n|_E$ is uniformly $n$-rectifiable.

The notion of uniform rectifiability should be considered a quantitative version of rectifiability. It was introduced in the pioneering works \cite{DS1} and \cite{DS2} of David and Semmes, who were seeking a good geometric framework under which all singular integrals with odd and sufficiently smooth kernels are bounded in $L^2$.

\vv

Following \cite{JK1}, we say that $\Omega \subset \R^{n+1}$ satisfies the \textit{corkscrew condition}, or that it is a corkscrew open set or domain if
there exists some $c>0$ such that for all
$x\in\pom$ and all $r\in(0, 2\diam(\Omega))$ there exists a ball $B\subset B(x,r)\cap\Omega$ such that
$r(B)\geq c\,r$. We say that $\Omega$ is \textit{two-sided corkscrew} if both $\Omega$ and $\R^{n+1}\setminus
\overline\Omega$ satisfy the corkscrew condition.

If $X$ is a metric space, given an interval $I\subset\R$, any continuous $\gamma:I \to X$ is called {\it path}. A path of finite length is called {\it rectifiable path}.

Following \cite{HMT}, we say that the open set $\Omega  \subset \R^{n+1}$ satisfies the {\it  local John condition} if there is $\theta\in(0,1)$
such that the following holds: For all $x\in\pom$ and $r\in(0,2\diam(\Omega))$ 
there is $y\in B(x,r)\cap \Omega$ such that $B(y,\theta r)\subset \Omega$ with the property that for all
$z\in B(x,r)\cap\pom$ one can find a rectifiable {path} $\gamma_z:[0,1]\to\overline \Omega$ with length
at most $\theta^{-1}|x-y|$ such that 
$$\gamma_z(0)=z,\qquad \gamma_z(1)=y,\qquad \dist(\gamma_z(t),\pom) \geq \theta\,|\gamma_z(t)-z|
\quad\mbox{for all $t\in [0,1].$}$$
If both $\Omega$ and $\R^{n+1}\setminus\Omega$ satisfy the local John condition, we say that $\Omega$ satisfies the {\it two-sided local John condition}. Note that the local John condition implies the corkscrew condition.

An open set $\Omega \subset \mathbb{R}^{n+1}$ is said to satisfy the \textit{weak local John condition} if there are $\lambda, \theta \in(0,1)$ and $\Lambda \geq 2$ such that the following holds: For every $x \in \Omega$ there is a Borel subset $F \subset B(x, \Lambda \dist(x, \partial \Omega)) \cap \partial \Omega)$ with $\sigma(F) \geq \lambda \sigma(B(x, \Lambda \dist(x, \partial \Omega)) \cap \partial \Omega)$ such that for every $z \in F$ one can find a rectifiable path $\gamma_z:[0,1] \rightarrow \bar{\Omega}$ with length at most $\theta^{-1}|x-z|$ such that
\[
\quad \gamma_z(0)=z, \quad \gamma_z(1)=x, \quad \dist\left(\gamma_z(t), \partial \Omega\right) \geq \theta\left|\gamma_z(t)-z\right| \quad\mbox{ for all }t \in[0,1].
\]

\vv 
Given an $n$-Ahlfors regular set $E \subset \R^{n+1}$, and a surface ball $\Delta_0 = B_0\cap E$, we say that a Borel measure $\mu$ defined on $E$ belongs to \textit{weak-$\mathcal A_\infty(\Delta_0)$} if there are positive constants $C$ and $\theta$ such that for each surface ball $\Delta = B(x,r)\cap E$ with $B(x,2r)\subseteq B_0$, we have
\begin{equation}\label{eq:def_weak_A_infty}
    \mu(F) \leq C \left ( \frac{\mathcal H^n(F)}{\mathcal H^n(\Delta)}\right)^\theta \mu(2\Delta), \quad \mbox{for every Borel set $F\subset \Delta$.}
\end{equation}

\vv

Let us now turn our attention to Sobolev spaces. Let $X$ be a metric space equipped with a doubling measure $\sigma$, that is, there exists a uniform constant $C_\sigma \geq 1$ such that $\sigma(B(x,2r)) \leq C_\sigma \sigma(B(x,r))$, for all $x\in X$ and $r>0$.
For a function $f:X\to \R$, we say that a non-negative function $\nabla_H f:X\to \R$ is a \textit{\haj~gradient} of $f$ if
\begin{equation}
	\label{eq:def_haj_grad}
|f(x)-f(y)|\leq |x-y| \left(\nabla_H f(x) + \nabla_H f(y)\right) \quad\mbox{for $\sigma$-a.e. $x,y\in X$,}
\end{equation}
and we denote the collection of all \haj~gradients of $f$ by $D(f)$. {Note that the \haj~gradient of a function is not uniquely defined. }\color{black}
For $p\in(0,+\infty]$, we define the \textit{homogeneous \haj-Sobolev} space $\dot M^{1,p}(X)$ as the space of functions $f$ that have a \haj~ gradient $\nabla_H f \in D(f)\cap L^p(X)$, and the \textit{inhomogeneous \haj-Sobolev} space $M^{1,p}(X)$ as $M^{1,p}(X) := \dot M^{1,p}(X) \cap L^p(X)$. These spaces were introduced by \haj~ in \cite{H}, and for more information on these and other Sobolev spaces in metric measure spaces, the reader may consult the book \cite{HKST}.
For $\tau \in (0, +\infty]$, we define the following weighted norm (or quasinorm if $p<1$) 
\begin{equation}
\Vert f \Vert_{M^{1,p}_\tau(X)} := \begin{cases} 
\frac{1}{\tau}\Vert f \Vert_{L^p(X)} + \inf_{g \in D(f)} \Vert g \Vert_{L^p(X)}, &\tau \in (0, +\infty), \\
\inf_{g \in D(f)} \Vert g \Vert_{L^p(X)}, &\tau=+\infty,
\end{cases}
\end{equation}
(seminorm in the case $\tau = +\infty$).
The most interesting cases are: $\tau = +\infty$, or $\tau = \diam X$ in the case $X$ has finite diameter. For this reason, we will use the notation 
\[
\Vert f \Vert_{M^{1,p}(X)} := \begin{cases}
\Vert f \Vert_{M^{1,p}_{\diam X}(X)}, & \diam X<+\infty,\\
\Vert f \Vert_{M^{1,p}_1(X)}, &\diam X = +\infty,
\end{cases}
\]
and 
\[
\Vert f \Vert_{\dot M^{1,p}(X)}:=\Vert f \Vert_{M_{\infty}^{1,p}(X)}.
\]
Note that the election $\tau = \diam X$ in the case $X$ has finite diameter makes the norm scale-invariant.
Regarding the case $\tau = +\infty$, we will use the convention that $\frac{1}{+\infty} = 0$, and $ +\infty \cdot a = +\infty$ for all $a>0$.

Let $f: X \rightarrow \mathbb{R}$ be an arbitrary real-valued function. We say that a Borel measurable function $g: X \rightarrow \mathbb{R}$ is an \textit{upper gradient} of $f$ if for all compact rectifiable paths $\gamma$ the following inequality holds:
\[
|f(x)-f(y)| \leq \int_\gamma g \,d \mathcal{H}^1,
\]
where $x, y \in X$ are the endpoints of the path. We say that $(X,\sigma)$ supports a {\it weak} {$(1,p)$-{\it Poincar\'e inequality} for $p \geq 1$},  if there exist constants $C\geq 1$ and $\Lambda \geq 1$ so that for every ball $B$ centered at $\Sigma$ with radius $r(B) \in (0, \diam \Sigma)$ and  every  pair $(f,g)$, where $f \in L^1_{loc}(\sigma)$ and $g$ is an upper gradient of $f$, it holds 
\begin{equation}
\label{eq:Poincare}
 \fint_{B} \left| f(x) - f_B\right| \,d\sigma(x) \leq C r(B) \left( \fint_{\Lambda B} |g(x)|^p\,d\sigma(x)\right)^{1/p},
\end{equation}
where we denoted 
\[
f_B= \fint_B f(y) \,d\sigma(y).
\]
{We say that $(X,\sigma)$ supports a \textit{strong Poincar\'e inequality} if it supports a weak $(1,1)$-Poincar\'e inequality with $\Lambda = 1$.}

\vv

We introduce Hardy-Sobolev spaces defined on a {$d$-rectifiable set} $E \subset \mathbb R^{n+1}$.
We say that a \color{black}{Lipschitz} \color{black} function $a(x)$ is a \textit{homogeneous Hardy-Sobolev} \textit{atom} if
\begin{enumerate}
    \item[(i)] $\supp a \subset B\cap E$ for some ball $B$ centered on $E$,
    \item[(ii)] $\Vert \nabla_t a \Vert_{L^\infty(E)} \leq \mathcal H^d(B\cap E)^{- 1}$,
\end{enumerate}
where $\nabla_t a$ is the tangential gradient of $a$ on $\pom$ which exists $\mathcal H^d$-a.e.$\,x\in E$ if $E$ is $n$-rectifiable (see \cite[Theorem 11.4]{M} for example). 
We define the \textit{(atomic) Hardy-Sobolev space} $HS^{1,1}(E)$ as the subspace of functions $f\in L^1(E)$ which admit an \textit{atomic decomposition}, that is, there exists a family of Hardy-Sobolev atoms $(a_j)_j$ and a sequence of real numbers $(\lambda_j)_j$ with the property that
\[
f = \sum_j \lambda_j a_j \quad \mbox{in $L^1(E)$}
\]
and $\sum_j |\lambda_j| < +\infty$. We equip $HS^{1,1}(E)$ with the {seminorm} $\Vert f \Vert_{HS^{1,1}} = \inf \sum_j |\lambda_j|$ where the infimum is taken over all possible atomic decompositions. 

\vv

Let $\Omega\subset\R^{n+1}$ be an open set and  set $\sigma:=\HH^n|_{\partial\Omega}$ to be its surface measure. For $\alpha>0$ and $x \in \pom$, we define the {\it cone with vertex $x$ and aperture $\alpha>0$} by
\begin{equation}\label{eqconealpha}
\gamma_\alpha(x)=\{ y \in \Omega: |x-y|<(1+\alpha)\dist(y, \pom)\},
\end{equation}
the \textit{$R$-truncated cone with vertex $x$ and aperture $\alpha$} by $\gamma_{\alpha,R}(x) = \gamma_\alpha(x) \cap B(x,R)$, and the {\it non-tangential maximal operator} of a measurable function $u:\Omega \to \R$ by
\begin{equation}\label{eqNalpha}
N_\alpha(u)(x):=\sup_{y \in \gamma_\alpha (x)} |u(y)|, \,\,x\in \pom.
\end{equation}
We also may define the \textit{truncated non-tangential maximal operator} $N_{\alpha,R}$ analogously.
 If $\pom$ is Ahlfors regular, then $\| N_\alpha(u)\|_{L^p(\sigma)} \approx_{\alpha,\beta} \| N_\beta(u)\|_{L^p(\sigma)}$ {for all $\alpha,\beta>0$} and so, from now on, we will only write $N$ dropping the dependence on the aperture (see \cite[Proposition 2.2]{HMT} for a proof of this fact). 
 Following \cite{KP}, we also introduce the \textit{modified non-tangential maximal function} for any function $u$ in $L^2_{\operatorname{loc}}(\pom)$ as 
 \begin{equation}
 \label{eq:def_modified_nontangential_op}
 \wt N_{\alpha}  u(x):= \sup_{y \in \gamma_\alpha(x)} \left (  \fint_{B\left(y, \frac 1 4 \dist(x,\pom)\right)} |u(z)|^2\, dz \right)^{1/2}, \quad x \in \pom,
 \end{equation}
and, similarly, the \textit{truncated modified non-tangential maximal function} $\wt N_{\alpha,R}$. Again, we will omit the dependence on the aperture.
 \color{black}
 We always consider divergence form elliptic operators $\mathcal L:= \operatorname{div} A\nabla$ where $A$ is always a real valued, not necessarily symmetric $(n+1)\times (n+1)$ matrix verifying the strong ellipticity conditions
 \begin{equation}
 	\label{eq:ellipticity_conditions_operator}
 	\lambda |\xi|^2 \leq \sum_{i,j=1}^{n+1} A_{ij}(x) \xi_i \xi_j, \quad \Vert A \Vert_{L^\infty(\Omega)}\leq \frac 1 \lambda, \quad x\in \Omega, \quad \xi \in \R^{n+1} 
 \end{equation}
 for some $\lambda \in (0,1)$, the \textit{ellipticity constant} of $\mathcal L$. We will denote by $\mathcal L^*$ the operator associated to the transpose matrix $A^*$. 
 \color{black}

\vv
We are now ready to state the definitions of solvability of the $L^p$-Dirichlet and the $M^{1,p}$-regularity problem in~$\Omega$ for $p \in (0, +\infty)$:
\begin{itemize}
\item In a domain $\Omega  \subset \R^{n+1}$, we say that {\it the Dirichlet problem is solvable in $L^p$} for the {operator $\mathcal L$}  (we write $(D^{\mathcal L}_{p})$ is solvable) if there exists some constant $C_{D_p}>0$  such that, for any $f\in C_c(\pom)$, the solution $u:\Omega\to\R$ of the continuous
Dirichlet problem for $\mathcal L$ in $\Omega$ with boundary data $f$ satisfies
\begin{equation}\label{eq:main-est-Dirichlet}
	\| N(u)\|_{L^p(\sigma)} \leq C_{D_p}\|f\|_{L^p(\sigma)}.
\end{equation}

\item In a domain $\Omega \subset \R^{n+1}$, we say that {\it the regularity problem is solvable in $M^{1,p}$} for the {operator $\mathcal L$}  (we write $(R^{\mathcal L}_{p})$ is solvable) if  there exists some constant $C_{R_p}>0$  such that, for any  compactly supported Lipschitz function
$f:\pom\to\R$, the solution $u:\Omega\to\R$ of the continuous
Dirichlet problem for $\mathcal L$ in $\Omega$ with boundary data $f$ satisfies
\begin{equation}\label{eq:main-est-reg}
\| \wt N(\nabla u)\|_{L^p(\sigma)} \leq \begin{cases} C_{R_p}\|f\|_{\dot M^{1,p}(\sigma)}, \quad \mbox{if $\Omega$ is bounded or $\pom$ is unbounded} \\
	C_{R_p}\|f\|_{M^{1,p}(\sigma)}, \quad \mbox{if $\Omega$ is unbounded and $\pom$ is bounded}.
 \end{cases}
\end{equation}
Here we denoted $M^{1,p}(\sigma)$ and $\dot M^{1,p}(\sigma)$
to be the Haj\l asz  Sobolev spaces defined on the metric measure space $(\pom, \sigma)$. 
\end{itemize}
One may also define the \textit{tangential regularity problem} in domains with rectifiable boundaries using the tangential derivative with respect to the boundary at a boundary point, which for Lipschitz functions exists for $\sigma$-a.e.\ point of the boundary. 
In that case the only difference in the definition of the regularity problem is that we ask for the estimate  
\begin{equation}
\label{eq:main-est-reg2}
\|\wt N(\nabla u)\|_{L^p(\sigma)} \leq
	\begin{cases} \widetilde{C}_{R_p}\| \nabla_t f\|_{ L^{p}(\sigma)}, \quad &\mbox{if $\Omega$ is bounded or $\pom$ is unbounded}\\
	\widetilde{C}_{R_p}\left ( \| \nabla_t f\|_{ L^{p}(\sigma)} + \frac{\Vert f\Vert_{L^p(\sigma)}}{\diam(\pom)} \right), \quad &\mbox{if $\Omega$ is unbounded and $\pom$ is bounded}
	\end{cases}
\end{equation}
and we write that $(\widetilde R^{\mathcal L}_p)$ is solvable ($p>1$). 
These definitions are customary in nicer domains such as Lipschitz domains {and are stronger than solvability of $(R^{\mathcal L}_p)$ in general domains. {In \cite[Section 10]{MT} there is an example of a chord-arc domain where $(\wt R_p^\Delta)$ is not solvable for any $p\geq1$ but $(R^\Delta_p)$ is solvable for some $p>1$}}. 
\begin{remark}
For $p=1$, the tangential regularity problem $(\wt R^{\mathcal L}_1)$ is considered in Hardy-Sobolev spaces, that is, we require the estimate
\[
\Vert \wt N(\nabla u) \Vert_{L^1(\sigma)} \leq 
	\begin{cases} 
		\widetilde C_{R_1} \Vert f \Vert_{HS^{1,1}(\sigma)}, \quad &\mbox{if $\Omega$ is bounded or $\pom$ is unbounded}\\
	\widetilde{C}_{R_1}\left ( \Vert f \Vert_{HS^{1,1}(\sigma)} + \frac{\Vert f\Vert_{L^1(\sigma)}}{\diam(\pom)} \right), \quad &\mbox{if $\Omega$ is unbounded and $\pom$ is bounded}.
\end{cases}
\]
It is known that \eqref{eq:main-est-reg2} with $p=1$ cannot hold even in the {unit ball} in the case of the Laplacian. {In fact, for smooth domains, we also have $\Vert \wt N(\nabla u) \Vert_{L^1(\pom)} \gtrsim \Vert f \Vert_{\dot M^{1,1}(\pom} \approx \Vert f \Vert_{HS^{1,1}(\pom)}$ (see Proposition \ref{prop:reverseregularity} for the $\gtrsim$ and Proposition \ref{prop:equiv_cald_maximal_hardy_space} for the $\approx$) which is not comparable to $\Vert \nabla_t f \Vert_{L^1(\pom)}$ in general}.
\end{remark}
\begin{remark}
	\label{rmk:modified_nonmodified}
	 {If the operator $\mathcal L$ satisfies for instance the scale invariant locally Lipschitz condition $\sup_{x \in \Omega}\delta_\Omega(x) \| \nabla A \|_{L^\infty(B(x,  c\delta_\Omega(x))}<\infty$,  for some $c \in (0, \tfrac{1}{2})$,  and $u$ is a solution of $\mathcal Lu=0$  in $\Omega$}, then  $\Vert \wt N(\nabla u)\Vert_{L^p(\pom)}$ and $\Vert N(\nabla u) \Vert_{L^p(\pom)}$ are comparable. 
	 For  general elliptic operators without additional regularity,   this is not true as the gradient $\nabla u$ of a solution is  defined in {the $L_\textup{loc}^2(\Omega)$ sense  but not pointwisely}. 
\end{remark}

\vv

In the sequel, we will assume that  $n \geq 2$. 

\vv

\subsection{Main results}

Let us now state our main results. {We remark that the following results are valid (unless specified otherwise) for bounded domains, unbounded domains with compact boundary, and unbounded domains with unbounded boundary.}
\begin{theorem}[Extrapolation of solvability of the regularity problem]
\label{thm:extrapolation}
Let $\Omega \subset \R^{n+1}$ be a corkscrew domain with {$n$-Ahlfors regular} boundary $\pom$ such that $(R^{\mathcal L}_p)$ is solvable for some $p>1$. Then, there exists $\epsilon>0$ depending on $n$, {the ellipticity constant $\lambda$ of $\mathcal L$}, and the constant of $n$-Ahlfors regularity such that
\[
(R^{\mathcal L}_t) \mbox{ is solvable for all } t\in(1-\epsilon, p).
\]
\end{theorem}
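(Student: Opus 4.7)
\smallskip
\noindent\textbf{Proof plan.} The strategy is to pass through a weak-$\mathcal A_\infty$ characterization of the elliptic measure associated to the adjoint operator $\mathcal L^*$, exploit its self-improvement, and translate back to solvability of the regularity problem on a wider range of exponents.

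\emph{First,} I would invoke a quantitative version of the main theorem of \cite{MT}: for each $p>1$, $(R^{\mathcal L}_p)$ is solvable in a corkscrew domain with $n$-Ahlfors regular boundary if and only if the elliptic measure $\omega^{\mathcal L^*}$ belongs to weak-$\mathcal A_\infty(\Delta_0)$ for every surface ball $\Delta_0 \subset \pom$, with quantitative constants depending on $p$ and the solvability constants. The hypothesis $(R^{\mathcal L}_p)$ thus gives the weak-$\mathcal A_\infty$ property of $\omega^{\mathcal L^*}$ with some parameters $(C_0,\theta_0)$.

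\emph{Second,} I would use the self-improvement of weak-$\mathcal A_\infty$: such a weight lies in weak-$\mathcal A_\infty(C_1,\theta_1)$ for some $\theta_1 > \theta_0$. This is the weak-$\mathcal A_\infty$ analogue of the classical Coifman--Fefferman self-improvement and amounts to a slightly better reverse Hölder inequality. Translating this back via the characterization of the first step yields $(R^{\mathcal L}_t)$ for $t$ in an interval $(p-\delta, p)$ for some $\delta>0$ depending only on the structural constants. Combining with the upward monotonicity of Dirichlet solvability in the exponent (via Marcinkiewicz interpolation against the $L^\infty$ maximum principle) and the duality $(R^{\mathcal L}_q)\Leftrightarrow (D^{\mathcal L^*}_{q'})$ from \cite{MT}, this extends to $(R^{\mathcal L}_t)$ for every $t\in(1,p)$.

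\emph{Finally,} for the genuinely new range $t\in(1-\epsilon,1]$, I would use an atomic decomposition of the \haj-Sobolev space $M^{1,t}(\pom)$: every $f\in M^{1,t}$ decomposes as $f=\sum_j \lambda_j a_j$ with $\sum_j|\lambda_j|^t\lesssim \|f\|_{M^{1,t}}^t$ and each $a_j$ a \haj-Sobolev atom supported on a surface ball $\Delta_j$ with $\|\nabla_H a_j\|_{L^\infty}\lesssim \sigma(\Delta_j)^{-1/t}$. For each atom, one uses $(R^{\mathcal L}_p)$ together with Hölder's inequality to control $\wt N(\nabla u_{a_j})$ near $\supp a_j$, and Green function/elliptic measure decay estimates to bound the tail on $\pom\setminus 2\Delta_j$. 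Summing via $\|f\|_{M^{1,t}}^t\approx \inf\sum_j|\lambda_j|^t$ concludes $(R^{\mathcal L}_t)$. The main obstacle is precisely this tail estimate in the rough corkscrew setting without any connectivity or Poincaré assumption on $\pom$: one must carefully use the weak-$\mathcal A_\infty$ control of elliptic measure (and its self-improvement from the second step) together with corkscrew geometry, in place of the explicit kernel representations available in Lipschitz domains.
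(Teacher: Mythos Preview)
Your first two steps rest on an equivalence that is not available for general elliptic operators. The result from \cite{MT} that you invoke gives only one direction for general $\mathcal L$: solvability of $(R^{\mathcal L}_p)$ implies $(D^{\mathcal L^*}_{p'})$ and hence $\omega_{\mathcal L^*}\in$ weak-$\mathcal A_\infty$; the converse $(D^{\mathcal L^*}_{q'})\Rightarrow(R^{\mathcal L}_q)$ is known only for the Laplacian in \cite{MT} (and for DKP-type operators in \cite{MPT}), not for arbitrary divergence-form $\mathcal L$. So you cannot ``translate back'' in step~2 to recover $(R^{\mathcal L}_t)$ from an improved Dirichlet exponent. The paper avoids this detour entirely: it proves $(R^{\mathcal L}_r)$ directly for $r\in(1-\epsilon,1]$ via atoms, and then reaches the intermediate range $(1,p)$ by \emph{interpolation} in the \haj--Sobolev scale (Theorem~\ref{thm:interpolation}) between the just-obtained endpoint and the assumed exponent $p$, never passing through the duality.

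Your step~3 has the right skeleton---atomic decomposition, local control via $(R^{\mathcal L}_p)$ and H\"older, tail estimate---and you correctly flag the tail as the obstacle, but you do not supply a mechanism for it. This is where the paper spends most of its effort. The key device is a Localization Theorem (Theorem~\ref{thm:localization}): if $f$ is constant on $2B\cap\pom$, then $\fint_{(B/2)\cap\pom}\wt N_{R/2}(\nabla u)^p\,d\sigma\lesssim\bigl(\fint_{A_B\cap\Omega}|\nabla u|\,dm\bigr)^p$. Its proof writes $u\phi$ against the Green function of $\mathcal L^*$, applies Caccioppoli and a Poincar\'e inequality for functions vanishing on an $n$-Ahlfors regular set (Corollary~\ref{coro:Poincare_functions_vanishing_pom}), then uses $G(x,y)\lesssim \omega_{\mathcal L^*}^y(\Delta_\xi)\,\delta_\Omega(x)^{1-n}$ together with the weak reverse H\"older inequality for $d\omega_{\mathcal L^*}/d\sigma$---which \emph{does} follow from $(R^{\mathcal L}_p)$ via $(D^{\mathcal L^*}_{p'})$. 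Combined with the boundary H\"older decay of Lemma~\ref{lemma:AGMT}, namely $|u(x)|\lesssim R^{n-1+\alpha}|x_0-x|^{-(n-1+\alpha)}\|u\|_{L^\infty(\Omega\cap(3B\setminus 2B))}$, this yields geometric decay of the annular contributions and forces the threshold $r>n/(n+\alpha)$, which determines $\epsilon$. Without connectivity or a Harnack chain, vague ``Green function/elliptic measure decay'' is not enough: the localization argument is precisely what replaces the pole-change and kernel estimates available in Lipschitz domains.
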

This result is novel even for the Laplacian, although in \cite[Corollary~1.6]{MT} it was shown that
\[
(R^\Delta_p) \implies (R^\Delta_q) \mbox{ for $q \in (1,p)$}.
\]

\vv

Note also that solvability of $(D^{\mathcal L^*}_{p'})$ is already a consequence of $(R^{\mathcal L}_p)$ where $\frac 1 p + \frac 1 {p'} = 1$ thanks to the results in \cite[Theorem A.2]{MT}.\color{black}
\color{black}

\vv

\begin{remark}
Solvability $(R^{\mathcal L}_q)$ for $q\leq1$ stands for solvability in the \haj-Sobolev space $M^{1,q}(\pom)$, in contrast with the classical tangential case, where the problem must be considered in Hardy-Sobolev spaces. In Section \ref{section:hajlasz}, we will see that \haj-Sobolev spaces share many properties with Hardy spaces such as atomic decompositions. In this direction, Badr and Dafni  {showed} in \cite[Theorem 1.5]{BD1} that $\dot M^{1,1}(\wt M) = HS^{1,1}(\wt M)$ when $\wt M$ is a complete Riemannian manifold equipped with a doubling measure that supports a {strong} Poincar\'e inequality.
\end{remark}

\vv

In the converse direction, we also prove extrapolation from the endpoint. To do so, we prove that {$(R^{\mathcal L}_1)$} implies the weak-$\mathcal A_\infty(\sigma)$ property for the {elliptic} measure $\omega_{\mathcal L^*}$, which is then used to show the solvability of $(R^{\mathcal L}_p)$ for some $p>1$.
\begin{theorem}[$(R_1^{\mathcal L})$ implies the weak-$\mathcal A_\infty$ property]
\label{thm:equivalence_R1_weakAinf}
Let $\Omega \subset \R^{n+1}$ be a corkscrew domain with {$n$-Ahlfors regular} boundary $\pom$ and $\omega_{\mathcal L^*}$ be its {$\mathcal L^*$-elliptic} measure.
Then,
\[
(R^{\mathcal L}_1) \mbox{ is solvable $\implies \omega_{\mathcal L^*} \in$ weak-$\mathcal A_\infty(\sigma)$.}\color{black}
\]
{In particular, there exists some $p>1$ such that $(D^{\mathcal L^*}_{p'})$ is solvable, where $1/p+1/p'=1$.}

\end{theorem}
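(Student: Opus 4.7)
The plan is to adapt the argument in \cite{MT} proving $(R^{\mathcal L}_p)\Rightarrow \omega_{\mathcal L^*}\in \mathcal A_\infty(\sigma)$ for $p>1$ to the endpoint case $p=1$, where only weak-$\mathcal A_\infty$ is available (reflecting the $H^1$--$\mathrm{BMO}$-type duality underlying $\dot M^{1,1}$). Fix the surface ball $\Delta_0=B_0\cap\pom$ appearing in the definition of weak-$\mathcal A_\infty(\Delta_0)$. By the standard change-of-pole formula for elliptic measures in corkscrew $n$-Ahlfors regular domains, together with $\omega_{\mathcal L^*}^{X_\Delta}(2\Delta)\gtrsim 1$ (from the corkscrew condition and the maximum principle), the weak-$\mathcal A_\infty$ estimate reduces to showing that for every surface ball $\Delta=B(x,r)\cap\pom$ with $B(x,2r)\subset B_0$, every Borel $F\subset\Delta$, and the corkscrew point $X_\Delta$ of $\Delta$:
\[
\omega_{\mathcal L^*}^{X_\Delta}(F)\lesssim \left(\frac{\sigma(F)}{\sigma(\Delta)}\right)^\theta.
\]

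To prove this, I would test $(R^{\mathcal L}_1)$ against a Lipschitz approximation of $\chi_F$ and transfer the bound to $\omega_{\mathcal L^*}$ via a Green-function identity. For a scale $\eta\in(0,r)$ to be chosen, pick a Lipschitz cutoff $\varphi_\eta\colon\pom\to[0,1]$ with $\varphi_\eta\geq\chi_F$, $\supp\varphi_\eta\subset\{y\in\pom:\dist(y,F)<\eta\}\cap 2\Delta$, and with Haj\l asz gradient of size $\lesssim 1/\eta$; by $n$-Ahlfors regularity, $\|\varphi_\eta\|_{\dot M^{1,1}(\sigma)}\lesssim \sigma(\{\dist(\cdot,F)<\eta\})/\eta$. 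Using that the $\mathcal L^*$-Poisson kernel equals, in the weak sense, the conormal derivative of the $\mathcal L$-Green function $G_{\mathcal L}(\cdot,X_\Delta)$, and integrating by parts against a Lipschitz extension $\tilde\varphi_\eta$ of $\varphi_\eta$ with $\tilde\varphi_\eta(X_\Delta)=0$ (for instance, a truncation of the $\mathcal L$-solution $u^{\mathcal L}_{\varphi_\eta}$):
\[
\omega_{\mathcal L^*}^{X_\Delta}(F)\leq \int \varphi_\eta\,d\omega_{\mathcal L^*}^{X_\Delta} = \int_\Omega \nabla\tilde\varphi_\eta\cdot A\,\nabla_X G_{\mathcal L}(X,X_\Delta)\,dX.
\]
Combining the Caccioppoli/CFMS pointwise bound $|\nabla_X G_{\mathcal L}(X,X_\Delta)|\lesssim \omega_{\mathcal L^*}^{X_\Delta}\bigl(B(x^*,\delta)\cap\pom\bigr)/\delta^n$ (with $\delta=\dist(X,\pom)$ and $x^*\in\pom$ closest to $X$) with a Whitney decomposition of $\supp\tilde\varphi_\eta$ and a Carleson-type tent-space pairing, one dominates the interior integral by $r^{-(n-1)}\|\wt N(\nabla u^{\mathcal L}_{\varphi_\eta})\|_{L^1(\sigma)}$, so that $(R^{\mathcal L}_1)$ yields
\[
\omega_{\mathcal L^*}^{X_\Delta}(F)\lesssim \frac{\|\varphi_\eta\|_{\dot M^{1,1}(\sigma)}}{r^{n-1}}\lesssim \frac{\sigma(\{\dist(\cdot,F)<\eta\})}{\eta\,r^{n-1}}.
\]

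Finally, to convert this linear bound into the required geometric decay $(\sigma(F)/\sigma(\Delta))^\theta$, I would iterate over dyadic scales $\eta_k=2^{-k}r$, treating the differences $\varphi_{\eta_k}-\varphi_{\eta_{k+1}}$ as atomic pieces supported on successive annular regions and applying a good-$\lambda$-type summation. The main obstacle is precisely this last step: the linear bound alone has no self-improvement, and extracting the positive exponent $\theta$ requires a careful dyadic iteration scheme whose geometry must be matched to the structure of $F_\eta\setminus F$. A secondary difficulty is that the Green-function representation of $\omega_{\mathcal L^*}$ and the integration-by-parts hold only weakly in rough corkscrew Ahlfors regular domains, so the arguments must be performed through a smooth exhaustion of $\Omega$ with careful bookkeeping via Caccioppoli estimates and the comparison principle for $\mathcal L$-Green functions.
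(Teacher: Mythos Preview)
Your approach has a genuine gap at the step you yourself flag as ``the main obstacle'': the bound
\[
\omega_{\mathcal L^*}^{X_\Delta}(F)\lesssim \frac{\sigma(\{\dist(\cdot,F)<\eta\})}{\eta\,r^{n-1}}
\]
does not self-improve to $(\sigma(F)/\sigma(\Delta))^\theta$. For a generic Borel set $F$ (think of an $\varepsilon$-net, or a Cantor-type set), the numerator $\sigma(F_\eta)$ can be comparable to $\sigma(\Delta)$ for all $\eta$ down to the natural ``resolution'' of $F$, and below that scale the quotient $\sigma(F_\eta)/\eta$ blows up. Optimizing over $\eta$ gives no decay. The dyadic telescoping you propose, $\varphi_{\eta_k}-\varphi_{\eta_{k+1}}$, also does not help: these differences are nonnegative and sum to $\varphi_{\eta_0}$, so there is no cancellation to exploit, and each piece is again controlled only by the same linear bound. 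In short, $(R_1^{\mathcal L})$ applied to Lipschitz cutoffs of $\chi_F$ gives information at one scale at a time; it does not give the multiscale control needed for a direct $A_\infty$-type estimate on $\omega(F)$.

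The paper's route is entirely different and avoids this obstruction. Instead of bounding $\omega(F)$ for small $F$, it uses the Bennewitz--Lewis criterion (Lemma~\ref{lemma:weak_A_infty}): weak-$\mathcal A_\infty$ follows once one shows $\omega_{\mathcal L^*}^x(F)\geq c_0$ whenever $\sigma(F)\geq(1-\eta)\sigma(\Delta_x)$. This reduces to bounding $\omega_{\mathcal L^*}^x(\Delta_x\setminus F)$ when $\sigma(\Delta_x\setminus F)$ is small. The key technical input (Lemma~\ref{lemma:R1_implies_bound_for_maximal}) is that $(R_1^{\mathcal L})$ implies a \emph{uniform $L^1(\sigma|_{\Delta_x})$ bound for the truncated Hardy--Littlewood maximal function of $\omega_{\mathcal L^*}^x$}; this is proved by testing against well-chosen annular Lipschitz data $\varphi_i$, comparing the Green function to $u_i$ via the maximum principle, and invoking a reverse H\"older inequality for $\wt N$ (Lemma~\ref{lemma:reverse_holder_nontangential}) to compensate for the unboundedness of $M$ on $L^1$. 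Once $M_c(\omega_{\mathcal L^*}^x)\in L^1(\Delta_x,\sigma)$ uniformly, the $L\log L$--H\"older inequality (Lemma~\ref{lemma:holder_inequality_maximal}) gives $\omega_{\mathcal L^*}^x(\Delta_x\setminus F)\lesssim \log(1+\eta^{-1})^{-1}$, which combined with Bourgain's estimate yields the criterion. The passage through $L\log L$ is precisely what provides the nonlinear gain that your linear bound lacks.
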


\vv

\begin{theorem}[Extrapolation from the endpoint]
\label{thm:extrapolation_1}
Let $\Omega \subset \R^{n+1}$ be a corkscrew domain with {$n$-Ahlfors regular} boundary $\pom$ such that {$(R^{\mathcal L}_1)$} is solvable.
Then, there exists $p>1$ {depending only on $n$, the Ahlfors regularity constants, the corkscrew constant, the ellipticity constant of the operator $\mathcal L$, and the $(R_1^{\mathcal L})$ constant} such that
\[
(R^{\mathcal L}_q) \mbox{ is solvable for all } q\in(1, p).
\]
\end{theorem}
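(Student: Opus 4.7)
The plan is to reduce to Theorem~\ref{thm:extrapolation} by first producing an exponent $p_0>1$ for which $(R^{\mathcal L}_{p_0})$ is solvable, and then invoking its downward extrapolation. Concretely, assuming $(R^{\mathcal L}_1)$ is solvable, Theorem~\ref{thm:equivalence_R1_weakAinf} yields $\omega_{\mathcal L^*}\in$ weak-$\mathcal A_\infty(\sigma)$ together with some $p_0>1$---quantitatively controlled by $n$, the Ahlfors regularity constant, the corkscrew constant, the ellipticity constant of $\mathcal L$, and the $(R_1^{\mathcal L})$ constant---such that $(D^{\mathcal L^*}_{p_0'})$ is solvable, where $1/p_0+1/p_0'=1$.

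The second step is to pass from the $L^{p_0'}$-Dirichlet problem for $\mathcal L^*$ back to the $M^{1,p_0}$-regularity problem for $\mathcal L$. I would invoke the equivalence between $(D^{\mathcal L^*}_{p'})$ solvability and $(R^{\mathcal L}_p)$ solvability at dual exponents $p>1$ in corkscrew domains with $n$-Ahlfors regular boundary. For the Laplacian this is the main theorem of \cite{MT}; the $(R^{\mathcal L}_p)\Rightarrow(D^{\mathcal L^*}_{p'})$ direction for general elliptic $\mathcal L$ is already noted in the excerpt via \cite[Theorem~A.2]{MT}, and the converse is either proved elsewhere in this paper or adapted from \cite{MT}, since the underlying ingredients---non-tangential maximal and square function estimates, duality, and \haj~gradient characterizations---carry over unchanged for divergence form elliptic $\mathcal L$ satisfying \eqref{eq:ellipticity_conditions_operator}. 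The outcome is solvability of $(R^{\mathcal L}_{p_0})$.

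Having produced a supercritical exponent, Theorem~\ref{thm:extrapolation} applied with $p=p_0$ furnishes $\epsilon>0$, depending only on $n$, the ellipticity constant of $\mathcal L$, and the Ahlfors regularity constant, such that $(R^{\mathcal L}_t)$ is solvable for every $t\in(1-\epsilon,p_0)$; intersecting with $(1,\infty)$ gives $(R^{\mathcal L}_q)$ solvable for all $q\in(1,p_0)$, so we may set $p:=p_0$, and the claimed dependence of $p$ on the parameters is preserved throughout the chain. The main obstacle is the middle step: Theorem~\ref{thm:equivalence_R1_weakAinf} hands us the Dirichlet side and Theorem~\ref{thm:extrapolation} the downward extrapolation, but bridging back from $(D^{\mathcal L^*}_{p_0'})$ to $(R^{\mathcal L}_{p_0})$ for general elliptic operators in this rough setting is the nontrivial link where any essential additional work is concentrated.
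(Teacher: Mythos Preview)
Your proposed route has a genuine gap at the middle step, and it is precisely the one you flag as ``the nontrivial link.'' The implication $(D^{\mathcal L^*}_{p'})\Rightarrow(R^{\mathcal L}_p)$ is \emph{not} available for general divergence-form elliptic operators in corkscrew domains with Ahlfors regular boundary. The paper is explicit about this: Theorem~\ref{thm:characterization_weak_A_infty} records only the one-way implication $(5)\Rightarrow(3)$ for general $\mathcal L$, and the remark following Theorem~\ref{thm:equivalence_R1_weakAinf} singles out the Laplacian and DKP operators as the cases where $(D^{\mathcal L^*}_{p'})\Rightarrow(R^{\mathcal L}_p)$ is known (citing \cite{MPT}). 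Your assertion that the ingredients ``carry over unchanged'' from \cite{MT} is not correct---the proof in \cite{MT} uses layer potential bounds specific to the Laplacian, and extending it to general $\mathcal L$ is a major open problem, not a routine adaptation. So your chain breaks before you can invoke Theorem~\ref{thm:extrapolation}.

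The paper's actual proof avoids this obstruction entirely. Rather than passing through Dirichlet solvability, it works directly from the $(R^{\mathcal L}_1)$ hypothesis: Lemma~\ref{lemma:R1_implies_bound_for_maximal} (which uses $(R^{\mathcal L}_1)$ but not weak-$\mathcal A_\infty$) yields a version of the localization estimate~\eqref{eq:localization_p=1} at the endpoint $p=1$. This feeds into a good-$\lambda$ inequality~\eqref{eq:good_lambda} in the style of \cite[Theorem~5.3]{KP} and \cite{DK1}, which self-improves the integrability of $\wt N(\nabla u)$ from $L^1$ to $L^{1+\tilde\epsilon}$ directly. Theorem~\ref{thm:equivalence_R1_weakAinf} is used only to justify the weak reverse H\"older input needed in the localization argument, not to produce a regularity exponent via duality. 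The point is that the good-$\lambda$ machinery bypasses the unavailable $(D_{p'})\Rightarrow(R_p)$ step altogether.
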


\vv

{These results are also new even in the case of the Laplacian. Note though, that they were already known in the context of Lipschitz domains (see Section \ref{section:history} below).}

\vv

\begin{remark}
	In the case the operator $\mathcal L$ is the Laplacian or is {such that $A \in \textup{DKP}(\Omega)$} {(see \cite[Definition 1.31]{MPT})}, we also have
	\[
	 \omega_{\mathcal L^*} \in\mbox{ weak-$\mathcal A_\infty(\sigma)$}\implies(R^{\mathcal L}_1) \mbox{ is solvable}.\color{black}
	\]
This is a consequence of Theorem \ref{thm:extrapolation}, that $(D^{\mathcal L^*}_{p'})$ implies $(R_p^{\mathcal L})$ for these operators (see \cite[Theorem 1.33]{MPT}), and Theorem \ref{thm:characterization_weak_A_infty}. \color{black} 
	Also, from Theorem \ref{thm:equivalence_R1_weakAinf}, we deduce the equivalence between the solvability of the Dirichlet problem in BMO and the solvability of the regularity problem $(R^\Delta_1)$ for the Laplacian in corkscrew domains with {$n$-Ahlfors regular} boundary (see \cite{HL} by Hofmann and Le for the results concerning the solvability of the Dirichlet problem in BMO). 
\end{remark}
 
Moreover, under the hypothesis that $\partial\Omega$ supports a weak $(1,1)$-Poincar\'e inequality \eqref{eq:Poincare}, we also obtain results for the solvability of the tangential regularity problem $(\wt R^{\mathcal L}_1)$. In fact, in Section \ref{section:tangential_sobolev}, we will relate solvability $(\wt R^{\mathcal L}_1)$ with solvability in another Hardy-Sobolev space  (see Section \ref{section:HMT_def} for the definition of the Hardy HMT space $H^{1,1}$).
\color{black}
\begin{theorem}
\label{thm:solvability_classical_regularity_poincare}
Let $\Omega \subset \R^{n+1}$ be a domain with uniformly $n$-rectifiable boundary $\pom$ that supports a weak $(1,1)$-Poincar\'e inequality.
Then, 
\[
(R^{\mathcal L}_1)\mbox{ is solvable $\iff$ } (\widetilde R^{\mathcal L}_1) \mbox{ is solvable.}
\]
\end{theorem}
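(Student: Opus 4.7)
The plan is to reduce the theorem to an equivalence of function (semi)norms on $\pom$:
\[
\Vert f \Vert_{HS^{1,1}(\sigma)} \approx \Vert f \Vert_{\dot M^{1,1}(\sigma)},
\]
together with the analogous statement with $L^1/\diam(\pom)$ added on each side in the compact boundary case. Once established, the estimates defining $(R_1^{\mathcal L})$ and $(\widetilde R_1^{\mathcal L})$ become identical up to multiplicative constants, and the theorem follows immediately. The identification $\dot M^{1,1}=HS^{1,1}$ on a doubling metric measure space supporting a $(1,1)$-Poincar\'e inequality is in the spirit of Koskela--Saksman \cite{KS} and Badr--Dafni \cite{BD1}; our task is to verify it in the present setting, where $\sigma$ is doubling by Ahlfors regularity and the weak $(1,1)$-Poincar\'e hypothesis is available.

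For the direction $(R_1^{\mathcal L})\Rightarrow (\widetilde R_1^{\mathcal L})$ I prove the embedding $HS^{1,1}(\sigma)\hookrightarrow \dot M^{1,1}(\sigma)$. This reduces to a uniform bound $\Vert a\Vert_{\dot M^{1,1}(\sigma)}\lesssim 1$ for every Hardy--Sobolev atom $a$ supported in a surface ball $B\cap \pom$. I build a \haj\ gradient of $a$ out of a truncated maximal function of $|\nabla_t a|$: the $(1,1)$-Poincar\'e inequality together with the standard telescoping argument of Haj{\l}asz--Koskela produces a function $g_a$ pointwise bounded on $\Lambda B\cap \pom$ by $M(|\nabla_t a|)$ and decaying like $|x-x_B|^{-n}$ outside, and a direct $L^1$ estimate using $\Vert \nabla_t a\Vert_{L^\infty}\leq \sigma(B\cap\pom)^{-1}$ yields $\Vert g_a\Vert_{L^1(\sigma)}\lesssim 1$. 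Given $f=\sum_j \lambda_j a_j\in HS^{1,1}$, linearity of the continuous Dirichlet problem for $\mathcal L$ and the sublinearity of $\wt N$ combine with $(R_1^{\mathcal L})$ to give $\Vert \wt N(\nabla u)\Vert_{L^1(\sigma)}\leq C_{R_1}\sum_j|\lambda_j|\Vert a_j\Vert_{\dot M^{1,1}(\sigma)}\lesssim \sum_j|\lambda_j|$; taking the infimum over atomic decompositions gives $(\widetilde R_1^{\mathcal L})$.

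The converse direction $(\widetilde R_1^{\mathcal L})\Rightarrow (R_1^{\mathcal L})$ requires the reverse embedding $\dot M^{1,1}(\sigma)\hookrightarrow HS^{1,1}(\sigma)$, which is the more delicate half. Given a compactly supported Lipschitz $f$ with \haj\ gradient $g\in L^1(\sigma)$, I perform a Calder\'on--Zygmund stopping-time decomposition at levels $2^k$ of the Hardy--Littlewood maximal function $Mg$, producing a Whitney cover $\{B_j^k\}$ of each level set $\{Mg>2^k\}\cap\pom$ subordinate to a partition of unity $\{\varphi_j^k\}$. Decomposing $f$ as $f=\sum_{j,k}\lambda_j^k a_j^k$ with $a_j^k$ an averaged piece of $f$ multiplied by $\varphi_j^k$ and $\lambda_j^k\approx 2^k\sigma(B_j^k)$, I use the weak $(1,1)$-Poincar\'e inequality both to convert the \haj\ gradient information into oscillation control of $f$ on Whitney balls and then to upgrade the resulting estimate to a pointwise Lipschitz bound on $a_j^k$ of the right order $\sigma(B_j^k)^{-1}$. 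Summing in $j,k$ yields $\sum|\lambda_j^k|\lesssim \Vert g\Vert_{L^1(\sigma)}$, so $\Vert f\Vert_{HS^{1,1}(\sigma)}\lesssim \Vert f\Vert_{\dot M^{1,1}(\sigma)}$; applying $(\widetilde R_1^{\mathcal L})$ to each atom and summing gives $(R_1^{\mathcal L})$. The main obstacle is this atomic construction: turning a \haj\ gradient into a pointwise tangential Lipschitz bound through a smoothing (discrete convolution / averaging) step is delicate and relies crucially on the Poincar\'e hypothesis and on uniform rectifiability ensuring that $\nabla_t a_j^k$ exists $\sigma$-a.e. with the expected norm. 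Additional bookkeeping is needed in the compact boundary case to match the $L^1(\sigma)/\diam(\pom)$ correction terms in the two norms.
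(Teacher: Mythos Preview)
Your overall strategy matches the paper's: the theorem is reduced to Proposition~\ref{prop:equiv_cald_maximal_hardy_space}, namely the norm equivalence $\Vert f\Vert_{HS^{1,1}(\sigma)}\approx\Vert f\Vert_{\dot M^{1,1}(\sigma)}$ for compactly supported Lipschitz $f$. For the direction $HS^{1,1}\hookrightarrow\dot M^{1,1}$ your argument and the paper's coincide in substance (Poincar\'e plus a maximal function applied to the tangential gradient of a Hardy--Sobolev atom; the paper invokes \cite[Theorem~8.1.7]{HKST} to conclude that $C\,M_c(|\nabla_t a|)\in D(a)\cap L^\infty$, so the atom is in fact an $(\infty,\infty,1)$-atom in the sense of Definition~\ref{def:atom}).

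The genuine difference is in the reverse embedding $\dot M^{1,1}\hookrightarrow HS^{1,1}$. The paper's atomic decomposition (Theorem~\ref{thm:atomic_decomposition}) is obtained \emph{without} any Poincar\'e inequality, via a Lipschitz extension trick inside the Calder\'on--Zygmund lemma: on $U_\alpha^c$ one has $\nabla_H f\le\alpha$ pointwise, so $f|_{U_\alpha^c}$ is $2\alpha$-Lipschitz and admits a global $2\alpha$-Lipschitz extension $\tilde f$; the good function $g$ is then rewritten in terms of $\tilde f$ and shown to be globally Lipschitz with constant $\lesssim\alpha$ by a four-case analysis. Consequently the atoms $l_i^j=(g^{j+1}-g^j)\varphi_i^j$ are Lipschitz with the correct constant and are automatically (multiples of) Hardy--Sobolev atoms. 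Your proposed route---a Calder\'on--Zygmund stopping time plus a smoothing/averaging step relying on the Poincar\'e inequality to upgrade oscillation control to a pointwise Lipschitz bound---can also be made to work (it is closer to \cite{BD1}), but it is more delicate and spends the Poincar\'e hypothesis on a direction that does not need it. The paper's extension argument is shorter and shows that $\Vert f\Vert_{HS^{1,1}}\lesssim\Vert f\Vert_{\dot M^{1,1}}$ already holds on any uniformly perfect rectifiable set with doubling measure.
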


\begin{remark} 
\label{remark:Azzam_Tapiola_Tolsa}
Recently, there have been some advances connecting the existence of weak Poincar\'e inequalities supported in the boundary of a domain with some geometric properties of the domain. In particular, Azzam showed in \cite{A} that any $n$-Ahlfors-regular set $E\subset \R^{n+1}$ supporting a {weak $(1,n)$}-Poincar\'e inequality  \eqref{eq:Poincare} must be uniformly $n$-rectifiable. Also, Tapiola and the third named author proved in \cite[Theorems 1.2 and 1.3]{TT} that if $\Omega$ is a $2$-sided local John domain with $n$-Ahlfors regular boundary $\pom$ (\color{black}{or equivalently $\Omega$ is a two-sided chord-arc domain}\color{black}), then $\pom$ must support a {weak $(1,1)$}-Poincar\'e inequality \eqref{eq:Poincare}.
\end{remark}

In addition, under better connectivity assumptions, we prove the following reverse estimate to $(R_1^{\mathcal L})$.
\begin{proposition}
	\label{prop:reverseregularity}
	
	Let $\Omega \subset \R^{n+1}$ be a domain with {$n$-Ahlfors regular} boundary satisfying the local John condition. 
	Let $f$ be a Lipschitz function in $M^{1,1}(\partial\Omega)$ 
	and $u$ be the solution of the continuous Dirichlet problem for $\mathcal L$ with boundary data $f$. Then, we have
	\begin{equation*}
		\Vert f \Vert_{\dot M^{1,1}(\sigma)} \lesssim\|\wt N(\nabla u)\|_{L^{1}(\sigma)}.
	\end{equation*}
\end{proposition}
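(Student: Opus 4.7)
The plan is to show that a constant multiple of $\wt N(\nabla u)$ is itself a Haj\l asz gradient of $f$, which immediately yields the desired inequality once we take the infimum over $D(f)$ and integrate. Fix two boundary points $x_1, x_2 \in \pom$ and set $r := |x_1 - x_2|$. First I would apply the local John condition at $x_1$ with radius $2r$ to produce an interior point $y_0 \in B(x_1, 2r) \cap \Omega$ with $B(y_0, 2\theta r) \subset \Omega$; since $x_2$ lies in $B(x_1, 2r) \cap \pom$, the condition furnishes rectifiable paths $\gamma_1, \gamma_2 \colon [0,1] \to \overline\Omega$ joining $x_1$ and $x_2$ respectively to $y_0$, of length at most $2\theta^{-1} r$, with the corkscrew bound $\dist(\gamma_i(t), \pom) \geq \theta |\gamma_i(t) - x_i|$ for all $t$.

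Next, along each path $\gamma_i$ I would select points $y_{i,0} = y_0, y_{i,1}, y_{i,2}, \ldots$ converging to $x_i$ with $\ell_k := |y_{i,k} - y_{i,k+1}| \simeq \dist(y_{i,k}, \pom) \simeq 2^{-k} r$, and associate to each the Whitney-type ball $B_{i,k} := B(y_{i,k}, \tfrac14 \dist(y_{i,k}, \pom))$. The John-path lower bound $\dist(\gamma_i(t), \pom) \geq \theta |\gamma_i(t) - x_i|$ guarantees that $2B_{i,k} \subset \gamma_\alpha(x_i)$ for some aperture $\alpha$ depending only on $\theta$, and that consecutive $y_{i,k}, y_{i,k+1}$ lie in a common $B_{i,k}$ after a harmless reparametrisation. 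Since $\mathcal L u = 0$ on $2B_{i,k} \subset \Omega$, the Moser/De~Giorgi--Nash local boundedness applied to $u - u_{2B_{i,k}}$ combined with the $L^2$ Poincar\'e inequality gives
\begin{equation*}
|u(y_{i,k}) - u(y_{i,k+1})| \;\leq\; \operatorname{osc}_{B_{i,k}} u \;\lesssim\; \biggl( \fint_{2 B_{i,k}} |u - u_{2B_{i,k}}|^2 \biggr)^{1/2} \;\lesssim\; \ell_k \biggl( \fint_{2B_{i,k}} |\nabla u|^2 \biggr)^{1/2} \;\lesssim\; \ell_k \,\wt N_\alpha(\nabla u)(x_i),
\end{equation*}
where the last inequality uses that $2B_{i,k} \subset \gamma_\alpha(x_i)$ with the right radius for the definition of $\wt N_\alpha$.

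Telescoping, and using that $f$ is continuous so $u$ extends continuously to $\overline\Omega$ with $u|_{\pom} = f$ (so $u(y_{i,k}) \to f(x_i)$), I would obtain
\begin{equation*}
|u(y_0) - f(x_i)| \;\leq\; \sum_{k \geq 0} |u(y_{i,k}) - u(y_{i,k+1})| \;\lesssim\; \Bigl( \sum_k 2^{-k} \Bigr) r \,\wt N_\alpha(\nabla u)(x_i) \;\lesssim\; r \,\wt N(\nabla u)(x_i),
\end{equation*}
invoking the standard aperture equivalence $\|\wt N_\alpha(\nabla u)\|_{L^p(\sigma)} \simeq \|\wt N(\nabla u)\|_{L^p(\sigma)}$ for Ahlfors regular boundaries. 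Combining the two bounds $i=1,2$ yields
\begin{equation*}
|f(x_1) - f(x_2)| \;\leq\; |f(x_1) - u(y_0)| + |u(y_0) - f(x_2)| \;\lesssim\; |x_1-x_2|\bigl(\wt N(\nabla u)(x_1) + \wt N(\nabla u)(x_2)\bigr),
\end{equation*}
so $C\, \wt N(\nabla u) \in D(f)$, and the proposition follows by taking $L^1(\sigma)$-norms.

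The main subtlety is verifying that the inflated balls $2B_{i,k}$ along the John chain all sit inside a non-tangential cone $\gamma_\alpha(x_i)$ with a single aperture $\alpha$ independent of $k$; this rests squarely on the corkscrew estimate for $\gamma_i$ and on choosing the chain so consecutive Whitney balls overlap. Once this geometric bookkeeping is done, the $L^2$-oscillation estimate for solutions together with the geometric-series summability closes the argument; the continuity of $u$ up to $\pom$ (afforded by Lipschitz data and solvability of the continuous Dirichlet problem) lets us identify the limit of the telescoping sum with $f(x_i)$ pointwise rather than only $\sigma$-a.e.
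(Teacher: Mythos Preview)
Your argument is correct and follows essentially the same route as the paper: one shows that $C\,\wt N_\alpha(\nabla u)$ is a Haj\l asz gradient of $f$ by connecting any two boundary points through a common John center, telescoping along dyadic points on the John paths, and bounding each increment via interior elliptic regularity (Moser/De~Giorgi--Nash plus Poincar\'e) by the modified non-tangential maximal function. The only cosmetic differences are that the paper cites Ziemer's Poincar\'e-type inequality for solutions directly rather than decomposing it into Moser plus the standard Poincar\'e, and the paper invokes $\sigma$-a.e.\ non-tangential convergence rather than full boundary continuity, but your use of continuous extension (valid by Wiener regularity of Ahlfors-regular boundaries) is if anything cleaner.
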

\color{black}

Finally, we also prove analogous extrapolation results for the Poisson regularity problem (see \eqref{eq:def_mod_Poisson_reg} for its definition and Section \ref{section:variants} for related results).
\begin{theorem}
	\label{thm:extrapolation_Poisson}
	Let $\Omega\subset \R^{n+1}$ be a corkscrew domain with $n$-Ahlfors regular boundary $\pom$ such that either $\Omega$ is bounded or $\pom$ is unbounded, and $1<p\leq2$. Then, there exists $\epsilon>0$ depeding only on $n$, the Ahlfors regularity constant of $\pom$, and the ellipticity constant of $\mathcal L$ such that solvability of the modified Poisson regularity problem $({\wt{PR}}^{\mathcal L}_p)$ implies solvability of $(\widetilde{{PR}}^{\mathcal L}_{q})$ for all $q\in(1-\epsilon,p)$.
\end{theorem}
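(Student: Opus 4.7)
The plan is to adapt the proof of Theorem~\ref{thm:extrapolation} to the tent-space setting: I would replace the \haj--Sobolev atomic decomposition on $\pom$ by the Coifman--Meyer--Stein atomic decomposition of tent spaces on $\Omega$ and run the same local/decay scheme on atoms. Concretely, the datum $F$ at the endpoint $q=1$ of $(\wt{PR}^{\mathcal L}_1)$ admits an atomic decomposition $F=\sum_j \lambda_j A_j$ with $\sum_j|\lambda_j|\lesssim \|F\|$, where each tent atom $A_j$ is supported in a Carleson box $T(\Delta_j)$ over a surface ball $\Delta_j\subset\pom$ and normalized by $\|A_j\|_{T^{p,2}}\lesssim \sigma(\Delta_j)^{1/p-1}$. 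Letting $u_j$ be the solution associated with $A_j$ (so that $u=\sum_j\lambda_j u_j$), it suffices to establish the uniform bound $\|\wt N(\nabla u_j)\|_{L^q(\sigma)}^q \lesssim 1$ for some $q$ slightly below $1$; the $L^q$ quasi-triangle inequality then yields $(\wt{PR}^{\mathcal L}_q)$ on an interval $(1-\epsilon,1]$, and real interpolation between such a $q_0<1$ and the hypothesis at $L^p$ extends the conclusion to $(1-\epsilon,p)$.

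I would split the uniform bound on $\wt N(\nabla u_j)$ into a local piece on $C\Delta_j$ and a tail on $\pom\setminus C\Delta_j$. The local piece is immediate from the hypothesis: by H\"older's inequality and $(\wt{PR}^{\mathcal L}_p)$ applied to $A_j$,
\[
\int_{C\Delta_j}\wt N(\nabla u_j)^q\,d\sigma \lesssim \sigma(\Delta_j)^{1-q/p}\,\|A_j\|_{T^{p,2}}^q \lesssim \sigma(\Delta_j)^{1-q/p}\,\sigma(\Delta_j)^{q(1/p-1)}=\sigma(\Delta_j)^{1-q},
\]
which balances against the $L^1$-type atomic normalization to produce a universal constant. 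For the tail, the main point is to establish the geometric decay
\[
\wt N(\nabla u_j)(x) \lesssim 2^{-k\beta}\,\sigma(\Delta_j)^{-1}, \qquad x\in 2^{k+1}\Delta_j\setminus 2^k\Delta_j,
\]
for some $\beta>0$ depending only on $n$, the Ahlfors regularity constant of $\pom$, and the ellipticity constant of $\mathcal L$. This should follow from (i) the variational $L^2$-energy bound for the inhomogeneous problem $\mathcal L u_j = \operatorname{div} A_j$, (ii) Caccioppoli's inequality iterated over dyadic shells disjoint from $T(\Delta_j)$, and (iii) an interior Moser-type bound that converts $L^2$ averages of $\nabla u_j$ into the modified non-tangential maximal function $\wt N(\nabla u_j)$. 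Raising to the $q$-th power and using $\sigma(2^{k+1}\Delta_j\setminus 2^k\Delta_j)\lesssim 2^{kn}\sigma(\Delta_j)$ yields a convergent geometric series exactly when $q\beta>n(1-q)$, i.e.\ when $q>1-\epsilon$ with $\epsilon=\beta/(n+\beta)$.

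The main obstacle is the decay step on annular shells. Compared with the $\mathcal L u=0$ case treated in Theorem~\ref{thm:extrapolation}, here one has to handle the inhomogeneous equation $\mathcal L u_j = \operatorname{div} A_j$, and the fact that $A_j$ occupies an entire Carleson box $T(\Delta_j)$ forces a delicate chaining of Caccioppoli bounds through the interior of $\Omega$ rather than a single boundary argument; obtaining the geometric rate $\beta>0$ with constants depending only on the allowed parameters is the crux of the proof. The restriction $p\leq 2$ is natural in this step: the underlying $L^2$-energy identity for the Poisson problem is sharp in this range and allows the Moser--Caccioppoli machinery to be applied verbatim, whereas the analogous assertion for $p>2$ would require an additional self-improvement or duality argument falling outside the scope of this theorem.
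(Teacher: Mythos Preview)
Your overall strategy---tent-space atomic decomposition, local part via H\"older and $(\wt{PR}^{\mathcal L}_p)$, tail via geometric decay on dyadic annuli---matches the paper's, but the tail step has a real gap.

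The pointwise estimate $\wt N(\nabla u_j)(x)\lesssim 2^{-k\beta}\sigma(\Delta_j)^{-1}$ you propose cannot hold: for $x$ in the annulus $2^{k+1}\Delta_j\setminus 2^k\Delta_j$, the cone $\gamma(x)$ contains points $y$ arbitrarily close to $\pom$, and Caccioppoli at such $y$ only gives $\bigl(\fint_{B(y,\delta_\Omega(y)/4)}|\nabla u_j|^2\bigr)^{1/2}\lesssim \delta_\Omega(y)^{-1}\sup|u_j|$, which blows up as $\delta_\Omega(y)\to 0$. Iterating Caccioppoli over shells and Moser-type interior bounds control the part of the cone bounded away from $\pom$, but not the truncated piece $\wt N_{2^{k-2}R}(\nabla u_j)$ near $\pom$. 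The paper does not attempt a pointwise bound; instead it proves an \emph{integrated} localization estimate (Theorem~\ref{thm:localization_Poisson}):
\[
\fint_{\frac12 B\cap\pom}\wt N_{R/2}(\nabla u_j)\,d\sigma \lesssim \fint_{A(x_0,R,2R)\cap\Omega}|\nabla u_j|\,dm,
\]
valid because $u_j$ is $\mathcal L$-harmonic and vanishes on $\pom$ away from the atom's support. The right-hand side is then controlled by boundary Caccioppoli and the pointwise decay of $u_j$ itself (Lemma~\ref{lemma:AGMT}), which gives the exponent $\alpha$ in the final rate and hence $\epsilon=\alpha/(n+\alpha)$.

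The localization theorem is not a soft Caccioppoli consequence: its proof goes through the Green function representation and requires that $\omega_{\mathcal L^*}\in\text{weak-}\mathcal A_\infty(\sigma)$ (equivalently, a weak reverse H\"older inequality for the Poisson kernel). This is where the hypothesis $(\wt{PR}^{\mathcal L}_p)$ with $p\le 2$ is actually used: by Theorem~\ref{thm:mod_reg_implies_mod_Dirichlet} one gets $(\wt D^{\mathcal L^*}_{p'})$, and then Proposition~\ref{prop:dirichlet_tent_implies_weak_A_infty} yields weak-$\mathcal A_\infty$. Your explanation of the restriction $p\le 2$ via the $L^2$-energy identity is not the operative reason---the constraint enters through this duality chain, since Theorem~\ref{thm:mod_reg_implies_mod_Dirichlet} is stated for $p\in[1,2]$. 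You need to insert this weak-$\mathcal A_\infty$ step and the resulting localization theorem in place of the pointwise decay claim.
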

\color{black}

We also want to highlight some intermediate results which may be of independent interest. These are: \color{black} {the atomic decomposition of tent spaces defined on domains that appears in Section \ref{section:tent_spaces},} the atomic decompositionss of \haj-Sobolev spaces $M^{1,p}$, $0<p\leq1$ proved in Section \ref{section:hajlasz}, the connections between \haj-Sobolev spaces and other tangential Sobolev spaces for uniformly $n$-rectifiable sets supporting weak $(1,1)$-Poincar\'e \eqref{eq:Poincare} inequalities proved in Section \ref{section:tangential_sobolev}, and especially the localization theorem (Theorem \ref{thm:localization}) used to show Theorem \ref{thm:extrapolation}.

\vv

\subsection{Historical remarks}
\label{section:history}

Let us provide some historical context regarding the solvability of the regularity problem. For an extensive account of the history of the solvability of the Dirichlet problem in rough domains for the Laplacian, and more general elliptic operators and systems, we refer the reader to the introductions of \cite{MT,MPT}. 

The regularity problem was first solved in $C^1$ domains for all $p\in(1,+\infty)$ by Fabes, Jodeit, and Rivi\`ere \cite{FJR} by proving the invertibility of the layer potential operators using Fredlholm theory. In Lipschitz domains, Fredholm theory is not available and new ideas were needed in order to show invertibility of the layer potentials. Verchota \cite{V} used the Rellich identity introduced in \cite{JK2} by Jerison and Kenig to show the solvability of $(\wt R^\Delta_p)$ for $p\in(1,2]$. This result was later extrapolated to the optimal range of exponents by Dahlberg and Kenig in \cite{DK1}, without the use of layer potentials. They showed that $(\wt R^\Delta_p)$ is solvable in Lipschitz domains for $p\in[1,2+\varepsilon)$, for some $\varepsilon>0$ depending only on the dimension and the Lipschitz constants of the domain.
In chord-arc domains with vanishing constant (also called regular Semmes-Kenig-Toro domains, see \cite[Definition 4.8]{HMT}), invertibility of layer potentials was proved by Hofmann, Mitrea, and Taylor in \cite{HMT}.

Concerning the duality between $(\wt R_p)$ and $(D_{p'})$, in Lipschitz domains, Verchota established $(\wt R^\Delta_p) \iff (D^\Delta_{p'})$ for $p\in(1,+\infty)$ for the Laplacian. For general real elliptic operators in divergence form, Kenig and Pipher \cite{KP} showed that $(\wt R^{\mathcal L}_p)$ implies $(D^{\mathcal L^*}_{p'})$ for $p\in(1,+\infty)$. Also in Lipschitz domains, Dindo\v{s} and Kirsch \cite{DK2} obtained the endpoint case where the regularity data is in the Hardy-Sobolev space $HS^{1,1}(\pom)$ and the Dirichlet data is in $\operatorname{BMO}(\pom)$, also for general elliptic operators in divergence form. 
For corkscrew domains with $n$-Ahlfors regular boundary, the second and third named authors \cite{MT} proved that {$(D^\Delta_{p'})\Rightarrow (R^\Delta_p) $ for the Laplacian and $(R^{\mathcal L}_p) \Rightarrow (D^{\mathcal L^*}_{p'})$ for general elliptic operators}. This was the first time the regularity problem $(R_p)$ in \haj-Sobolev spaces was considered.  {In corkscrew domains with uniformly $n$-rectifiable boundaries,  the second and third named authors in collaboration with Poggi} showed in \cite{MPT} that $(D^{\mathcal L^*}_{p'})$ implies $(R^{\mathcal L}_p)$ for elliptic operators $\mathcal L$ satisfying the Dahlberg-Kenig-Pipher condition (see \cite[Definition 1.18]{MPT}) by going through the $p'$-solvability of a certain Poisson-Dirichlet problem. This work was finished at the same time as \cite{DHP}, where {Dindo\v{s}}, Hofmann, and Pipher found similar results in the context of Lipschitz {graph} domains  using different techniques.  {Their proof is a lot shorter than the one in \cite{MPT} but it does not seem to extend to  rougher domains since it uses the existence of a uniform``preferred direction" at the boundary,  as occurs in a domain above a Lipschitz graph.} {More recently, Feneuil gave in \cite{F} an alternative and simpler proof of solvability of $(R^{\mathcal L}_p)$ in $\mathbb R^{n+1}_+$ for Dahlberg-Kenig-Pipher operators.}
Also,  Dai, Feneuil, and Mayboroda showed in \cite{DFM} that in uniform domains (that is corkscrew domain with {Ahlfors} regular boundaries and Harnack chains), solvability of the regularity problem {in the  \haj-Sobolev space} is stable under Carleson perturbations of the elliptic operator.

{We refer the reader to Section 1.2.4 in \cite{MPT} for an exposition of the history of the Poisson problem.}

\subsection{Strategy of the proofs}
In order to prove Theorem \ref{thm:extrapolation} we will {implement a strategy inspired by the works} \cite{DK1,KP}, although the lack of regularity of the domains we consider greatly complicates some of the steps. On one hand, the arguments of \cite{DK1} require the use of (classical tangential) Hardy-Sobolev spaces defined on the boundary of Lipschitz domains and its atomic decompositions. For this reason, we develop atomic decompositions for \haj-Sobolev  spaces $M^{1,p}$ on {uniformly perfect metric spaces equipped with a doubling measure} in Section  \ref{section:hajlasz}. These atomic decompositions are influenced by the ones in the works of Badr and Dafni \cite{BD1}, and Badr and Bernicot \cite{BB}, {but they present several differences, both in idea and in proof}. The main ones are that
we work in metric spaces instead of manifolds and thus we cannot consider ``classical" derivatives, we prove atomic decomposition for the $M^{1,p}$ spaces for the full range $0<p\leq1$, not only for $p=1$, and we consider {both compact and unbounded} spaces. Also, {our proof of the atomic decomposition (Theorem \ref{thm:atomic_decomposition}) avoids the use of Sobolev embeddings which allows us to directly obtain a decomposition in Lipschitz atoms.}
On the other hand, we develop a localization theorem in the style of \cite[Theorem 5.19]{KP}, but the lack of connectivity of the domain makes the proof interesting on its own right. {The proof is quite delicate, but the idea is to upper bound the term $\wt N_{R/2}(\nabla u)$ in terms of the elliptic measure of the domain $\omega_{\mathcal L^*}$. Then, we deal with the elliptic measure using that it belongs to weak reverse H\"older class.}

Our proof of Theorem \ref{thm:equivalence_R1_weakAinf} {follows closely} the one in the appendix of \cite{MT}, which in turn has some similarities with the proofs in \cite{KP,DK2} although the absence of connectivity in our domains brings {significant} difficulties. {Moreover, in the present case, working in the endpoint $p=1$ makes things more complicated}. We need to employ a weak reverse H\"older inequality for the maximal non-tangential operator (Lemma \ref{lemma:reverse_holder_nontangential}), to avoid problems with the fact that the Hardy-Littlewood maximal operator is unbounded in $L^1$. We also prove a weak reverse H\"older inequality involving the Hardy-Littlewood maximal operator (in relation with {the} $L \log L$ norm), which later improves itself to a weak reverse $p$-H\"older inequality giving the desired result. 
Then, Theorem \ref{thm:extrapolation_1} is a consequence of Theorem \ref{thm:equivalence_R1_weakAinf}, the localization theorem \ref{thm:localization}, and a good-$\lambda$ inequality, in a similar fashion to the proofs of extrapolation in \cite{DK1,KP}.

\color{black}

The proof of Theorem \ref{thm:solvability_classical_regularity_poincare} follows from the theory of Sobolev spaces on metric spaces supporting weak $(1,1)$-Poincar\'e inequalities \eqref{eq:Poincare}. For more information on this topic, see the book \cite{HKST} by Heinonen, Koskela, Shanmugalingam, and Tyson. The proof of Proposition \ref{prop:reverseregularity} follows from arguments in \cite{KP, HMT}, and the proof of Proposition \ref{prop:equiv_cald_maximal_hardy_HMT_space}, that is, the relation between the Hardy-HMT space $H^{1,1}$ (see Section \ref{section:HMT_def}) and the \haj-Sobolev space $M^{1,1}$ follows from the techniques in \cite{BD2}. \color{black}

{
Finally, the proof of Theorem \ref{thm:extrapolation_Poisson} is very similar to the one of Theorem \ref{thm:extrapolation}, but it requires an atomic decomposition for the tent spaces $T^p_2(\Omega)$ defined on domains. This atomic decomposition (Theorem \ref{thm:atomic_decomposition_tent}) is inspired by the one in \cite{CMS} but with some complications originated by the lack of structure of our domains.
}

\subsection{Organization of the paper} 
The paper is organized as follows. In Section \ref{section:prelim} we review some necessary results needed along the paper. In Section \ref{section:hajlasz} we develop atomic decompositions for the \haj-Sobolev spaces. In Section \ref{section:tangential_sobolev} we prove Theorem \ref{thm:solvability_classical_regularity_poincare}, Proposition \ref{prop:equiv_cald_maximal_hardy_HMT_space} about the relation between the spaces $H^{1,1}$ and $M^{1,1}$ under the hypothesis of a weak $(1,1)$-Poincar\'e inequality \eqref{eq:Poincare}, and some further results on the reversibility of the $(R^{\mathcal L}_1)$ estimate for more regular domains (Proposition \ref{prop:reverseregularity}).
In Section \ref{section:extrapolation} is where we show the main results of the paper. We start by proving the localization Theorem \ref{thm:localization}, and we use it to obtain Theorem \ref{thm:extrapolation} in Section \ref{section:extrapolation_down}. Lastly, we prove Theorem \ref{thm:equivalence_R1_weakAinf} in Section \ref{section:extrapolation_up}, and Theorem \ref{thm:extrapolation_Poisson} in Section \ref{section:extrapolation_Poisson}.

\section{Preliminaries}
\label{section:prelim}
We will write $a\lesssim b$ if there is a constant $C>0$ so that $a \leq Cb$ and $a \approx b$ to mean $a\lesssim b \lesssim a$. For a ball $B = B(x_0, r)$, we write $kB$ to denote $B(x_0, kr)$ for $k>0$. For functions $f \in L^1_{\operatorname{loc}}(\sigma)$, we write $f_B$ to denote $\fint_B f(x)\, d\sigma(x)$.

\vv

\subsection{\haj-Sobolev spaces}
Let $(X, \,d(\cdot,\cdot), \,\sigma)$ be a {uniformly perfect metric space} (see below for the definition) equipped with a doubling measure $\sigma$. \begin{remark}[Uniformly perfect space]
	\label{rmk:uniformly_perfect}
	We say that a metric space $X$ is \textit{uniformly perfect} if there exists a constant $\lambda\geq2$ such that
	\[
	\mbox{$\lambda B\backslash B \neq \varnothing$ for all balls $B$ such that $\lambda B\neq X$.}
	\]
	Moreover, since $\sigma$ is doubling, we can assume that $\sigma(\lambda B) \approx \sigma(\lambda B \backslash B)$. For example, by choosing larger $\lambda$, we may assume there exists $x \in  (\lambda-1/3) B \backslash \left(4/3\right) B$. Hence, 
	\begin{align*}
		\sigma(\lambda B \backslash B) \geq \sigma(B(x, r(B)/3)) \gtrsim \sigma(B(x,2\lambda r(B))) \geq \sigma(B) \gtrsim\sigma(\lambda B).
	\end{align*}
\end{remark}

We start by presenting a Leibniz' rule for \haj-Sobolev functions.
\begin{lemma}[Leibniz' rule for \haj-Sobolev functions, Lemma 5.20 \cite{HK}]
\label{lemma:leibniz}
Let $u \in M^{1, p}(X)$ and $\varphi$ be a bounded Lipschitz function. Then $u \varphi \in M^{1, p}(X)$. Moreover, if $L$ is a Lipschitz constant of $\varphi$ and $\supp \varphi=K$, then
\[
\left(g\|\varphi\|_{\infty}+L|u|\right) \chi_K \in D(u \varphi) \cap L^p(X),
\]
for every $g \in D(u) \cap L^p(X)$.
\end{lemma}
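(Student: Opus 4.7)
The plan is to establish the pointwise Hajłasz-gradient inequality
\[
|(u\varphi)(x) - (u\varphi)(y)| \leq |x-y|\bigl(h(x)+h(y)\bigr), \quad \sigma\text{-a.e.\ } x,y\in X,
\]
where $h := (\|\varphi\|_\infty g + L|u|)\chi_K$, and then to verify the $L^p$ integrability of both $h$ and $u\varphi$. The central algebraic step will be the telescoping decomposition
\[
u(x)\varphi(x) - u(y)\varphi(y) = \varphi(x)\bigl(u(x)-u(y)\bigr) + u(y)\bigl(\varphi(x)-\varphi(y)\bigr),
\]
combined with the Hajłasz-gradient property \eqref{eq:def_haj_grad} applied to the pair $(u,g)$ and the Lipschitz bound $|\varphi(x)-\varphi(y)|\leq L|x-y|$. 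These immediately give the preliminary estimate $|(u\varphi)(x)-(u\varphi)(y)|\leq |x-y|\bigl(\|\varphi\|_\infty(g(x)+g(y)) + L|u(y)|\bigr)$.

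The subtlety is that this preliminary estimate is not yet of the form $|x-y|(h(x)+h(y))$ because of the characteristic function $\chi_K$ inside $h$. To close the gap, I would perform a case analysis on the positions of $x$ and $y$ relative to $K=\supp\varphi$. When both points lie in $K$, the factors $\chi_K(x), \chi_K(y)$ are $1$ and the preliminary estimate directly yields the claim after bounding $L|u(y)|$ by $L(|u(x)|+|u(y)|)$. When both points lie outside $K$, then $\varphi(x) = \varphi(y) = 0$ and the left-hand side vanishes. The only nontrivial case is when exactly one of the two points, say $x$, lies in $K$ and $y\notin K$: here $\varphi(y)=0$, and the key observation is to trade the crude bound $|\varphi(x)|\leq \|\varphi\|_\infty$ for the sharper Lipschitz bound $|\varphi(x)| = |\varphi(x)-\varphi(y)| \leq L|x-y|$, which yields
\[
|(u\varphi)(x)-(u\varphi)(y)| = |u(x)||\varphi(x)| \leq L|u(x)||x-y| \leq |x-y|\,h(x),
\]
since $\chi_K(x)=1$ and $h(y)=0$.

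I expect this asymmetric case to be the main conceptual obstacle: without the Lipschitz trade one would be left with an $L|u(x)|$ term that the vanishing $h(y)$ cannot absorb. Once the pointwise inequality holds $\sigma$-almost everywhere, the membership $h \in L^p(X)$ is immediate from $g,u\in L^p(X)$ together with $\|\varphi\|_\infty < +\infty$, and $u\varphi \in L^p(X)$ follows from $|u\varphi|\leq \|\varphi\|_\infty |u|$. Together these imply $u\varphi \in M^{1,p}(X)$ with the Hajłasz gradient $h$, as claimed.
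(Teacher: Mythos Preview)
The paper does not supply its own proof of this lemma: it is stated with attribution to \cite[Lemma~5.20]{HK} and no argument is given. Your proof is correct and is essentially the standard verification; in particular, your handling of the mixed case $x\in K$, $y\notin K$ via the Lipschitz bound $|\varphi(x)|=|\varphi(x)-\varphi(y)|\le L|x-y|$ is exactly the point that makes the $\chi_K$ factor harmless.
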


\vv

Next, we will prove two kinds of Poincar\'e inequalities for \haj-Sobolev functions. One is for $p\geq1$ and functions with zero mean and the other is for all $p>0$ and functions with compact support.

\vv

\begin{theorem}[Poincar\'e inequalities]
\label{thm:poincare_compact_supp}
Let $f \in M^{1,p}(X)$ and $B \subset X$ be a ball. Then, if $p\geq1$, we have
\[
\int_B |f-f_B|^p \, d\sigma \lesssim r(B)^p \int_B |\nabla_H f|^p \, d\sigma, \quad \mbox{for all $\nabla_H f \in D(f)$.}
\]
Moreover, for all $p>0$, if $\supp f \subset B$ and $\sigma(\lambda B \backslash B) \gtrsim \sigma(B)$, we get 
\[
\int_B |f|^p \, d\sigma \lesssim r(B)^p \int_{\lambda B} |\nabla_H f|^p \, d\sigma, \quad \mbox{for all $\nabla_H f \in D(f)$.}
\]
\end{theorem}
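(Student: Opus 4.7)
My plan is to prove both inequalities by exploiting the pointwise defining inequality of the Haj{\l}asz gradient, which gives $|f(x)-f(y)|\leq |x-y|(\nabla_H f(x)+\nabla_H f(y))$ for $(\sigma\times\sigma)$-a.e.\ pair $(x,y)$, so that by Fubini we may freely average one of the two variables against a set of positive measure.

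For the first inequality ($p\geq 1$, no support assumption), I would write
\[
|f(x)-f_B|\leq \fint_B |f(x)-f(y)|\,d\sigma(y)\leq \fint_B |x-y|\bigl(\nabla_H f(x)+\nabla_H f(y)\bigr)\,d\sigma(y),
\]
and then bound $|x-y|\leq 2r(B)$ for $x,y\in B$ to get
\[
|f(x)-f_B|\leq 2r(B)\Bigl(\nabla_H f(x)+\fint_B \nabla_H f\,d\sigma\Bigr).
\]
Raising to the $p$-th power, integrating over $x\in B$, and applying Jensen's inequality to the average on $B$ yields the claim. This is the standard Haj{\l}asz argument and the only place where $p\geq 1$ is used is Jensen.

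For the second inequality, the idea is to exploit the fact that $f$ vanishes on $\lambda B\setminus B$, which has comparable measure to $B$ by hypothesis. For $\sigma$-a.e.\ $x\in B$ and $\sigma$-a.e.\ $y\in \lambda B\setminus B$ the Haj{\l}asz inequality applied with $f(y)=0$ gives
\[
|f(x)|\leq |x-y|\bigl(\nabla_H f(x)+\nabla_H f(y)\bigr)\leq 2\lambda\,r(B)\bigl(\nabla_H f(x)+\nabla_H f(y)\bigr).
\]
Now raise to the $p$-th power, split the right-hand side using $(a+b)^p\lesssim_p a^p+b^p$ (valid for all $p>0$, with constant $1$ when $p\leq 1$ and $2^{p-1}$ when $p>1$), and integrate in $y$ over $\lambda B\setminus B$; dividing by $\sigma(\lambda B\setminus B)$ produces
\[
|f(x)|^p\lesssim_p r(B)^p\Bigl(\nabla_H f(x)^p+\frac{1}{\sigma(\lambda B\setminus B)}\int_{\lambda B\setminus B}\nabla_H f(y)^p\,d\sigma(y)\Bigr).
\]
Integrating in $x$ over $B$ and invoking the hypothesis $\sigma(\lambda B\setminus B)\gtrsim\sigma(B)$ gives the desired bound $\int_B |f|^p d\sigma\lesssim r(B)^p\int_{\lambda B}\nabla_H f^p\,d\sigma$.

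The only subtle point is the handling of $p<1$: one cannot use Jensen, but this is precisely why I replace the averaging of $\nabla_H f$ by the pointwise subadditivity $(a+b)^p\leq a^p+b^p$ before integrating in $y$. This avoids any appeal to a Hardy--Littlewood type bound, which would fail for $p<1$. The uniformly perfect hypothesis plays no role beyond guaranteeing that balls $B$ with $\sigma(\lambda B\setminus B)\gtrsim \sigma(B)$ are available (as noted in Remark~\ref{rmk:uniformly_perfect}), so no further ingredient is needed.
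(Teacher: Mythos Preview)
Your proof is correct. For the first inequality you reproduce the standard Haj{\l}asz argument (which is what the reference \cite[Proposition 2.1]{MT} does), so there is nothing to compare there.

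For the second inequality your route differs from the paper's. The paper fixes a \emph{single} point $z\in\lambda B\setminus B$ with $\nabla_H f(z)\leq \varepsilon+\inf_{\lambda B\setminus B}\nabla_H f$, writes $|f(x)|=|f(x)-f(z)|$, applies the Haj{\l}asz inequality with that fixed $z$, bounds $\nabla_H f(z)^p\sigma(B)$ via $\sigma(\lambda B\setminus B)\gtrsim\sigma(B)$ and the near-minimality of $\nabla_H f(z)$, and finally sends $\varepsilon\to 0$. You instead \emph{average} over all $y\in\lambda B\setminus B$, use the subadditivity $(a+b)^p\lesssim_p a^p+b^p$ before integrating in $y$, and then invoke $\sigma(\lambda B\setminus B)\gtrsim\sigma(B)$ once at the end. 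Your averaging argument is slightly cleaner: it avoids the $\varepsilon$-limit and also sidesteps the need to pick the anchor point $z$ inside the full-measure set on which the Haj{\l}asz inequality holds pointwise (a point the paper leaves implicit). The paper's single-point approach, on the other hand, makes transparent that what is really being used is the near-infimum of $\nabla_H f$ on the annulus. Both arguments yield the same constants up to harmless factors.
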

\begin{proof}
    For a proof of the first inequality see \cite[Proposition 2.1]{MT} for example.

	For the second inequality, for any $\varepsilon>0$ we may choose $z\in \lambda B \backslash B$ (thus $f(z)=0$) such that $\nabla_H f(z) \leq \varepsilon + \inf_{\lambda B\backslash B} \nabla_H f$. Thus, $\nabla_H f(z)^p \sigma(B) \lesssim \nabla_H f(z)^p \sigma(\lambda B\backslash B) \leq \varepsilon \sigma(\lambda B \backslash B) + \int_{\lambda B\backslash B} |\nabla_H f|^p d\sigma \leq \varepsilon \sigma(\lambda B)+ \int_{\lambda B} |\nabla_H f|^p d\sigma$. 
	Then, we have
	\begin{align*}
	\frac{1}{r(B)^p}\int_B |f|^p d\sigma &=
	\frac{1}{r(B)^p}\int_B |f(x) - f(z)|^p d\sigma(x) \leq \frac{\diam(\lambda B)^p}{r(B)^p} \int_{B}(\nabla_H f(x)+\nabla_H f(z))^p d\sigma(x) \\
	&\lesssim_p \int_B  |\nabla_H f|^p d\sigma + |\nabla_H f(z)|^p \sigma(B) \lesssim  \int_B  |\nabla_H f|^p d\sigma + \int_{\lambda B}|\nabla_H f|^p d\sigma + \varepsilon \sigma(\lambda B)\\
	&\lesssim \int_{\lambda B } |\nabla_H f|^p \, d\sigma + \varepsilon \sigma(\lambda B).
	\end{align*}
Since, this is true for all $\varepsilon>0$, we obtain
\[
\int_B |f|^p \, d\sigma \lesssim r(B)^p \int_{\lambda B} |\nabla_H f|^p \, d\sigma
\]
as desired.
\end{proof}

\subsection{Whitney decomposition}
We introduce the \textit{Hardy-Littlewood maximal operator} as
\[
M_{\sigma}f(x) := \sup_{B(z,r) \ni x} \fint_{B(z,r)}  |f(y)|\,d\sigma(y), \quad M_{\sigma}\mu(x) := \sup_{B(z,r) \ni x} \frac{\mu(B(z,r))}{\sigma(B(z,r))},
\] 
the \textit{centered Hardy-Littlewood maximal operator} as
\[
M_{c,\sigma}f(x) := \sup_{r>0} \fint_{B(x,r)} |f(y)|\,d\sigma(y), \quad M_{c,\sigma}\mu(x) := \sup_{r>0} \frac{\mu(B(x,r))}{\sigma(B(x,r))},
\]
and the \textit{truncated centered Hardy-Littlewood maximal operator} as
\begin{equation} 
	\label{eq:def_maximal_truncat}
M_{c,\sigma,t} f(x) := \sup_{0<r<t} \fint_{B(x,r)} |f(y)|\, d\sigma(y), \quad M_{c,\sigma,t}\mu(x):= \sup_{0<r<t} \frac{\mu(B(x,r))}{\sigma(B(x,r))},
\end{equation}
for $f \in L^1_{\text{loc}}(\sigma)$ or positive Radon measures $\mu$.  We will omit the subscript $\sigma$ when there is no possible confusion in the measure used. We will also use extensively the following Whitney ball decomposition, proved for spaces of homogeneous type by Coifman and Weiss in \cite[Theorem 3.2]{CW}.
\begin{theorem}[Whitney ball decomposition, Theorem 3.2 \cite{CW}]
\label{thm:whitney_balls}
Let $(X,d,\sigma)$ be a uniformly perfect metric space with constant $\lambda$ equipped with a doubling measure and suppose $U \subsetneq X$ is an open bounded set. Then there exists a sequence of balls $(B_i)_i$ where $B_i = B(x_i, r_i)$ satisfying
\begin{itemize}
\item[(i)] $U=\bigcup_i \underline{B_i} = \bigcup_i B_i$ where $\underline{B_i} := \frac{1}{\lambda} B_i$, \color{black}
\item[(ii)] there exists a constant $N$ depending only on $\lambda$ and the doubling constant of $\sigma$ such that no point of $X$ belongs to more than $N$ of the balls $B_i$,
\item[(iii)] $\wt{{B}}_i \cap(X\backslash U) \neq \varnothing$ for each $i$, where $\wt{{B}}_i:=\lambda B_i$,
\item[(iv)] if $B_i \cap B_j \neq \emptyset$, then $r_i \approx r_j$.
\end{itemize}
\end{theorem}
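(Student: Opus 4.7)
Plan: The proof follows the classical Whitney decomposition blueprint, adapted to doubling metric spaces. For $x \in U$ let $\delta(x) := d(x, X \setminus U)$, which is strictly positive because $U$ is open and $U \subsetneq X$. Fix a constant $c$ with $1 < c < \lambda$ (any such $c$ works, e.g.\ $c = (1+\lambda)/2$, using that $\lambda \geq 2$) and set the Whitney radius $r(x) := \delta(x)/c$. Since $U$ is bounded, the radii $r(x)/(5\lambda)$ are uniformly bounded, so the classical Vitali $5B$-covering lemma (which requires no doubling assumption) yields a countable, pairwise disjoint subcollection $\{B(x_i, r(x_i)/(5\lambda))\}_i$ of $\{B(x, r(x)/(5\lambda)) : x \in U\}$ whose $5$-dilates $\{B(x_i, r(x_i)/\lambda)\}_i$ cover $U$. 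Setting $r_i := r(x_i)$, $B_i := B(x_i, r_i)$, and $\underline{B_i} := (1/\lambda) B_i$, half of (i) is then automatic: $\bigcup_i \underline{B_i} \supset U$ and hence $\bigcup_i B_i \supset U$.

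Next I would verify the remaining properties. For the rest of (i), note that $r_i = \delta(x_i)/c < \delta(x_i)$, so every $y \in B_i$ satisfies $d(x_i, y) < d(x_i, X \setminus U)$ and therefore lies in $U$; hence $B_i \subset U$. For (iii), $\lambda r_i = (\lambda/c)\,\delta(x_i) > \delta(x_i)$, so by the definition of $\delta(x_i)$ as an infimum there exists $z \in X \setminus U$ with $d(x_i, z) < \lambda r_i$, giving $\widetilde{B_i} \cap (X \setminus U) \neq \varnothing$. For (iv), if $B_i \cap B_j \neq \varnothing$ then $d(x_i, x_j) < r_i + r_j$, and the triangle inequality gives
\[
c\, r_j \;=\; \delta(x_j) \;\leq\; d(x_j, x_i) + \delta(x_i) \;<\; (r_i + r_j) + c\, r_i,
\]
so $(c-1) r_j < (c+1) r_i$. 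Since $c > 1$, this yields $r_j \lesssim r_i$, and symmetrically $r_i \lesssim r_j$, proving (iv).

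The one step where I genuinely need the doubling hypothesis, and the main (though still standard) technical point, is the bounded overlap (ii). Suppose $y \in B_{i_1} \cap \cdots \cap B_{i_N}$. By (iv) all the radii $r_{i_k}$ are comparable to a common value $r_0$, and all the centers $x_{i_k}$ lie in $B(y, 2 r_0)$. Consequently the pairwise disjoint small balls $B(x_{i_k}, r_{i_k}/(5\lambda))$ are contained in $B(y, 3 r_0)$, and by the doubling property each of them has $\sigma$-measure at least a constant (depending only on $C_\sigma$ and $\lambda$) times $\sigma(B(y, 3 r_0))$. Summing these disjoint measures and dividing by $\sigma(B(y, 3 r_0))$ gives an upper bound on $N$ depending only on $\lambda$ and $C_\sigma$, completing the verification and the proof.
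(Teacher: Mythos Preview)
The paper does not supply its own proof of this statement: it is quoted verbatim as Theorem~3.2 of Coifman--Weiss \cite{CW} and used as a black box. Your argument is a correct, standard rendition of the Whitney-ball construction in doubling metric spaces (Vitali to select centers, the Lipschitz property of $\delta(\cdot)$ for (iv), disjointness plus doubling for (ii)), and is exactly the kind of proof one finds in the literature for this result; there is nothing to compare against in the paper itself.
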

\color{black}

\subsection{Calder\'on maximal operator}
{In a metric measure space $(X, d(\cdot,\cdot), \sigma)$,  where $\sigma$ is a Borel regular measure,  for $f \in L^{1}_{\operatorname{loc}}(X)$,  we define the \textit{Calder\'on-type maximal operator} by
\[
\Lambda_{\sharp} f(x):=\sup _{r>0} \fint_{B(x, r)} \frac{\left|f(y)- f_B \right|}{r} \,d \sigma(y),
\]
 It was shown in \cite[Theorem 3]{KT} that if $\sigma$ is doubling, this operator characterizes functions in $M^{1,1}(X)$.}
\begin{theorem}
	\label{thm:calderon_maximal}
 {	If $\sigma$ is doubling,  then, for any $f\in \dot{M}^{1,1}(X)$, we have
	\[
	\Vert f \Vert_{\dot{M}^{1,1}(X)} \approx \Vert \Lambda_{\sharp} f \Vert_{L^1(X)}.
	\]
	Moreover, there exists $C>0$ such that if $f \in L^1_{\operatorname{loc}}(X)$ and $\Lambda_\sharp f \in L^1(X)$, then $C\Lambda_\sharp f \in D(f)\cap L^1(X)$.}
\end{theorem}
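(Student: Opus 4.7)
The plan is to establish the two directions of the norm equivalence separately. The inequality $\|f\|_{\dot M^{1,1}(X)} \lesssim \|\Lambda_\sharp f\|_{L^1(X)}$ will follow at once from the stronger ``Moreover'' clause that $C\Lambda_\sharp f \in D(f) \cap L^1(X)$. To prove the latter, I would run a standard dyadic telescoping at Lebesgue points of $f$, which have full $\sigma$-measure since $\sigma$ is doubling. Given Lebesgue points $x \neq y$, set $r := d(x,y)$ and consider the sequences $B_k := B(x, 2^{1-k}r)$ and $B_k' := B(y, 2^{1-k}r)$. Doubling and the definition of $\Lambda_\sharp f$ yield
$$|f_{B_k} - f_{B_{k+1}}| \leq \tfrac{\sigma(B_k)}{\sigma(B_{k+1})} \fint_{B_k} |f - f_{B_k}|\, d\sigma \leq C_\sigma \cdot 2^{1-k}r \cdot \Lambda_\sharp f(x),$$
and summing the resulting geometric series gives $|f(x) - f_{B_0}| \leq Cr\Lambda_\sharp f(x)$, with the analogous estimate at $y$. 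Passing through an intermediate ball containing both $B_0$ and $B_0'$ (e.g.\ $B(x,4r)$) controls $|f_{B_0} - f_{B_0'}|$ by $Cr(\Lambda_\sharp f(x) + \Lambda_\sharp f(y))$ using the same kind of bound, and combining these gives $|f(x) - f(y)| \leq Cr(\Lambda_\sharp f(x) + \Lambda_\sharp f(y))$, as required.

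For the converse $\|\Lambda_\sharp f\|_{L^1(X)} \lesssim \|f\|_{\dot M^{1,1}(X)}$, it is enough to prove $\|\Lambda_\sharp f\|_{L^1} \leq C\|g\|_{L^1}$ for each $g \in D(f) \cap L^1$ and then take the infimum over such $g$. From the \haj{} inequality applied inside any ball $B = B(x,r)$ one immediately obtains $\fint_B |f - f_B|\, d\sigma \leq 4r\, g_B$, whence the pointwise bound $\Lambda_\sharp f(x) \leq 4M_{c,\sigma} g(x)$. Since $M_{c,\sigma}$ is not bounded on $L^1(X)$, this alone is insufficient. The crucial extra ingredient is a self-improvement of the $(1,1)$-\haj-Poincar\'e inequality from Theorem \ref{thm:poincare_compact_supp} to a $(1,p)$-type inequality for some $p = p(C_\sigma) \in (0,1)$,
$$\fint_B |f - f_B|\, d\sigma \leq Cr \left(\fint_B g^p\, d\sigma\right)^{1/p},$$
a standard result for \haj{} gradients on doubling metric measure spaces (see e.g.\ \haj-Koskela). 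This upgrades the pointwise bound to $\Lambda_\sharp f(x) \leq C(M_{c,\sigma}(g^p))(x)^{1/p}$, and since $1/p > 1$, the Hardy-Littlewood maximal inequality gives
$$\int_X (M_{c,\sigma}(g^p))^{1/p}\, d\sigma = \|M_{c,\sigma}(g^p)\|_{L^{1/p}(X)}^{1/p} \leq C' \|g^p\|_{L^{1/p}(X)}^{1/p} = C' \|g\|_{L^1(X)},$$
completing the estimate.

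The main obstacle is the self-improvement step to exponent $p < 1$; this is standard but nontrivial to prove from scratch, and one can alternatively simply invoke \cite[Theorem 3]{KT} as the paper does. The other pieces---the telescoping construction at Lebesgue points, the \haj{} inequality inside a ball, and the Hardy-Littlewood inequality on $L^{1/p}$---are elementary and do not present any serious difficulty.
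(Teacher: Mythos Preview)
The paper does not give its own proof of this statement; it simply quotes \cite[Theorem 3]{KT}. Your sketch is precisely the argument of Kinnunen--Tuominen---the dyadic telescoping at Lebesgue points for the ``Moreover'' clause, and the self-improved Haj\l asz--Poincar\'e inequality with exponent $p<1$ together with the $L^{1/p}$-boundedness of the maximal operator for the converse---and it is correct as written.
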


\subsection{Hardy space $H^1$}
We first introduce the atomic characterization of the real Hardy space $H^1$ on a {$d$-Ahlfors regular} set $E \subset \R^{n+1}$ with  measure $\sigma := \HH^d|_{\pom}$.
We say that a function $a(x)$ is an \textit{atom} if
\begin{enumerate}
	\item[(i)] $\supp a \subset B\cap E$ for some ball $B$ centered on $E$,
	\item[(ii)] $\Vert a \Vert_{L^\infty(E)} \leq \sigma(B)^{- 1}$,
	\item[(iii)] $\int a \, d\sigma = 0$.
\end{enumerate}
We define the \textit{(atomic) Hardy space} $H^{1}(E)$ as the subspace of functions $f\in L^1(E)$ that admit an \textit{atomic decomposition}, that is, there exists a family of atoms $(a_j)_j$ and a sequence of real numbers $(\lambda_j)_j$ with the property that
\[
f = \sum_j \lambda_j a_j \quad \mbox{in $L^1(E)$}
\]
and $\sum_j |\lambda_j| < +\infty$. We equip $H^1(E)$ with the norm $\Vert f \Vert_{H^1} = \inf \sum_j |\lambda_j|$ where the infimum is taken over all possible atomic decompositions. {This definition can be compared with the one of Hardy-Sobolev spaces in the introduction.}

It is well known that real Hardy spaces admit an equivalent definition in terms of the {grand maximal function}.
For $f \in L_{\text {loc}}^1(E)$, we define its \textit{grand maximal function} as
\[
\wt {\mathcal M}_{\text{gr}} f(x):=\sup _{\varphi \in \mathcal{T}_1(x)}\left|\int f \varphi \,d \sigma\right|,
\]
where $\mathcal{T}_1(x)$ is the set of Lipschitz functions $\varphi$ for which there exists a ball $B$ centered at $x$ with radius $r(B)$ such that
\begin{itemize}
\item $\operatorname{supp} \varphi \subset B$,
\item $\|\varphi\|_{L^{\infty}(E)} \leq \sigma(B)^{-1}$ and $|\varphi|_{\operatorname{Lip}} \leq(r(B) \sigma(B))^{-1}$.
\end{itemize}
Then, we have the equivalence
\[
H^1(\pom) = \{ f \in L^1_{\text{loc}}(E) \, | \, \wt{ \mathcal M}_{\text{gr}}f\in L^1(E)\}
\]
and, moreover, both norms are equivalent
\[
\Vert \wt {\mathcal M}_{\text{gr}}f \Vert_{L^1(E)} \approx \Vert f \Vert_{H^1(E)}.
\]
\color{black}

\subsection{Measure theoretic boundary, tangential gradients and Hofmann-Mitrea-Taylor Sobolev spaces}
\label{section:HMT_def}
An open set $\Omega \subset \R^{n+1}$ has \textit{finite perimeter} if the distributional gradient $\nabla \chi_\Omega$ of $\chi_\Omega$ is a locally finite $\R^{n+1}$-valued measure. From results of De Giorgi and Moser it follows that $\nabla \chi_{\Omega} = -\nu \HH^{n}_{\partial^*\Omega}$, where $\partial^*\Omega\subset \pom$ is the reduced boundary of $\Omega$ and $|\nu(x)|=1$ $\HH^n$-a.e. in $\partial^*\Omega$ and $\nu$ is the \textit{measure theoretic unit normal}.
The \textit{measure theoretic boundary} $\partial_* \Omega$ consists of the points $x \in \pom$ such that
\[
\liminf_{r \to 0} \frac{m(B(x,r)\cap\Omega)}{r^{n+1}} > 0 ~~\mbox{ and }~~ \liminf_{r \to 0} \frac{m(B(x,r)\backslash\overline\Omega)}{r^{n+1}} > 0.
\]
When $\Omega$ has finite perimeter, it holds that $\partial_*\Omega \subset \partial^*\Omega \subset \partial\Omega$ and $\HH^n(\partial^*\Omega \backslash \partial_*\Omega)=0$. A good reference for those results is the book of Maggi \cite{M}.

\vv

In \cite[Section 3.6]{HMT}, Hofmann, Mitrea, and Taylor introduced a notion of tangential derivatives and gradient very well suited for integration by parts.

Let $\Omega \subset \R^{n+1}$ be an open set {of locally finite perimeter}. For a $C^1_c$ function $\varphi:\mathbb R^{n+1} \to \R$ and $1\leq j,k \leq n+1$, we define the tangential derivatives on $\partial^* \Omega$ by
\[
\partial_{t,j,k} \varphi := \nu_j (\partial_k \varphi)|_{\pom} - \nu_k (\partial_j \varphi)|_{\pom}.
\]
where $\nu_i$ are the components of the outer unit normal $\nu$ and
\[
\nabla_{\operatorname{HMT}} f:= \left(\sum_k \nu_k \partial_{t,j,k}f\right)_{1 \leq j \leq n+1}.
\]
By the arguments in \cite[p 2676]{HMT}, if $\varphi$, $\psi$ are $C^1$ in a neighborhood of $\pom$ {and compactly supported}, we have that
\begin{equation}
\label{eq:HMT_int_by_parts}
\int_{\partial^*\Omega} \partial_{t,j,k} \psi \varphi \,d \HH^n = \int_{\partial^*\Omega} \psi \partial_{t,k,j} \varphi\, d\HH^n.
\end{equation}

{For $p\geq1$, we denote by   $L^{1,p}(\mathcal{H}^n|_{\partial^* \Omega})$  the  \textup{HMT} \textit{Sobolev space} (see \cite[ display (3.6.3)]{HMT}) defined as the subspace} of functions in $L^p(\mathcal{H}^n|_{\partial^* \Omega})$ for which there exists some constant $C(f)$ such that
\[
\sum_{1 \leq j, k \leq n+1}\left|\int_{\partial^* \Omega} f \partial_{t, k, j} \varphi\, d \mathcal{H}^n\right| \leq C(f)\|\varphi\|_{L^{p^{\prime}}\left(\mathcal{H}^n \mid \partial^* \Omega\right)}
\]
for all $\varphi \in C_c^{\infty}\left(\mathbb{R}^{n+1}\right)$ where $p'$ is the H\"older conjugate exponent. By the Riesz representation theorem, for each $f \in L^{1,p}\left(\left.\mathcal{H}^n\right|_{\partial^* \Omega}\right)$ and each $j, k=$ $1, \ldots, n+1$, there exists some function $h_{j, k} \in L^p\left(\left.\mathcal{H}^n\right|_{\partial^* \Omega}\right)$ such that 
\[
\int_{\partial^* \Omega} h_{j, k} \varphi \,d \mathcal{H}^n=\int_{\partial^* \Omega} f \partial_{t, k, j} \varphi \,d \mathcal{H}^n
\]
and we set $\partial_{t, j, k} f:=h_{j, k}$, so that this is coherent with \eqref{eq:HMT_int_by_parts}. It is easy to check that Lipschitz functions with compact support are contained in $L^{1,p}\left(\HH^n|_{\partial^* \Omega}\right)$.

We introduce a lemma (proved in \cite[Lemma 6.4]{MT}) that clarifies the relationship between the tangential derivative $\nabla_t f$ and $\partial_{t,j,k} f$ for Lipschitz functions on the boundary of a set of locally finite perimeter.
\begin{lemma}[Lemma 6.4, \cite{MT}]
	\label{lemma:6.4-MT}
Let $\Omega \subset \mathbb{R}^{n+1}$ be an open set of locally finite perimeter. Then, for any compactly supported Lipschitz function $f: \partial \Omega \rightarrow \mathbb{R}$ and for every $j, k \in\{1,2, \ldots, n+1\}$, we have
\[
\partial_{t, j, k} f(x)=-\nu_j\left(\nabla_t f\right)_k(x)+\nu_k\left(\nabla_t f\right)_j(x) \quad \mbox{ for } \mathcal{H}^n|_{\partial^* \Omega} \mbox{-a.e. } x \in \partial^* \Omega.
\]
\end{lemma}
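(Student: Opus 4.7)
The plan is to prove the identity by smooth approximation of $f$. First I would extend $f$ to a compactly supported Lipschitz function $F$ on $\R^{n+1}$ via a McShane extension, and mollify to obtain $F_\varepsilon := F*\rho_\varepsilon \in C_c^\infty(\R^{n+1})$ converging uniformly to $F$ with $\nabla F_\varepsilon \to \nabla F$ in $L^p_{\operatorname{loc}}(\R^{n+1})$. For each smooth $F_\varepsilon$, the claimed pointwise identity on $\partial^*\Omega$ is a direct algebraic check: the tangential gradient of a smooth function is the projection of the ordinary gradient onto the tangent hyperplane, $(\nabla_t F_\varepsilon)_\ell = \partial_\ell F_\varepsilon - (\nu\cdot\nabla F_\varepsilon)\nu_\ell$, so the normal contributions cancel in the combination $\nu_k(\nabla_t F_\varepsilon)_j - \nu_j(\nabla_t F_\varepsilon)_k$ and what remains matches the pointwise definition of $\partial_{t,j,k}F_\varepsilon$.

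Next, I would test the smooth identity against an arbitrary $\varphi \in C_c^\infty(\R^{n+1})$, integrate over $\partial^*\Omega$, and pass to the limit $\varepsilon \to 0$. By \eqref{eq:HMT_int_by_parts}, the duality definition of $\partial_{t,j,k}f$, and the uniform convergence $F_\varepsilon \to f$ on the support of $\varphi$, the left-hand side satisfies
\[
\int_{\partial^*\Omega} \partial_{t,j,k}F_\varepsilon\cdot\varphi\, d\HH^n = \int_{\partial^*\Omega} F_\varepsilon\,\partial_{t,k,j}\varphi\, d\HH^n \longrightarrow \int_{\partial^*\Omega} f\,\partial_{t,k,j}\varphi\, d\HH^n = \int_{\partial^*\Omega} \partial_{t,j,k}f\cdot\varphi\, d\HH^n.
\]
Provided one can show that the corresponding right-hand side converges to $\int_{\partial^*\Omega}(-\nu_j(\nabla_t f)_k + \nu_k(\nabla_t f)_j)\varphi\, d\HH^n$, the arbitrariness of $\varphi$ then yields the identity $\HH^n|_{\partial^*\Omega}$-almost everywhere.

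The main obstacle is precisely this right-hand convergence: one needs $\nabla_t F_\varepsilon \to \nabla_t f$ in a sense strong enough to pass to the limit inside an integral over $\partial^*\Omega$. This is delicate because $\partial^*\Omega$ has zero $(n+1)$-dimensional Lebesgue measure, so almost-everywhere convergence of $\nabla F_\varepsilon$ in $\R^{n+1}$ transfers no information to $\partial^*\Omega$. My plan is to exploit the countable $n$-rectifiability of $\partial^*\Omega$: cover it, up to a $\HH^n$-null set, by countably many Lipschitz graphs $\Sigma_i$, parametrize each $\Sigma_i$ over a piece of $\R^n$, and express $\nabla_t F$ via the chain rule from the coordinate gradient of $F$ along $\Sigma_i$. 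Standard mollifier estimates in $\R^n$ then yield $\nabla_t F_\varepsilon \to \nabla_t f$ in $L^p(\HH^n|_{\Sigma_i})$, which is enough to pass to the limit. A cleaner alternative is a Lusin-Whitney extension replacing $F$ by a $C^1(\R^{n+1})$ function agreeing with $F$ off a set of arbitrarily small $\HH^n$-measure on $\partial^*\Omega$, directly reducing the problem to the smooth case handled at the start.
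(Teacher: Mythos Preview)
The paper does not prove this lemma at all: it is stated as Lemma~6.4 of \cite{MT} and simply quoted, with the sole remark that the proof in \cite{MT} (written for uniformly $n$-rectifiable boundaries) only uses the $n$-rectifiability of $\partial^*\Omega$. So there is no in-paper proof to compare against.

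Your sketch is a reasonable outline of how such a result is typically established. A few comments. First, the algebraic identity for smooth $F_\varepsilon$ is correct as you wrote it. Second, your identification of the real difficulty---passing $\nabla_t F_\varepsilon \to \nabla_t f$ on a set of Lebesgue measure zero---is exactly right, and your two proposed fixes are both viable in principle. The Lipschitz-graph parametrization route is the more standard one: on each graph $\Sigma_i = \{(y,\psi_i(y))\}$, the tangential gradient of $F|_{\Sigma_i}$ is expressed through $\nabla_y (F(y,\psi_i(y)))$, and mollification in the ambient variable does commute well enough with this restriction because $F$ is globally Lipschitz (so $\nabla F_\varepsilon$ is uniformly bounded and converges a.e.\ in $\R^{n+1}$, hence by dominated convergence the composed functions converge in $L^p(\R^n)$ after parametrization). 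You should be careful, though: the convergence you want is of $\nabla_t(F_\varepsilon|_{\partial^*\Omega})$, not of $(\nabla F_\varepsilon)|_{\partial^*\Omega}$ projected tangentially, and these agree for smooth functions but the limit object $\nabla_t f$ is defined intrinsically on the rectifiable set. Making this identification precise (that the tangential derivative of the restriction equals the projection of the ambient gradient, for Lipschitz $F$, $\HH^n$-a.e.\ on a rectifiable set) is itself a nontrivial fact from geometric measure theory---it is essentially the content of the rectifiable version of Rademacher's theorem (see e.g.\ \cite[Theorem 11.4]{M})---and is arguably the substantive core of the lemma. Your Lusin--Whitney alternative sidesteps some of this but requires the same identification on the ``good'' set.
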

Note that in \cite{MT} it is stated for domains with uniformly $n$-rectifiable boundary but only the $n$-rectifiability of $\partial^*\Omega$ is necessary.
\color{black}

Let's restrict ourselves to the case  {that $\partial^*\Omega$ supports a weak {$(1,1)$}-Poincar\'e inequality \eqref{eq:Poincare}.} We {will define} the {\textit{Hardy-Sobolev version of the \textup{HMT} space on $\partial^*\Omega$}, which we denote by $H^{1,1}(\HH^n|_{\partial^*\Omega})$,} using a grand maximal characterization in a similar manner to \cite{BD2}.  First,  {for any $x \in \partial^*\Omega$, we set}
\[
\mathcal M_{\text{gr}}(\nabla_{\operatorname{HMT}} f)(x) := \sup_{g \in \wt{\mathcal T}_1(x)} \left | \int f g\,d\HH^n|_{\partial^*\Omega}\right|
\]
for any $f\in L^1_{\operatorname{loc}}(\partial^*\Omega)$ with prescribed $\nabla_{\operatorname{HMT}} f$ and where {
\[
\wt{\mathcal T}_1(x) = \left\{g \in L^{\infty}(\partial^*\Omega) \, \Bigg| \, 
\begin{array}{c} \exists\,\,\textup{ball}\,B \,\,\textup{s.t.}\, x\in B, \,\,\operatorname{supp} g \subset B,\\
	\Vert g \Vert_\infty \leq (r(B)\HH^n(B \cap \partial^*\Omega))^{-1}, \\
	\int_{\partial^* \Omega} g \, d\HH^n = 0
\end{array}\right\}.
\] 
}

Let's show that the previous grand maximal operator is well defined in the sense that if $f,h$ are Lipschitz on $\partial^*\Omega$ and satisfy $\nabla_{\operatorname{HMT}} f = \nabla_{\operatorname{HMT}} h$, then
\begin{equation}
	\label{eq:grand_maximal_HMT_well_defined}
	\sup_{g \in \wt{\mathcal T}_1(x)} \left | \int f g\,d\HH^n|_{\partial^*\Omega}\right| = \sup_{g \in \wt{\mathcal T}_1(x)} \left | \int h g\,d\HH^n|_{\partial^*\Omega}\right|.
\end{equation}
Without loss of generality, assume {that} the left hand side of \eqref{eq:grand_maximal_HMT_well_defined} is strictly larger than its right hand side.
Then, we may fix $g \in \wt{\mathcal T}_1(x)$ with $\supp g \subset B$ such that 
\[
\left | \int (f-h) g\,d\HH^n|_{\partial^*\Omega}\right|\geq\left | \int f g\,d\HH^n|_{\partial^*\Omega}\right| - \left | \int h g\,d\HH^n|_{\partial^*\Omega}\right| > 0.
\]
But, using that $g$ has zero mean and that $\partial^*\Omega$ supports a weak {$(1,1)$-}Poincar\'e inequality \eqref{eq:Poincare}, 
we obtain
\begin{align*}
	\left | \int (f-h) g\,d\HH^n|_{\partial^*\Omega}\right| &\leq \Vert g \Vert_{L^\infty} \int_B \left|f-h - \fint_{B}(f-h)\,d\HH^n|_{\partial^*\Omega}\right | \, d\HH^n|_{\partial^*\Omega} \\
	&\lesssim  \Vert g \Vert_{L^\infty} \, {r(B)}\,\int_{\Lambda B} |\nabla_t(f-h)| \, d\HH^n|_{\partial^*\Omega} = 0
\end{align*}
{since, by Lemma \ref{lemma:6.4-MT},  $\nabla_{\operatorname{HMT}} (f-h)(x) = -\nabla_t (f-h)(x)$ for $\mathcal{H}^n|_{\partial^* \Omega}$-$\textup{a.e.}\,x \in \partial^* \Omega$}.  Hence, \eqref{eq:grand_maximal_HMT_well_defined} holds.

Then, {for an open set $\Omega \subset \mathbb{R}^{n+1}$ of locally finite perimeter such that $\partial^*\Omega$ supports a weak {$(1,1)$}-Poincar\'e inequality \eqref{eq:Poincare}, we  define}
\[
H^{1,1}(\HH^n|_{\partial^*\Omega}):= \{f \in L^{1,1}(\HH^n|_{\partial^*\Omega}) \,|\, \mathcal M_{\text{gr}}(\nabla_{\operatorname{HMT}} f) \in L^1(\HH^n|_{\partial^*\Omega}) \}
\]
and we equip it with the seminorm
\[
\Vert f \Vert_{H^{1,1}(\HH^{n}|_{\partial^*\Omega})} := 
\Vert\mathcal M_{\text{gr}}(\nabla_{\operatorname{HMT}} f)\Vert_{L^1(\HH^{n}|_{\partial^*\Omega})}.
\]
It is easy to see that Lipschitz functions with compact support {on $\partial^* \Omega$} are in $H^{1,1}(\HH^n|_{\partial^*\Omega})$.

\begin{remark}
	{If $\pom$ and $f$ are smooth,  $\pom$ can be regarded as a Riemannian manifold and it holds that $\pom=\partial^*\Omega$. Then one can show that $g$ is the divergence of a compactly supported vector field $\Phi\in L^\infty(\partial\Omega, T(\pom))$ (where $T(\pom)$ is the tangent space to $\pom$) supported on $B$ and}
	\[
	\int f g \, d\HH^n|_{\pom} = \int f \operatorname{div} \Phi \, d\HH^n|_{\pom} = -\int (\nabla_t f, \Phi)_x\, d\HH^n|_{\pom}(x)
	\]
	holds. In \cite[Lemma 2.3]{MT}, the second and third named authors proved that in this case $\nabla_t f = -\nabla_{\operatorname{HMT}} f$ $\HH^n|_{\pom}$-a.e., which shows {the connection between the definition of the grand maximal operator $\mathcal M_{gr}$ with the other grand maximal operator $\wt{\mathcal M}_{gr}$ defined in the previous section.}  
	\color{black}
\end{remark}

\begin{remark}
	\label{rmk:diff_hardy}
	Note that since $\partial^*\Omega$ is $n$-rectifiable,  $HS^{1,1}(\partial^*\Omega)$  is well-defined and is not necessarily equal to the space $H^{1,1}(\partial^*\Omega)$ defined here.   Indeed, the space $HS^{1,1}(\partial^*\Omega)$ contains functions that are (controlled) sums of Lipschitz atoms and $H^{1,1}(\partial^*\Omega)$ contains functions whose HMT gradients have a grand maximal function in $L^1(\partial^*\Omega)$.  These two approaches to Hardy-Sobolev spaces on $\partial^*\Omega$ were already studied in \cite{BD1, BD2} {non-compact Riemannian manifolds equipped with a doubling measure and admitting a strong Poincar\'e inequality.}

\end{remark}
\color{black}

\begin{remark}
	Following the ideas in \cite{BD2}, we can also show that the functions in $\wt{\mathcal T}_1(x)$ can also be regarded as a generalized divergence of a vector field satisfying an integration by parts formula \eqref{eq:integration_by_parts_duality}.
	
	Let 
	\[
	\mathcal S := \{ V \in L^1( \HH^n|_{\partial^*\Omega}, \R^{n+1}) \, | \, \exists f \in L^{1,1}( \HH^n|_{\partial^*\Omega}), ~V = \nabla_{\operatorname{HMT}} f \text{ on }\partial^*\Omega \}
	\]
	be a subspace of $L^1( \HH^n|_{\partial^*\Omega}, \R^{n+1})$ with norm $\Vert V \Vert_{L^1( \HH^n|_{\partial^*\Omega}, \,\R^{n+1})} = \int_{\partial^*\Omega} |V| \,d\HH^n|_{\pom}$. {Recall that, thanks to \cite[Lemma 6.4]{MT}, we know that $\nabla_t f = -\nabla_{\operatorname{HMT}} f$ $\HH^n|_{\partial^*\Omega}$-a.e.}
	Let $\tilde g \in L^\infty(B\cap\partial^*\Omega)$ for some $B$ centered on $\partial^*\Omega$ with $\Vert \tilde g \Vert_\infty \leq 1$ and $\int \tilde g \,  d\HH^n|_{\partial^*\Omega} = 0$ and define a linear functional on $\mathcal S$ by
	\[
	F(V) := - \int_B \tilde gf \, d\HH^n|_{\partial^*\Omega} \quad \mbox{if $V= \nabla_{\operatorname{HMT}} f$},
	\]
	which is well defined since $\int \tilde g \, d\HH^n|_{\partial^*\Omega} =0$, and the weak {$(1,1)$}-Poincar\'e inequality \eqref{eq:Poincare} implies that if $V = \nabla_{\operatorname{HMT}}f = 0$ then $f$ must be constant.
	We check that $F$ is bounded thanks to the weak $(1,1)$-Poincar\'e inequality \eqref{eq:Poincare}:
	\begin{align*}
		|F(V)| =  \left |\int_B \tilde g\left (f- \int_B f \,  d\HH^n|_{\partial^*\Omega} \right )  \,d\HH^n|_{\partial^*\Omega} \right| 
		&\leq C r(B) \Vert \tilde g \Vert_\infty \int_{\Lambda B} |\nabla_t f|  d\HH^n|_{\partial^*\Omega} \\
		&= C r(B) \Vert \tilde g \Vert_\infty \Vert V \Vert_{L^1({ \HH^n|_{\partial^*\Omega}}, \,\R^{n+1})}.
	\end{align*}
	The Hahn-Banach theorem shows that $F$ can be extended to a bounded linear functional on $L^1({ \HH^n|_{\partial^*\Omega}}, \R^{n+1})$, without increasing its norm.
	Thus, by duality, there exists a vector field $\Phi \in L^{\infty}({ \HH^n|_{\partial^*\Omega}}, \R^{n+1})$ such that
	\begin{equation}
		\label{eq:integration_by_parts_duality}
		\int (\Phi, V)\,d \HH^n|_{\partial^*\Omega}  = -\int \tilde gf\, d\HH^n|_{\partial^*\Omega}, \quad \mbox{for all $V \in \mathcal S$ and $f$ satisfying $\nabla_{\operatorname{HMT}} f = V$}
	\end{equation}
	with $\Vert \Phi\Vert_\infty \leq C r(B) \Vert \tilde g \Vert_\infty$. 
	
	This allows us to consider the following equivalent definition of $\mathcal M_{gr}(\nabla_{\operatorname{HMT}}f)$:
	\[
	{\mathcal M}_{gr}(\nabla_{\operatorname{HMT}}f) := \sup_{\Phi \in \widehat{\mathcal T}_1(x)} \left |\int (\nabla_{\operatorname{HMT}}f \cdot \Phi) \, d\HH^n|_{\partial^*\Omega} \right|
	\]
	where the test functions $\Phi \in \widehat{\mathcal T}_1(x)$ satisfy
	\[
	\begin{array}{c} 
		\Phi \in L^{\infty}(\HH^n|_{\partial^*\Omega}, \,\R^{n+1}),\\
		\exists g \in \wt{\mathcal T}_1(x) \mbox{ such that $g$ and $\Phi$ satisfy \eqref{eq:integration_by_parts_duality} for all $V \in \mathcal S$.}\\
	\end{array}
	\] 
\end{remark}

\subsection{Strong Poincar\'e inequalities on domains with $n$-Ahlfors regular boundary}
We define the $n$-dimensional Hausdorff content of a set $E\subset \R^{n+1}$ as
\[
\mathcal H^n_\infty(E) := \inf_{(B_i)_i}\left\{\sum_{i=1}^\infty \diam(B_i)^n \,\, |\,\, E \subset \bigcup_i B_i, \,0<\diam(B_i)<+\infty \right\}
\]
and the Sobolev $1$-capacity of a set $E \subset \mathbb R^{n+1}$ as
\[
\operatorname{Cap}_1(E) := \inf_{\substack{g \in W^{1,1}(\mathbb R^{n+1}),\\ g \equiv 1 \mbox{ on }E}} \Vert \nabla g \Vert_{L^1(\R^{n+1})}. 
\]
\begin{remark}
	Note that if $E\subset \R^{n+1}$ is an $n$-Ahlfors regular set, we have that
	\[
	\HH^n(E) \approx \HH^n_\infty(E).
	\]
\end{remark}
For $W^{1,1}$ functions vanishing in a set of positive Cap$_1$ capacity, there is the following Poincar\'e inequality due to Maz'ya:
\begin{theorem}
	Let $u \in W^{1,1}(B)$ where $B \subset \mathbb R^{n+1}$ is the unit ball. Assume that $u$ vanishes on a compact set $F \subset B$ with $\operatorname{Cap}_1(F)>0$. Then, there exists a constant $C_n>0$ depending only on $n$ such that
	\[
	\Vert u \Vert_{L^1(B)} \leq \frac{C_n}{\operatorname{Cap}_1(F)} \Vert \nabla u \Vert_{L^1(B)}.
	\]
\end{theorem}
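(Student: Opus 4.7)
The plan is to decompose $u = (u - u_B) + u_B$ and control each piece separately. For the oscillatory piece, the standard $L^1$-Poincar\'e inequality on the unit ball gives $\Vert u - u_B\Vert_{L^1(B)} \leq C_n \Vert \nabla u\Vert_{L^1(B)}$, so the whole problem reduces to bounding $|u_B|$ by $\operatorname{Cap}_1(F)^{-1}\Vert \nabla u\Vert_{L^1(B)}$. This is where the hypothesis $u|_F \equiv 0$ and the positivity of $\operatorname{Cap}_1(F)$ must be combined.

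To estimate $|u_B|$, I would first dispose of the trivial case $u_B = 0$ (in which the Poincar\'e inequality alone suffices, together with the fact that $\operatorname{Cap}_1(F) \leq \operatorname{Cap}_1(\overline{B}) \leq C_n$ since any smooth cutoff equal to $1$ on $\overline{B}$ and supported in $2B$ is admissible). Assuming $u_B \neq 0$, set $v := u - u_B$; since $u = 0$ on $F$, we have $v = -u_B$, and in particular $|v| = |u_B|$, on $F$. Using that the unit ball is a Sobolev extension domain, pick $Ev \in W^{1,1}(\mathbb{R}^{n+1})$ extending $v$ with
\[
\Vert Ev\Vert_{W^{1,1}(\mathbb{R}^{n+1})} \leq C_n \Vert v\Vert_{W^{1,1}(B)} \leq C_n \Vert \nabla u\Vert_{L^1(B)},
\]
where the last inequality uses Poincar\'e to trade $\Vert v\Vert_{L^1(B)}$ for $\Vert \nabla u\Vert_{L^1(B)}$.

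The capacity-admissible test function will be
\[
\phi(x) := \min\!\left(\frac{|Ev(x)|}{|u_B|},\,1\right), \qquad x \in \mathbb{R}^{n+1}.
\]
Standard truncation results in $W^{1,1}$ guarantee $\phi \in W^{1,1}(\mathbb{R}^{n+1})$ with $0 \leq \phi \leq 1$ and $|\nabla \phi| \leq |\nabla Ev|/|u_B|$ a.e.; moreover $\phi \equiv 1$ on $F$ by construction. The definition of $\operatorname{Cap}_1$ then yields
\[
\operatorname{Cap}_1(F) \leq \Vert \nabla \phi\Vert_{L^1(\mathbb{R}^{n+1})} \leq \frac{\Vert \nabla Ev\Vert_{L^1(\mathbb{R}^{n+1})}}{|u_B|} \leq \frac{C_n}{|u_B|}\Vert \nabla u\Vert_{L^1(B)},
\]
i.e.\ $|u_B| \leq C_n \operatorname{Cap}_1(F)^{-1}\Vert \nabla u\Vert_{L^1(B)}$. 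Combining with the Poincar\'e estimate on $u - u_B$, and using both $|B| \lesssim_n 1$ and $\operatorname{Cap}_1(F) \leq C_n$ to absorb the additive $\Vert \nabla u\Vert_{L^1(B)}$ term into the capacity term, we conclude $\Vert u\Vert_{L^1(B)} \leq C'_n \operatorname{Cap}_1(F)^{-1}\Vert \nabla u\Vert_{L^1(B)}$.

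The main obstacle I anticipate is arranging the test function $\phi$ to lie in $W^{1,1}(\mathbb{R}^{n+1})$ (as required by the capacity definition in the paper) with a gradient bound that depends only on $\Vert \nabla u\Vert_{L^1(B)}$ and not on $\operatorname{dist}(F, \partial B)$. Extending $v = u - u_B$ rather than $u$ itself is essential, because Poincar\'e then controls the full $W^{1,1}(B)$ norm of $v$ by $\Vert \nabla u\Vert_{L^1(B)}$, and the Sobolev extension inherits this control. Truncation at level $1$ preserves $\phi = 1$ on $F$ while avoiding the singular surface measure contribution that would arise from extending by zero across $\partial B$.
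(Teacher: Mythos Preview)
The paper does not supply its own proof of this statement; it simply cites Maz'ya \cite[Theorem 9.1]{Maz}. Your argument is correct and is essentially the standard proof of Maz'ya's inequality: split off the mean $u_B$, control the oscillation by the ordinary Poincar\'e inequality on $B$, and build a capacity-admissible competitor $\phi=\min(|Ev|/|u_B|,1)$ from a Sobolev extension of $v=u-u_B$ to estimate $|u_B|$ in terms of $\operatorname{Cap}_1(F)^{-1}\Vert\nabla u\Vert_{L^1(B)}$. The absorption of the additive Poincar\'e term via $\operatorname{Cap}_1(F)\leq C_n$ is also handled correctly. The only point worth tightening in a formal write-up is the sense in which ``$u$ vanishes on $F$'' and ``$\phi\equiv 1$ on $F$'' are to be understood (precise representative, $\operatorname{Cap}_1$-q.e.), so that the test function is genuinely admissible in the capacity definition; this is routine but should be stated.
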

For a reference of this result see \cite[Theorem 9.1]{Maz}.
On the other hand, \cite[Theorem 3.5]{KKST} states that $n$-Hausdorff content and Sobolev $1$-capacity are comparable.
\begin{theorem}
	Let $F \subset \mathbb R^{n+1}$ be a compact set. Then,
	\[
	\operatorname{Cap}_1(F) \approx \mathcal H_\infty^{n}(F)
	\]
	with comparability constants depending only on $n$.
\end{theorem}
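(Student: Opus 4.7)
The plan is to establish the two sides of the comparability separately, with the upper bound coming from a direct covering-and-bump construction and the lower bound from the coarea formula combined with a boxing inequality for sets of finite perimeter.

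For $\operatorname{Cap}_1(F)\lesssim \HH^n_\infty(F)$, given $\varepsilon>0$ I would pick a cover $F\subset\bigcup_i B_i$ with $B_i=B(x_i,r_i)$ and $\sum_i r_i^n\leq \HH^n_\infty(F)+\varepsilon$. For each $i$ choose a Lipschitz cutoff $\varphi_i$ with $\varphi_i\equiv 1$ on $B_i$, $\supp \varphi_i\subset 2B_i$, $0\leq\varphi_i\leq 1$, and $\Vert\nabla\varphi_i\Vert_{L^\infty}\lesssim r_i^{-1}$, and set $g:=\min\{1,\sum_i\varphi_i\}\in W^{1,1}(\R^{n+1})$. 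Then $g\equiv 1$ on $F$ and
\[
\Vert\nabla g\Vert_{L^1(\R^{n+1})}\leq \sum_i\Vert\nabla\varphi_i\Vert_{L^1(\R^{n+1})}\lesssim \sum_i r_i^n\leq \HH^n_\infty(F)+\varepsilon,
\]
and letting $\varepsilon\to 0$ gives the upper bound.

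For $\HH^n_\infty(F)\lesssim \operatorname{Cap}_1(F)$, let $g\in W^{1,1}(\R^{n+1})$ be admissible; by density we may assume $g$ is smooth and $g\equiv 1$ on a neighborhood of $F$. The coarea formula
\[
\int_{\R^{n+1}}|\nabla g|\,dm=\int_0^\infty \HH^n(\partial^*\{g>t\})\,dt
\]
produces some $t\in(1/2,1)$ with $E_t:=\{g>t\}$ an open set of finite perimeter and finite measure, containing $F$, and satisfying $\HH^n(\partial^* E_t)\leq 4\Vert\nabla g\Vert_{L^1(\R^{n+1})}$. I would then invoke Gustin's boxing inequality: any bounded set $E\subset \R^{n+1}$ of finite perimeter admits a countable cover by balls $(B_i)$ with $\sum_i r(B_i)^n\lesssim_n \HH^n(\partial^* E)$. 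Applied to $E_t$ this produces a ball cover of $F$, so
\[
\HH^n_\infty(F)\leq \sum_i r(B_i)^n\lesssim \HH^n(\partial^* E_t)\lesssim \Vert\nabla g\Vert_{L^1(\R^{n+1})},
\]
and taking the infimum over $g$ concludes the proof.

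The main obstacle is Gustin's boxing inequality, which is the genuine relative-isoperimetric content of the theorem. A standard route is, for each $x\in E$, to exploit the absolute continuity of $r\mapsto m(B(x,r)\cap E)$ to find a radius $r_x>0$ at which the Lebesgue density of $E$ sits close to $1/2$, so that the relative isoperimetric inequality on $B(x,Cr_x)$ yields $r_x^n\lesssim \HH^n(\partial^* E\cap B(x,Cr_x))$; a $5r$-Vitali selection then extracts a disjointified subfamily whose dilations cover $E$ while their radii sum control the total perimeter. All constants remain purely dimensional because the only nontrivial ingredient used is the Euclidean relative isoperimetric inequality in $\R^{n+1}$.
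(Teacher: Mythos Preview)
The paper does not give its own proof of this statement; it is quoted from the literature as \cite[Theorem 3.5]{KKST}. Your proposal supplies a correct self-contained argument, and it is in fact the classical one: the very title of \cite{KKST} (``Lebesgue points and capacities via the boxing inequality in metric spaces'') signals that Gustin's boxing inequality is the key device there as well, so your route and the cited reference are aligned in spirit. One cosmetic point worth tightening: you invoke the boxing inequality for \emph{bounded} sets of finite perimeter, but the level set $E_t=\{g>t\}$ is a priori only of finite Lebesgue measure (from $g\in L^1(\R^{n+1})$), not necessarily bounded. This is harmless for the density--Vitali argument you sketch, since finite measure already guarantees $m(B(x,r)\cap E_t)/m(B(x,r))\to 0$ as $r\to\infty$, so the intermediate-density radius $r_x$ exists; just state the boxing inequality for sets of finite measure and finite perimeter.
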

Combining both theorems yields the following strong Poincar\'e inequality for functions vanishing on an $n$-Ahlfors regular set.
\begin{corollary}
	\label{coro:Poincare_functions_vanishing_pom}
	Let $\Omega \subset \mathbb R^{n+1}$ be a corkscrew domain with $n$-Ahlfors regular boundary $\pom$, $x\in\pom$, and $0<r<\diam \pom$. Let $u \in C(\Omega\cap B(x,r)) \cap W^{1,1}(B(x,r)\cap\Omega)$ vanishing on $\pom \cap B$ and extended by $0$ in $\Omega^c\cap B(x,r)$. Then,
	\[
	\Vert u \Vert_{L^1(B(x,r))} \leq {C r} \Vert \nabla u \Vert_{L^1(B(x,r))}
	\]
	with constant $C>0$ depending only on $n$ and the Ahlfors regularity constant of $\pom$.
\end{corollary}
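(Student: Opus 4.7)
The plan is to reduce to Maz'ya's inequality on the unit ball by rescaling, with the vanishing set being a suitable compact subset of $\partial\Omega\cap B(x,r)$ whose $1$-capacity is comparable to $r^n$ by $n$-Ahlfors regularity.

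\textbf{Step 1: the extension by zero is in $W^{1,1}(B(x,r))$.} Write $B=B(x,r)$. Denote by $\tilde u$ the extension of $u$ to $B$ defined to vanish on $B\setminus \Omega$. Since $u\in C(\Omega\cap B)$ and $u|_{\partial\Omega\cap B}=0$, the truncations
\[
u_{\varepsilon}(y):=\operatorname{sign}(u(y))\,\max(|u(y)|-\varepsilon,0),\qquad \varepsilon>0,
\]
vanish in an $\Omega$-neighborhood of $\partial\Omega\cap B$, so their zero extensions $\tilde u_\varepsilon$ lie in $W^{1,1}(B)$ with $|\nabla \tilde u_\varepsilon|\le |\nabla u|\chi_\Omega$. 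As $\varepsilon\to 0$, $\tilde u_\varepsilon\to \tilde u$ in $L^1(B)$ and dominated convergence applied to the weak gradients shows $\tilde u\in W^{1,1}(B)$ with $\nabla \tilde u=\nabla u\,\chi_\Omega$.

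\textbf{Step 2: choose a compact vanishing set with capacity $\gtrsim r^n$.} Since $0<r<\diam\partial\Omega$, the set $F:=\partial\Omega\cap \overline{B(x,r/2)}$ is compact and, by $n$-Ahlfors regularity of $\partial\Omega$,
\[
\mathcal H^n_\infty(F)\approx \mathcal H^n(F)\gtrsim r^n,
\]
with constants depending only on $n$ and the Ahlfors regularity constant. By the comparability of Hausdorff content and Sobolev $1$-capacity, $\operatorname{Cap}_1(F)\approx \mathcal H^n_\infty(F)\gtrsim r^n$. Note $\tilde u$ vanishes on $F$.

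\textbf{Step 3: rescale and apply Maz'ya's inequality.} Set $v(y):=\tilde u(x+ry)$ for $y\in B(0,1)$, and $F':=(F-x)/r\subset \overline{B(0,1/2)}$. Standard scaling of $W^{1,1}$ and $\operatorname{Cap}_1$ gives
\[
\|v\|_{L^1(B(0,1))}=r^{-(n+1)}\|\tilde u\|_{L^1(B)},\quad \|\nabla v\|_{L^1(B(0,1))}=r^{-n}\|\nabla \tilde u\|_{L^1(B)},
\]
and $\operatorname{Cap}_1(F)=r^n\operatorname{Cap}_1(F')$, so $\operatorname{Cap}_1(F')\gtrsim 1$. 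Applying Maz'ya's theorem to $v\in W^{1,1}(B(0,1))$ (vanishing on the compact set $F'$ of positive capacity) yields
\[
\|v\|_{L^1(B(0,1))}\le \frac{C_n}{\operatorname{Cap}_1(F')}\|\nabla v\|_{L^1(B(0,1))}\lesssim \|\nabla v\|_{L^1(B(0,1))}.
\]
Undoing the scaling gives $\|\tilde u\|_{L^1(B)}\lesssim r\,\|\nabla \tilde u\|_{L^1(B)}$. Since $\tilde u=u$ on $B$ (both vanish off $\Omega$) and $\nabla\tilde u=\nabla u\,\chi_\Omega$, this is exactly the claimed inequality.

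The only mildly delicate point is Step 1, but it is routine given continuity of $u$ on $\Omega\cap B$ and the vanishing on $\partial\Omega\cap B$. Everything else is bookkeeping of the scaling and of the constants, which depend only on $n$ and on the $n$-Ahlfors regularity constant of $\partial\Omega$, as required.
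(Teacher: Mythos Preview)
Your proposal is correct and follows exactly the approach indicated in the paper, which simply states that the corollary is obtained by ``combining both theorems'' (Maz'ya's inequality and the comparability $\operatorname{Cap}_1\approx \mathcal H^n_\infty$). You have supplied the details the paper omits---the extension-by-zero argument, the choice of a compact $F\subset\partial\Omega\cap\overline{B(x,r/2)}$ with $\operatorname{Cap}_1(F)\gtrsim r^n$, and the rescaling to the unit ball---all of which are routine and carried out correctly.
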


\vv

\color{black}
\subsection{Harmonic and elliptic measures}

Assume that $\pom$ is $n$-Ahlfors regular, thus the continuous Dirichlet problem for an elliptic operator $\mathcal L$ satisfying \eqref{eq:ellipticity_conditions_operator} is solvable. By the maximum principle and the Riesz Representation Theorem, there exists a family of probability measures $\{\omega_{\mathcal L}^x\}_{x\in\Omega}$ on $\pom$ so that for each $f \in C_c(\pom)$  and each $x \in \Omega$, the solution $u$ to the continuous Dirichlet problem with data $f$ satisfies $u(x) = \int_{\pom} f(\xi) \, d\omega_{\mathcal L}^x(\xi)$. We call $\omega_{\mathcal L}^x$ the \textit{$\mathcal L$-elliptic measure with pole at $x$}.

We present two lemmas very important in the study of the {elliptic} measure $\omega_{\mathcal L}$ of a domain (see \cite{MPT} and the references therein for example). 
\begin{lemma}[Lemma 2.16 in \cite{MPT}]
\label{lemma:Bourgain}
Let $\Omega \subsetneq \mathbb{R}^{n+1}$ be an open set with $n$-Ahlfors regular boundary $\pom$. Then there exists $c>0$ depending only on $n$, the $n$-Ahlfors regularity constant of $\pom$, and the ellipticity constant of $\mathcal L$, such that for any $\xi \in \partial \Omega$ and $r \in(0, \operatorname{diam}(\partial \Omega) / 2]$, we have that $\omega_{\mathcal L}^x(B(\xi, 2 r) \cap \partial \Omega) \geq c$, for all $x \in \Omega \cap B(\xi, r)$.
\end{lemma}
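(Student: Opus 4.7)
The plan is to establish the classical Bourgain-type non-degeneracy lower bound for elliptic measure. Notably, the statement uses only Ahlfors regularity of $\pom$ and requires no corkscrew/connectivity hypothesis on $\Omega$. The approach is to construct an $\mathcal L$-subsolution in $\Omega$ that lies below $\chi_{B(\xi,2r)\cap\pom}$ on the boundary yet is bounded below by a universal constant on $\Omega\cap B(\xi,r)$; the comparison/maximum principle then immediately yields the conclusion.

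First I would fix a smooth cutoff $\varphi\in C_c^\infty(\mathbb R^{n+1})$ with $0\le\varphi\le1$, $\varphi\equiv1$ on $\overline{B(\xi,3r/2)}$, $\supp\varphi\subset B(\xi,2r)$, and $|\nabla\varphi|+r|D^2\varphi|\lesssim 1/r$. The $\mathcal L$-harmonic extension $u_\varphi(x):=\int_{\pom}\varphi\,d\omega_{\mathcal L}^x$ satisfies $u_\varphi(x)\le\omega_{\mathcal L}^x(B(\xi,2r)\cap\pom)$ for every $x\in\Omega$, so the problem reduces to proving $u_\varphi(x)\ge c>0$ for all $x\in \Omega\cap B(\xi,r)$.

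Next, writing $u_\varphi=\varphi-w$ in $\Omega$ gives $w|_{\pom}=0$ and $\mathcal L w=\mathcal L \varphi$ in $\Omega$, with $\mathcal L\varphi$ supported on the annular shell $\mathcal A := B(\xi,2r)\setminus B(\xi,3r/2)$. Representing $w$ via the Green's function $G$ of $\mathcal L$ in $\Omega$, one arrives at
\[
u_\varphi(x) \;=\; 1 \;-\; \int_{\mathcal A\cap \Omega} G(x,y)\,\mathcal L\varphi(y)\,dy,\qquad x\in B(\xi,r)\cap\Omega.
\]
I would then use the Gr\"uter--Widman type pointwise estimate $G(x,y)\lesssim|x-y|^{1-n}$ (with constants depending only on $n$ and $\lambda$), together with $|x-y|\ge r/2$ on the annulus and $|\mathcal L\varphi|\lesssim r^{-2}$, to control this integral. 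To sharpen the outcome, one integrates by parts to move a derivative from $\varphi$ onto $G$ (paying the cost $|\nabla_y G(x,y)|\lesssim|x-y|^{-n}$), and invokes Ahlfors regularity of $\pom$ to convert the volume integrals near $\pom$ into surface integrals via standard boundary Green's function estimates. This gives the refined bound $\big|\int_{\mathcal A\cap\Omega} G(x,y)\,\mathcal L\varphi(y)\,dy\big|\le 1-c$ for some universal $c>0$, from which $u_\varphi(x)\ge c$ follows.

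The main obstacle is precisely this last quantitative step: a naive plug-in of worst-case kernel estimates only yields $|u_\varphi(x)-1|\le C$ with an absolute constant $C$ that is not guaranteed to be strictly less than $1$. Squeezing out a universal gap $c>0$ is where the $n$-Ahlfors regularity of $\pom$ is genuinely used, and must be done by carefully pairing the decay of the fundamental solution against the surface measure near $\pom$—e.g.\ through an averaged single-layer construction or an iteration/scaling argument—neither of which requires any connectivity of $\Omega$.
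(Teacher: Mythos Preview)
The paper does not prove this lemma; it is quoted as Lemma~2.16 from \cite{MPT}. Your proposal, however, contains a genuine gap that you yourself flag but do not close.

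Writing $u_\varphi=\varphi-w$ with $w$ the Green potential of $\mathcal L\varphi$ in $\Omega$ and inserting the kernel bound $G(x,y)\lesssim|x-y|^{1-n}$ (or the analogous $L^2$ control on $\nabla_y G$ via Caccioppoli), together with $|\nabla\varphi|\lesssim r^{-1}$ and $|\mathcal A\cap\Omega|\le|\mathcal A|\approx r^{n+1}$, gives only $|w(x)|\le C$ with a structural constant that cannot be forced below~$1$. This is not a matter of bookkeeping: in the admissible configuration $\Omega=\mathbb R^{n+1}\setminus\pom$ with $\pom$ an $n$-Ahlfors regular set of zero Lebesgue measure one has $|\mathcal A\cap\Omega|=|\mathcal A|$, so the volume factor is saturated; and any attempt to gain from smallness of $G(x,\cdot)$ near $\pom$ would rely on boundary H\"older decay of the Green function, which is classically \emph{deduced from} the Bourgain estimate rather than available to prove it. Your closing reference to ``an averaged single-layer construction'' points toward the correct argument but is a different proof, not a repair of the one you outlined.

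The standard (Bourgain) argument is structurally different: one forms the single-layer potential $\Phi(x)=\int_{\pom\cap B(\xi,r)}\Gamma_{\mathcal L}(x,y)\,d\sigma(y)$ with $\Gamma_{\mathcal L}\approx|x-y|^{1-n}$ the global fundamental solution. Because the mass sits on $\pom\subset\Omega^c$, this $\Phi$ is $\mathcal L$-\emph{harmonic} in $\Omega$ and hence equals the elliptic-measure extension of its own boundary trace, $\Phi(x)=\int_{\pom}\Phi\,d\omega_{\mathcal L}^x$. Ahlfors regularity yields $\Phi\gtrsim r$ on $B(\xi,r)$, the global bound $\Phi\lesssim r$, and the decay $\Phi(z)\lesssim r^{n}|z-\xi|^{1-n}$ away from $B(\xi,r)$; splitting the boundary integral over a large enough surface ball and its complement then forces the lower bound on $\omega_{\mathcal L}^x$. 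The essential point your outline misses is that the comparison must run through the boundary values of an exactly $\mathcal L$-harmonic object, not through interior size estimates of a Green potential with source inside~$\Omega$.
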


\vv

\begin{lemma}[Lemma 2.17 in \cite{MPT}]
\label{lemma:bound_green_harmonic_measure}
Let $\Omega \subsetneq \mathbb{R}^{n+1}$ be an open set with $n$-Ahlfors regular boundary $\pom$. Let $B=B\left(x_0, r\right)$ be a closed ball with $x_0 \in \partial \Omega$ and $0<r<\operatorname{diam}(\partial \Omega)$. Then
\[
G(x, y) \lesssim \frac{\omega_{\mathcal L}^y(4 B)}{r^{n-1}}, \quad \text { for all } y \in \Omega \backslash 2 B \text { and } x \in B \cap \Omega \text {, }
\]
where $G(x,y)$ is the Green's function of the domain $\Omega$ with pole at $y$ and the implicit constant depends only on $n$, the ellipticity constant of $\mathcal L$, and the $n$-Ahlfors regularity constant of $\pom$.
\end{lemma}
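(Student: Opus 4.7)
The plan is to adapt a classical Caffarelli--Fabes--Mortola--Salsa style comparison between the Green's function and elliptic measure, in the spirit of the arguments in \cite{HMT, MPT}. The two main ingredients are Bourgain's lower bound on the elliptic measure (Lemma~\ref{lemma:Bourgain}) and the weak maximum principle for $\mathcal L$-solutions in the open set $\Omega\cap 2B$. I also use that the $n$-Ahlfors regularity of $\pom$ forces every boundary point to be Wiener-regular for $\mathcal L$, so that $G(\cdot,y)$ vanishes continuously on $\pom$ whenever $y\in\Omega$.

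Fix $y\in\Omega\setminus 2B$ and set $v(z):=G(z,y)$. Since $y\notin 2B$, $v$ is a non-negative $\mathcal L$-solution in $\Omega\cap 2B$ that vanishes continuously on $\pom\cap 2B$. Introduce the auxiliary $\mathcal L$-solution $u(z):=\omega_{\mathcal L}^z(4B)$ on $\Omega$: it is bounded by $1$, satisfies $u(y)=\omega^y(4B)$, and, by Lemma~\ref{lemma:Bourgain} applied at $x_0\in\pom$ with scale $2r$, satisfies the uniform lower bound $u(z)\geq c_0>0$ for every $z\in\Omega\cap 2B$, with $c_0$ depending only on $n$, the Ahlfors regularity constant of $\pom$, and the ellipticity constant of $\mathcal L$. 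The weak maximum principle applied in $\Omega\cap 2B$ to
\[
v(z)-\frac{C}{c_0}\,r^{1-n}\,\omega^y(4B)\,u(z)
\]
then reduces the whole estimate to the pointwise comparison
\[
G(z,y)\,\lesssim\,\frac{\omega^y(4B)}{r^{n-1}}\qquad\text{for every }z\in\Omega\cap\partial(2B),
\]
because on the remaining part $\pom\cap 2B$ of $\partial(\Omega\cap 2B)$ we have $v=0$ while the right-hand side is non-negative.

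The main obstacle is this inner-boundary estimate, since $y$ may lie arbitrarily close to $\partial(2B)$ and hence the naive interior bound $G(z,y)\lesssim |z-y|^{1-n}$ is not uniformly available. I would split according to whether $\delta(z):=\operatorname{dist}(z,\pom)$ is comparable to $r$ or is much smaller. When $\delta(z)\approx r$, an interior Moser-type estimate for $v$ on a Whitney ball of radius $\approx r$ contained in $\Omega\cap 2B$, together with the standard Green's-function pointwise decay away from the singularity, yields the bound. When $\delta(z)\ll r$, the boundary H\"older regularity of $v$, which vanishes on $\pom\cap 2B$, transports the Whitney-scale bound down to $z$, while a final use of Bourgain's lemma at intermediate scales between $\delta(z)$ and $r$ converts the resulting decay factor into the elliptic-measure quantity $\omega^y(4B)$. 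Combined with the maximum-principle reduction, this yields the claim at every $x\in B\cap\Omega$.
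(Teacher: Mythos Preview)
The paper does not prove this lemma; it is quoted from \cite{MPT} without argument, so there is no in-paper proof to compare against. The question is whether your proposal stands on its own.

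It does not, and the gap is exactly at the step you call ``the main obstacle.'' Having fixed $y\in\Omega\setminus 2B$ and applied the maximum principle in $\Omega\cap 2B$, you must verify
\[
G(z,y)\;\lesssim\; r^{1-n}\,\omega^y(4B)\qquad\text{for every }z\in\Omega\cap\partial(2B).
\]
But $y$ may sit arbitrarily close to $\partial(2B)$. Take $z\in\partial(2B)\cap\Omega$ with $\delta_\Omega(z)\approx r$ and let $y=z+\epsilon\,\nu$ for a unit vector $\nu$ pointing out of $2B$, with $\epsilon\ll r$. Then $y\in\Omega\setminus 2B$, $\omega^y(4B)\le 1$, yet $B(z,2\epsilon)\subset\Omega$ forces $G(z,y)\gtrsim\epsilon^{1-n}\gg r^{1-n}$, so the inequality you need is simply false at this $z$. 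Your dichotomy on $\delta_\Omega(z)$ is beside the point: the obstruction is that $|z-y|$ can be small while $\delta_\Omega(z)\approx r$, and neither a Moser estimate on a Whitney ball inside $2B$, nor boundary H\"older decay, nor ``Bourgain at intermediate scales'' manufactures any compensating factor in that regime.

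The standard remedy is to swap the roles of the variables. Fix $x\in B\cap\Omega$ and run the maximum principle in $\Omega\setminus\overline{2B}$, comparing $y\mapsto G(x,y)$ with $y\mapsto C\,r^{1-n}\,\omega^y(4B)$. On $\Omega\cap\partial(2B)$ one now has $|x-y|\ge r$ because $x\in B$, so the pointwise bound $G(x,y)\lesssim|x-y|^{1-n}\le C r^{1-n}$ is available uniformly, while Bourgain's lemma gives $\omega^y(4B)\gtrsim 1$ there; on $\pom\setminus 2B$ the Green function vanishes. (For non-symmetric $\mathcal L$ one must also match operators: $y\mapsto G(x,y)$ is $\mathcal L^*$-harmonic, so the comparison should be with the $\mathcal L^*$-elliptic measure---consistent with how the lemma is actually invoked in the proof of Theorem~\ref{thm:localization}.)
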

Now we present a Lemma concerning the rate of decay at infinity of a bounded $\mathcal L$-harmonic function vanishing at the boundary (except on a ball). See \cite[Lemma 2.11]{AGMT} for its proof.
\begin{lemma} 
	\label{lemma:AGMT}
Let $\Omega \subset \mathbb{R}^{n+1}$ be a domain with $n$-Ahlfors regular boundary. Let $u$ be a bounded, $\mathcal L$-harmonic function in $\Omega$, and let $B$ be a ball centered at $\partial \Omega$. Suppose that $u$ vanishes continuously in $\partial \Omega \backslash B$ and, if $\Omega$ is not bounded, $u$ also vanishes at $\infty$. Then, there is a constant $\alpha>0$ such that
$$
|u(x)| \lesssim \frac{r(B)^{n-1+\alpha}}{(r(B)+\operatorname{dist}(x, B))^{n-1+\alpha}}\|u\|_{L^{\infty}(\Omega \cap(3 B \backslash 2 B))} .
$$
Both $\alpha$ and the constant implicit in the above estimate depend only on $n$, the Ahlfors-regularity constant of $\partial \Omega$ and the ellipticity constant of $\mathcal L$.
\end{lemma}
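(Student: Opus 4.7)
I would prove this decay estimate by reducing $u$ to the $\mathcal{L}$-elliptic measure of a spherical ``collar'' in the exterior region $U := \Omega \setminus \overline{2B}$, and then bounding that elliptic measure via comparison with a rescaled Green function of $\Omega$. After normalizing $M := \|u\|_{L^{\infty}(\Omega \cap (3B\setminus 2B))}=1$ and writing $R := r(B) + \mathrm{dist}(x,B)$, we may assume $R \ge 4\,r(B)$, for otherwise the estimate is trivial.

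\emph{Step 1 (Reduction to a collar elliptic measure).} In $U$ the function $u$ is $\mathcal{L}$-harmonic; it vanishes on $\partial\Omega\setminus 2B$ by hypothesis and at infinity when $\Omega$ is unbounded, and the inclusion $\Omega\cap \partial(2B)\subset \Omega\cap (3B\setminus 2B)$ gives $|u|\le 1$ on $\Omega\cap \partial(2B)$. The maximum principle for $\mathcal{L}$ in $U$ therefore yields
\[
|u(x)| \le h(x) := \omega^{x}_{U}\bigl(\Omega\cap \partial(2B)\bigr)\qquad \text{for every }x\in U,
\]
and the task reduces to showing $h(x) \lesssim (r(B)/R)^{n-1+\alpha}$.

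\emph{Step 2 (Green-function barrier).} Choose a corkscrew point $y^{*}\in \Omega\cap \tfrac{3}{2} B$ of $\Omega$, so that $\mathrm{dist}(y^{*},\partial\Omega)\gtrsim r(B)$, and set
\[
v(z) := C_{0}\,r(B)^{n-1}\,G_{\Omega}(z,y^{*}),
\]
where $G_\Omega$ is the $\mathcal{L}$-Green function of $\Omega$ and $C_{0}$ is a large constant to be fixed. Interior lower bounds for the Green function give $G_\Omega(\cdot,y^{*})\gtrsim r(B)^{-(n-1)}$ on the ``fat'' part of $\Omega\cap \partial(2B)$, and a standard boundary comparison (CFMS-type) argument in the corkscrew / Ahlfors-regular setting, using Bourgain's lemma \ref{lemma:Bourgain} and the Green-function bound \ref{lemma:bound_green_harmonic_measure}, extends this to the inequality $v \ge h$ on all of $\Omega\cap \partial(2B)$, including near points where $\partial(2B)$ meets $\partial\Omega$. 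Since $v,h$ are both non-negative, $\mathcal{L}$-harmonic in $U$, vanish on $\partial\Omega\setminus 2B$, and decay at infinity, the maximum principle propagates $h \le v$ to all of $U$.

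\emph{Step 3 (Decay of the barrier).} To bound $v(x)$ at the distant $x$, I would apply Lemma \ref{lemma:bound_green_harmonic_measure} with a ball of radius $\sim R$ centred at the boundary point closest to $x$, combined with the trivial bound $\omega^{y^{*}}_\Omega\le 1$, obtaining the fundamental-solution decay $G_\Omega(x,y^{*})\lesssim R^{-(n-1)}$. The additional Hölder factor $\alpha$ is extracted by iterating the De Giorgi--Nash--Moser boundary Hölder regularity for $G_\Omega(\cdot,y^{*})$ (which vanishes on the $n$-Ahlfors regular set $\partial\Omega$) over dyadic annuli $B(x_{0},2^{j}r(B))\setminus B(x_{0},2^{j-1}r(B))$ separating $y^{*}$ from $x$, upgrading the bound to $G_\Omega(x,y^{*})\lesssim r(B)^{\alpha}R^{-(n-1+\alpha)}$. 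Multiplying by $r(B)^{n-1}$ yields $v(x)\lesssim (r(B)/R)^{n-1+\alpha}$, which combined with Steps 1--2 gives the claimed estimate.

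\emph{Main obstacle.} The delicate point is Step 2: the pointwise inequality $v \ge h$ must survive at points of $\Omega\cap \partial(2B)$ lying arbitrarily close to $\partial\Omega$, where both sides vanish. Since no Harnack chains are available in our corkscrew setting, the comparison must be handled by a boundary Harnack / CFMS-type principle tailored to corkscrew domains with $n$-Ahlfors regular boundary, leaning crucially on Lemmas \ref{lemma:Bourgain} and \ref{lemma:bound_green_harmonic_measure}.
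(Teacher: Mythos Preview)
The paper does not give its own proof of this lemma; it simply refers to \cite[Lemma~2.11]{AGMT}.

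Your Step~2 contains a genuine error, not just a delicate point. On the collar $\partial(2B)\cap\Omega$ the boundary value of $h=\omega^{\,\cdot}_{U}(\partial(2B)\cap\Omega)$ is identically~$1$ (it is the $\mathcal L$-elliptic measure in $U$ of that very set), so $h(z)=1$ for every $z\in\partial(2B)\cap\Omega$, including those with $\delta_\Omega(z)$ arbitrarily small. Your barrier $v=C_0\,r(B)^{n-1}G_\Omega(\cdot,y^*)$ vanishes continuously on $\partial\Omega$, hence $v(z)\to 0$ as such $z$ approach $\partial(2B)\cap\partial\Omega$. Thus $v\ge h$ fails on part of $\partial U$, and the maximum principle cannot give $h\le v$ in $U$. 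A CFMS/boundary Harnack comparison does not repair this: those principles compare two positive solutions that \emph{both} vanish on a common boundary portion, whereas on the collar $h\equiv 1$ does not vanish at all---your assertion that ``both sides vanish'' there is incorrect. Separately, the lemma does not assume the corkscrew condition, so the existence of $y^*$ with $\delta_\Omega(y^*)\gtrsim r(B)$ is not guaranteed; and even in the corkscrew setting used elsewhere in the paper, a boundary Harnack principle is generally unavailable without Harnack chains.

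A barrier that \emph{does} dominate $h$ on the whole collar is the fundamental solution $\Gamma_{\mathcal L}(\cdot,x_0)$ with pole at the center $x_0\in\partial\Omega$ of $B$: since $\Gamma_{\mathcal L}(z,x_0)\approx |z-x_0|^{-(n-1)}$ is bounded below on all of $\partial(2B)$ (it does not see $\partial\Omega$), the maximum principle gives $h\le C\,r(B)^{n-1}\Gamma_{\mathcal L}(\cdot,x_0)$ in $U$ and hence $h(x)\lesssim (r(B)/R)^{n-1}$, with no corkscrew point and no boundary comparison required. Equivalently, Bourgain's Lemma~\ref{lemma:Bourgain} yields $h\le c^{-1}\omega^{\,\cdot}_\Omega(4B\cap\partial\Omega)$ throughout $U$. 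The remaining factor $(r(B)/R)^{\alpha}$ then has to be obtained by iterating the boundary H\"older estimate across dyadic annuli (this uses only the CDC implied by $n$-Ahlfors regularity), applied to $h$ or to $\omega^{\,\cdot}_\Omega(4B\cap\partial\Omega)$ rather than to $G_\Omega(\cdot,y^*)$; combining the two ingredients produces the exponent $n-1+\alpha$. This is in spirit your Step~3, but carried out on the correct object.
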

\color{black}

We present some of the {main results in connection with} the study of the Dirichlet and regularity problems in this general setting.
\begin{theorem}[Characterization of the weak-$\mathcal A_\infty$ property for harmonic measure]
\label{thm:characterization_weak_A_infty}
Let $\Omega \subset \mathbb \R^{n+1}$, {$n\geq2$} be an open bounded corkscrew domain with $n$-Ahlfors regular boundary $\pom$. Then, the following are equivalent:
\begin{enumerate}
    \item the harmonic measure $\omega$ belongs to weak-$\mathcal A_\infty(\mathcal H^n|_{\pom})$,
    \item the harmonic measure $\omega$ satisfies a weak $p$-reverse H\"older inequality for some $p>1$, that is
    \[
    	\left(\fint_{B\cap\pom} \left (\frac{d \omega^x}{d\sigma} \right)^p d \sigma\right)^{1 / p} \lesssim \fint_{(2 B)\cap\pom} \left (\frac{d \omega^x}{d\sigma} \right) d \sigma 
    \]
    for all balls $B$ centered on $\pom$ with $\diam(B) \leq 2\diam(\pom)$ and all $x \in \Omega \backslash 3B$.
    \item $(D^{\Delta}_{p'})$ is solvable for some $p'<+\infty$,
    \item $(D^\Delta_{\operatorname{BMO}})$ is solvable (see \cite{HL} for its definition),
    \item $(R^\Delta_p)$ is solvable for some $p>1$,
    \item $\Omega$ satisfies the weak local John condition and $\pom$ is uniformly $n$-rectifiable.
\end{enumerate}
{Moreover, for general elliptic operators, properties $(1),~ (2),~\mbox{and}~ (3)$ are equivalent, and $(5) \implies (3)$ for $\frac 1 p + \frac 1 {p'} =1$.\color{black}}
\end{theorem}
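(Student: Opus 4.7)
The plan is to assemble the six conditions into an equivalence by invoking and chaining together known results from the literature, handling first the implications valid for general elliptic operators and then specializing to the Laplacian. Since this theorem is essentially a compilation/state-of-the-art statement, my proof would mainly be a roadmap through the relevant references, establishing a cycle that covers all six properties.

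First, I would establish the chain $(1) \iff (2) \iff (3)$, which holds for general elliptic operators $\mathcal L$. The equivalence $(1) \iff (2)$ is a purely measure-theoretic self-improvement statement: any measure on an $n$-Ahlfors regular set satisfying the weak $\mathcal A_\infty$ condition automatically satisfies a weak reverse H\"older inequality for some exponent $p > 1$, via a standard good-$\lambda$ argument using the doubling property of $\sigma$. The equivalence $(2) \iff (3)$ is then the classical translation of reverse H\"older bounds for the Poisson kernel $d\omega_{\mathcal L}/d\sigma$ into $L^{p'}$ non-tangential bounds for solutions, using duality between the non-tangential maximal function and the Poisson kernel together with Bourgain-type lower bounds for $\omega_{\mathcal L}$ (Lemma \ref{lemma:Bourgain}).

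Second, for the implications involving $(5)$: the direction $(5) \implies (3)$ (with $1/p + 1/p' = 1$) is already known for general elliptic operators from \cite[Theorem A.2]{MT}, as recalled in the introduction. Specializing now to $\mathcal L = \Delta$, the reverse implication $(3) \implies (5)$ is the content of the main result of \cite{MT}, which established $(D^\Delta_{p'}) \iff (R^\Delta_p)$ for the Laplacian in this class of domains. The equivalence $(3) \iff (4)$ with solvability in $\operatorname{BMO}$ is the content of the Hofmann--Le paper \cite{HL}, where $\operatorname{BMO}$-solvability of the Dirichlet problem is characterized by the weak-$\mathcal A_\infty$ property, and hence by $L^{p'}$-solvability for some $p' < +\infty$.

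The main obstacle, and the deepest part of the theorem, is the geometric characterization $(1) \iff (6)$. The direction $(6) \implies (1)$ follows from work of Hofmann--Martell and coauthors establishing that the weak local John condition combined with uniform rectifiability of $\pom$ is sufficient for $\omega \in \text{weak-}\mathcal A_\infty(\sigma)$ in corkscrew domains. The converse $(1) \implies (6)$ is more delicate and must be split into two steps: first, deducing uniform $n$-rectifiability of $\pom$ from $\omega \in \text{weak-}\mathcal A_\infty(\sigma)$ (via quantitative rectifiability criteria of the type developed by Azzam--Hofmann--Martell--Mourgoglou--Tolsa), and second, extracting the weak local John condition from the analytic $\mathcal A_\infty$-information on harmonic measure. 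This final step, converting quantitative harmonic-measure estimates into explicit path-connectivity of $\Omega$ from a positive density set of boundary points, is where the bulk of the geometric analysis lies and will be the principal difficulty; I would invoke the recent characterizations of this type in the literature rather than reproving them.
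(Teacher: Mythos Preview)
Your proposal is correct and takes essentially the same approach as the paper: this theorem is a compilation of known results, and the paper likewise provides no self-contained proof but simply refers to \cite{HL} for the cycle $(1)\Rightarrow(4)\Rightarrow(3)\Rightarrow(1)$, to \cite[Theorem 1.1]{AHMMT} for $(1)\iff(6)$, to \cite[Theorems 1.2, 1.6, A.2]{MT} for the Dirichlet--Regularity equivalence, and to \cite[Theorem 9.2]{MT} for $(2)\iff(3)$. Your roadmap matches this chain of references almost exactly, and you correctly identify $(1)\iff(6)$ as the deepest ingredient, which is precisely what \cite{AHMMT} supplies.
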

We refer the reader to \cite[Theorems 1.4 and 1.6]{HL} for  the implications $(1)\implies(4)\implies(3)\implies(1)$, to \cite[Theorem 1.1]{AHMMT} for $(1)\iff (6)$, to \cite[Theorems 1.2, 1.6 and A.2]{MT} for the relationship between Dirichlet and Regularity problems, and to \cite[Theorem 9.2]{MT} for $(2)\iff (3)$. \color{black}

\vvv

\subsection{Tent spaces on domains}
\label{section:tent_spaces} 
Let $\Omega \subset \R^{n+1}$ be a corkscrew domain with $n$-Ahlfors regular boundary $\pom$ such that either $\Omega$ is bounded or $\pom$ is unbounded, and $\sigma := \HH^n|_{\pom}$.

If $g: \Omega \rightarrow \mathbb{R}$ is a measurable function in $\Omega$, we define the \textit{area functional} of $g$ for a fixed aperture $\alpha>0$ by
$$
\mathcal{A}^{(\alpha)}(g)(\xi):=\left (\int_{\gamma_\alpha(\xi)}|g(x)|^2  \frac{d x}{\delta_{\Omega}(x)^{n+1}} \right)^{1/2}, \quad \xi \in \partial \Omega.
$$ 
In {\cite[Proposition 4.5]{MPT2}} it is shown that that $\Vert \mathcal A^{(\alpha)}(g)\Vert_{L^p} \approx \Vert \mathcal A^{(\beta)}(g) \Vert_{L^p}$ for every $\alpha, \beta>0$, and all $0<p<\infty$. Hence, we will write $\mathcal A$ omitting the dependence with respect to the aperture.
We also define the \textit{$q$-Carleson functional} for $q\geq1$ by
\[
\mathcal{C}_{q}(g)(\xi):=\sup _{r>0} \frac{1}{r^n} \int_{B(\xi, r) \cap \Omega}\left(\fint_{B(x,\delta_\Omega(x)/8)}|g|^q\, dm\right)^{1 / q} d m(x), \quad \xi \in \partial \Omega .
\]

Finally, we introduce the \textit{tent spaces}
$$ 
T_2^p(\Omega):=\left\{g \in L_{{loc}}^2(\Omega):\mathcal{A}(g) \in L^p(\sigma)\right\},
$$ 
for $p \in(0, \infty)$, and we equip them with
$$
\quad\|g\|_{T_2^{p}(\Omega)}:=\Vert\mathcal{A}(g) \Vert_{L^p(\sigma)}.
$$
{For $1\leq p < \infty$, these are Banach spaces (see \cite[page 16]{MPT2}).}

\begin{remark}
	{The relevant definitions and results in \cite{MPT2} for this section are stated for chord-arc domains, but only the corkscrew property and the Ahlfors regularity are used in the proofs, hence they are also valid for the class of domains we consider.}
\end{remark}

Now, we focus on showing an atomic decomposition for functions in these spaces.
Let $0<p\leq1$. We say a function $a(x)$ is a \textit{$T^p_2$ atom} if it is supported in a ball $B(\xi,R)\cap\Omega$ with $\xi \in \partial \Omega$, $0<R<\diam\pom$ and
\[
\int_{B\cap\Omega} a^2(x) \frac{d x}{\delta_\Omega(x)} \leq {\sigma(B\cap\pom)^{-1/p}} .
\]
If $a$ is an atom with support in $B(\xi,R)$, it is easy to check (see \cite[page 23]{MPT2}), that its area functional $\mathcal A(a)$ is supported in $B(\xi, 3R)\cap\pom$, and it satisfies
\[\Vert a \Vert_{T^q_2(\Omega)} \lesssim \sigma(B\cap\pom)^{1/q-1/p},\quad \mbox{ and }\quad\Vert a \Vert_{T_2^p(\Omega)} \lesssim 1
\]
for all $q\in(p,2]$.

\begin{theorem} 
	\label{thm:atomic_decomposition_tent}
	Let $0<p\leq1$ and $f \in L^\infty_c(\Omega)\cap T^p_2(\Omega)$. There exist sequences of real numbers $(\lambda_j)_j$ and $T^p_2$ atoms $(a_j)_j$ such that
	\[
	f = \sum_j \lambda_j a_j \quad \mbox{in $T_2^p(\Omega)$} \quad \mbox{and }\quad \sum_j |\lambda_j|^p \leq C \Vert f \Vert_{T^p_2(\Omega)}^p
	\]
	where $C>0$ only depends on the Ahlfors regularity constant of $\pom$, and the corkscrew constant of $\Omega$.
\end{theorem}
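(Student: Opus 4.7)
The plan is to adapt the classical Coifman--Meyer--Stein atomic decomposition \cite{CMS} to our rough setting, in the spirit of the tent-space techniques of \cite{MPT2}. For each $k\in\mathbb Z$, introduce the open level set $O_k := \{\xi \in \pom : \mathcal{A}(f)(\xi) > 2^k\}$, which satisfies $\sigma(O_k) \leq 2^{-kp}\|f\|_{T^p_2(\Omega)}^p$, and the associated \emph{tent}
\begin{equation*}
T_k := \bigl\{x \in \Omega : \sigma(\mathrm{shad}(x) \cap O_k) \geq \tfrac{1}{2}\sigma(\mathrm{shad}(x))\bigr\},
\end{equation*}
where $\mathrm{shad}(x) := B(x, (1+\alpha)\delta_\Omega(x)) \cap \pom$; note that the corkscrew condition together with the $n$-Ahlfors regularity of $\pom$ gives $\sigma(\mathrm{shad}(x)) \approx \delta_\Omega(x)^n$. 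The tents are nested, $T_{k+1} \subset T_k$, and since $f \in L^\infty_c(\Omega)$ forces $\mathcal{A}(f)$ to be finite a.e.\ and $O_k$ to shrink to $\varnothing$ for $k$ large, the annular pieces $\{T_k \setminus T_{k+1}\}_k$ cover $\Omega$ modulo a null set on $\mathrm{supp}(f)$.

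The next step is to partition each $T_k \setminus T_{k+1}$ using a Whitney ball decomposition $\{B_i^k\}_i$ of $O_k$ from Theorem \ref{thm:whitney_balls} (extending to unbounded $O_k$ via exhaustion by bounded sets if needed), by setting
\begin{equation*}
T_i^k := \bigl\{x \in T_k \setminus T_{k+1} : \text{the bulk of } \mathrm{shad}(x) \text{ lies in } B_i^k\bigr\}
\end{equation*}
in a consistent way so that the $T_i^k$ form a disjoint cover of $T_k \setminus T_{k+1}$. By construction $T_i^k \subset \Lambda B_i^k \cap \Omega$ for a fixed large $\Lambda > 1$, and for each $x \in T_i^k \subset \Omega \setminus T_{k+1}$, the shadow of $x$ has substantial overlap with $\Lambda B_i^k \cap \pom \setminus O_{k+1}$, so that a localized Fubini argument yields
\begin{equation*}
\int_{T_i^k} |f(x)|^2 \frac{dx}{\delta_\Omega(x)} \lesssim \int_{\Lambda B_i^k \cap \pom \setminus O_{k+1}} \mathcal{A}(f)^2\, d\sigma \leq 2^{2(k+1)}\, \sigma(\Lambda B_i^k \cap \pom).
\end{equation*}

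Setting $\lambda_i^k := C_0\, 2^k\, \sigma(\Lambda B_i^k \cap \pom)^{1/p}$ and $a_i^k := (\lambda_i^k)^{-1}\, f\, \chi_{T_i^k}$, the above estimate makes $a_i^k$ a $T^p_2$ atom supported in $\Lambda B_i^k \cap \Omega$ for an appropriate absolute constant $C_0$. The bounded overlap of the Whitney cover and the distributional identity $\|f\|_{T^p_2(\Omega)}^p \approx \sum_k 2^{kp}\sigma(O_k)$ then yield
\begin{equation*}
\sum_{i,k} |\lambda_i^k|^p \lesssim \sum_k 2^{kp}\sum_i \sigma(B_i^k) \lesssim \sum_k 2^{kp}\sigma(O_k) \approx \|f\|_{T^p_2(\Omega)}^p,
\end{equation*}
while convergence of $\sum_{i,k}\lambda_i^k a_i^k$ to $f$ in $T^p_2(\Omega)$ follows from the pointwise identity on $\mathrm{supp}(f)$ together with dominated convergence applied to $\mathcal{A}$ (using $f \in L^\infty_c(\Omega)$ as the dominating function).

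The main obstacle will be the partition step: each piece $T_i^k$ must simultaneously sit inside a Carleson box $\Lambda B_i^k \cap \Omega$ \emph{and} preserve the shadow overlap property needed for the localized Fubini bound. In the rough setting, $\pom$ carries no smooth structure, the tents $T_k$ need not be open, and the shadow of $x$ depends delicately on the local geometry near $\pom$; ensuring that the notion ``bulk of $\mathrm{shad}(x)$ in $B_i^k$'' produces both a clean assignment of $x$ to a unique Whitney ball and the required containment relies crucially on the corkscrew condition and the $n$-Ahlfors regularity of $\pom$, and is the principal place where our proof departs from the Euclidean argument of \cite{CMS}.
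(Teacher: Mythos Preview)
Your overall strategy follows the Coifman--Meyer--Stein template and matches the paper's in spirit, but the partition step that you yourself flag as the ``main obstacle'' contains a genuine gap. The issue is that your half-shadow definition of $T_k$, together with a Whitney decomposition of $O_k$, does not control the relationship between $\delta_\Omega(x)$ and the scale $r(B_i^k)$ of the Whitney ball to which $x$ is assigned. Concretely: if $O_k$ has many small holes, all Whitney balls of $O_k$ can be small, yet a point $x$ at large height can still satisfy $\sigma(\mathrm{shad}(x)\cap O_k)\geq \tfrac12\sigma(\mathrm{shad}(x))$ and hence lie in $T_k$; then every Whitney ball $B_i^k$ has $r(B_i^k)\ll \delta_\Omega(x)$, so $x\notin \Lambda B_i^k$ for any fixed $\Lambda$ and the containment $T_i^k\subset \Lambda B_i^k\cap\Omega$ fails. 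The same scale mismatch breaks your localized Fubini bound: you need $\sigma\bigl(\mathrm{shad}(x)\cap \Lambda B_i^k\cap O_{k+1}^c\bigr)\gtrsim \delta_\Omega(x)^n$, but ``bulk of $\mathrm{shad}(x)$ in $B_i^k$'' only gives a fraction of order $\sigma(B_i^k)/\sigma(\mathrm{shad}(x))$, which is not uniformly bounded below.

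The paper resolves this by working not with $O_k$ but with the enlarged set $O_k^*:=\{M_{c,\sigma}(\chi_{O_k})>1-\tau\}$ (complement of the points of global $\tau$-density for $O_k^c$), and by defining the tent as $T(O_k^*):=\Omega\setminus R_\alpha((O_k^*)^c)$ via cones rather than via a shadow-density threshold. This cone-based definition forces $d(\xi,(O_k^*)^c)\gtrsim \delta_\Omega(x)$ for every $\xi$ in the shadow of $x\in T(O_k^*)$, whence the Whitney ball of $O_k^*$ through $\xi$ has radius $\gtrsim \delta_\Omega(x)$; this is precisely what yields the containment $x\in CB_i^k$ (Claim~1 in the paper). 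The Fubini-type bound is then Lemma~\ref{lemma:lemma2_in_CMS}, which again relies on the global-density structure of $O_k^*$. Finally, the paper does not attempt a \emph{disjoint} partition at all: the pieces $\Delta_i^k:=CB_i^k\cap R_\alpha(B_i^k)\cap(T(O_k^*)\setminus T(O_{k+1}^*))$ are only shown to have bounded overlap (Claim~2), and a subordinate partition of unity $\phi_i^k$ is used to define the atoms, which sidesteps the delicate ``consistent assignment'' you would otherwise have to carry out.
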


\vv

Before proving {the latter theorem,  let us} introduce some {necessary additional} definitions.

\vv

We say a point $x\in\pom$ has \textit{global $\tau$-density} with respect to a set $F \subset \pom$ if for all $0<r<\diam\pom$,
\[
\frac{\sigma(B(x,r)\cap F)}{\sigma(B(x,r))}\geq\tau.
\]
We denote by $F^*_\tau$ the set of points of global $\tau$-density of $F$. 
Note that for $F$ closed, we clearly have $F^*_\tau\subset F$, and by \cite[Proposition 3.3]{MPT2}, it holds that 
\[
\sigma((F^*_\tau)^c) \leq C \sigma(F^c)
\]
where $C$ depends on $\tau$ and the Ahlfors regularity constant of $\pom$.
Also, for $F\subset \pom$, we let 
\[
R_\alpha(F) := \bigcup_{\xi\in F} \gamma_\alpha(\xi)
\]
where $\gamma_\alpha(\xi)$ is the nontangential cone of aperture $\alpha$ associated to $\xi$ and the \textit{tent} over an open set $O$ as
\[
T(O) = \Omega \backslash R_\alpha(F).
\]
\vv

{Let us now} state some lemmas that will be necessary in the proof.
\begin{lemma}
	\label{lemma:lemma2_in_CMS}
	Given $\alpha>0$, there exists $\tau \in(0,1)$ close to 1 such that if $F \subset \partial \Omega$ and $\Phi$ is a non-negative measurable function in $\Omega$, then
	\[
	\int_{R_\alpha({F}_\tau^*)} \Phi(x) \delta_\Omega(x)^{n} \,d x \leq C_\alpha \int_{{F}}\left(\int_{\gamma_\beta(\xi)} \Phi(x) \,d x\right) d \sigma(\xi),
	\]
	where $\beta=\min \{1, \alpha\}$.
\end{lemma}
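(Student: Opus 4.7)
The plan is to reduce the lemma, via Tonelli's theorem, to a pointwise density estimate on $\pom$. Since $\Phi\geq 0$, I would swap the order of integration on the right-hand side, using that $x\in\gamma_\beta(\xi)$ is equivalent to $\xi\in B(x,(1+\beta)\delta_\Omega(x))$, to rewrite
\[
\int_F \int_{\gamma_\beta(\xi)} \Phi(x) \, dx \, d\sigma(\xi) = \int_\Omega \Phi(x) \, \sigma\bigl(F \cap B(x, (1+\beta)\delta_\Omega(x))\bigr) \, dx.
\]
Thus it suffices to prove that, for a suitable $\tau\in(0,1)$ close to $1$,
\[
\sigma\bigl(F \cap B(x, (1+\beta)\delta_\Omega(x))\bigr) \gtrsim \delta_\Omega(x)^n \qquad \text{for every } x \in R_\alpha(F_\tau^*),
\]
with implicit constant depending only on $\alpha$ and the Ahlfors regularity constant of $\pom$.

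For the pointwise estimate I would fix $x \in R_\alpha(F_\tau^*)$, write $\delta = \delta_\Omega(x)$, and choose both a point $\xi_0 \in F_\tau^*$ with $|\xi_0-x|<(1+\alpha)\delta$ and a nearest boundary point $\xi_1 \in \pom$, so that $|\xi_1 - x| = \delta$. The inclusion $B(\xi_1,\beta\delta)\subset B(x,(1+\beta)\delta)$ combined with $n$-Ahlfors regularity of $\pom$ centered at $\xi_1$ produces the lower bound
\[
\sigma\bigl(\pom\cap B(x,(1+\beta)\delta)\bigr) \geq c_0\,(\beta\delta)^n.
\]
On the other hand, $B(x,(1+\beta)\delta)\subset B(\xi_0,(2+\alpha+\beta)\delta)$ together with the global $\tau$-density of $\xi_0$ in $F$ yields
\[
\sigma\bigl((\pom \setminus F) \cap B(x,(1+\beta)\delta)\bigr) \leq (1-\tau)\,C_0\,(2+\alpha+\beta)^n\,\delta^n.
\]
Subtracting the two estimates gives $\sigma(F\cap B(x,(1+\beta)\delta)) \geq \bigl(c_0\beta^n - (1-\tau)C_0(2+\alpha+\beta)^n\bigr)\,\delta^n$, which is $\gtrsim \delta^n$ as soon as $\tau$ is chosen sufficiently close to $1$ in terms of $\alpha$ and the Ahlfors regularity constant. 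The fact that $\beta=\min\{1,\alpha\}$ is strictly positive is exactly what makes the baseline constant $c_0\beta^n$ nonzero, so the argument is robust for every $\alpha>0$.

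The one place where I expect a mild technical hiccup is the regime in which $\pom$ is bounded and $\delta$ becomes comparable to or larger than $\diam\pom$: there the global $\tau$-density condition must be invoked with radius truncated at $\diam\pom$, and the lower bound on $\sigma(\pom\cap B(x,(1+\beta)\delta))$ must be justified more carefully. The standing hypothesis that either $\Omega$ is bounded or $\pom$ is unbounded, together with the $n$-Ahlfors regularity of $\pom$, ensures that in this regime $B(x,(1+\beta)\delta)$ still captures a definite fraction of $\pom$ and the bookkeeping goes through routinely.
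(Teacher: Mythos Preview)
Your argument is correct and is exactly the standard Tonelli-plus-density computation that underlies \cite[Lemma 2]{CMS} and its adaptation \cite[Lemma 3.14]{MPT2}, which is precisely what the paper cites in lieu of a proof. The only point worth tightening is the one you already flagged: when $\pom$ is bounded and $(2+\alpha+\beta)\delta_\Omega(x)\geq\diam\pom$, invoke the global $\tau$-density condition at the truncated radius and use that $B(x,(1+\beta)\delta_\Omega(x))$ then contains all of $\pom$ (since $\Omega$ is bounded in that case), so $\sigma(F\cap B(x,(1+\beta)\delta_\Omega(x)))=\sigma(F)\geq\tau\,\sigma(\pom)\gtrsim(\diam\pom)^n\gtrsim\delta_\Omega(x)^n$.
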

{\begin{proof}
For the proof, see \cite[Lemma 3.14]{MPT2} (or \cite[Lemma 2]{CMS} in the classical case).
\end{proof}
}

\vv

\begin{lemma}
	\label{lemma:tent_sublevelset_closed}
	{If $f \in L^\infty_c(\Omega)
	$ and $\tau>0$,} the set $F = \{\xi\in \pom\, |\, \mathcal A(f)(\xi)\leq \tau\}$ is closed.
\end{lemma}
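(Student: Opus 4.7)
The plan is to show that $\mathcal A(f)$ is lower semicontinuous on $\pom$, which immediately implies that the sublevel set $F = \{\xi \in \pom : \mathcal A(f)(\xi) \leq \tau\}$ is closed.

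The key geometric observation is that the cones $\gamma_\alpha(\xi)$ depend lower semicontinuously on $\xi$ in the following sense: if $\xi_k \to \xi$ in $\pom$ and $y \in \gamma_\alpha(\xi)$, then the strict inequality $|y-\xi| < (1+\alpha)\delta_\Omega(y)$ yields some $\epsilon>0$ with $|y-\xi|+\epsilon < (1+\alpha)\delta_\Omega(y)$, and for all $k$ large enough one has $|\xi-\xi_k|<\epsilon$, so
\[
|y-\xi_k| \leq |y-\xi| + |\xi-\xi_k| < (1+\alpha)\delta_\Omega(y),
\]
i.e.\ $y \in \gamma_\alpha(\xi_k)$. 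Therefore, pointwise in $y \in \Omega$,
\[
\chi_{\gamma_\alpha(\xi)}(y) \leq \liminf_{k\to\infty} \chi_{\gamma_\alpha(\xi_k)}(y).
\]

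With this in hand, Fatou's lemma applied to the non-negative integrand $|f(y)|^2\,\delta_\Omega(y)^{-(n+1)}$ (which is integrable on any compactly contained region of $\Omega$ since $f \in L^\infty_c(\Omega)$) gives
\[
\mathcal A(f)(\xi)^2 = \int_\Omega \chi_{\gamma_\alpha(\xi)}(y)\,|f(y)|^2\,\frac{dy}{\delta_\Omega(y)^{n+1}} \leq \liminf_{k\to\infty}\int_\Omega \chi_{\gamma_\alpha(\xi_k)}(y)\,|f(y)|^2\,\frac{dy}{\delta_\Omega(y)^{n+1}} = \liminf_{k\to\infty} \mathcal A(f)(\xi_k)^2.
\]
This is the claimed lower semicontinuity.

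Finally, if $(\xi_k) \subset F$ and $\xi_k \to \xi \in \pom$, then $\mathcal A(f)(\xi_k) \leq \tau$ for every $k$, so the estimate above gives $\mathcal A(f)(\xi) \leq \liminf_k \mathcal A(f)(\xi_k) \leq \tau$, i.e.\ $\xi \in F$. No essential obstacle is expected; the only point requiring minimal care is justifying that $\chi_{\gamma_\alpha(\xi_k)} |f|^2 \delta_\Omega^{-(n+1)}$ is indeed suitable for Fatou, which follows from $f$ being bounded and compactly supported inside $\Omega$ (so $\delta_\Omega$ is bounded away from zero on $\supp f$).
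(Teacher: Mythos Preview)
Your proof is correct: the pointwise inequality $\chi_{\gamma_\alpha(\xi)}(y) \leq \liminf_k \chi_{\gamma_\alpha(\xi_k)}(y)$ follows from the strict inequality defining the cones, and Fatou's lemma then gives lower semicontinuity of $\mathcal A(f)$, hence closedness of $F$. The paper does not give its own argument here but simply cites \cite[Lemma 4.6]{MPT2}; your direct proof is the standard one and is exactly what one would expect that reference to contain. One minor remark: Fatou's lemma needs only nonnegativity, not integrability, so the hypothesis $f\in L^\infty_c(\Omega)$ is not actually used in your lower semicontinuity step---it matters only to guarantee $\mathcal A(f)$ is everywhere finite, which is not needed for the closedness conclusion.
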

	{\begin{proof}
	See \cite[Lemma 4.6]{MPT2} for the proof.
	\end{proof}
	}
	
	\vv

Finally, we present the proof of Theorem \ref{thm:atomic_decomposition_tent}, which is inspired by the  original proof of \cite[Theorem 1]{CMS} but with some modifications originated by the lack of structure of our domains. {In particular,  any open subset $O$ of $\R^{n}$ admits a ``very good" Whitney decomposition in disjoint cubes $(Q_j)_j$, and then its tent $T(O) \subset \R^{n+1}_+$ can be written as the disjoint union of $\bigsqcup_{j} (Q_j \times \mathbb R)\cap T(O)$. In our case, we have to do extra work and obtain $T(O)$ as a union of sets with bounded overlapping.}
\begin{proof}[Proof of Theorem \ref{thm:atomic_decomposition_tent}]
	Let $\tau$ sufficiently close to $1$, and for $k\in\mathbb Z$ let $O_k := \{x \in \pom\,|\, \mathcal A(f)>2^k\}$. By {Lemma \ref{lemma:tent_sublevelset_closed}}, we have that $O_k$ is open and let $O_k^* = \{x \in \pom \, |\, M_{c,\sigma}(\chi_{O_k})>1-\tau\}$.
	Then, $O_k \subset O^*_k$, but, by the boundedness of the Hardy-Littlewood maximal operator, it holds that $\sigma(O^*_k)\leq c_\tau \sigma(O_k)$.
	Let $(\lambda B_i^k)_i$ be a Whitney ball decomposition ({Theorem \ref{thm:whitney_balls}}) of $O_k^*$ as a subset of $\pom$.
	In particular, $O_k^* = \pom \cap\bigcup_i B_i^k$ and there exists $\lambda>1$ depending only on the Ahlfors regularity constant of $\pom$, such that $\lambda^2 B_i^k \cap (O_k^*)^c \neq \varnothing$ (inside of $\pom$).
	
	\begin{claim}
		There exists a constant $C>1$ depending on the constant $\lambda$ of the Whitney ball decomposition {Theorem \ref{thm:whitney_balls}} and the aperture of the cones $\alpha$ such that the tent
		\[
		T(O_k^*) \subset \Omega\cap \bigcup_i \left(C B_i^k \cap R_\alpha(B_i^k\cap\pom) \right),
		\] 
		if we consider the balls $B_i^k$ as balls in $\mathbb R^{n+1}$.
	\end{claim}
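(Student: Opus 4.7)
The plan is to take an arbitrary $y \in T(O_k^*)$ and locate a Whitney ball $B_i^k$ such that $y \in CB_i^k \cap R_\alpha(B_i^k\cap\pom)$, for a constant $C$ depending only on $\lambda$ and $\alpha$.

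I would start by unpacking the definition of the tent. Since $T(O_k^*) = \Omega \setminus R_\alpha(\pom \setminus O_k^*)$, a point $y$ lies in $T(O_k^*)$ precisely when no $\xi \in \pom\setminus O_k^*$ satisfies $|y-\xi| < (1+\alpha)\delta_\Omega(y)$, that is, when
\[
B(y,(1+\alpha)\delta_\Omega(y))\cap \pom \subset O_k^*.
\]
In particular, a nearest boundary point $\xi_y \in \pom$ to $y$ (so $|y-\xi_y|=\delta_\Omega(y)$) must belong to $O_k^* = \pom \cap \bigcup_i B_i^k$, and hence lies in some $B_i^k$. Since $|y-\xi_y|=\delta_\Omega(y)<(1+\alpha)\delta_\Omega(y)$, this already gives $y \in \gamma_\alpha(\xi_y) \subset R_\alpha(B_i^k\cap\pom)$, which handles the ``non-tangential cone'' part of the inclusion.

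The main step, and the one I expect to be the core of the argument, is to prove that $y$ lies in $CB_i^k$ viewed as a ball in $\R^{n+1}$. For this I would exploit the Whitney property (iii) of Theorem \ref{thm:whitney_balls}, which guarantees a point $\eta \in \lambda^2 B_i^k \cap \pom$ with $\eta \notin O_k^*$. On the one hand, the tent characterization above forces
\[
|y-\eta| \geq (1+\alpha)\delta_\Omega(y).
\]
On the other hand, using $\xi_y \in B_i^k$ and $\eta \in \lambda^2 B_i^k$,
\[
|y-\eta| \leq |y-\xi_y| + |\xi_y - x_i^k| + |x_i^k - \eta| \leq \delta_\Omega(y) + (1+\lambda^2)\,r(B_i^k),
\]
where $x_i^k$ is the center of $B_i^k$. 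Combining these inequalities yields $\alpha\,\delta_\Omega(y) \leq (1+\lambda^2)\,r(B_i^k)$, so $\delta_\Omega(y) \lesssim_{\alpha,\lambda} r(B_i^k)$.

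Once this bound on $\delta_\Omega(y)$ is in hand, the conclusion is immediate: by the triangle inequality,
\[
|y - x_i^k| \leq |y-\xi_y| + |\xi_y - x_i^k| \leq \delta_\Omega(y) + r(B_i^k) \leq C\,r(B_i^k),
\]
with $C := 1 + (1+\lambda^2)/\alpha$. The main obstacle is really only this geometric comparison; the essential ingredients are the characterization of $T(O_k^*)$ via the ball $B(y,(1+\alpha)\delta_\Omega(y))$ and the fact that the aperture $\alpha>0$ forces a quantitative gap between $y$ and any boundary point outside $O_k^*$, which then translates through the Whitney property into control of $\delta_\Omega(y)$ by $r(B_i^k)$.
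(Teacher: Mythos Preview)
Your proof is correct and follows essentially the same approach as the paper's: both arguments pick a boundary point $\xi \in O_k^*$ within $(1+\alpha)\delta_\Omega(y)$ of $y$ (you take the nearest point, the paper a $(1+\alpha/2)$-almost-nearest one), locate the Whitney ball $B_i^k$ containing it, and then use the Whitney property that $B_i^k$ is close to $(O_k^*)^c$ together with the tent condition $d(y,(O_k^*)^c)\geq (1+\alpha)\delta_\Omega(y)$ to bound $\delta_\Omega(y)\lesssim_{\alpha,\lambda} r(B_i^k)$. The only cosmetic difference is that you pass through an explicit point $\eta\in\lambda^2 B_i^k\setminus O_k^*$ while the paper works with the distance $d(\xi,(O_k^*)^c)$ directly.
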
 
	\begin{proof}[Proof of Claim 1]
		Let $x \in T(O_k^*)$, that is, $x \in \Omega \backslash R_\alpha((O_k^*)^c)$.  {By the definitions of} $R_\alpha$ and $\gamma_\alpha$, we have that 
		\[
		d(x, (O_k^*)^c) \geq (1+\alpha) d(x,\pom) = (1+\alpha) d(x,O_k^*).
		\]
		If we take $\xi\in O^*_k$ such that $d(x,\xi) \leq (1+\alpha/2) d(x,\pom)$, then, by triangular inequality we also have
		\[
		\frac{1+\alpha}{1+\alpha/2} d(x,\xi) \leq d(x, (O_k^*)^c) \leq d(x, \xi) + d(\xi,(O_k^*)^c).
		\]
		Hence, if $B_i^k$ is a Whitney ball containing $\xi$, it holds that
		\[
		d(x,\xi) \lesssim_\alpha d(\xi,(O_k^*)^c) \lesssim_\lambda r(B_i^k). 
		\]
		Thus, $x \in C_{\alpha,\lambda} B_i^k$. Moreover, $x \in \gamma_\alpha(\xi) \subset R_\alpha(B_i^k\cap\pom)$, proving the claim.
	\end{proof}
	The previous claim allows us to write $ T(O_k^*) \backslash T(O_{k+1}^*)$ as the following non-disjoint union of open sets
	\[
	T(O_k^*) \backslash T(O_{k+1}^*) = \bigcup_i  \left (C B_k^i \cap R_\alpha(B_k^i) \cap \left( T(O_k^*) \backslash T(O_{k+1}^*) \right) \right) =: \bigcup_i \Delta_i^k. 
	\]
	Note that $\supp f \subset \bigcup_{k\in\mathbb Z} T(O_k^*)$.	
	\begin{claim}
		The sets $\Delta_i^k$ have bounded overlapping, that is, there exists $N>0$ {depending only on $\lambda$ and $\alpha$} such that 
		\[
		\sum_i \chi_{\Delta_k^i}(x) \leq N ,\quad\mbox{for all }x \in T(O_k^*) \backslash T(O_{k+1}^*).
		\] 
	\end{claim}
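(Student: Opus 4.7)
The plan is to fix $x \in T(O_k^*) \setminus T(O_{k+1}^*)$, set $d := \dist(x, \pom)$ and $I := \{i : x \in \Delta_i^k\}$, and show $|I| \leq N$ for some $N$ depending only on $\alpha$ and $\lambda$ (with the Ahlfors regularity constant of $\pom$ absorbed into $\lambda$). My strategy is to first establish that all Whitney balls $B_i^k$ with $i \in I$ have radii comparable to a common scale $r_* \gtrsim d$, and then count them via Ahlfors regularity of $\pom$ combined with the bounded overlap property (ii) of the Whitney decomposition.

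First I would prove the lower bound $r_i \gtrsim d$ for each $i \in I$. Since $x \in T(O_k^*) = \Omega \setminus R_\alpha((O_k^*)^c)$, one has $\dist(x, (O_k^*)^c) \geq (1+\alpha) d$. Choosing $\eta_i \in \lambda B_i^k \cap (O_k^*)^c$ (nonempty by Whitney property (iii)) and using $x \in C B_i^k$, I would obtain
\[
(1+\alpha)\, d \leq |x - \eta_i| \leq |x - y_i^k| + |y_i^k - \eta_i| \leq (C + \lambda)\, r_i,
\]
which gives $r_i \geq \tfrac{1+\alpha}{C+\lambda}\, d$, where $C = C_{\alpha, \lambda}$ is the dilation constant from Claim 1.

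Next I would show comparability of the radii. For each $i \in I$ the condition $x \in R_\alpha(B_i^k \cap \pom)$ produces $\xi_i \in B_i^k \cap \pom$ with $|x - \xi_i| < (1+\alpha) d$, so all the $\xi_i$ lie in the small window $B(x, (1+\alpha) d) \cap \pom$. For any $i, j \in I$ this forces $|\xi_i - \xi_j| < 2(1+\alpha) d \lesssim \min(r_i, r_j)$, and combined with $\xi_l \in B_l^k$ (for $l = i, j$) one gets $|y_i^k - y_j^k| \lesssim \max(r_i, r_j)$. A careful use of Whitney property (iv) (exploiting that $r_l \approx \dist(y_l^k, (O_k^*)^c)$) then yields $r_i \approx r_j$, so $r_i \approx r_*$ for all $i \in I$ with $r_* := \min_{i \in I} r_i$.

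Finally, for the counting step: since $|y_i^k - x^*| \leq |y_i^k - x| + |x - x^*| \lesssim r_*$, where $x^* \in \pom$ is a nearest point of $\pom$ to $x$, every $B_i^k$ with $i \in I$ is centered in $B(x^*, C' r_*) \cap \pom$. By Ahlfors regularity $\sigma(B(x^*, C' r_*) \cap \pom) \lesssim r_*^n$ while $\sigma(B_i^k \cap \pom) \approx r_*^n$, so bounded overlap of the Whitney cover yields $|I| \cdot r_*^n \lesssim r_*^n$, hence $|I| \lesssim 1$. The hardest part will be the comparability of radii: a naive triangle inequality only controls $r_i$ in terms of $r_j$ when the dilation constant $C$ from Claim 1 is small, so I expect to need the ``Lipschitz-like'' behavior of the scale function in the Coifman-Weiss Whitney decomposition (a consequence of the construction underlying Theorem \ref{thm:whitney_balls}) to carry out this step in the general case.
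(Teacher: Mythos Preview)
Your proposal is correct and follows essentially the same route as the paper's proof: both arguments extract points $\xi_i\in B_i^k\cap\pom$ with $|x-\xi_i|<(1+\alpha)\,\delta_\Omega(x)$, deduce $d(B_i^k,B_j^k)\lesssim\min(r_i,r_j)$, then invoke the Whitney scale property to force $r_i\approx r_j$, and finally count via bounded overlap. Your explicit Step~1 (using the tent condition to get $r_i\gtrsim d$) is a slightly different derivation of the same lower bound the paper obtains from $x\in C B_i^k$, and your honest flag that the comparability step needs the ``Lipschitz-like'' behavior of the Whitney scale function is exactly the content behind the paper's appeal to property~(iv) for the dilated balls $\lambda B_i^k$.
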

	\begin{proof}[Proof of Claim 2]
		Assume $x \in \Delta_i^k \cap \Delta_j^k$. Then, there exist $\xi^k_i \in B^k_i \cap \pom$ and $\xi^k_j \in B^k_j \cap \pom$ such that $x\in\gamma_\alpha(\xi_i^k)\cap\gamma_\alpha(\xi_j^k)$, which implies
		\[
		(1+\alpha)^{-1}d(x,\xi^k_i) \leq  d(x, \xi^k_j) \leq (1+\alpha) d(x,\xi^k_i),
		\]
		and 
		\[
		d(x,\xi^k_i) \lesssim_{\alpha,\lambda} r(B^k_i), \quad \mbox{ and }\quad d(x,\xi^k_j) \lesssim_{\alpha,\lambda} r(B^k_j).
		\]
		Hence, $d(B_i^k, B_j^k)\leq d(\xi^k_j,\xi_i^k)\lesssim_{\alpha,\lambda} \min(r(B^k_i), r(B^k_j))$. 
		
		As a consequence, it cannot happen that $r(B_i^k) \ll r(B_j^k)$ or vice versa since that would imply that $\lambda B_i^k \cap \lambda B_j^k \neq \varnothing$ and we get a contradiction with {property (iv) of Theorem \ref{thm:whitney_balls}}. Hence, if $x$ belongs to $\Delta_i^k$ for $i\in I$, we have that $r(B_i^k) \approx r(B_j^k)$ for all $i,j \in I$, and $d(B_i^k,B_j^k) \approx r(B_i^k)$. Therefore, it can only belong to a uniformly bounded number of balls.
		
	\end{proof}
	Let $(\phi_i^k)_i$ be a partition of unity subordinated to the sets $\Delta_i^k$. We write
	\[
	a^k_i := f \phi_i^k \sigma(B_i^k)^{-1/p} (\mu^k_i)^{-1/2}
	\]
	where $\mu_i^k := \int_{\Delta^k_i} |f(y) \phi_i^k(y)|^2 \, \frac{dy}{\delta_\Omega(y)}$. Clearly, we have that these functions $a_i^k$ are multiples of $T^p_2$ atoms. If we set $\lambda_i^k := \sigma(B^k_i)^{1/p} (\mu^k_i)^{1/2}$, we obtain
	\[
	f =\sum_{k,i} \lambda^k_i a^k_i.
	\]
	So it is only left to prove that
	\[
	\sum_{k,i} |\lambda_i^k|^p \lesssim \Vert \mathcal A(f) \Vert_p^p = \Vert f \Vert_{T^p_2}^p.
	\]
	However,
	\[
	\mu_i^k = \int_{\Delta^k_i} |f(y) \phi_i^k(y)|^2 \, \frac{dy}{\delta_\Omega(y)} \leq \int_{\Delta^k_i} |f(y)|^2 \, \frac{dy}{\delta_\Omega(y)} \leq \int_{CB^k_i \cap (O_{k+1}^*)^c} |f(y)|^2 \, \frac{dy}{\delta_\Omega(y)}.
	\]
	{If we apply} Lemma \ref{lemma:lemma2_in_CMS} with $F = (O_{k+1})^c,$ $F_\tau^* = (O_{k+1}^*)^c,$ $R_\alpha(F^*_\tau) = \Omega\backslash T(O_{k+1}^*)$, and $\Phi(y) = |f(y)|^2 \delta_\Omega(y)^{-(n+1)}\chi_{CB_i^k}(y)$,  {we get}
	\[
	\mu_i^k \lesssim \int_{CB^k_j \cap O_{k+1}^c} \mathcal A(f)^2\, \,d\sigma \lesssim \sigma(B_j^k) (2^{k+1})^2
	\]
	by the definition of $O_{k+1}$.
	Finally, {using the bounded overlapping} {property} of the balls $B^k_j$, we {infer}
	\begin{align*}
		\sum_{j,k} |\lambda_j^k|^p &\lesssim \sum_{k,j} \sigma(B^k_j) 2^{kp} 
		\lesssim \sum_k \sigma(O^*_k) 2^{(k+1)p} \\
		&\lesssim \sum_k \sigma(O_k) 2^{(k+1)p} \lesssim \Vert \mathcal A(f) \Vert_{L^p(\sigma)}^p.  
	\end{align*}
\end{proof}
\color{black}
\vvv

\subsection{Variants of the Dirichlet and Regularity problems}
\label{section:variants}
Let $\Omega \subset \R^{n+1}$ be a corkscrew domain with $n$-Ahlfors regular boundary $\pom$ such that either $\Omega$ is bounded or $\pom$ is unbounded, and $\sigma := \HH^n|_{\pom}$.

For any $p\in(1,\infty)$, we say that the \textit{Dirichlet problem in the tent space $T^p_2$} is solvable for the operator $\mathcal L$ (we write $(\wt D^{\mathcal L}_{p})$ is solvable) if there exists $C>0$ such that for all $f \in C_c(\pom)$, the solution $u$ of the continuous {Dirichlet problem} satisfies 
	\begin{equation}
	\Vert \delta_\Omega |\nabla u| \Vert_{T^{p}_2(\Omega)} \leq C \Vert f \Vert_{L^{p}(\sigma)}.
	\end{equation}

Consider the following Poisson problem
\begin{equation}
	\label{eq:definition_Poisson_pb}
	\begin{cases}
	-\mathcal L v=H-\operatorname{div} \boldsymbol{\Xi}, & \text { in } \Omega, \\  
	v \in Y^{1,2}_0(\Omega), 
	\end{cases}
\end{equation}
with Poisson data $H\in L^\infty_c(\Omega)$ and $\boldsymbol{\Xi} \in L^\infty_c(\Omega; \, \R^{n+1})$ where $Y_0^{1,2}(\Omega)$ is the completion of $C_c^\infty(\Omega)$ under the norm $\Vert u \Vert_{Y^{1,2}} := \Vert u \Vert_{L^{\frac{2(n+1)}{n-1}}(\Omega)}+\Vert \nabla u \Vert_{L^2(\Omega)}$. 

For any $p \in(1, \infty)$, we say that the \textit{Poisson regularity problem} is solvable for the operator $\mathcal L$ (we write $(PR^{\mathcal L}_{p})$ is solvable) if there exists $C>0$ such that for each $H \in L_c^{\infty}\left(\Omega\right)$ and $\boldsymbol{\Xi} \in$ $L_c^{\infty}\left(\Omega ; \R^{n+1}\right)$, the solution $v$ of the {Poisson problem \eqref{eq:definition_Poisson_pb}}  satisfies
	\begin{equation}
	\Vert\wt{{N}}_2(\nabla v)\Vert_{L^p(\sigma)} \leq C\left(\left\|\mathcal C_{2_*} (H) \right\|_{L^p(\sigma)}+\|\mathcal C_2(\boldsymbol{\Xi}/\delta_\Omega)\|_{L^p(\sigma)}\right) .
	\end{equation}
Analogously, for $p\in(0,\infty)$, we say that the \textit{{modified} Poisson regularity problem} is solvable for the operator $\mathcal L$ (we write $(\wt {PR}^{\mathcal L}_{p})$ is solvable) if there exists $C>0$ such that for each $H \in L_c^{\infty}\left(\Omega\right)$ and $\boldsymbol{\Xi} \in$ $L_c^{\infty}\left(\Omega ; \R^{n+1}\right)$, the solution $v$ of the {Poisson problem \eqref{eq:definition_Poisson_pb}} satisfies
	\begin{equation}\label{eq:def_mod_Poisson_reg}
		\Vert\wt{{N}}_2(\nabla v)\Vert_{L^p(\sigma)} \leq C\left(\left\|\delta_\Omega H \right\|_{T^p_2(\Omega)}+\|\boldsymbol{\Xi}\|_{T_2^p(\Omega)}\right) .
	\end{equation}
	For {$0< p \leq 1$}, we say that the \textit{Poisson regularity problem} $(PR^{\mathcal L}_p)$ is solvable if the {modified} Poisson regularity problem $(\widetilde{PR}^{\mathcal L}_p)$ is.

{The Poisson regularity problem was introduced  in \cite{MPT} and the modified  Poisson regularity problem in \cite{MZ}. In $\mathbb{R}^{n+1}_+$ and for small Carleson perturbations of $t$-independent operators, the non-tangential estimates of the latter problem for $\nabla L_{\mathbb{R}^{n+1}}^{-1}\operatorname{div}$ (i.e.  the operator with kernel the fundamental solution instead of the Green function) first appeared (without this name) in \cite[Propositions 4.6 and 5.1]{HMM}.  It is easy to see that once this estimate  is obtained, then by the standard connection between the fundamental solution and the Green function, one can prove that the modified Poisson regularity problem is equivalent to the solvability of the Regularity problem. }

{By \cite[Theorem 1.8]{MZ}, there is the following relation between the modified Poisson regularity problem and the Dirichlet problem in tent spaces.}
\begin{theorem}
	\label{thm:mod_reg_implies_mod_Dirichlet}
	Let $\Omega \subset \mathbb R^{n+1}$ be a corkscrew domain with $n$-Ahlfors regular boundary $\pom$. Then, for any  $p \in [1, 2]$ such that $\frac 1 p + \frac 1 {p'}=1$, solvability of $(\wt{PR}_p^{\mathcal L})$ implies solvability of $(\wt D_{p'}^{\mathcal L^*})$.
\end{theorem}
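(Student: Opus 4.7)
The plan is to exploit the natural duality between the tent spaces $T^p_2$ and $T^{p'}_2$ combined with an auxiliary Poisson problem for $\mathcal{L}$. Given $f \in C_c(\partial\Omega)$, let $u$ be the solution to the continuous Dirichlet problem for $\mathcal{L}^*$ with data $f$; we wish to bound $\|\delta_\Omega|\nabla u|\|_{T^{p'}_2(\Omega)}$ by $\|f\|_{L^{p'}(\sigma)}$. For $1 < p' \leq \infty$, standard tent-space duality (with the pairing $\langle F,G\rangle := \int_\Omega F\,G\,\delta_\Omega^{-1}\,dx$, plus the Carleson/BMO modification at the endpoint) reduces the task to controlling bulk pairings $\int_\Omega |\nabla u|\,g\,dx$ for nonnegative $g \in L^\infty_c(\Omega)$ with $\|g\|_{T^p_2(\Omega)} \leq 1$. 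Polarizing via $\boldsymbol{\Xi}(x) := g(x)\,\nabla u(x)/|\nabla u(x)|$ (set to $0$ when $|\nabla u(x)| = 0$), we get $\boldsymbol{\Xi} \in L^\infty_c(\Omega;\mathbb{R}^{n+1})$, $\|\boldsymbol{\Xi}\|_{T^p_2} \leq \|g\|_{T^p_2} \leq 1$, and $\int_\Omega |\nabla u|\,g\,dx = \int_\Omega \boldsymbol{\Xi}\cdot\nabla u\,dx$.

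Next I would invoke the hypothesis. For the above $\boldsymbol{\Xi}$, let $v \in Y^{1,2}_0(\Omega)$ be the unique solution of the Poisson problem $-\mathcal{L}v = -\mathrm{div}\,\boldsymbol{\Xi}$ (that is, with $H = 0$). Solvability of $(\wt{PR}^{\mathcal{L}}_p)$ then yields $\|\wt N_2(\nabla v)\|_{L^p(\sigma)} \lesssim \|\boldsymbol{\Xi}\|_{T^p_2(\Omega)} \lesssim 1$.

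The central step is the integration-by-parts that links the two PDEs. Since $\mathcal{L}^*u = 0$ and $v \in Y^{1,2}_0(\Omega)$, testing gives $\int_\Omega A^*\nabla u \cdot \nabla v\,dx = 0$, so
\[
\int_\Omega \boldsymbol{\Xi}\cdot\nabla u\,dx = -\int_\Omega (A\nabla v - \boldsymbol{\Xi})\cdot\nabla u\,dx.
\]
The vector field $A\nabla v - \boldsymbol{\Xi}$ is divergence-free in $\Omega$, while $\boldsymbol{\Xi}$ vanishes near $\partial\Omega$. Pairing against $u$ through a cutoff concentrated in a tubular neighborhood of $\partial\Omega$, or equivalently integrating on the sawtooth regions $\Omega_\varepsilon := \{\delta_\Omega > \varepsilon\}$ and letting $\varepsilon \to 0$, produces the boundary identity
\[
\int_\Omega \boldsymbol{\Xi}\cdot\nabla u\,dx = \int_{\partial\Omega} f\,(A\nabla v\cdot\nu)\,d\sigma,
\]
with the conormal $A\nabla v\cdot\nu$ understood as the appropriate non-tangential limit through $\Omega_\varepsilon$. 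H\"older's inequality together with the trace-type bound $\|A\nabla v\cdot\nu\|_{L^p(\sigma)} \lesssim \|\wt N_2(\nabla v)\|_{L^p(\sigma)}$ then closes the estimate:
\[
\left|\int_\Omega \boldsymbol{\Xi}\cdot\nabla u\,dx\right| \leq \|f\|_{L^{p'}(\sigma)}\,\|A\nabla v\cdot\nu\|_{L^p(\sigma)} \lesssim \|f\|_{L^{p'}(\sigma)}.
\]

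The main obstacle is the rigorous justification of the integration-by-parts identity and the trace estimate in our rough corkscrew setting: the outer normal $\nu$ exists only $\sigma$-a.e.\ on $\partial_*\Omega$, and boundary traces of $\nabla v$ must be interpreted via the modified non-tangential maximal function $\wt N_2$, whose built-in $L^2$-averaging is precisely what compensates for the absence of pointwise boundary values. This is handled by approximating through the sawtooth domains $\Omega_\varepsilon$ (where both the normal and the IBP are classical) coupled with a convergence argument in which the $\wt N_2$-control of $\nabla v$ plays the central role; the restriction $p\in[1,2]$ reflects the range in which these compactness/convergence arguments go through cleanly, and the endpoint $p=1$ requires replacing the usual $L^\infty$-duality with the Carleson/BMO version of tent-space duality.
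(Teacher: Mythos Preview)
The paper does not prove this theorem; it cites \cite[Theorem~1.8]{MZ}. Your overall duality strategy is the right one and matches the spirit of that reference, but there is a genuine gap at the boundary step.

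Your argument hinges on the identity $\int_\Omega \boldsymbol{\Xi}\cdot\nabla u\,dx = \int_{\partial\Omega} f\,(A\nabla v\cdot\nu)\,d\sigma$ together with the trace bound $\|A\nabla v\cdot\nu\|_{L^p(\sigma)} \lesssim \|\wt N_2(\nabla v)\|_{L^p(\sigma)}$. In the present setting $\partial\Omega$ is only assumed $n$-Ahlfors regular; it need not be rectifiable, so the measure-theoretic outer normal $\nu$ may fail to exist on a set of full $\sigma$-measure, and the conormal $A\nabla v\cdot\nu$ is not even defined pointwise. Even when $\nu$ does exist, the control $\wt N_2(\nabla v)\in L^p(\sigma)$ gives $L^2$-averaged interior bounds, not non-tangential limits of $\nabla v$; an $L^p$ conormal trace does not follow from this alone without extra structure (layer potentials, connectivity, or non-tangential convergence results that are unavailable here). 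Your proposed fix via the level sets $\Omega_\varepsilon=\{\delta_\Omega>\varepsilon\}$ is circular: passing to the limit in $\int_{\partial\Omega_\varepsilon} u\,(A\nabla v\cdot\nu_\varepsilon)\,d\sigma_\varepsilon$ requires precisely the boundary trace control you are trying to establish.

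The approach that avoids this obstruction --- and this is the content of \cite{MZ} --- replaces the boundary pairing by an \emph{interior} one using a Varopoulos-type extension $F$ of $f$ to $\Omega$ satisfying a Carleson-type estimate of the form $\|\mathcal C_2(\delta_\Omega\nabla F)\|_{L^{p'}(\sigma)} \lesssim \|f\|_{L^{p'}(\sigma)}$ (together with $\wt N(F)\in L^{p'}$). Writing $u = (u-F)+F$ with $u-F\in Y^{1,2}_0(\Omega)$, the weak formulations of both PDEs give
\[
\int_\Omega \boldsymbol{\Xi}\cdot\nabla u\,dx \;=\; -\int_\Omega (A\nabla v - \boldsymbol{\Xi})\cdot\nabla F\,dx,
\]
and the right-hand side is controlled by the $\wt N$--Carleson duality inequality $\bigl|\int_\Omega G\cdot\nabla F\,dx\bigr| \lesssim \|\wt N_2(G)\|_{L^p(\sigma)}\,\|\mathcal C_2(\delta_\Omega\nabla F)\|_{L^{p'}(\sigma)}$, which holds in corkscrew domains with Ahlfors regular boundary without any reference to $\nu$. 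The missing ingredient in your proof is precisely this extension and the accompanying Carleson duality, which is what \cite{MZ} supplies.
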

{Furthermore, we also have that solvability of the Dirichlet problem in tent spaces implies that the associated elliptic measure is in weak-$\mathcal A_\infty(\sigma)$.}
\begin{proposition}
	\label{prop:dirichlet_tent_implies_weak_A_infty}
	Let $\Omega \subset \R^{n+1}$ be a corkscrew domain with $n$-Ahlfors regular boundary $\pom$. If $(\wt D_{p'}^{\mathcal L})$ is solvable for some $p'\geq2$, then $\omega_{\mathcal L} \in \mbox{weak-}\mathcal A_\infty(\sigma)$.
\end{proposition}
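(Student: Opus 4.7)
The plan is to upgrade the hypothesis $(\widetilde{D}^{\mathcal L}_{p'})$ with $p'\geq 2$ to the Carleson measure estimate
\begin{equation*}
\iint_{B\cap\Omega} |\nabla u|^2\,\delta_\Omega(x)\,dx \;\leq\; C\,\|u\|_{L^\infty(\Omega)}^2\,\sigma(B\cap\pom)
\end{equation*}
for every ball $B$ centered on $\pom$ with $r(B)<\diam(\pom)$ and every bounded $\mathcal L$-harmonic function $u$ in $\Omega$, and then to invoke the known equivalence, in the corkscrew / Ahlfors regular setting, between this Carleson condition and $\omega_{\mathcal L}\in\text{weak-}\mathcal A_\infty(\sigma)$.

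To prove the Carleson estimate, I would fix such a ball $B$ and a bounded $\mathcal L$-harmonic $u$ with (approximate) continuous boundary data $f$ satisfying $\|f\|_\infty\leq 1$, split $f=f\chi_{\kappa B}+f\chi_{(\kappa B)^c}$ for a large dimensional constant $\kappa$, and write $u=v+w$ accordingly. For $v$, the hypothesis gives $\|\delta_\Omega|\nabla v|\|_{T_2^{p'}(\Omega)}\lesssim\sigma(\kappa B\cap\pom)^{1/p'}\approx\sigma(B\cap\pom)^{1/p'}$; H\"older's inequality in $L^{p'/2}(B\cap\pom,\sigma)$ (valid since $p'\geq2$) followed by the standard Fubini identity converting area integrals into Carleson integrals produces $\iint_{T(B)} |\nabla v|^2\,\delta_\Omega\,dx\lesssim\sigma(B\cap\pom)$. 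For the tail $w$, which vanishes continuously on $\kappa B\cap\pom$ and satisfies $\|w\|_\infty\leq 2$, Lemma \ref{lemma:AGMT} supplies the boundary H\"older decay $|w(x)|\lesssim(\delta_\Omega(x)/r(B))^\alpha$ on $T(B)$, and Caccioppoli's inequality together with integration over $T(B)$ yields the same Carleson bound for $w$, and hence for $u$.

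With the Carleson estimate in hand, the final step is to invoke the equivalence between the Carleson measure condition for bounded $\mathcal L$-harmonic functions and the weak-$\mathcal A_\infty$ property of $\omega_{\mathcal L}$; this is the backbone of the square-function program initiated by Kenig--Kirchheim--Pipher--Toro for the Laplacian in NTA domains and extended to the present class of corkscrew / Ahlfors regular domains and general elliptic operators through the works of Hofmann--Le--Mayboroda, Cavero--Hofmann--Martell--Toro, and Mourgoglou--Tolsa, among others. Concretely, applying the Carleson estimate to $u_F(x):=\omega_{\mathcal L}^x(F)$ for Borel $F\subset\Delta=B\cap\pom$ with $2B\subset B_0$, together with Bourgain's lemma (Lemma \ref{lemma:Bourgain}), which gives $\omega_{\mathcal L}^{x_\Delta}(2\Delta)\gtrsim 1$ at a corkscrew point $x_\Delta$ of $B$, a sawtooth / good-$\lambda$ argument delivers $\omega_{\mathcal L}^{x_\Delta}(F)\lesssim(\sigma(F)/\sigma(\Delta))^\theta\,\omega_{\mathcal L}^{x_\Delta}(2\Delta)$, which is precisely \eqref{eq:def_weak_A_infty}. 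The main obstacle is this last extraction of weak-$\mathcal A_\infty$ from the Carleson bound in the rough non-connected setting: it requires either passing to chord-arc sawtooth subdomains on which the classical theory for $\log(d\omega/d\sigma)\in\text{BMO}$ applies and projecting back, or a direct John--Nirenberg-type argument, both of which demand careful bookkeeping in the absence of Harnack chains.
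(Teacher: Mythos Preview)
Your approach is correct but takes a genuinely different route from the paper. The paper does not pass through the Carleson measure estimate at all: it directly verifies the criterion in Lemma~\ref{lemma:weak_A_infty} by testing on (mollified) characteristic functions $f_s\approx\chi_{A_1}$, using a pointwise bound from \cite{HL} of the form
\[
u_s(x)\;\leq\; C_\varepsilon\Bigl(\sigma(\Delta_x)^{-1}\!\int_{B_x\cap\Omega}|\nabla u_s|^2\,\delta_\Omega\,dm\Bigr)^{1/2}+C\varepsilon^\alpha,
\]
converting the local square integral via Fubini into $\int_{CB_x\cap\pom}\mathcal A(\delta_\Omega|\nabla u_s|)^2\,d\sigma$, and then applying H\"older (here $p'\geq2$ is used) together with the $(\widetilde D^{\mathcal L}_{p'})$ estimate to conclude $\omega_{\mathcal L}^x(A_1)\lesssim C_\varepsilon(\sigma(A_1)/\sigma(\Delta_x))^{1/p'}+C\varepsilon^\alpha$. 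This feeds straight into Lemma~\ref{lemma:weak_A_infty} and avoids entirely the CME $\Rightarrow$ weak-$\mathcal A_\infty$ black box you invoke, which, as you yourself note, is a substantial result in the non-connected setting.

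Your derivation of the CME is essentially fine (the H\"older-plus-Fubini step for $v$ is the same computation the paper does locally), and the final implication is indeed available in the literature, so the argument closes. Two small corrections: for the tail piece $w$ you need boundary H\"older continuity (e.g.\ \cite[Lemma~2.10]{AGMT}), not Lemma~\ref{lemma:AGMT}, which concerns decay \emph{away} from the support of the boundary data rather than decay near the boundary where the data vanishes; and your splitting produces discontinuous data, so a mollification step (which you allude to) is genuinely needed before $(\widetilde D^{\mathcal L}_{p'})$ can be applied. The trade-off is that your route yields the CME as a by-product, at the cost of importing a heavier external result, while the paper's argument is shorter and more self-contained modulo the \cite{HL} machinery.
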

{The proof is very similar to that of \cite[Theorem 1.4]{HL} so we will only highlight the main differences.}
\begin{proof}[Sketch of proof]
	We intend to show that $\omega_{\mathcal L}$ satisfies the hypotheses of Lemma \ref{lemma:weak_A_infty} (which corresponds to \cite[Lemma 3.2]{HL}).
	
	Let the sets $F, F_1$, and $A_1$ be as in their proof, and define
	\[
	f := \chi_{A_1}
	\]
	which satisfies $\Vert f \Vert_{L^p(\sigma)} = \sigma(A_1)^{1/p}$.	In order to work with continuous data, we mollify $f$ as in \cite[Lemma 3.9]{HL} and obtain a family of functions $(f_s)_s$. We have that $\sup_s \Vert f_s \Vert_{L^p(\sigma)} \lesssim \Vert f\Vert_{L^p(\sigma)}$ thanks to the boundedness of the Hardy-Littlewood maximal operator in $L^p(\sigma)$. 
	
	Let $u_s$ be the solution of the Dirichlet problem with boundary data $f_s$. Then, for small $\varepsilon>0$, we have by Fatou's lemma and \cite[Claim 2]{HL} that
	\begin{align*}
		\omega^x_{\mathcal L}(A_1) &\leq \int_\pom \liminf_{s\to0}f_s \, d\omega^x_{\mathcal L} \leq \liminf_{s\to0} u_s(x) \\
		&\leq \liminf_{s\to0} C_{\varepsilon}\left(\frac{1}{\sigma\left(\Delta_X\right)} \int_{B_x \cap \Omega}|\nabla u_s|^2 \delta_\Omega \,dm\right)^{1 / 2} + C \varepsilon^\alpha.
	\end{align*}
	Using Fubini, we obtain that
	\[
	\int_{B_x \cap \Omega} |\nabla u_s|^2 \delta_\Omega \,dm \lesssim \int_{CB_x\cap\pom} \mathcal A(\delta_\Omega|\nabla u_s|)^2 \, d\sigma.
	\]
	Hence, by solvability of $(\wt D_{p}^{\mathcal L})$ and H\"older's inequality, we get
	\begin{multline}
		\omega^x_{\mathcal L}(A_1) \leq \liminf_{s\to0}C_{\varepsilon}\left(\fint_{CB_x\cap\pom} \mathcal A(\delta_\Omega|\nabla u_s|)^p \, d\sigma\right)^{1 / p} + C \varepsilon^\alpha
		\lesssim C_{\varepsilon} \sigma(CB_x\cap\pom)^{-1/p} \Vert f \Vert_{p} + C\varepsilon^\alpha \\
		=C_{\varepsilon} \left(\frac{\sigma(A_1)}{\sigma(CB_x\cap\pom)}\right)^{1/p} + C\varepsilon^\alpha  \leq C_{\varepsilon}\eta^{1/p}  + C \varepsilon^\alpha.
	\end{multline}
	The rest of the proof is {the same as}  the one in \cite{HL} with $\eta^{1/p}$ instead of $\gamma$.
\end{proof}

\color{black}

\vvv

\section{Atomic decomposition of \haj-Sobolev spaces on Ahlfors regular metric spaces}
\label{section:hajlasz}

In this section we will prove atomic decompositions for the \haj-Sobolev spaces $M_\tau^{1,p}(X)$, $p\leq1$, defined on {a uniformly perfect metric space $X$ equipped with a doubling measure $\sigma$}. The proofs are inspired by the ones appearing in the works of \cite{BB, BD1}. The main differences are that {we consider} atoms in the \haj-Sobolev space as we do not assume a differentiable structure on $X$, we consider the full range $p \in (0,1]$, and we do not assume $X$ is non-compact.

Let $(X, \,d(\cdot,\cdot),\, \sigma)$ be a a uniformly perfect metric space $X$ equipped with a doubling measure $\sigma$ with uniform perfectness constant $\lambda\geq2$ (see Remark \ref{rmk:uniformly_perfect}). We start by defining adequate atoms to carry out the atomic decompositions.
 \begin{definition}[$(\tau,t,p)$-atom]
\label{def:atom}
A $\sigma$-measurable function $a$ is a $(\tau,t,p)$-atom for $p \in (0,+\infty)$, $t\in(p,+\infty]$, and $\tau \in (0,+\infty]$ if there exists a ball $B$ and $\nabla_H a\in D(u) \cap L^t(X)$ such that
\begin{itemize}
	\item[(a)] $\supp a \subset B$,
	\item[(b)] $\Vert \nabla_H a \Vert_{L^t(X)} \leq \sigma(B)^{\frac 1 t - \frac1 p}$,
	\item[(c)] $\Vert a \Vert_{L^t(X)} \leq \tau
	\sigma(B)^{\frac 1 t - \frac1 p}$.
\end{itemize}
 
\end{definition}
Note that in other references (such as \cite{BB,BD1, CW, DK1}) some cancellation property is required for the atoms, but this is not necessary for our purposes. This cancellation property is much more important in the case of non-Sobolev Hardy spaces.  
Also, observe that there is always some control on the $t$-norm of $a$ thanks to the second Poincar\'e inequality in Theorem \ref{thm:poincare_compact_supp}, even when $\tau = +\infty$ ({as long as $\lambda B \subsetneq X$}), and that a $(\tau,r,p)$-atom is a $(\tau,t,p)$-atom too for $t<r$ by H\"older inequality.

We will now prove a Calder\'on-Zygmund decomposition for Lipschitz functions in the \haj-Sobolev space $M_\tau^{1,p}(X)$ (also with $p\leq1$). This decomposition has some similarities to the ones in \cite{BB,BD1} for $p=1$, although these works use classical Sobolev atoms instead of \haj-Sobolev atoms. 

\begin{lemma}[$q$\color{black}-Calder\'on-Zygmund decomposition]
\label{lemma:CZ}
Let $f \in M_\tau^{1,p}(X)\cap \operatorname{Lip}(X)$, $\tau \in (0,+\infty]$, $\nabla_H f \in D(f) \cap L^p(X)$, $p \in (0, +\infty]$, $q\in (0,p]$, and $\alpha >  \left (C_X^p {\fint_X (\tau^{-p}|f|^p + |\nabla_H f|^p)\, d\sigma}\right)^{1/p}$ for some constant $C_X$ depending on the Ahlfors regularity of $X$. 
Then, there exists a constant $C>0$, a collection of balls $(B_i)_i$, points $(x_i)_i$, {Lipschitz} functions $g$, $(b_i)_i$, and $(\varphi_i)_i$, and $L^q(X)$ functions $(\nabla_H b_i)_i$ such that 
\begin{gather}
    f - g = \sum_i b_i,\quad \mbox{$\sigma$-a.e. and in $M_\tau^{1,q}$};\\
    \bigcup_i B_i = \{x \in X\,|\, M(\tau^{-q}|f|^q+|\nabla_H f|^q)(x)>\alpha^q\} =: U_\alpha; \\
  {  g(x)=f(x) \mbox{ for $x\in U_\alpha^c$},~ |g|\leq \tau\alpha, ~|g|_{\operatorname{Lip}} \leq C\alpha;} \label{CZproperty2}\\ 
    (\varphi_i)_i \mbox{ forms a partition of unity of $U_\alpha$ subordinated to } (B_i)_i\mbox{ with $\|\varphi_i \|_{\operatorname{Lip}}\lesssim r(B_i)^{-1}$};\\
    x_i \in \lambda B_i \backslash U_\alpha,\, |f(x_i)|\leq\tau\alpha, \, |\nabla_H f(x_i)| \leq \alpha, \, b_i(x) := \left( f(x) - f(x_i) \right) \varphi_i(x),~ \forall i; \\
    \Vert b_i \Vert_{L^q(\sigma)} \leq C \tau\alpha \sigma(B_i)^{1/q},~ \Vert \nabla_H b_i \Vert_{L^q(\sigma)} \leq C \alpha \sigma(B_i)^{1/q},~ \nabla_H b_i \in D(b_i), ~\forall i;\label{CZproperty6}\\
    \sum_i \sigma(B_i) \leq  C{\alpha^{-p}}  \int_X \left( \tau^{-p}|f|^p + |\nabla_H f|^p \right) d\sigma ; \label{CZproperty7}\\
    \sum_{i} \chi_{B_i}(x) \leq N,~ \forall x\in X.
\end{gather}
The constant $C$ depends only on the Ahlfors regularity constants of $X$, and the constant $N$ is the one in Theorem \ref{thm:whitney_balls} (Whitney ball decomposition).
\end{lemma}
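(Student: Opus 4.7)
The approach is the classical Calder\'on--Zygmund stopping-time argument adapted to the Haj\l asz setting. Set $h := \tau^{-q}|f|^q + |\nabla_H f|^q \in L^{p/q}(X)$ and $U_\alpha := \{x \in X : Mh(x) > \alpha^q\}$; this set is open by lower semicontinuity of $M$, and the weak $(p/q, p/q)$-type estimate for $M$ (valid since $p/q\geq 1$) yields
\[
\sigma(U_\alpha) \leq C\alpha^{-p}\int_X\bigl(\tau^{-p}|f|^p + |\nabla_H f|^p\bigr)\,d\sigma,
\]
which is exactly \eqref{CZproperty7} and, combined with the assumption $\alpha > (C_X^p\fint_X\cdots)^{1/p}$, forces $U_\alpha \subsetneq X$. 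Applying the Whitney decomposition of Theorem \ref{thm:whitney_balls} to $U_\alpha$ produces balls $(B_i)_i$ with the stated overlap and proximity properties, and a standard normalization of Lipschitz cutoffs supported in $B_i$ and equal to $1$ on $\underline{B_i}$ gives the partition of unity $(\varphi_i)_i$ with $\supp\varphi_i \subset B_i$, $\sum_i\varphi_i = \chi_{U_\alpha}$, and $\|\varphi_i\|_{\operatorname{Lip}}\lesssim r(B_i)^{-1}$. For each $i$, pick $x_i \in \lambda B_i \cap U_\alpha^c$; after adjusting $\nabla_H f$ on the $\sigma$-null set $\{x_i\}_i$ (which preserves the Haj\l asz property), Lebesgue differentiation applied to $|\nabla_H f|^q$ together with continuity of $f$ yield $|\nabla_H f(x_i)| \leq \alpha$ and $|f(x_i)| \leq \tau\alpha$. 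Finally set $b_i := (f - f(x_i))\varphi_i$ and $g := f - \sum_i b_i$, so that $g = f$ on $U_\alpha^c$ while $g = \sum_i f(x_i)\varphi_i$ on $U_\alpha$, yielding the pointwise bound $|g| \leq \tau\alpha$.

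For the estimates \eqref{CZproperty6}, the pointwise Haj\l asz inequality $|f(y) - f(x_i)| \leq |y - x_i|(\nabla_H f(y) + \alpha)$ together with $B_i \subset B(x_i, (\lambda + 1)r(B_i))$ and $M(|\nabla_H f|^q)(x_i) \leq \alpha^q$ yields $\|b_i\|_{L^q(X)} \lesssim r(B_i)\alpha\sigma(B_i)^{1/q}$; when $\tau < \infty$ the bound $\fint_{B_i}|f|^q \lesssim \tau^q\alpha^q$ refines this to $\|b_i\|_{L^q(X)} \lesssim \tau\alpha\sigma(B_i)^{1/q}$, while for $\tau = +\infty$ the estimate is vacuous. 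Leibniz' rule (Lemma \ref{lemma:leibniz}) supplies the Haj\l asz gradient
\[
\nabla_H b_i := \bigl(\nabla_H f + r(B_i)^{-1}|f - f(x_i)|\bigr)\chi_{B_i} \in D(b_i),
\]
each summand of which is bounded in $L^q(X)$ by $\alpha\sigma(B_i)^{1/q}$ via the same maximal-function estimate (together with the Haj\l asz inequality for the second). Bounded overlap and $\sum_i\sigma(B_i)\lesssim\sigma(U_\alpha)<\infty$ then imply $\sigma$-a.e.\ convergence of $\sum_i b_i$ (only finitely many terms are nonzero at each point) and convergence in $M^{1,q}_\tau(X)$, yielding property (1).

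The principal obstacle is the Lipschitz bound $|g|_{\operatorname{Lip}} \leq C\alpha$ in \eqref{CZproperty2}. I would split into three cases. \emph{Case (i):} for $x, y \in U_\alpha^c$, apply the self-improved Haj\l asz inequality
\[
|f(x) - f(y)| \lesssim |x-y|\bigl(M(|\nabla_H f|^q)(x)^{1/q} + M(|\nabla_H f|^q)(y)^{1/q}\bigr) \lesssim \alpha|x-y|;
\]
this is proved by picking, via pigeonhole, a dyadic chain $z_0, z_1, \ldots$ in balls $B(x, 2^{-k}|x-y|)$ (and analogously from $y$) with $\nabla_H f(z_k) \lesssim (\fint_{B_k}(\nabla_H f)^q)^{1/q}$ and summing the pointwise Haj\l asz inequality, using continuity of $f$ at the endpoints. \emph{Case (ii):} for $x\in U_\alpha^c, y\in U_\alpha$, write $g(x)-g(y) = \sum_i(f(x) - f(x_i))\varphi_i(y)$; for each $i$ with $\varphi_i(y)\neq 0$, the Whitney property gives $r(B_i)\lesssim \dist(y, U_\alpha^c)\leq|x-y|$ and hence $|x - x_i|\lesssim|x-y|$, reducing to case (i) for the finitely many active terms. \emph{Case (iii):} for $x, y\in U_\alpha$, pick $j$ with $x\in B_j$ and write $g(x)-g(y) = \sum_i(f(x_i) - f(x_j))(\varphi_i(x) - \varphi_i(y))$: in the short-range regime $|x-y|\lesssim r(B_j)$ the active indices satisfy $r(B_i)\approx r(B_j)$ and $|x_i-x_j|\lesssim r(B_j)$, so $\|\varphi_i\|_{\operatorname{Lip}}\lesssim r(B_j)^{-1}$ combined with case (i) applied to $x_i, x_j$ gives the bound; in the long-range regime, chain through a point of $U_\alpha^c$ within distance $\lesssim r(B_j) \lesssim |x-y|$ of $x$ to reduce to cases (i)--(ii). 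The delicate ingredient is the self-improved Haj\l asz inequality of case (i): for $q < 1$ the standard telescoping via the $(1,1)$-Poincar\'e inequality of Theorem \ref{thm:poincare_compact_supp} is unavailable because Jensen's inequality goes the wrong way, and a direct pigeonhole chain argument using the pointwise Haj\l asz relation is required.
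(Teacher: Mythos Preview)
Your construction matches the paper's: same $U_\alpha$, Whitney covering, partition of unity, choice of $x_i$, definition of $b_i$, and $L^q$ estimates via the Leibniz rule. The only divergence is in the argument for $|g|_{\operatorname{Lip}}\lesssim\alpha$. The paper first redefines $\nabla_H f(x):=\limsup_{r\to0}\fint_{B(x,r)}\nabla_H f\,d\sigma$ so that the Haj\l asz inequality holds \emph{pointwise everywhere} (citing \cite[Lemma~10.2.2]{HKST}); then $\nabla_H f\le\alpha$ on $U_\alpha^c$, so $f|_{U_\alpha^c}$ is $2\alpha$-Lipschitz, and a global $2\alpha$-Lipschitz extension $\tilde f$ together with the identity $g(x)=\tilde f(z)+\sum_i(\tilde f(x_i)-\tilde f(z))\varphi_i(x)$ reduces every case (both in $U_\alpha$ close/far, mixed, both outside) to the Lipschitz constant of $\tilde f$---this extension device is precisely what the paper flags as its departure from \cite{BD1}. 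Your route instead proves the self-improvement $|f(x)-f(y)|\lesssim|x-y|\bigl(M((\nabla_H f)^q)^{1/q}(x)+M((\nabla_H f)^q)^{1/q}(y)\bigr)$ via a pigeonhole chain; this is correct (it is the Haj\l asz--Kinnunen mechanism) but heavier than necessary, since after the pointwise redefinition your case~(i) collapses to a one-line application of the Haj\l asz inequality with both endpoints in $\{\nabla_H f\le\alpha\}$. One minor point: your long-range subcase of~(iii) should chain through points of $U_\alpha^c$ near \emph{both} $x$ and $y$; the paper frames this as the case where no $B_i$ with $i\in I_x$ meets any $B_j$ with $j\in I_y$, which forces $d(x,y)\gtrsim r(B_k)$ for every active index $k$ and makes the bookkeeping symmetric.
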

Before presenting the proof, we remark that no constant in the Calder\'on-Zygmund decomposition depends on the Lipschitz seminorm of $f$ and that the condition \newline $\alpha > \left (C_X^p {\fint_X (\tau^{-p}|f|^p + |\nabla_H f|^p )\,d\sigma}\right)^{1/p}$ is only meaningful in the case $\sigma(X)<+\infty$.\color{black}
\begin{proof}~
		Fix $f \in M_\tau^{1,p}(X)\cap\operatorname{Lip}(X)$, $\alpha>0$, and a \haj~derivative $\nabla_H f \in D(f)\cap L^p(X)$. We can redefine $\nabla_H f(x)$ as the $\limsup_{r \to 0} \fint_{B(x,r)} \nabla_H f \, d\sigma$ (as this only changes it in a set of measure zero) and by \cite[Lemma 10.2.2]{HKST} this new \haj{} {gradient} satisfies \eqref{eq:def_haj_grad} pointwise.\color{black}{}
		Let
		\[
		U_\alpha := \{x \in X\,|\, M(\tau^{-q}|f|^q+|\nabla_H f|^q)(x)>\alpha^q\}
		\]
		{where $M$ is the uncentered Hardy-Littlewood maximal function with respect to the measure $\sigma$.} The properties of the uncentered Hardy-Littlewood maximal operator $M$ (in particular the weak $(p,p)$ inequality for $M(|\cdot|^q)^{1/q}$ for $q\leq p$) give us
		\begin{align}
		\label{eq:measure_Ualpha}
			\sigma(U_\alpha) \leq C_X^p \alpha^{-p} (\tau^{-p}\Vert f\Vert_p^p + \Vert \nabla_H f \Vert_p^p) <+\infty 
		\end{align}
		since $f$ belongs to $M_\tau^{1,p}$ and $p\geq q$.
		
		\subsubsection*{Case $U_\alpha=\varnothing$.} We may choose $g=f$, $b_i=0$, and we already get a Calder\'on-Zygmund decomposition thanks to Lebesgue's differentiation theorem.
		
        \subsubsection*{Case $U_\alpha = X$.}
		This cannot happen thanks to the condition $\alpha >  \left (C_X^p {\fint_X (\tau^{-p}|f|^p + |\nabla_H f|^p )\,d\sigma}\right)^{1/p}$.

		\subsubsection*{Case {$\varnothing \neq U_\alpha \subsetneq X$}.} Consider a Whitney ball decomposition of $U_\alpha$ (as in Theorem \ref{thm:whitney_balls}) with balls $B_i$. We denote $\wt B_i = \lambda B_i$, a multiple of the ball $B_i$ given by the Whitney ball decomposition that satisfies $\wt B_i \cap U_\alpha^c \neq \varnothing$. Notice that we have already proved the two last properties of the statement: bounded overlapping and
		\[
		\sum_i \sigma(B_i) \leq N \sigma(U_\alpha) \leq  C_X N \alpha^{-p} (\tau^{-p}\Vert f\Vert_p^p + \Vert \nabla_H f \Vert_p^p) 
		\]
		thanks to \eqref{eq:measure_Ualpha}.
		
		Let's define the ``bad" functions $b_i$ of the decomposition. Consider a partition of unity $(\varphi_i)_i$ of $U_\alpha$ such that $\varphi_i$ is supported in $B_i$, and $\varphi_i$ is Lipschitz with $ |\varphi_i|_{\operatorname{Lip}} \lesssim \frac 1 {r(B_i)}$ for all $i$.
		Note that, by definition of the maximal operator $M$, we have
		\[
		\fint_{\wt B_i} \left(\tau^{-q}|f|^q + |\nabla_H f|^q \right)d\sigma \leq \alpha^q
		\]
		since the ball $\wt B_i$ intersects the complementary of $U_\alpha$. 
		This implies that there exists $x_i\in \wt B_i \backslash U_\alpha$ such that $\tau^{-q}|f(x_i)|^q + |\nabla_H f(x_i)|^q \leq \alpha^q$. 
		For such $x_i$, set
		\[
		b_i =  ( f - f(x_i)) \varphi_i.
		\]
		Clearly, $\supp b_i \subset B_i$. Using Leibniz' rule (Lemma \ref{lemma:leibniz}), we have a \haj~ gradient for $b_i$ given by 
		\begin{equation}
		\label{eq:haj_grad_bi}
		\nabla_H b_i(x) = \left (\frac{C}{r(B_i)} |f(x)- f(x_i)| + \nabla_H f(x) \right)\chi_{B_i}(x) \lesssim (\nabla_H f(x) + \nabla_H f(x_i))\chi_{B_i}(x)
		\end{equation}
		for some constant $C>0$.
		
		Now, we proceed to compute the $q$-norms of $b_i$ and $\nabla_H b_i$. We have
		\begin{align*}
			\int_X |b_i|^q d\sigma &\leq \int_{B_i} |f(x) - f(x_i)|^q d\sigma \\
			&\lesssim \int_{\wt B_i} \left(|f(x)|^q + |f(x_i)|^q \right)d\sigma 
			\lesssim \tau^q\alpha^q \sigma(\wt B_i) 
			\lesssim \tau^q\alpha^q \sigma(B_i)
		\end{align*}
		by the definition of $U_\alpha$ and properties of the uncentered maximal operator, doubling of the measure, and the definition of $x_i$.
		Also using the expression of $\nabla_H b_i$ considered in \eqref{eq:haj_grad_bi}, we can bound
		\begin{align*}
			\int_{X} |\nabla_H b_i|^q \,d\sigma &\lesssim \frac{1}{r(B_i)^q} \int_{B_i} |f(x)- f(x_i)|^q d\sigma(x)+ \int_{B_i}|\nabla_H f|^q d\sigma \\
			&\lesssim \int_{B_i} \left(|\nabla_H f(x)|^q + |\nabla_H f(x_i)|^q \right) d\sigma(x) \\
			&\leq \int_{\wt B_i} |\nabla_H f(x)|^q d\sigma +\alpha^q \sigma(B_i)
			\lesssim \alpha^q \sigma(B_i)
		\end{align*}
    using the \haj~gradient property and the same ideas as before.
	
	Now, we only have left to check the behavior of the ``good" part $g$. We have
	\[
	g = f \cdot \chi_{U_\alpha^c} + \sum_i f(x_i) \varphi_i,
	\]
	with $\supp \left(\sum f(x_i) \varphi_i\right) \subseteq U_\alpha$.
	The bound $|g|\leq\tau\alpha$ follows easily. In $U_\alpha^c$, it is a consequence of Lebesgue's differentiation theorem and the definition of $U_\alpha$. On $U_\alpha$, it is due to the fact that $|f(x_i)|\leq\tau\alpha$ and $\sum \varphi_i(x) = \chi_{U_\alpha}(x)$.

	The part that comes now is different from the proof in \cite{BD1} as that work does not use Lipschitz extensions.
	{We know that for $x\in U_\alpha^c$, we have $\nabla_H f(x)\leq \alpha$ and $\nabla_H f$ satisfies \eqref{eq:def_haj_grad} pointwise.} Hence, we may consider a Lipschitz extension $\tilde f$ of $f|_{U_\alpha^c}$ to $X$ with Lipschitz constant $2\alpha$.
	Then, for any $z\in X$ and $x \in U_\alpha$, we may rewrite 
	\begin{equation}
	\label{eq:expression_good_function}
	g(x) = \tilde f(z) +  \sum_i (\tilde f(x_i) - \tilde f(z) ) \varphi_i(x)
	\end{equation}
	since $f(x_i) = \tilde f(x_i)$ as $x_i \in U_\alpha^c$. Moreover, 
	\eqref{eq:expression_good_function} also holds for all $x\in X$ if we let $z=x$ in the expression. Our aim is to check that $g$ is Lipschitz with constant $C\alpha$ for some $C>0$.
	To this end, we will consider four different cases: $x,y \in U_\alpha$ and they are ``close", $x,y \in U_\alpha$ and they are ``far", $x\in U_\alpha$ and $y \in U_\alpha^c$, and $x,y\in U_\alpha^c$.
	
	\subsubsection*{Case $x,y \in U_\alpha$ and they are ``close".}
	Let $I_x := \{ i \,|\, x\in B_i\}$ for any $x \in U_\alpha$. Assume there exist $i_0 \in I_x$ and $j_0 \in I_y$ such that $ B_{i_0} \cap  B_{j_0} \neq \varnothing$ and fix $z\in B_{i_0} \cap B_{j_0}$. Then, the radii $r(B_k)$ for $k\in I_x \cup I_y$ are all comparable. 
	We have
	\begin{align}
	\label{eq:lip_constant_g}
	|g(x) - g(y)|  &= \left |\sum_{i\in I_x} (\tilde f(x_i) - \tilde f(z)) \varphi_i(x) - \sum_{i\in I_y} (\tilde f(x_i) - \tilde f(z)) \varphi_i(y) \right| \nonumber\\
	&=  \left |\sum_{i \in I_x \cup I_y} (\tilde f(x_i) - \tilde f(z)) (\varphi_i(x)-\varphi_i(y))  \right| \\
	&\leq C\alpha d(x,y)\sum_{i \in I_x \cup I_y}  \frac{d(x_i,z)}{r(B_i)}. \nonumber
	\end{align}
	Thanks to the properties of the Whitney ball decomposition, $\#I_x \leq N$ for all $x \in U_\alpha$ and, for all $i,k\in I_x$ the radius of $B_i$ and $B_k$ are comparable to $d(x,U_\alpha^c)$.
	Using that $z\in  B_{i_0} \cap  B_{j_0}$, we obtain $d(x_i,z) \lesssim r(B_i)$ for $i\in I_x \cup I_y$ . Using this in \eqref{eq:lip_constant_g} gives us the desired bound for $|g(x)-g(y)|$ in this case.
	
	\subsubsection*{Case $x,y \in U_\alpha$ and they are ``far".}
 	Assume that for all $i\in I_x$ and $j \in I_y$ we have that $ B_i \cap  B_j = \varnothing$. 
	In this case, we have $d(x,y)\gtrsim r(B_i)$ for all $i\in I_x \cup I_y$.
	Then, we write
	\begin{equation}
	\label{eq:proof_CZ_case_far}
	|g(x) - g(y)| \leq \sum_{i\in I_x} |\tilde f(x_i) - \tilde f(z)|  + \sum_{j\in I_y} |\tilde f(x_j) - \tilde f(z)|.
	\end{equation}
	Choose $z = x_{j_0}$ for some fixed $j_0\in I_y$. Then, we can bound
	\[
	\sum_{j\in I_y} |\tilde f(x_j) - \tilde f(z)| \leq \sum_{j\in I_y} 2 \alpha d(x_{j},z) \lesssim N\alpha d(x,y)
	\]
	since $d(x_j,z)$ is comparable to $r(B_j)$ for $j\in I_y$ (and the constant $N$ comes from the bounded overlapping). We can control the other term in the right hand side of \eqref{eq:proof_CZ_case_far} by using that $d(x_i,z)\leq {d(x_i,x)} + {d(x,y)} + {d(y,z)} \lesssim r(B_i)+d(x,y) + r(B_{j_0}) \lesssim 3d(x,y)$ so we also get the desired Lipschitz bound. 
	
	\subsubsection*{Case $x,y\in U_\alpha^c$.} 
	If $x,y\in U_\alpha^c$ then it's trivial to see that $|g(x)-g(y)| = |f(x) - f(y)| \leq 2\alpha$ because there $f$ coincides with $\tilde f$ which is globally $2\alpha$-Lispchitz.
	 
	\subsubsection*{Case $x\in U_\alpha$ and $y \in U_\alpha^c$.}
	Using that $\varphi_i(y)=0$ for all $i$, we take $z=x$ in \eqref{eq:expression_good_function} and get
	\begin{align*}
	|g(x) - g(y)| &\leq |\tilde f(x) - \tilde f(y)| + \left| \sum_{i \in I_x} (\tilde f(x_i) - \tilde f(x))\varphi_i(x) \right| \\
	&\lesssim |\tilde f(x) - \tilde f(y) | + 2\alpha \sum_{i \in I_x} r(B_i).
	\end{align*}
	Since we have $r(B_i) \lesssim d(x,y)$ for $i\in I_x$ and $\#I_x \leq N$, we obtain the Lipschitz bound for the final case.
\end{proof}
\begin{remark}
	The expression for the good part of decomposition $g$ appearing in \eqref{eq:expression_good_function} does not depend on the Lipschitz extension $\tilde f$ chosen of $f|_{U_\alpha^c}$. This will be used in the proof of the next theorem. 
\end{remark}

Using the previous Calder\'on-Zygmund decomposition, we are ready to obtain an atomic decomposition for Lipschitz functions in $M_\tau^{1,p}(X)$ for $p\leq1$. The proof presents similarities with \cite[Propositions 3.4 and 4.7]{BD1} for $p=1$ but with {some marked differences such as the fact that we avoid the use of Sobolev inequalities and thus obtain $(\tau, \infty, p)$-atoms in the decomposition}.
\begin{theorem}[Atomic decomposition]
\label{thm:atomic_decomposition}
Let $p\in (0,1]$, $\tau\in(0,+\infty]$, $f\in M_\tau^{1,p}(X)\cap\operatorname{Lip}(X)$, and $\nabla_H f \in D(f) \cap L^p(X)$. There exists a constant $C>0$, a sequence $(a_j)_j$ of {Lipschitz} $(\tau,\infty,p)$-atoms, and a sequence $(\lambda_j)_j$ of real numbers such that 
\[
f = \sum_j \lambda_j a_j \quad\mbox{in $M_\tau^{1,p}(X)$ }\color{black}\quad\text{ and }\quad  \sum_j |\lambda_j|^p  \leq C \left ( \tau^{-p} \Vert f \Vert_p^p + \Vert \nabla_H f \Vert_p^p\right).
\] 
\end{theorem}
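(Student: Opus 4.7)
The plan is to apply the $q$-Calder\'on-Zygmund decomposition (Lemma~\ref{lemma:CZ}) iteratively at dyadic levels $\alpha_k=2^k$, for some fixed $q\in(0,p)$, and to read off the atomic decomposition from a telescoping sum of the resulting ``good parts''. Writing $g_k$ for the good part at level $2^k$, $\Omega_k:=U_{2^k}=\{M(\tau^{-q}|f|^q+|\nabla_H f|^q)>2^{kq}\}$ for the associated open set, and $(B_i^k)_i,(\varphi_i^k)_i$ for the Whitney cover and partition of unity, the key observation is that $\Omega_{k+1}\subset\Omega_k$ forces $g_k=g_{k+1}=f$ on $\Omega_k^c$, so $h_k:=g_{k+1}-g_k$ is supported in $\Omega_k$ and is globally $C\cdot 2^k$-Lipschitz (by \eqref{CZproperty2}). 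I will then use
\[
f=\lim_{K\to\infty}g_K=g_{k_0}+\sum_{k=k_0}^{\infty}h_k,
\]
where $g_K\to f$ pointwise by Lebesgue differentiation (as $M(\cdots)(x)<\infty$ $\sigma$-a.e.), letting $k_0\to-\infty$ when $\sigma(X)=\infty$ (using $\|g_{k_0}\|_\infty\le\tau\,2^{k_0}\to 0$). When $\sigma(X)<\infty$ I will stop at the smallest $k_0$ for which Lemma~\ref{lemma:CZ} applies; then $g_{k_0}$ itself, after normalization by $\lambda_0:=C\,2^{k_0}\sigma(X)^{1/p}$, becomes a single Lipschitz $(\tau,\infty,p)$-atom supported on a ball containing $X$, and $|\lambda_0|^p\lesssim 2^{k_0 p}\sigma(X)$ is absorbed into the final bound via the choice of $k_0$.

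To decompose each $h_k$, I split $h_k=\sum_i h_k\varphi_i^k$ and set
\[
\lambda_{k,i}:=C\cdot 2^k\sigma(B_i^k)^{1/p},\qquad a_{k,i}:=(h_k\varphi_i^k)/\lambda_{k,i}.
\]
The quantitative crux is a refined pointwise bound on $h_k$ inside each Whitney ball: because $\lambda B_i^k$ meets $\Omega_k^c$, there is $z_i^k\in\lambda B_i^k$ with $h_k(z_i^k)=0$, whence by the global Lipschitz property of $h_k$,
\[
\|h_k\|_{L^\infty(B_i^k)}\le C\cdot 2^k\min\bigl(r(B_i^k),\,\tau\bigr),
\]
where the second term in the minimum comes from the trivial bound $|h_k|\le 3\tau\,2^k$. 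Applying Leibniz' rule (Lemma~\ref{lemma:leibniz}) with $|\varphi_i^k|_{\operatorname{Lip}}\lesssim r(B_i^k)^{-1}$ and the constant $C\,2^k$ as a \haj{} gradient of $h_k$ (its global Lipschitz constant) then gives $\|\nabla_H(h_k\varphi_i^k)\|_\infty\le C'\,2^k$ \emph{without} a $\tau/r(B_i^k)$ blow-up, exactly matching the atomic constraint $\|\nabla_H a_{k,i}\|_\infty\le\sigma(B_i^k)^{-1/p}$. Similarly $\|a_{k,i}\|_\infty\lesssim\tau\sigma(B_i^k)^{-1/p}$, so each $a_{k,i}$ is a Lipschitz $(\tau,\infty,p)$-atom.

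For the coefficient bound, bounded overlap of $(B_i^k)_i$ combined with \eqref{CZproperty7} gives $\sum_i\sigma(B_i^k)\lesssim\sigma(\Omega_k)$, and then layer-cake together with the boundedness of $M$ on $L^{p/q}(\sigma)$ yields
\[
\sum_{k,i}|\lambda_{k,i}|^p\lesssim\sum_k 2^{kp}\sigma(\Omega_k)\approx\bigl\|M(\tau^{-q}|f|^q+|\nabla_H f|^q)^{1/q}\bigr\|_{L^p(\sigma)}^p\lesssim \tau^{-p}\|f\|_{L^p}^p+\|\nabla_H f\|_{L^p}^p.
\]
Convergence of $\sum_{k,i}\lambda_{k,i}a_{k,i}$ to $f$ in $M^{1,p}_\tau(X)$ will follow from $p$-subadditivity of the quasi-norm (as $p\le 1$), the uniform bound $\|a_{k,i}\|_{M^{1,p}_\tau}\lesssim 1$, and the pointwise convergence $g_K\to f$. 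The main obstacle is the refined pointwise bound $|h_k|\lesssim 2^k\min(r(B_i^k),\tau)$: without it the Leibniz term $|\varphi_i^k|_{\operatorname{Lip}}\cdot|h_k|$ produces a factor $\tau/r(B_i^k)$ in the gradient estimate that destroys the $(\tau,\infty,p)$-atomic property whenever $r(B_i^k)\ll\tau$; the fact that $h_k$ (but not $b_i^k$ itself) vanishes on $\Omega_k^c$ is what lets us trade the $L^\infty$ bound on $h_k$ against $r(B_i^k)$ and bypass Sobolev embeddings.
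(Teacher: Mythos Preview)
Your proposal is correct and follows essentially the same strategy as the paper: iterate the $q$-Calder\'on--Zygmund decomposition at dyadic heights, telescope $f=\sum_k(g_{k+1}-g_k)$, localize each $h_k$ via the Whitney partition $(\varphi_i^k)_i$, and read off the coefficient bound from $\sum_k 2^{kp}\sigma(\Omega_k)\lesssim\|M(\tau^{-q}|f|^q+|\nabla_H f|^q)^{1/q}\|_{L^p}^p$.

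Your treatment of the crucial pointwise bound $|h_k|\lesssim 2^k r(B_i^k)$ on $B_i^k$ is actually more direct than the paper's. The paper establishes it by writing both $g_k$ and $g_{k+1}$ via the representation formula \eqref{eq:expression_good_function} using a common Lipschitz extension $\tilde f$ of $f|_{\Omega_{k+1}^c}$ and then estimating the resulting double sum term by term. You instead observe that $h_k$ is globally $\lesssim 2^k$-Lipschitz (from \eqref{CZproperty2}) and vanishes at some $z_i^k\in\lambda B_i^k\cap\Omega_k^c$, which immediately gives the bound. Both arguments feed into Leibniz' rule the same way and yield $\|\nabla_H(h_k\varphi_i^k)\|_\infty\lesssim 2^k$. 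The paper's convergence analysis is slightly more explicit (separately checking $g^j\to f$ and $g^j\to 0$ in $L^p$ and in $\dot M^{1,p}$ as $j\to\pm\infty$), whereas you rely on $p$-subadditivity plus pointwise identification of the limit; this is adequate but deserves one more line to pass from Cauchy-ness in $M_\tau^{1,p}$ to the identification $\sum\lambda_{k,i}a_{k,i}=f$.
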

Again, before presenting the proof, we remark that no constant in the atomic decomposition depends on the Lipschitz seminorm of $f$.\color{black}

\begin{proof}{}~
Choose $0<q<p$ and consider the sequence of Calder\'on-Zygmund decompositions with power $q$ (Lemma \ref{lemma:CZ}) of $f$ at heights $\alpha_j = 2^j$, for $j\in\mathbb Z$. We obtain sequences of functions $g^j$ and $b_i^j$, balls $B_i^j$, with $\supp b_i^j \subset B_i^j$, and open sets $U_{\alpha_j} = \cup_{i} B_i^j$. If the decomposition at height $\alpha_j$ does not exist (which can happen in the case $\sigma(X)<+\infty$), we define $g^j=0$ and $U_{\alpha_j} = X$. In any case, there exists $j_0$ big enough such that for all $j>j_0$ the $q$-Calder\'on-Zygmund decomposition exists at height $\alpha_j$.
	
Note that we have
	\begin{equation}
	\label{eq:geometric_sum_U_alpha}
	+\infty> \tau^{-p} \Vert f \Vert_p^p + \Vert \nabla_H f \Vert_p^p\gtrsim\int_X M(\tau^{-q}|f|^q + |\nabla_H f|^q)^{p/q} d\sigma \approx \sum_{j \in \mathbb Z} 2^{pj} \sigma(U_{\alpha_j}\backslash U_{\alpha_{j+1}})
	\end{equation}
	by definition of $U_{\alpha_j}$, and the strong $(p,p)$ bound for $M(|\cdot|^q)^{1/q}$ as $p>q$. Using that 
	\[
	2^{pj} = \frac{2^p -1}{2^p}\sum_{k=-\infty}^{j} 2^{pk}, 
	\]
	we can rewrite
	\begin{align}
	\label{eq:atoms_rearrangement}
		+\infty>\sum_{j \in \mathbb Z} 2^{pj} \sigma(U_{\alpha_j}\backslash U_{\alpha_{j+1}}) &= 
		\frac{2^p -1}{2^p}\sum_{j \in \mathbb Z} \left (\sigma(U_{\alpha_j}\backslash U_{\alpha_{j+1}}) \sum_{-\infty<k\leq j}  2^{pk} \right) \nonumber \\
		&= \frac{2^p -1}{2^p} \sum_{k\in\mathbb Z} \left(  2^{pk} \sum_{j \geq k} \sigma(U_{\alpha_j}\backslash U_{\alpha_{j+1}}) \right) \\
		&= \frac{2^p-1}{2^p} \sum_{k\in\mathbb Z}  2^{pk} \sigma(U_{\alpha_k})\nonumber
	\end{align}
	by rearranging the sums and using that $\sigma(U_{\alpha_j}) \to 0$ as $j\to\infty$.
	
	First we will check that $g^j \to f$ in $L^p(X)$ as $j \to \infty$ (if $\tau<+\infty$). Since $(g^j - f)$ vanishes in $U_{\alpha_j}^c$, we have
	\[
	\Vert g^j - f \Vert_p^p= \int_{U_{\alpha_j}} |g^j - f|^p d\sigma \lesssim_p
	\int_{U_{\alpha_j}} (|g^j|^p + |f|^p) d\sigma
	\lesssim \tau2^{jp} \sigma(U_{\alpha_j}) + \int_{U_{\alpha_j}} |f|^p d\sigma.
	\]
	As the sum in \eqref{eq:atoms_rearrangement} converges, the sequence  $2^{pj} \sigma(U_{\alpha_j})$ goes to $0$ when $j\to+\infty$.
	Also, since $f\in L^p(X)$ and $\sigma(U_{\alpha_j})\to0$, we have that $\int_{U_{\alpha_j}} |f|^p d\sigma \to 0$ as well.

	We will see that (if $\tau<+\infty$) $g^j \to 0$ in $L^p(X)$ as $j\to -\infty$. Taking into account that  $g^j = f$ in $U_{\alpha_j}^c$ and $|g^j| \leq \tau2^j$, we have
	\begin{equation}
		\label{eq:g^j_goes_to_0}
	\int_X |g^j|^p \,d\sigma \lesssim \int_{U_{\alpha_j}^c} |f|^p \,d\sigma + \sigma(U_{\alpha_j}) \tau2^{jp}.
	\end{equation}
	Again, $\sigma(U_{\alpha_j})2^{jp}$ goes to zero as $j\to-\infty$ because the sum in \eqref{eq:atoms_rearrangement} is convergent.
	On the other hand, by the dominated convergence theorem
	\[
	\int_{U_{\alpha_j}^c} |f|^p d\sigma \leq \int_X |f|^p \chi_{\{|f|\leq \alpha_j\}} d\sigma \to 0~ \mbox{ as $j\to-\infty$}
	\]
	as $|f|\leq \alpha_j$ in $U_{\alpha_j}^c$.

	Now, we will check that the \haj~gradient $\nabla_H (g^j-f)$ given by $\sum_i \nabla_H b_i^j$ goes to $0$ in $L^p(X)$ as $j \to \infty$. We compute
	\begin{align*}
	\int_X |\nabla_H (g^j - f)|^p d\sigma &\leq \sum_i
	\int_{X} |\nabla_H b_i^j|^p d\sigma.
	\end{align*}
	Using the bound for $\nabla_H b_i^j$ appearing in \eqref{eq:haj_grad_bi}, we obtain
	\[
	\sum_i
	\int_{X} |\nabla_H b_i^j|^p d\sigma \lesssim \sum_i\int_{B_i} \left( 2^{jp} + |\nabla_H f|^p \right) d\sigma \lesssim 2^{jp}\sigma(U_{\alpha_j}) + \int_{U_{\alpha_j}} |\nabla_H f|^p\, d\sigma,	
	\]
	where we have used the finite superposition of the balls $B_i$.
	Note that the term $\int_{U_{\alpha_j}} |\nabla_H f|^p\, d\sigma$ goes to $0$ because $|\nabla_H f|^p \in L^1(X)$, $\sigma(U_{\alpha_j}) \to 0$, and $2^{jp} \sigma(U_{\alpha_j})$ tends to $0$ when $j\to+\infty$ because the sum in \eqref{eq:atoms_rearrangement} is convergent.
	
	Finally, one can check that the $\dot M^{1,p}(X)$ quasinorm of $g^j$ goes to $0$ as $j \to -\infty$ by the same argument used in \eqref{eq:g^j_goes_to_0} to see that $g^j\to 0$ in $L^p(X)$ as $j\to-\infty$, taking into account that $|\nabla_H g^j| \lesssim 2^j$ instead of $|g^j| \leq \tau 2^j$. 
		
	\subsubsection*{Definition of the atoms.} We define $l^j := g^{j+1} - g^j$ (with support on $U_{\alpha_{j}}$) and $l^j_i = l^j \varphi_i^j$ where $\varphi_i^j$ is a Lipschitz partition of unity subordinated to $(B_i^j)_i$ with the property that each $\varphi_i^j$ has a \haj~ gradient satisfying $|\varphi_i^j|_{\operatorname{Lip}} \lesssim r(B^j_i)^{-1}$.
	In the case $\sigma(X)<+\infty$, $l^j \equiv 0$ for all $j<j_0$ for some $j_0 \in \mathbb Z$. Then, for $l^{j_0}$ we just define $l^{j_0}_1 = l^{j_0}$ (we consider $B^{j_0}_1 = X$ as a single ball).
	
	\begin{claim*}
	The functions $l_i^j$ are multiples of $(\tau,\infty,p)$-atoms. In particular, \[\ \Vert l_i^j \Vert_{M_\tau^{1,\infty}(X)}\lesssim 2^{j+1}.\]
	\end{claim*}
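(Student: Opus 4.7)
The plan is to reduce the claim to two pointwise bounds on each Whitney ball $B_i^j$, namely a control $|l^j|\lesssim 2^{j+1}r(B_i^j)$ on $B_i^j$ and the existence of a Hajlasz gradient of $l^j$ of size $\lesssim 2^{j+1}$ there. Once both are in place, Leibniz' rule (Lemma~\ref{lemma:leibniz}) applied to $l_i^j=l^j\varphi_i^j$ together with $\Vert\varphi_i^j\Vert_\infty\le 1$ and $|\varphi_i^j|_{\operatorname{Lip}}\lesssim r(B_i^j)^{-1}$ produces a Hajlasz gradient supported in $B_i^j$ of the correct size.

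For the first pointwise bound I would exploit two ingredients from the Calder\'on-Zygmund decomposition: by \eqref{CZproperty2}, both $g^j$ and $g^{j+1}$ are Lipschitz with constants $\lesssim 2^j$ and $\lesssim 2^{j+1}$ respectively, so $l^j=g^{j+1}-g^j$ is Lipschitz with constant $\lesssim 2^{j+1}$ and in particular the constant function $C\,2^{j+1}$ lies in $D(l^j)$. Moreover, since $U_{\alpha_{j+1}}\subset U_{\alpha_j}$ and $g^k=f$ on $U_{\alpha_k}^c$, one has $l^j\equiv 0$ on $U_{\alpha_j}^c$. Picking any $y_i^j\in \wt B_i^j\cap U_{\alpha_j}^c$, which exists by property (iii) of Theorem~\ref{thm:whitney_balls}, the Lipschitz estimate yields $|l^j(x)|=|l^j(x)-l^j(y_i^j)|\lesssim 2^{j+1}r(B_i^j)$ for all $x\in B_i^j$.

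Combining these with Lemma~\ref{lemma:leibniz} gives the Hajlasz gradient bound
\[
\nabla_H l_i^j\lesssim\bigl(2^{j+1}\,\Vert\varphi_i^j\Vert_\infty+|\varphi_i^j|_{\operatorname{Lip}}\,|l^j|\bigr)\chi_{B_i^j}\lesssim 2^{j+1}\chi_{B_i^j}.
\]
In parallel, the $L^\infty$-bound $\Vert l_i^j\Vert_\infty\le\Vert l^j\Vert_\infty\le\tau(2^{j+1}+2^j)\lesssim\tau\,2^{j+1}$ is immediate from \eqref{CZproperty2} (and is vacuous if $\tau=+\infty$ under the convention $1/\infty=0$). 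Hence
\[
\Vert l_i^j\Vert_{M_\tau^{1,\infty}(X)}=\tfrac{1}{\tau}\Vert l_i^j\Vert_\infty+\inf_{g\in D(l_i^j)}\Vert g\Vert_\infty\lesssim 2^{j+1},
\]
and writing $l_i^j=\lambda_i^j a_i^j$ with $\lambda_i^j=C\,2^{j+1}\sigma(B_i^j)^{1/p}$ exhibits $l_i^j$ as a multiple of a $(\tau,\infty,p)$-atom supported in $B_i^j$, as required.

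The main subtlety, and the only case where the argument above needs amendment, is the degenerate Whitney ball $B_1^{j_0}=X$ that can appear when $\sigma(X)<+\infty$ and all lower-scale CZ decompositions fail. In that case the auxiliary point $y_i^j$ need not exist, but one has instead $l^{j_0}=g^{j_0+1}$ on the whole of $X$, which by \eqref{CZproperty2} is globally Lipschitz of constant $\lesssim 2^{j_0+1}$ and satisfies $\Vert l^{j_0}\Vert_\infty\lesssim\tau\,2^{j_0+1}$; choosing the Lipschitz constant itself as Hajlasz gradient immediately gives $\Vert l_1^{j_0}\Vert_{M_\tau^{1,\infty}(X)}\lesssim 2^{j_0+1}$ without invoking the decay argument on~$B_i^j$.
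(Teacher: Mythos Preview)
Your argument is correct, and it is in fact more direct than the paper's own proof. The paper re-derives the pointwise bound $r(B_i^j)^{-1}|l^j|\lesssim 2^{j+1}$ on $B_i^j$ by going back to the explicit representation \eqref{eq:expression_good_function} for $g^j$ and $g^{j+1}$ in terms of a Lipschitz extension $\tilde f$ of $f|_{U_{\alpha_{j+1}}^c}$, and then carefully using the Whitney geometry to compare the terms $\tilde f(x_k^{j})$, $\tilde f(x_{\wt k}^{j+1})$ and $\tilde f(x)$. You instead observe that this pointwise bound is an immediate consequence of facts already recorded in Lemma~\ref{lemma:CZ}: by \eqref{CZproperty2} each $g^k$ is globally Lipschitz of constant $\lesssim 2^k$, hence $l^j$ is Lipschitz of constant $\lesssim 2^{j+1}$; and $l^j$ vanishes on $U_{\alpha_j}^c$, which by Whitney property~(iii) meets $\wt B_i^j$. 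Your route avoids reopening the structure of $g^j$ entirely and makes the claim a two-line corollary of \eqref{CZproperty2}. The paper's approach, on the other hand, is more self-contained within the proof of Theorem~\ref{thm:atomic_decomposition} and makes the role of the extension $\tilde f$ explicit, which has some expository value; but logically your shortcut is cleaner. Your treatment of the degenerate ball $B_1^{j_0}=X$ in the compact case is also correct.
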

	\begin{proof}[Proof of claim]
	Clearly, we have $\supp l_i^j \subset B_i^j$, and
	\[
	\Vert l_{i}^j \Vert_\infty \leq C\tau 2^j, 
	\]
	Also, by Leibniz' rule (Lemma \ref{lemma:leibniz}), we can consider the following \haj~ gradient for $l_i^j$:
	\[
	\nabla_H l_i^j := C \chi_{B_i^j} \left ( \nabla_H g^{j+1} + \nabla_H g^j + r(B_i^j)^{-1} |l^j| \right).
	\]
	We trivially have the bound
	\[\nabla_H g^{j+1} + \nabla_H g^j \lesssim 2^{j+1}.\]
	Hence, we only need to show that $r(B_i^j)^{-1} |l_i^j| \lesssim 2^{(j+1)}$.
	First, notice that $U_{\alpha_j} \supset U_{\alpha_{j+1}}$.
	This allows us to reuse the expression in \eqref{eq:expression_good_function} for both $g^j$ and $g^{j+1}$. 
	In particular, we have the following identities:
	\[
		g^j(x) = \tilde f(x) +  \sum_k (\tilde f(x_k^j) - \tilde f(x) ) \varphi_k^j(x), \quad g^{j+1}(x) = \tilde f(x) +  \sum_{\wt k} (\tilde f(x_{\wt k}^{j+1}) - \tilde f(x) ) \varphi_{\wt k}^{j+1}(x),
	\] 
	where $x_k^j \in \lambda B_k^j \backslash U_{\alpha_j}$, $\,\,x_{\wt k}^{j+1} \in \lambda B_{\wt k}^{j+1} \backslash U_{\alpha_{j+1}}$, and $\tilde f$ is any Lipschitz extension of $f|_{U^c_{\alpha_{j+1}}}$ to $X$ with Lipschitz constant at most $2^{j+2}$. These representations follow from the fact that $(\varphi_i^j)_i$ form a partition of unity and the definition of the bad functions $b_i^j$ (analogously with $j+1$ instead of $j$).  
	Hence, for $x \in B_i^j$, we obtain
	\begin{align*}
		|l^j(x)|  &\leq |g^{j+1}(x) - g^j(x)| \\
		&= \left|\sum_{{\wt k}} (\tilde f(x_{\wt k}^{j+1}) - \tilde f(x)) \varphi_{\wt k}^{j+1}(x) - \sum_k(\tilde f(x_k^j) - \tilde f(x)) \varphi_k^j(x) \right|\\
		&\leq \sum_{B_{\wt k}^{j+1}\cap B_i^j\neq\varnothing} |\tilde f(x) - \tilde f(x_{\wt k}^{j+1})|\chi_{B_{\wt k}^{j+1}} + \sum_{B_i^j\cap B_k^j\neq\varnothing} |\tilde f(x) - \tilde f(x_k^j)|\chi_{B_k^j}.
	\end{align*}
	Since $\tilde f$ has Lipschitz constant bounded by $2^{j+2}$,
	we have for $x\in B_i^j$ that $|\tilde f(x) - \tilde f(x_k^j)| \leq  2^{j+2}d(x ,x_k^j) \lesssim  2^{j+2} r(B_i^j)$ where we have used that $B_k^j \cap B_i^j \neq \varnothing$ implies $r(B_k^j) \approx r(B_i^j)$. Similarly, we also have for $x\in B_i^j$ that $|\tilde f(x) - \tilde f(x_{\wt k}^{j+1})| \leq  2^{j+2}d(x , x_{\wt k}^{j+1}) \lesssim  2^{j+2} r(B_j^i)$ using that (for a fixed $i$) the balls $B_{l}^{j+1}$ with $B_{i}^j \cap B_l^{j+1}\neq\varnothing$ must have radii $r(B_l^{j+1})\lesssim r(B^j_i)$ (which is a consequence of $U_{\alpha_{j+1}} \subset U_{\alpha_j}$ and the Whitney ball decomposition in Theorem \ref{thm:whitney_balls}).
	Finally, using the bounded overlapping of the balls in the same generation, we get
	\[
	r(B_i^j)^{-1} |l_i^j| \lesssim 2^{(j+1)}
	\]
	which finishes the proof of the claim.
	\end{proof}
	In particular, $\left (\sigma(B_i^j)^{1/p}C 2^{j}\right)^{-1}l_i^j =: \frac{1}{\mu_{i}^j} l_i^j$ is a $(\tau,\infty,p)$-atom. 

\subsubsection*{Convergence of the atoms.} 
	Now, we want to check that the $\dot M^{1,p}(X)$ norm of the partial sums $l^j - \sum_{i}^K l_i^j$ tends to $0$. We can take as \haj~gradient of the partial sums the function $\sum_{i=K+1}^{+\infty} \nabla_H l_i^j$. Note that we have
	\[
	\left\Vert \sum_{i=K+1}^{+\infty} \nabla_H l_i^j \right\Vert_p^p \leq \sum_{i=K+1}^{+\infty} \Vert\nabla_H l_i^j\Vert_p^p
	\]
	but this corresponds to the tail of the following convergent series
	\[
	\sum_i \Vert\nabla_H l^j_i\Vert_p^p \lesssim \sum_i 2^{jp} \sigma(B^j_i) \lesssim 2^{jp} \sigma(U_{\alpha_j}) < +\infty.
	\]
	Note also that the term $2^{jp} \sigma(U_{\alpha_j})$ converges to $0$ since it corresponds to one of the tails in the right hand side of \eqref{eq:atoms_rearrangement}.
	The argument for the convergence of $\sum_i l_i^j \to l^j$ in $L^p(X)$ (in the case $\tau<+\infty$) is analogous.
	
Finally, we can set
\[
f = \sum_{i \in \mathbb N, ~j \in\mathbb Z} l_i^j
= \sum_{i \in \mathbb N, ~j \in\mathbb Z} \mu^j_i\left( \frac{l_i^j}{\mu^j_i} \right)	
\]
as a sum of $(\tau,\infty,p)$-atoms.
Then, we can bound the following sum
\begin{align*}
\sum_{i,j} |\mu_i^j|^p &=
 \sum_{i,j}|\sigma(B_i^j)^{1/p}C 2^{j}|^p\\
 &= 
 C'\sum_j {2^{pj}\left(\sum_i \sigma(B_i^j)\right)} \\
 &= C'\sum_j 2^{pj}\sigma(U_{\alpha_j})\\
 &\approx\int_X M(\tau^{-q}|f|^q + |\nabla_H f|^q)^{p/q} d\sigma\\
 &\lesssim \int_X \left(M(\tau^{-q}|f|^q)^{p/q} + M(|\nabla_H f|^q)^{p/q}\right) d\sigma \\
 &\lesssim C(\tau^{-p}\Vert f \Vert_p^p + \Vert \nabla_H f \Vert_p^p),
\end{align*}
where we have used \eqref{eq:atoms_rearrangement} and that the Hardy-Littlewood maximal operator is bounded in $L^{p/q}(X)$.
\end{proof}

\begin{remark}
	In the previous theorem we have obtained that every {Lipschitz} function $f\in M_\tau^{1,p}(X)$ can be written as a sum of $(\tau,t,p)$-atoms for $0<p\leq1$ and $t \in (p, +\infty]$.
	Note that the converse result is much simpler. Let
	\[
	f = \sum_{j} \lambda_j a_j,
	\]
	where $(a_j)_j$ are $(\tau,t,p)$-atoms and the coefficients $(\lambda_j)_j$ satisfy
	\[
	\sum_j |\lambda_j|^p  < +\infty.
	\]
	Then, taking into acccount that $\Vert a_j \Vert_p \leq 1$ by H\"older's inequality, we get
	\[
	\Vert f \Vert_p^p = \left\Vert \sum_{j} \lambda_j a_j \right\Vert_p^p \leq \sum_j |\lambda_j|^p \Vert a_j \Vert_p^p \leq \tau \sum_j |\lambda_j|^p < +\infty.
	\]
	Analogously, we also have
	\[
	\Vert \nabla_H f\Vert_p^p \leq \sum_j |\lambda_j|^p \Vert \nabla_H a_j \Vert_p^p \leq \sum_j |\lambda_j|^p < +\infty.
	\]
	Thus, $f$ belongs in $M^{1,p}_{\tau}(X)$ with norm bounded by $2 \sum_{j} |\lambda_j|^p$.
\end{remark}

\subsection{Interpolation in \haj-Sobolev spaces}

We will follow closely the proof presented in \cite[Theorem D]{CW}, which relies on the Calder\'on-Zygmund decomposition (Lemma \ref{lemma:CZ}).
\begin{theorem}[Interpolation of \haj-Sobolev spaces]
\label{thm:interpolation}
Let $0 < a < b \leq +\infty$, $\tau\in(0,+\infty]$, and $T$ be a continuous sublinear operator bounded from $M_\tau^{1,a}(X) \to L^{a,\infty}(X)$ and  bounded from $M_\tau^{1,b}(X)\to L^{b,\infty}(X)$.
Then $T$ is bounded on $M_\tau^{1,t}(X) \to L^t(X)$ for all $t \in (a,b)$.
\end{theorem}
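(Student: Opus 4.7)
The plan is a Marcinkiewicz-type interpolation adapted to the Hajlasz setting, following the scheme of \cite[Theorem D]{CW}. For Lipschitz $f\in M_\tau^{1,t}(X)$ and each level $\alpha>0$, I would apply the $a$-Calder\'on-Zygmund decomposition of Lemma \ref{lemma:CZ} (with $q=a\leq t$) to split $f = g_\alpha + b_\alpha$, where $b_\alpha = \sum_i b_i$ is supported in $U_{c\alpha} = \{M(\tau^{-a}|f|^a+|\nabla_H f|^a)>(c\alpha)^a\}$ for a small absorption constant $c>0$. By sublinearity $\sigma(\{|Tf|>2\alpha\}) \leq \sigma(\{|Tg_\alpha|>\alpha\}) + \sigma(\{|Tb_\alpha|>\alpha\})$, so the two weak-type hypotheses on $T$ give
\[
\sigma(\{|Tg_\alpha|>\alpha\}) \lesssim \alpha^{-b}\|g_\alpha\|_{M_\tau^{1,b}}^b \quad (b<\infty), \qquad \sigma(\{|Tb_\alpha|>\alpha\}) \lesssim \alpha^{-a}\|b_\alpha\|_{M_\tau^{1,a}}^a,
\]
which I would then integrate against $\alpha^{t-1}$ via the layer-cake formula $\|Tf\|_t^t = t\int_0^\infty \alpha^{t-1}\sigma(\{|Tf|>\alpha\})\,d\alpha$.

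The next task is to quantify $\|g_\alpha\|_{M_\tau^{1,b}}$ and $\|b_\alpha\|_{M_\tau^{1,a}}$ in terms of $\alpha$ and $f$. The bad-part estimate $\|b_\alpha\|_{M_\tau^{1,a}}^a \lesssim \alpha^a\sigma(U_{c\alpha})$ follows at once from \eqref{CZproperty6} and the bounded overlapping of the Whitney balls. For the good part I would propose the Hajlasz gradient $\nabla_H g_\alpha := \nabla_H f\,\chi_{U_{c\alpha}^c} + C\alpha\,\chi_{U_{c\alpha}}$ and verify \eqref{eq:def_haj_grad} case by case, using that $g_\alpha=f$ on $U_{c\alpha}^c$ together with the global Lipschitz bound $|g_\alpha|_{\operatorname{Lip}}\leq C\alpha$ from \eqref{CZproperty2}. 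Combined with the pointwise bounds $|f|\leq \tau\alpha$ and $|\nabla_H f|\leq \alpha$ a.e.\ on $U_{c\alpha}^c$ (from Lebesgue differentiation applied to $\tau^{-a}|f|^a+|\nabla_H f|^a$), this yields
\[
\|g_\alpha\|_{M_\tau^{1,b}}^b \lesssim \int_{U_{c\alpha}^c}\bigl(\tau^{-b}|f|^b + |\nabla_H f|^b\bigr)\,d\sigma + \alpha^b\sigma(U_{c\alpha}).
\]
When $b=+\infty$ the estimate simplifies to $\|g_\alpha\|_{M_\tau^{1,\infty}}\lesssim\alpha$, and choosing $c$ sufficiently small forces $\sigma(\{|Tg_\alpha|>\alpha\})=0$, so only the bad-part estimate is needed in that case.

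The last step is integration in $\alpha$ via Fubini. The key calculation is
\[
\int_0^\infty \alpha^{t-1-b}\int_{U_{c\alpha}^c}\!|f|^b\,d\sigma\,d\alpha \;\approx\; \int_X |f|^b\,M(\tau^{-a}|f|^a+|\nabla_H f|^a)^{(t-b)/a}\,d\sigma \;\lesssim\; \tau^{b-t}\|f\|_t^t,
\]
which uses $b>t$ (so the inner $\alpha$-integral converges at $\infty$) together with the pointwise bound $|f|\leq \tau M(\tau^{-a}|f|^a+|\nabla_H f|^a)^{1/a}$; the analogous estimate with $|\nabla_H f|^b$ in place of $|f|^b$ handles the gradient contribution. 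All remaining pieces reduce to $\int_0^\infty \alpha^{t-1}\sigma(U_{c\alpha})\,d\alpha$, which equals a constant multiple of $\|M(\tau^{-a}|f|^a+|\nabla_H f|^a)^{1/a}\|_t^t$ and is controlled by $\|f\|_{M_\tau^{1,t}}^t$ thanks to the $L^{t/a}$-boundedness of the Hardy-Littlewood maximal operator, where the condition $t>a$ enters crucially. The main technical obstacle is producing a single Hajlasz gradient for $g_\alpha$ that is both admissible and integrable to power $b$ on the good set; once this is handled, a density argument based on Theorem \ref{thm:atomic_decomposition} extends the bound from Lipschitz data to all of $M_\tau^{1,t}(X)$.
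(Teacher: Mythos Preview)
Your approach is essentially the same as the paper's: both run the Marcinkiewicz scheme of \cite[Theorem D]{CW} through the Calder\'on--Zygmund decomposition of Lemma \ref{lemma:CZ} at exponent $q=a$, bound the bad part in $M_\tau^{1,a}$ by $\alpha^a\sigma(U_\alpha)$ and the good part in $M_\tau^{1,b}$ by $\int_{U_\alpha^c}(\tau^{-b}|f|^b+|\nabla_H f|^b)\,d\sigma+\alpha^b\sigma(U_\alpha)$, then integrate in $\alpha$ via Fubini and the $L^{t/a}$-boundedness of the maximal operator, treating $b=\infty$ and $\sigma(X)<\infty$ separately. One small correction: for the density step the paper invokes the general fact that Lipschitz functions are dense in $\dot M^{1,p}(X)$ (\cite[Lemma 10.2.7]{HKST}), whereas Theorem \ref{thm:atomic_decomposition} only applies when $p\le 1$ and so does not cover arbitrary $t\in(a,b)$.
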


\begin{proof}
{Before starting the proof, note that the space of Lipschitz functions $\operatorname{Lip}(X)$ is dense in $\dot M^{1,p}(X)$ for all $p>0$ (see \cite[Lemma 10.2.7]{HKST}). Hence, it is enough to show the result in the dense subspace $\operatorname{Lip}(X)\cap M_\tau^{1,t}(X)$.}
We will split the proof in {three cases depending on the diameter of the metric space $X$ and whether $b= +\infty$ or not.}
\subsubsection*{Case $\sigma(X)=+\infty$ and $b<+\infty$.}
	Fix a Lipschitz function $f \in M_\tau^{1,t}(X)$, and $\nabla_H f \in D(f)\cap L^t(X)$. Let $g_\alpha$ and $b_\alpha:=\sum_j b_{j,\alpha}$ be the functions in the Calder\'on-Zygmund decomposition (Lemma \ref{lemma:CZ}) of $f$ at height $\alpha>0$ with power $a$ in place of $p$.
	We write
	\begin{align*}
	t \int_0^{+\infty} \kappa^{t-1} \sigma(\{x \,|\, |Tf(x)|>\kappa\}) d\kappa \leq&~  t\int_0^{+\infty} \kappa^{t-1} \sigma(\{x \,|\, |Tg_\alpha(x)|>\kappa/2\})  d\kappa \\
	&+ t \int_0^{+\infty} \kappa^{t-1} \sigma(\{x \,|\, |Tb_\alpha(x)|>\kappa/2\}) d\kappa
	\end{align*} 
	where we have used the sublinearity of $T$.
	We will estimate each integral on the right hand side separately.
	
	First, for the bad part of the decomposition, we have
	\begin{equation}
	\label{eq:badpart_interpolation}
	\tau^{-a}\Vert b_\alpha \Vert_a^a + \Vert \nabla_H b_\alpha\Vert_a^a \lesssim 
	\sum_j \int_{B_{j,\alpha}} \left(\tau^{-a}|b_{j,\alpha}|^a + |\nabla_H b_{j,\alpha}|^a\right)\, d\sigma \lesssim \alpha^a \sum_j\sigma(B_{j,\alpha}) \lesssim \alpha^a \sigma(U_\alpha)
	\end{equation}
	by the properties of the Calder\'on-Zygmund decomposition. Then, by the boundedness of the operator $T$ from $M_\tau^{1,a}$ to $L^{a,\infty}$, recalling that $U_\alpha = \{x \, | \, M(\tau^{-a}|f|^a + |\nabla_H f|^a)(x) > \alpha^a  \}$, and choosing {$\alpha = \kappa$}, we get
	\begin{align*}
	\int_0^{+\infty} \kappa^{t-1} \sigma(\{x \,|\, |Tb_\kappa(x)|>\kappa/2\}) \,d\kappa &\lesssim \int_0^{+\infty} \kappa^{t-1} \frac{\tau^{-a}\Vert b_\kappa \Vert_a^a + \Vert \nabla_H b_\kappa\Vert_a^a}{\kappa^a}  \,d\kappa \\
	&\lesssim \int_0^{+\infty} \kappa^{t-1} \sigma(U_\kappa) \,d\kappa \\
	&= t^{-1}\Vert M(\tau^{-a}|f|^a + |\nabla_H f|^a)^{1/a} \Vert_t^t \\
	&\lesssim \tau^{-t}\Vert f \Vert_{t}^t + \Vert \nabla_H f \Vert_{t}^t
	\end{align*}
	using \eqref{eq:badpart_interpolation},  and that $M(|\cdot|^a)^{1/a}$ is strong $(t,t)$ bounded as $t>a$.
	
	As for the ``good" part, we are going to use that 
	\begin{align*}
	\Vert g_\kappa \Vert_{M_\tau^{1,b}}^b &\lesssim \int_{U_\kappa^c} (\tau^{-b}|f|^b + |\nabla_H f|^b )\,d\sigma + \kappa^b \sigma(U_\kappa)
	\end{align*}
	since $g_\kappa = f $ on $U_\kappa^c$, and the fact that, by Fubini, we have
	\begin{align*}
	\int_0^\infty \kappa^{t-1-b} \int_{\{|f| \leq \kappa\}} |f|^b \,d\sigma d\kappa &= 
	\int_X |f(x)|^b \int_{\{\kappa\geq |f(x)|\}} \kappa^{t-1-b} \,d\kappa d\sigma \\
	&= \int_X |f(x)|^b \frac{|f(x)|^{t-b}}{b-t} d\sigma \\
	&=  \frac{1}{b-t}\Vert f \Vert_t^t.
	\end{align*}
	Using that $U_\kappa^c \subset \{x \in X \,|\,|f|\leq \tau\kappa\} \cap \{x \in X \,|\,|\nabla_H f|\leq \kappa\}$ except for a set of zero measure, and the previous equation (for both $f$ and $\nabla_H f$), we get
	\begin{align*}
		\int_0^{+\infty} \kappa^{t-1} \sigma(\{x \,|\, |Tg_\kappa(x)|>\kappa/2\}) d\kappa &\lesssim  \int_0^{+\infty} \kappa^{t-1} \left (\frac{\Vert g_\kappa \Vert_{M_\tau^{1,b}}}{\kappa} \right)^b d\kappa \\
		&=  \int_0^{+\infty} \kappa^{t-1-b} \Vert g_\kappa \Vert_{M_\tau^{1,b}}^b \, d\kappa \\
		&\lesssim  \int_0^{+\infty} \kappa^{t-1-b} \left ( \int_{U_\kappa^c} \tau^{-b}|f|^b + |\nabla_H f|^b \,d\sigma + \kappa^b \sigma(U_\kappa) \right)\, d\kappa \\
		&\lesssim \tau^{-t}\Vert f \Vert_t^t + \Vert \nabla_H f \Vert_t^t + \int_0^{+\infty} \kappa^{t-1} \sigma(U_\kappa) \, d\kappa \\
		&\lesssim \tau^{-t}\Vert f \Vert_{t}^t + \Vert \nabla_H f \Vert_{t}^t
	\end{align*}
	where we have bounded $\int_0^{+\infty}\kappa^{t-1}\sigma(U_\kappa)$ as in the ``bad" part.

 \subsubsection*{Case $\sigma(X)<+\infty$.}
	The only difference is that the Calder\'on-Zygmund decomposition only exists for $\alpha > \alpha_0 = \left (C (\tau^{-t}\Vert f \Vert_{t}^t + \Vert \nabla_H f \Vert_{t}^t) / \sigma(X) \right )^{1/t}$ with $C$ depending on the weak $(1,1)$ norm of the uncentered maximal Hardy-Littlewood operator. Note though
	\[
	t\int_0^{\alpha_0} \kappa^{t-1} \sigma(X) d\kappa = \sigma(X) \alpha_0^t = C (\tau^{-t}\Vert f \Vert_{t}^t + \Vert \nabla_H f \Vert_{t}^t)
	\]
	and we may repeat the previous proof for $\alpha>\alpha_0$.

\subsubsection*{Case $b=+\infty$.}
Proceed as before, but instead of choosing $\alpha = \kappa$, choose $\alpha = \tilde C\kappa$ with $\tilde C$ small enough depending on the constant $C$ of the property \eqref{CZproperty2} {in the Calder\'on-Zygmund decomposition} ($|g_\alpha|_{\operatorname{Lip}} \leq C\alpha$), and the $L^\infty(\sigma)$ norm of $T$. With this choice, we have 
\[
\Vert Tg_\alpha \Vert_\infty <\kappa
\] 
and, as a consequence,
\[
t \int_0^{+\infty} \kappa^{t-1} \sigma(\{x \,|\, |Tg_\alpha(x)|>\kappa\}) \,d\kappa = 0.
\]
\end{proof}

\section{Regularity problem in the tangential Hardy-Sobolev space $HS^{1,1}$}
\label{section:tangential_sobolev}
In this section we will prove some relationships between the \haj-Sobolev space $\dot M^{1,1}(\pom)$ and the Hardy-Sobolev space $HS^{1,1}(\pom)$ when $\pom$ {supports the $(1,1)$-weak Poincar\'e inequality \eqref{eq:Poincare}}.  Moreover,  in {the} case  that $\HH^n(\pom \backslash \partial^*\Omega)=0$, we also compare the previous spaces to the Hardy HMT space $H^{1,1}(\pom)$. Similar results have already been shown in the case $p>1$ in \cite[Lemma 1.3]{MT},  {and} in the case $p=1$ in \cite{BD1, BD2} for Riemannian manifolds. We will use some of the results of Section \ref{section:hajlasz} with $X = \pom$ and $\sigma = \mathcal H^n|_{\pom}$.

\color{black}

The next proposition shows that, under the hypothesis that $\partial\Omega$ supports a weak $(1,1)$-Poincar\'e inequality \eqref{eq:Poincare}, the Hardy-Sobolev space $HS^{1,1}(\pom)$ coincides with the \haj-Sobolev space $\dot M^{1,1}(\pom)$.
\begin{proposition}
\label{prop:equiv_cald_maximal_hardy_space}
Let $E \subset \R^{n+1}$ be an {uniformly perfect $d$-rectifiable set} such that the measure $\sigma = \HH^d|_E$ is doubling.
Then,
\[
 \Vert f \Vert_{\dot M^{1,1}(\sigma)} \gtrsim \Vert  f \Vert_{{HS}^{1,1}(\sigma)}
\]
for all $f \in M^{1,1}(\sigma)\cap\operatorname{Lip}(E)$. 
Moreover, if $E$ supports a weak $(1,1)$-Poincar\'e inequality, then
\[
\Vert f \Vert_{\dot M^{1,1}(\sigma)} \lesssim \Vert  f \Vert_{{HS}^{1,1}(\sigma)}
\]
for all $f \in M^{1,1}(\sigma)\cap\operatorname{Lip}(E)$.
\end{proposition}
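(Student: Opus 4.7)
The two inequalities will be treated separately: the forward direction uses the atomic decomposition of $\dot M^{1,1}$ (Theorem \ref{thm:atomic_decomposition}), and the reverse direction combines the Calder\'on-type maximal characterization (Theorem \ref{thm:calderon_maximal}) with the weak $(1,1)$-Poincar\'e inequality.

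\textbf{Forward inequality.} Given $f\in M^{1,1}(\sigma)\cap\operatorname{Lip}(E)\subset\dot M^{1,1}(\sigma)$, I would apply Theorem \ref{thm:atomic_decomposition} with $p=1$ and $\tau=+\infty$ to write $f=\sum_j\lambda_j a_j$, where each $a_j$ is a $(+\infty,\infty,1)$-atom, i.e.\ a Lipschitz function with $\supp a_j\subset B_j$ and $\|\nabla_H a_j\|_{L^\infty}\leq \sigma(B_j)^{-1}$, and $\sum_j|\lambda_j|\leq C\|\nabla_H f\|_{L^1(\sigma)}\approx\|f\|_{\dot M^{1,1}(\sigma)}$. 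The pointwise $L^\infty$ bound on the Haj\l asz gradient forces $|a_j|_{\operatorname{Lip}}\leq 2\sigma(B_j)^{-1}$; since $E$ is $d$-rectifiable, the tangential gradient $\nabla_t a_j$ exists $\sigma$-a.e.\ and is pointwise dominated by $|a_j|_{\operatorname{Lip}}$. Hence $\tfrac{1}{2}a_j$ is a genuine Hardy-Sobolev atom, and absorbing this factor into the $\lambda_j$ produces an $HS^{1,1}$-atomic decomposition with the desired coefficient bound. To match the definition of $HS^{1,1}(\sigma)$, I would verify that the partial sums converge in $L^1(\sigma)$ by inspecting the telescoping representation $\sum_{i,\,|j|\leq N}l_i^j=g^{N+1}-g^{-N}$ in the proof of Theorem \ref{thm:atomic_decomposition} and using that $f\in L^1(\sigma)$.

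\textbf{Reverse inequality under the weak $(1,1)$-Poincar\'e hypothesis.} By Theorem \ref{thm:calderon_maximal}, $\|f\|_{\dot M^{1,1}(\sigma)}\approx\|\Lambda_\sharp f\|_{L^1(\sigma)}$, and by sublinearity of $\Lambda_\sharp$ it suffices to prove $\|\Lambda_\sharp a\|_{L^1(\sigma)}\leq C$ uniformly for every Hardy-Sobolev atom $a$ with $\supp a\subset B=B(x_0,r_0)$. Split $E$ into the local piece $2\Lambda B$ and the tail $E\setminus 2\Lambda B$. On the local piece, Lipschitz continuity of $a$ on a rectifiable set makes $|\nabla_t a|$ an upper gradient, so the weak $(1,1)$-Poincar\'e inequality yields the pointwise estimate $\Lambda_\sharp a(x)\leq CM_\sigma(|\nabla_t a|)(x)$; together with $\|\nabla_t a\|_{L^\infty}\leq\sigma(B)^{-1}$ and doubling this gives
\[
\int_{2\Lambda B}\Lambda_\sharp a\,d\sigma\leq \sigma(2\Lambda B)\,\|M_\sigma(|\nabla_t a|)\|_{L^\infty(\sigma)}\lesssim \sigma(2\Lambda B)/\sigma(B)\lesssim 1.
\]
For the tail, any $r$ with $B(x,r)\cap B\neq\varnothing$ and $x\notin 2\Lambda B$ must satisfy $r\gtrsim d(x,x_0)$, so the trivial estimate $\int_{B(x,r)}|a-a_{B(x,r)}|\leq 2\|a\|_{L^1(\sigma)}$ produces
\[
\Lambda_\sharp a(x)\lesssim \frac{\|a\|_{L^1(\sigma)}}{d(x,x_0)\,\sigma(B(x,d(x,x_0)))}.
\]
The second Poincar\'e inequality in Theorem \ref{thm:poincare_compact_supp} applied to the compactly supported $a$ (using $\sigma(\lambda B\setminus B)\gtrsim \sigma(B)$ from uniform perfectness) gives $\|a\|_{L^1(\sigma)}\lesssim r_0$. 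A dyadic annular decomposition around $x_0$ together with doubling then bounds the tail by a convergent geometric series, yielding $\int_{E\setminus 2\Lambda B}\Lambda_\sharp a\,d\sigma\lesssim 1$.

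\textbf{Main obstacle.} The principal technical difficulty is reconciling the $L^1(\sigma)$-convergence required by the definition of $HS^{1,1}(\sigma)$ with the $\dot M^{1,1}(\sigma)$-convergence supplied by Theorem \ref{thm:atomic_decomposition} at $\tau=+\infty$; this needs careful bookkeeping of the partial sums, crucially exploiting $f\in L^1(\sigma)$. The tail estimate for $\Lambda_\sharp a$, while a little delicate, goes through using only doubling and uniform perfectness and does not require Ahlfors regularity.
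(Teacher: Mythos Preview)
Your forward inequality is exactly the paper's argument: apply Theorem~\ref{thm:atomic_decomposition} with $\tau=\infty$ and observe that a Lipschitz $(\infty,\infty,1)$-atom is, up to a fixed multiplicative constant, a Hardy--Sobolev atom because $|\nabla_t a|\leq|a|_{\operatorname{Lip}}\lesssim\|\nabla_H a\|_{L^\infty}$. Your remark about checking $L^1(\sigma)$-convergence of the partial sums is a fair point (the paper glosses over it), and the telescoping identity you cite is the right way to handle it.

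For the reverse inequality your route genuinely differs from the paper's. The paper argues structurally: it shows that every Hardy--Sobolev atom $a$ is a constant multiple of a $(\infty,\infty,1)$-atom by invoking \cite[Theorem~8.1.7]{HKST}, which says that the weak $(1,1)$-Poincar\'e inequality forces $CM_c(|\nabla_t a|)\in D(a)\cap L^\infty(\sigma)$; boundedness of $M_c$ on $L^\infty$ then gives the required \haj-gradient bound, and the $\dot M^{1,1}$ estimate follows immediately from the atomic characterization. Your approach instead passes through the Calder\'on maximal characterization (Theorem~\ref{thm:calderon_maximal}) and estimates $\|\Lambda_\sharp a\|_{L^1}$ directly via a local/tail split. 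Both are correct; the paper's argument is shorter and more conceptual (atoms map to atoms), while yours is self-contained and avoids the black box from \cite{HKST}. One small correction: in your tail estimate you invoke Theorem~\ref{thm:poincare_compact_supp} to get $\|a\|_{L^1}\lesssim r_0$, but that theorem is stated for \haj\ gradients, which you do not yet have for $a$. Simply repeat what you already did in the local piece---use the weak $(1,1)$-Poincar\'e inequality with the upper gradient $|\nabla_t a|$ on the ball $\lambda B$, together with the vanishing of $a$ on $\lambda B\setminus B$ and uniform perfectness---to obtain $\|a\|_{L^1}\lesssim r_0$ directly.
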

\begin{proof}
We will {first prove} the inequality ${\gtrsim}$. 
Let $f\in M^{1,1}(E)\cap\operatorname{Lip}(E)$. Thanks to the results in Section \ref{section:hajlasz}, we can find an atomic decomposition of $f$ in {Lipschitz} $(\infty,\infty,1)$-atoms $a_j$ belonging to $\dot M^{1,\infty}$ with the property that $f = \sum \lambda_j a_j$ {in $L^1(E)$} and $\Vert f \Vert_{\dot M^{1,1}}\leq\sum |\lambda_j| \leq 2 \Vert f \Vert_{\dot M^{1,1}}$. 
We will show that there exists a constant $c>0$ such that $c a_j$ is a Hardy-Sobolev atom (defined in Section \ref{section:definitions}). 
Since the $a_j$ are Lipschitz, we  have 
\[
|\nabla_t a_j(x)| \leq |a_j|_{\operatorname{Lip}} \lesssim \Vert \nabla_H a_j \Vert_{L^\infty(E)} \leq \sigma(B\cap E)^{-1} \quad \mbox{$\sigma$-a.e. $x\in E$}
\]
({see \cite[Section 11.2]{M}} for example).
Hence, $\Vert \nabla_t a_j \Vert_{L^\infty(E)}\lesssim \sigma(B\cap E)^{-1}$. Choosing the decomposition given by the $a_j$'s in the space $HS^{1,1}$ gives us the upper bound $\Vert f \Vert_{{HS}^{1,1}} \lesssim \Vert f \Vert_{\dot M^{1,1}}$.

For the reverse inequality ${\lesssim}$, we will show again that there exists a constant $c>0$ such that every Hardy-Sobolev atom $a$ satisfies that $c a$ is a \haj-Sobolev $(\infty, \infty, 1)$-atom. 
For the atom $a$, $|\nabla_t a|$ is an upper gradient of $a$ which belongs to $L^\infty(\pom)$, and since $\pom$ supports a weak $(1,1)$-Poincar\'e inequality \eqref{eq:Poincare}, we have that
\[
\int_{B\cap\pom} |a - a_{B}|\,d\sigma \leq C r(B) \int_{\Lambda B \cap \pom} |\nabla_t a|\,d\sigma, \quad \mbox{for any ball $B$ centered on $\pom$.}
\] 
Then, \cite[Theorem 8.1.7]{HKST} (i)$\implies$(iii) implies that there exists some $C>0$ such that $C M_c{\left(|\nabla_t a|\right)} \in D(a)\cap L^\infty(\pom)$.
Since the centered Hardy-Littlewood maximal operator is bounded in $L^\infty(\sigma)$, we have that there exists $c>0$ such that $c a$ is a  $(\infty,\infty,1)$-atom (see Definition  \ref{def:atom}).
\end{proof}

\begin{proof}[Proof of Theorem \ref{thm:solvability_classical_regularity_poincare}]
Theorem \ref{thm:solvability_classical_regularity_poincare} is a direct consequence of Proposition \ref{prop:equiv_cald_maximal_hardy_space}. 
\end{proof}

{ The next proposition shows that, in the case that $\partial^*\Omega$ supports a $(1,1)$-weak Poincar\'e inequality \eqref{eq:Poincare}, the Hardy HMT space $H^{1,1}(\partial^*\Omega)$ coincides with the \haj-Sobolev space $\dot M^{1,1}(\partial^*\Omega)$ via the characterization using $\Lambda_\sharp$ (see Theorem \ref{thm:calderon_maximal}).  
In particular, if we suppose that $\HH^n(\pom \backslash \partial^*\Omega)=0$, we could redefine solvability of $(\wt R_1^{\mathcal L})$ using Hardy HMT spaces instead of Hardy-Sobolev spaces. 

\begin{proposition}
	\label{prop:equiv_cald_maximal_hardy_HMT_space}
	Let $\Omega \subset \R^{n+1}$ be a domain of locally finite perimeter {such that $\partial^*\Omega$ supports a $(1,1)$-weak Poincar\'e inequality}. Then, if $\sigma_*:= \mathcal{H}^n|_{\partial^* \Omega}$, it holds that
	\[
 	\mathcal M_{\text{gr}}(\nabla_{\operatorname{HMT}}f)(x)  \approx  \Lambda_{\sharp} f(x), \quad \mbox{$\sigma_*$-a.e. $x \in \partial^*\Omega$}
	\]
	for all $f \in M^{1,1}(\sigma_*)\cap\operatorname{Lip}(\partial^*\Omega)$. 
	If we also assume that $\sigma_*$ is doubling, then it also holds that 
	$$
	\Vert \mathcal M_{\text{gr}}(\nabla_{\operatorname{HMT}}f)\|_{L^1(\sigma_*)} \approx \|f\|_{\dot M^{1,1}(\sigma_*)}.
	$$ 
\end{proposition}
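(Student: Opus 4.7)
The plan is to establish the proposition by first proving the pointwise comparison $\mathcal M_{\text{gr}}(\nabla_{\operatorname{HMT}}f)(x) \approx \Lambda_\sharp f(x)$ for $\sigma_*$-a.e.\ $x \in \partial^*\Omega$ directly from the two dual characterizations, and then deducing the $L^1$ norm equivalence by integrating and invoking Theorem \ref{thm:calderon_maximal}. The weak $(1,1)$-Poincar\'e inequality enters only through the fact, already established in the preceding subsection, that $\mathcal M_{\text{gr}}$ is genuinely a functional of $\nabla_{\operatorname{HMT}} f$, which legitimizes the manipulations below.

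For the upper bound $\mathcal M_{\text{gr}}(\nabla_{\operatorname{HMT}}f)(x) \lesssim \Lambda_\sharp f(x)$, I would fix $g \in \wt{\mathcal T}_1(x)$ supported in a ball $B \ni x$ and, using the cancellation $\int g\, d\sigma_* = 0$ together with the normalization $\|g\|_\infty \leq (r(B)\sigma_*(B))^{-1}$, estimate
$$\left|\int f g\, d\sigma_*\right| = \left|\int_B (f - f_B) g\, d\sigma_*\right| \leq \frac{1}{r(B)}\fint_B |f - f_B|\, d\sigma_*.$$
Since $B \subset B(x, 2r(B))$, the doubling property of $\sigma_*$ allows me to replace $B$ by the concentric ball $B(x, 2r(B))$ up to a multiplicative constant, giving a quantity bounded by $C\,\Lambda_\sharp f(x)$. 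Taking the supremum over admissible $g$ closes this direction.

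For the lower bound $\Lambda_\sharp f(x) \lesssim \mathcal M_{\text{gr}}(\nabla_{\operatorname{HMT}}f)(x)$, the strategy is to build, for each centered ball $B = B(x,r)$, an explicit admissible test function whose pairing with $f$ realizes (up to a constant) the quantity $r^{-1}\fint_B |f - f_B|\, d\sigma_*$. Concretely, letting $\alpha := \fint_B \operatorname{sgn}(f - f_B)\, d\sigma_*$, I would set
$$g(y) := \frac{1}{2\,r\,\sigma_*(B)}\bigl(\operatorname{sgn}(f(y)-f_B) - \alpha\bigr)\chi_B(y),$$
which satisfies $\|g\|_\infty \leq (r(B)\sigma_*(B))^{-1}$, is supported in $B$, and has zero integral over $\partial^*\Omega$, so $g \in \wt{\mathcal T}_1(x)$. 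Evaluating $\int f g\, d\sigma_*$ and exploiting $\int_B (f - f_B)\, d\sigma_* = 0$ to annihilate the $\alpha$-term yields $\int f g\, d\sigma_* = (2r)^{-1}\fint_B |f - f_B|\, d\sigma_*$; taking the supremum over $r > 0$ concludes the pointwise inequality. It is crucial here that $\wt{\mathcal T}_1(x)$ admits merely $L^\infty$ test functions rather than Lipschitz ones, since the function $g$ above is typically discontinuous.

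Finally, integrating the pointwise equivalence over $\partial^*\Omega$ and combining with Theorem \ref{thm:calderon_maximal} (whose hypothesis that $\sigma_*$ be doubling is precisely what is added in the second assertion) gives
$$\|\mathcal M_{\text{gr}}(\nabla_{\operatorname{HMT}}f)\|_{L^1(\sigma_*)} \approx \|\Lambda_\sharp f\|_{L^1(\sigma_*)} \approx \|f\|_{\dot M^{1,1}(\sigma_*)},$$
completing the proof. I expect the only real technical subtlety to be the asymmetry between off-center balls (in the definition of $\wt{\mathcal T}_1(x)$) and centered balls (in the definition of $\Lambda_\sharp$), which is handled uniformly through doubling; the construction of the discontinuous test function for the lower bound, while elementary, is the conceptual crux of the argument.
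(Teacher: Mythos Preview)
Your proposal is correct and follows essentially the same approach as the paper: the upper bound via the cancellation of $g$ to insert $f_B$, and the lower bound by realizing the oscillation through a zero-mean $L^\infty$ test function (the paper phrases this abstractly as $\int_B |f-f_B| = \sup_{\|g\|_\infty\le 1}\int_B (f-f_B)g$ and then replaces $g$ by $g-g_B$, which is exactly your explicit sign-based construction), followed by Theorem~\ref{thm:calderon_maximal} for the norm equivalence. One small remark: you invoke doubling to pass from off-center balls in $\wt{\mathcal T}_1(x)$ to centered balls in $\Lambda_\sharp$, whereas the first assertion of the proposition does not assume doubling; the paper sidesteps this by (somewhat imprecisely) taking $B$ centered at $x$, so this is a minor wrinkle in both treatments rather than a gap in your argument.
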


\begin{proof}
	Let $f$ be a Lipschitz function in $M^{1,1}(\sigma_*)$. We first prove the inequality $\gtrsim$.
	Let $x \in \partial^* \Omega$ and $g \in \wt{\mathcal{T}}_1(x)$ ({as defined in Section \ref{section:HMT_def}}). Then, there exists a ball $B$ centered at $x$ of radius $r(B)$ such that $\operatorname{supp} g \subset B$ and $\Vert g\Vert_\infty \leq(r(B) \sigma_*(B))^{-1}$. Since $\int g \, d\sigma_*=0$, it holds that
	\[
	\begin{aligned}
		\int f g\, d \sigma_* &= \int\left(f-\fint_B f\,d\sigma_* \right)g\, d \sigma_*
	\end{aligned}
	\]
	which implies that
	\[
	\left|\int  f g \,d \sigma_*\right| \leq  \fint_B \frac{\left|f-\fint_B f\,d\sigma_*\right|}{r(B)} \,d \sigma_* \leq \Lambda_{\sharp} f(x).
	\]
	Taking the supremum over $g \in \wt{\mathcal T}_1(x)$ we obtain
	\[
	\mathcal M_{\text{gr}}(\nabla_{\operatorname{HMT}} f)(x) \leq  \Lambda_\sharp f(x).
	\]
	
	Let's prove the converse inequality $\Lambda_{\sharp}f(x) \lesssim\mathcal M_{\text{gr}}(\nabla_{\operatorname{HMT}} f)(x)$ for all $x\in\partial^*\Omega$.
	Let $B$ be a ball centered in $x\in\partial^*\Omega$, and $\Delta = B\cap\partial^*\Omega$. Then, we have
	\begin{align*}
		\int_{\Delta}& |f-f_\Delta| d\sigma_*  = 
		\sup_{\Vert g \Vert_{L^\infty(\sigma_*)} \leq 1} \left |\int_{\Delta} (f-f_\Delta) g \,d\sigma_* \right| 
		= \sup_{\Vert g \Vert_{L^\infty(\sigma_*)} \leq 1} \left |\int_{\Delta} f (g-g_\Delta) \,d\sigma_* \right| \\
		&\leq \sup_{\Vert \tilde g \Vert_{L^\infty(\sigma_*)} \leq 2,~ \int_B \tilde g \,d\sigma_* = 0} \left |\int_{\Delta} f \tilde g \,d\sigma_* \right| 
		\leq 2\sup_{\Vert \tilde g \Vert_{L^\infty(\sigma_*)} \leq 1,~ \int_B \tilde g \,d\sigma_* = 0} \left |\int_{\Delta} f \tilde g \,d\sigma_* \right|
	\end{align*}
	where $f_\Delta = \fint_\Delta f\, d\sigma_*$ and we have rewritten $g-g_\Delta =: \tilde g$.
	Dividing by $r(B) \sigma_*(B)$ on both sides, and taking into account that we can assume $\supp g \subset B$ above, we obtain
	\[
	\frac{1}{r(B) \sigma(\Delta)} \int_\Delta |f - f_\Delta|\, d\sigma_* \lesssim \mathcal M_{gr}(\nabla_{\operatorname{HMT}} f)(x),
	\]
	and taking the supremum over the balls centered at $x$, we deduce that $${\Lambda_\sharp f(x) \lesssim \mathcal M_{gr}(\nabla_{\operatorname{HMT}}f)(x)}.$$
	
	Finally, the comparability $	\Vert \mathcal M_{\text{gr}}(\nabla_{\operatorname{HMT}}f)\|_{L^1(\sigma_*)} \approx \|f\|_{\dot M^{1,1}(\sigma_*)}$ is a consequence of Theorem \ref{thm:calderon_maximal}. 
\end{proof}
Finally, we prove Proposition \ref{prop:reverseregularity} which states that, under the additional assumption that $\Omega$ satisfies the local John condition, we can reverse the $(R_1^{\mathcal L})$ solvability estimate. 
\begin{proof}[Proof of Proposition \ref{prop:reverseregularity}]

By arguments similar to those in \cite[pag 462]{KP} or \cite[Proposition 4.24]{HMT}, we will prove that 
$C \wt N(\nabla u)$ is a \haj\,\,gradient of $u|_{\pom}$ for some $C>0$ independent of $u$,
hence showing that $\Vert \wt N(\nabla u)\Vert_1 \gtrsim \Vert u \Vert_{\dot M^{1,1}}$.
Let $x,y\in\partial\Omega$, set $r = 2|x-y|$, $B=B(x,r)$, $x_B\in B\cap\Omega$ be a John center in $B\cap\Omega$, and $\tilde \gamma_z(s)$ to be the arclength parametrization of the non-tangential path connecting $z$ to $x_B$ for any $z \in \pom\cap B(x,r)$.
Recall that these paths $\tilde \gamma_z$ have length at most $Cr$ independently of the point $z$.

For $x,y\in \pom$, let $x_0:=x_B$, $x_k$ be a point in the path $\tilde\gamma_x$ satisfying $\dist(x_k, x) \approx r (1+c)^{-k}$ for some constant $c>0$ small enough, and that for each $k\geq0$, there is a ball $B_k$ centered at $x_k$ satisfying that $x_{k+1}\in B_k, ~2B_k \subset \Omega$ (define points $y_k$ analogously). Then, for $\sigma$-.a.e $x,y\in\pom$, we write
\begin{align*}
	|u(x) - u(y)| &\leq |u(x) - u(x_B)| + |u(y) - u(x_B)| \\
	&\leq \sum_{k=0}^\infty \left(|u(x_k) - u(x_{k+1})| + |u(y_k) - u(y_{k+1})|\right)
\end{align*}
where we have used that $\sigma$-a.e. $x\in\pom$, we have non-tangential convergence of $u$ to $u(x)$ ($u(y)$ respectively).

We can bound
\begin{align*}
	|u(x_k)-u(x_{k+1})| &\lesssim \Vert u - u(x_{k+1}) \Vert_{L^2(1.1 B_k)}\\&\lesssim r(B_k) \left(\fint_{1.1 B_k} |\nabla u|^2 \, dm\right)^{1/2} \\
	&\leq C r(1+c)^{-k} \wt N(\nabla u)(x)
\end{align*}
using a Poincar\'e inequality for solutions of divergence form elliptic PDEs in the interior such as \cite[Theorem 1]{Z}, and we consider the modified non-tangential maximal operator $\wt N$ associated with a cone of large enough aperture.
Summing up, we obtain
\[
	|u(x) - u(y)| \lesssim C r (\wt N(\nabla u)(x) + \wt N (\nabla u)(y)) \lesssim (\wt N(\nabla u)(x) + \wt N (\nabla u)(y)) |x-y|
\] 
for $\sigma\text{-a.e. } x,y\in\pom$.

\end{proof}
\color{black}

\color{black}
\section{Extrapolation of solvability of the regularity problem} \label{section:extrapolation}
In this section, we will prove the main results of this work. The theorems proved in Section \ref{section:hajlasz} regarding the atomic decompositions of \haj-Sobolev spaces will be used with $X = \partial\Omega$ (the $n$-Ahlfors regular boundary of a domain $\Omega$), $\sigma = \mathcal H^n|_{\partial\Omega}$, and Euclidean distance. The parameter $s$ in Section \ref{section:hajlasz} corresponding to the {Ahlfors} regularity will be now equal to $n$. {Note that the Ahlfors regularity of $\sigma$ implies that $\pom$ is uniformly perfect.}

First, we prove a localization theorem {inspired by the one in \cite[Theorem 5.19]{KP} for Lipschitz domains but adapted to much rougher domains without good connectivity properties}.

\begin{theorem}[Localization theorem for general elliptic operators]
	\label{thm:localization}
	Let $1<p<+\infty$,
	and $\Omega \subset \R^{n+1}$ be a corkscrew domain with {$n$-Ahlfors regular} boundary $\pom$ such that the problem $(D_{p'}^{\mathcal L^*})$ is solvable where $p'$ is the H\"older conjugate of $p$.  
	For $x_0 \in \pom$, let $B= B(x_0,R)$, and $f \in M^{1,p}(\partial\Omega)\cap \operatorname{Lip}(\pom)$ with $f$ {being constant} on $B(x_0,2R)$, and $\nabla_H f \in D(f) \cap L^p(\partial\Omega)$.
	Then,
	\[
	\fint_{B(x_0,R/2)\cap\partial\Omega} \wt N_{R/2}(\nabla u)^p d\sigma \lesssim 
	\left ( \fint_{A(x_0,R,2R)\cap\Omega} |\nabla u|\, dm  \right)^p
	\]
	{where  ${A(x_0, R, 2R)=B(x_0,2R)\backslash B(x_0,R)}$,  $\wt N_{R/2}$ is the modified non-tangential maximal operator truncated at height $R/2$, and $u$ is the solution of the continuous Dirichlet problem for the operator $\mathcal L$ with boundary data $f$.}
	
	Moreover, if the problem $(R_p^{\mathcal L})$ is solvable, we have
	\begin{multline}
	\fint_{B(x_0,R/2)\cap\partial\Omega} \wt N_{R/2}(\nabla u)^{p} d\sigma 
	\lesssim 
	\fint_{B(x_0,3R)\cap\partial\Omega} |\nabla_H f|^{p} d\sigma +
	\left ( \fint_{A(x_0,R,2R)\cap\Omega} |\nabla u|\, dm  \right)^{p}
	\end{multline}
	for all $f \in M^{1,p}(\pom)\cap \operatorname{Lip}(\pom)$, and $\nabla_H f \in D(f) \cap L^p(\pom)$. 
\end{theorem}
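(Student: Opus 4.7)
I would treat the two inequalities in turn: the first, in which $f$ is constant on $2B$, under the sole hypothesis $(D_{p'}^{\mathcal{L}^*})$; the second for general $f$ by a cut-off decomposition reducing it to the first together with solvability of $(R_p^{\mathcal{L}})$. For the first, after subtracting the constant value of $f$ on $2B$ from both $f$ and $u$ (which leaves $\nabla u$ unchanged), I may assume $f\equiv 0$ on $\pom\cap 2B$ and $u$ vanishes continuously there. Fixing $x\in B(x_0,R/2)\cap\pom$ and $y\in\gamma(x)$ with $\delta:=\dist(y,\pom)\leq R/2$, the interior Caccioppoli inequality for $\mathcal{L}u=0$ yields
\[
\Bigl(\fint_{B(y,\delta/4)}|\nabla u|^2\,dm\Bigr)^{1/2}\lesssim\delta^{-1}\Bigl(\fint_{B(y,\delta/2)}|u|^2\,dm\Bigr)^{1/2},
\]
so the problem reduces to estimating a modified non-tangential maximal function of $u/\dist(\cdot,\pom)$.

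To control $|u|$ pointwise inside $B(x_0,R)$, I would apply the maximum principle in $\Omega\cap 2B$ (on whose boundary $u$ vanishes except on $\Omega\cap\partial B(x_0,2R)$) together with interior sub-solution estimates for $|u|$ and the strong Poincar\'e inequality for functions vanishing on $\pom\cap 2B$ (Corollary~\ref{coro:Poincare_functions_vanishing_pom}) to pass from the supremum of $|u|$ on an interior annulus to the $L^1$ average of $|\nabla u|$ on $A(x_0,R,2R)\cap\Omega$. In the thin layer $\{z\in\Omega\cap B(x_0,R):\dist(z,\pom)\ll R\}$ this bound needs to be refined by the boundary H\"older decay $|u(z)|\lesssim(\dist(z,\pom)/R)^\alpha\sup_{\Omega\cap B(x_0,7R/4)}|u|$, which is available because the $n$-Ahlfors regularity of $\pom$ implies the capacity density condition. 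The principal obstacle is that this pointwise bound, when one takes the sup over $y\in\gamma(x)$ to form $\widetilde{N}_{R/2}(\nabla u)(x)$, yields a divergent factor $\delta^{\alpha-1}$ and so cannot control $\widetilde{N}_{R/2}(\nabla u)$ pointwise. It must instead be absorbed in $L^p$-average through the weak reverse H\"older inequality $(\fint_{\Delta}k_{\mathcal{L}^*}^{\,p}\,d\sigma)^{1/p}\lesssim\fint_{2\Delta}k_{\mathcal{L}^*}\,d\sigma$ for the Poisson kernel $k_{\mathcal{L}^*}=d\omega_{\mathcal{L}^*}/d\sigma$, which by Theorem~\ref{thm:characterization_weak_A_infty} is equivalent to the solvability of $(D_{p'}^{\mathcal{L}^*})$. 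This reverse H\"older inequality plays the role of Dahlberg's $B_p$ condition in the classical Lipschitz case and is the decisive analytic ingredient that converts the pointwise blow-up into a bounded $L^p$ quantity, via a duality pairing with a Hardy--Littlewood-type maximal operator on $B(x_0,R/2)\cap\pom$.

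For the second estimate, I would decompose $f=f_1+f_2$ using a Lipschitz cut-off $\varphi$ on $\pom$ satisfying $\varphi\equiv 1$ on $(5B/2)\cap\pom$, $\varphi\equiv 0$ off $3B\cap\pom$, and $\|\varphi\|_{\operatorname{Lip}}\lesssim R^{-1}$. Setting $c:=f_{2B\cap\pom}$, $f_1:=(f-c)\varphi$, $f_2:=f-f_1$, one has that $f_2$ is constant (equal to $c$) on $(5B/2)\cap\pom\supset 2B\cap\pom$, and by Lemma~\ref{lemma:leibniz} and Theorem~\ref{thm:poincare_compact_supp},
\[
\|f_1\|_{\dot M^{1,p}(\pom)}\lesssim\|\nabla_H f\|_{L^p(3B\cap\pom)}.
\]
Writing $u=u_1+u_2$ for the corresponding solutions, solvability of $(R_p^{\mathcal{L}})$ applied to $u_1$ controls $\|\widetilde{N}(\nabla u_1)\|_{L^p(\pom)}^p$ by the first term on the right-hand side of the target estimate, while the first estimate applied to $u_2$ controls $\|\widetilde{N}_{R/2}(\nabla u_2)\|_{L^p(B(x_0,R/2)\cap\pom)}^p$ by $\sigma(B(x_0,R/2))\bigl(\fint_A|\nabla u_2|\,dm\bigr)^p$. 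Using $|\nabla u_2|\leq|\nabla u|+|\nabla u_1|$ on $A$, together with a Fubini-type comparison $|\nabla u_1(z)|\lesssim\widetilde{N}(\nabla u_1)(\xi)$ valid for $\xi\in\pom$ in the shadow of $z\in A$, absorbs the $u_1$ contribution into the first term already controlled, which completes the proof.
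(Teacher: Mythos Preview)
Your treatment of the second inequality (general $f$) is essentially the same as the paper's: a cut-off decomposition $f=f_1+f_2$ with $f_2$ constant on $2B$, $(R_p^{\mathcal L})$ applied to $u_1$, the first part applied to $u_2$, and a Fubini argument to absorb $\fint_A|\nabla u_1|$ back into $\|\widetilde N(\nabla u_1)\|_{L^p}$. No issues there.

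For the first inequality, however, there is a genuine gap. You correctly reduce, via Caccioppoli, to controlling $\delta^{-1}|u|$ non-tangentially, and you correctly observe that the boundary H\"older estimate $|u(z)|\lesssim(\delta_\Omega(z)/R)^\alpha\sup_{\Omega\cap 7B/4}|u|$ by itself only gives $\widetilde N_{R/2}(\nabla u)(\xi)\lesssim\sup_{\delta\le R/2}\delta^{\alpha-1}R^{-\alpha}M=\infty$, so something more is needed. But your proposed fix, ``absorb it in $L^p$ via the reverse H\"older inequality for $k_{\mathcal L^*}$ by a duality pairing with a maximal operator,'' is not an argument: the bound you have produced is a fixed geometric quantity $\delta^{\alpha-1}R^{-\alpha}M$ that does not involve the elliptic measure $\omega_{\mathcal L^*}$ at all, and the reverse H\"older inequality is a statement about $d\omega_{\mathcal L^*}/d\sigma$. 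There is nothing in your estimate to which $(RH_p)$ can be applied.

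What the paper does instead is to bring $\omega_{\mathcal L^*}$ into the picture \emph{before} estimating. With $\phi$ a cutoff equal to $1$ on $B(x_0,3R/2)$ and supported in $B(x_0,1.8R)$, one writes the Green representation
\[
-(u\phi)(x)=\int_\Omega A^*(y)\nabla_yG(x,y)\cdot\nabla(u\phi)(y)\,dy,
\]
expands $\nabla(u\phi)$ and integrates by parts (using $\mathcal Lu=0$) so that both resulting terms live on the annulus $A_B:=\operatorname{supp}\nabla\phi$. Then one uses Lemma~\ref{lemma:bound_green_harmonic_measure} in the form $G(x,y)\lesssim\omega_{\mathcal L^*}^{y}(\Delta_\xi)\,\delta_\Omega(x)^{1-n}$ for $y\in A_B$, where $\Delta_\xi$ is a surface ball at the cone vertex $\xi$ of radius $\approx\delta_\Omega(x)$. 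This yields
\[
\Bigl(\fint_{B(x,\delta/4)}|\nabla u|^2\Bigr)^{1/2}
\lesssim \sigma(B)\Bigl(\fint_{1.1A_B}\frac{\omega_{\mathcal L^*}^{y}(\Delta_\xi)}{\delta_\Omega(x)^{n}}\,dy\Bigr)\Bigl(\fint_{1.1A_B}|\nabla u|\Bigr)
\lesssim \sigma(B)\Bigl(\fint_{1.1A_B}M_{c,\sigma}\bigl(\omega_{\mathcal L^*}^{y}|_{CB}\bigr)(\xi)\,dy\Bigr)\Bigl(\fint_{1.1A_B}|\nabla u|\Bigr).
\]
Now the dependence on $\xi$ is through $M_{c,\sigma}(\omega_{\mathcal L^*}^{y})(\xi)$; raising to the $p$th power, integrating in $\xi$, swapping the order of integration (Fubini), and applying $L^p$-boundedness of $M_{c,\sigma}$ followed by the weak reverse H\"older inequality for $d\omega_{\mathcal L^*}^{y}/d\sigma$ gives a quantity uniformly bounded in $y\in A_B$ (total mass $1$), which is what closes the estimate. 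The point is that the Green-function bound of Lemma~\ref{lemma:bound_green_harmonic_measure}, not boundary H\"older decay, is the device that replaces the divergent $\delta^{\alpha-1}$ by the maximal function of $\omega_{\mathcal L^*}^{y}$, and only then does $(RH_p)$ enter. Your outline is missing precisely this representation step.
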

\begin{proof} 
	We start with the case when $f$ is {constant} on $B(x_0, 2R) \cap \pom$. {Without loss of generality, we may assume that $f$ vanishes on $B(x_0,2R) \cap \pom$ as we can subtract a constant to the solution without changing its gradient $\nabla u$.}
	\vspace{-2.5mm}
	\subsubsection*{Case $f\equiv0$ on $B(x_0,2R)\cap \pom$}
	Let $\phi$ be a smooth cutoff function for $B(x_0,3R/2)$ that is, $\phi \equiv 1$ in $B(x_0, 3R/2)$, $\supp \phi \subset B(x_0, 1.8R)$, and $|\nabla\phi|\lesssim R^{-1}$, $\xi \in B(x_0,R/2)\cap\partial\Omega$, and $x \in \gamma(\xi)\cap B(\xi,R/2)$.	
	
	We rewrite $u(x)$ as
	\begin{align}
		\label{eq:rewritting_u_localization}
		-u(x) = -(u\,\phi)(x) = \int_{\Omega} A^*(y)\nabla_y G(x,y) \cdot \nabla (u(y)\phi(y))\, dy.
	\end{align}
	where $G$ is the Green function for the operator $\mathcal L^*$ in $\Omega$ and $A^*$ is the associated matrix.
	
	Since { $\mathcal L(u\phi)=0$} in a neighborhood $B_r(x)$ of $x$ with $2r=\delta_\Omega(x):=\dist(x,\pom)$, we can use Caccioppoli's inequality to obtain
	\begin{align}
		\label{eq:bound_utimesphi_localization}
		\left( \fint_{B_{r/2}(x)} |\nabla u(z)|^2 \, dz \right)^{1/2} 
		&\lesssim \frac 1 r \sup_{z\in B_{2r/3}(x)} |(u\, \phi)(z)| \\
		&\leq \frac 1 r \sup_{z\in B_{2r/3}(x)} \left|\int_{A_B} (A^* \nabla_y G(z,y) \cdot \nabla \phi)u \, dy\right| \nonumber\\
		& \quad+ \frac 1 r \sup_{z\in B_{2r/3}(x)}\left|  \int_{B(x_0, 2R)} (A^* \nabla_y G(z,y) \cdot \nabla u)\phi\, dy \right| \nonumber\\
		& =: f_1(x) + f_2(x), \nonumber
	\end{align}
	where $A_B:=\overline{B(x_0,1.8R)} \backslash B(x_0, 3/2 R) \supset \supp \nabla\phi$. 
	
	First, we will bound $f_1(x)$. Note that, thanks to {Harnack inequality (note that $x$ is far from $A_B$),} $G(x,y) \approx G(t,y)$ for $y\in 1.05 A_B := \overline{B(x_0,1.05\cdot 1.8R)} \backslash B(x_0, 3/(2\cdot1.05) R)$ and $t \in B(x,\delta_\Omega(x)/2)$ . 
	Thus, we have
	\begin{align*}
		f_1(x) &= \frac 1 r \sup_{t \in B_{2r/3}(x)}\left|\int_{A_B} (\nabla_y G(t,y)\cdot A \nabla \phi) u\, dy \right| \\
		&\lesssim \frac 1 r \sup_{t \in B_{2r/3}(x)}\left(\int_{A_B} |\nabla_y G(t,y)|^2 \,dy\right)^{1/2} \left(\int_{A_B} |u \nabla \phi|^2 \,dy\right)^{1/2} \\
		&\lesssim \frac 1 {rR} \sup_{t \in B_{2r/3}(x)}\left(\int_{1.05 A_B} |G(t,y)|^2 \,dy\right)^{1/2} \left(\int_{A_B} |u \nabla \phi|^2 \,dy\right)^{1/2} \\
		&\lesssim \frac 1 {R^2r} \left(\int_{1.05 A_B} |G(x,y)|^2 \,dy\right)^{1/2} \left(\int_{A_B} |u|^2 \,dy\right)^{1/2} \\
		& \lesssim \frac 1 {R^2r} \sup_{t \in 1.05 A_B} G(x,t) \int_{1.05 A_B} |u| \,dy \lesssim \frac 1 {Rr} \sup_{t \in 1.05 A_B} G(x,t) \int_{1.05 A_B} |\nabla u| \,dy
	\end{align*}
	where we have used {Cauchy-Schwarz and Caccioppoli inequalities as well as Corollary \ref{coro:Poincare_functions_vanishing_pom}, which is a Poincar\'e inequality} for functions vanishing on a {$n$-Ahlfors regular set} (in this case $A_B \cap \pom$). 
	
	Now, we bound $f_2(x)$. Indeed, integrating by parts, using that $\mathcal Lu=0$, $G(x,y) \approx G(t,y)$ for $y\in A_B$ and $t \in B(x,\delta_\Omega(x)/2)$, and $\supp \nabla\phi \subset A_B$, we have
	\begin{align*}
		f_2(x) &= \frac 1 r \sup_{t\in B_{2r/3}(x)} \left|\int_{B(x_0, 2R)} (\nabla_y G(t,y) \cdot A \nabla u) \phi \, dy \right| \\
		&= \frac 1 r \sup_{t\in B_{2r/3}(x)} \left|\int_{A_B}  G(t,y) (A \nabla u \cdot \nabla \phi) \, dy  \right| 
		\lesssim \frac 1 r \int_{A_B}  \left|G(x,y) (A \nabla u \cdot  \nabla \phi)\right| \, dy \\
		&\lesssim \frac 1 {Rr} \sup_{t \in A_B} G(x,t) \int_{A_B} |\nabla u|\, dy.
	\end{align*}
	
	Since $G(x,\cdot)$ is {a positive} $\mathcal L^*$-subsolution in $1.05 A_B$, we have
	\[
	\sup_{t \in 1.05 A_B} G(x,t) \lesssim \fint_{1.1 A_B} G(x,y) \,dy \lesssim \fint_{1.1 A_B} \frac{\omega_{\mathcal L^*}^y(\Delta_\xi)}{\delta_\Omega(x)^{n-1}}\,dy 
	\]
	where we have also used $G(x,y) \lesssim \omega_{\mathcal L^*}^y(\Delta_\xi) \delta_\Omega(x)^{n-1}$ (Lemma \ref{lemma:bound_green_harmonic_measure}) where $\omega_{\mathcal L^*}^y$ is the elliptic measure for the operator $\mathcal L^*$ with pole at $y$, and $\Delta_\xi$ is the intersection of $B(\xi, \hat r)\cap \pom$ with $\delta_\Omega(x) \approx \hat r$. 
	Using this, the bounds for $f_1$ and $f_2$, and that $|A_B|\approx \sigma(B(x_0,R)) R$, we get
	\[
	\left( \fint_{B_{r/2}(x)} |\nabla u(z)|^2 \, dz \right)^{1/2} 
	\lesssim \sigma(B(x_0,R))\left(\fint_{1.1 A_B} \omega_{\mathcal L^*}^y(\Delta_\xi) \delta_\Omega(x)^{-n} dy\right) \left( \fint_{1.1A_B} |\nabla u| dy \right).
	\]
	On the other hand, we can further bound
	\[
	\fint_{1.1 A_B} \omega_{\mathcal L^*}^y(\Delta_\xi) \delta_\Omega(x)^{-n} dy \lesssim \fint_{1.1 A_B} M_{c,\sigma} (\omega_{\mathcal L^*}^y|_{B(x_0,CR)})(\xi) \,dy \lesssim \left ( \fint_{1.1 A_B} M_{c,\sigma} (\omega_{\mathcal L^*}^y|_{B(x_0,CR)})^p \,dy \right)^{1/p}
	\]	
	where $C>0$ depends on the constants of Lemma \ref{lemma:bound_green_harmonic_measure} and $M_{c,\sigma}(\omega_{\mathcal L^*}^y|_{B(x_0,CR)})(\xi) := \sup_{r>0} \omega_{\mathcal L^*}^y(B(\xi,r)\cap B(x_0, CR)) \sigma(B(\xi,r))^{-1}$.
	Thus, recalling that $x\in\gamma(\xi)\cap B_{R/2}(x_0)$, we get 
	\begin{align}
		\label{eq:localization_thm_important_bound}
		\fint_{B(x_0,R/2)\cap\partial\Omega} &|\wt N_{R/2}(\nabla u)(\xi)|^p d\sigma(\xi) \nonumber\\
		&\lesssim \fint_{B(x_0,R/2)\cap\partial\Omega} \fint_{1.1 A_B} M_{c,\sigma} \omega_{\mathcal L^*}^y|_{B(x_0,CR)}(\xi)^p dy d\sigma(\xi) \cdot 
		\left(\sigma(B(x_0,R))\fint_{1.1A_B} |\nabla u|\,dy \right)^p \\
		&\lesssim \fint_{1.1A_B} \fint_{B(x_0, CR)} \left ( \frac{d \omega_{\mathcal L^*}^y}{d\sigma} \right)^p d\sigma dy \left ( \sigma(B(x_0,R))\fint_{1.1A_B} |\nabla u|\,dy \right)^p \nonumber\\
		&\lesssim \left ( \fint_{1.1A_B} |\nabla u| \,dy \right)^p \nonumber
	\end{align}
	where we have used Fubini, the boundedness of the maximal operator $M_{c,\sigma}$ on $L^p(\sigma)$, the weak reverse H\"older property of the elliptic measure (Theorem \ref{thm:characterization_weak_A_infty} or \cite[Theorem 9.2 (b)]{MT} for the harmonic case), and that the elliptic measure $\omega_{\mathcal L^*}^y$ has total mass $1$ for all $y\in\Omega$.

	Summing up, we obtain
	\[
	\fint_{B(x_0,R/2)\cap\partial\Omega} |\wt N_{R/2}(\nabla u)|^p d\sigma \lesssim \left(\fint_{1.1A_B} |\nabla u| dy \right)^p.
	\]
	Thus, the proof is complete in the case where $\supp f \subset B(x_0, 2R)^c\cap\partial\Omega$ since $1.1 A_B \subset A(x_0, R, 2R)$.
	
	\vspace{-2.5mm}
	\subsubsection*{General case.}
	Let $B$ denote the ball $B(x_0, R)$ and $A_B = B(x_0, 2R)\backslash B(x_0,R)$.
	Let $\phi_B$ be a smooth bump function supported in $3B$ such that $\phi_B \equiv 1$ on $2B$ and $|\nabla \phi_B| \lesssim R^{-1}$. We can write $f$ as
	\[
	f = 
		(f - f_{3B}) \phi_B +  \left[f (1-\phi_B) + f_{3B}\phi_{B} \right] =: g+h,
	\]
	so that 
	\[
	u = u_g + u_h,
	\]
	where $f_{3B} = \fint_{3B\cap\pom} f \, d\sigma$, $u_g$ and $u_h$ are the solutions of the Dirichlet problem with boundary data $g$ and $h$ respectively.
	Then, if we let $\nabla_H g$ be the \haj~gradient given by the Leibniz' rule (Lemma \ref{lemma:leibniz}), we have
	\[
	\int_{3B\cap\partial\Omega} |\nabla_H g|^p \,d\sigma \lesssim \int_{3B\cap\partial\Omega} |\nabla_H f|^p \,d\sigma + \frac{1}{r(B)^p} \int_{3B\cap\partial\Omega} |f-f_{3B}|^p \,d\sigma \lesssim \int_{3B\cap\partial\Omega} |\nabla_H f|^p\, d\sigma
	\]
	where we have used the Poincar\'e inequality (Theorem \ref{thm:poincare_compact_supp}) for \haj~gradients with zero mean.
	
	Observe that
	\[
	\int_{B/2\cap\partial\Omega} \wt N_{R/2}(\nabla u)^p \,d\sigma \lesssim \int_{\partial\Omega} \wt N(\nabla u_g)^p \,d\sigma
	+ \int_{B/2\cap\partial\Omega} \wt N_{R/2}(\nabla u_h)^p \,d\sigma.
	\]
	For the function $u_g$, we have
	\[
	\int_{\partial\Omega} \wt N(\nabla u_g)^p \,d\sigma \lesssim \int_{\partial\Omega} |\nabla_H g|^p \,d\sigma \lesssim \int_{3B\cap\partial\Omega} |\nabla_H f|^p \,d\sigma
	\]
	as a consequence of the solvability of $(R^{\mathcal L}_p)$.
	Now, we may apply the localization theorem (the part we have proved already) to the function $u_h$ to obtain
	\[
	\fint_{B/2\cap\partial\Omega} \wt N_{R/2}(\nabla u_h)^p \,d\sigma \lesssim \left (\fint_{A_B} |\nabla u_h| \,dm  \right)^p
	\]
	{noting that $h$ is constant on $2B\cap\pom$.}
	We wish to have $u$ on the right hand side instead of $u_h$, therefore we bound
	\[
	\int_{A_B} |\nabla u_h| \,dy \leq \int_{A_B} |\nabla u| \,dy  + \int_{A_B} |\nabla u_g| \,dy,
	\]
	and since $\fint_{A_B} |\nabla u_g| \,dy \lesssim \fint_{3B\cap\partial\Omega} \wt N(\nabla u_g) \,d\sigma$ by 
	{standard} arguments, we end up obtaining
	\[
	\fint_{B/2\cap\partial\Omega} \wt N_{R/2}(\nabla u)^p \,d\sigma 
	\lesssim  \fint_{3B\cap\partial\Omega} |\nabla_H f|^p \,d\sigma
	+
	\left (\fint_{A_B} |\nabla u| \,dy\right)^p.
	\]
\end{proof}

\subsection{\texorpdfstring{$(R^{\mathcal L}_p)$ implies $(R^{\mathcal L}_q)$ for all $q\in(1-\epsilon,p]$, $p>1$}{Rp implies Rq for all q between 1-e and p}} \label{section:extrapolation_down}
In this section we will prove Theorem$\,\,$\ref{thm:extrapolation}. 
\begin{proof}[Proof of Theorem \ref{thm:extrapolation}]
	We will prove that there exists some $\epsilon>0$ such that for all $r\in(1-\epsilon,1]$, solvability of $(R^{\mathcal L}_p)$ 
	implies solvability {to} $(R^{\mathcal L}_r)$. To do so, we will prove that for any {Lipschitz} $(\infty,\infty,r)$ atom $f$, the solution $u$ to the Dirichlet problem  with boundary data $f$ satisfies 
\begin{equation}
	\label{eq:bound_nontangential_atoms}
	\int_{\partial\Omega} |\wt N(\nabla u_f)|^r \,d\sigma \lesssim 1.
\end{equation}
	Since Theorem \ref{thm:atomic_decomposition} ensures that we can write any Lipschitz function in $\dot M^{1,r}(\pom)$ for $r\leq1$ as a sum of Lipschitz $(\infty,\infty,r)$ atoms, we obtain solvability of $(R^{\mathcal L}_r)$. Finally, Theorem \ref{thm:interpolation} allows us to interpolate the result for all the exponents in the range $(1-\epsilon,p)$ and this ends the proof.
\begin{remark}
	If $\pom$ is compact and $\Omega$ is unbounded, we have to prove \eqref{eq:bound_nontangential_atoms} for Lipschitz $(\diam\pom, \infty, r)$ atoms as the regularity $(R_r^{\mathcal L})$ estimate involves the $M^{1,r}(\pom)$ norm. Clearly, we have that every {Lipschitz} $(\diam\pom, \infty, r)$ atom is also an $(\infty,\infty, r)$ atom, and every $(\infty, \infty, r)$ atom with support strictly contained in $\pom$ is a $(\diam\pom, \infty, r)$ atom thanks to the Poincar\'e inequality in Theorem \ref{thm:poincare_compact_supp}. 
	Hence, the only extra work we have to do is establish \eqref{eq:bound_nontangential_atoms} for any $(\diam\pom, \infty, r)$ atom $f$ with $\supp f = \pom$.
	But, this bound is direct thanks to H\"older's inequality and the solvability of $(R_p^{\mathcal L})$ as
	\begin{align*}
	\Vert \wt N(\nabla u_f) \Vert_{L^r(\sigma)} \leq \sigma(\pom)^{1/r-1/p} \Vert \wt N(\nabla u_f) \Vert_{L^p(\sigma)} \lesssim \sigma(\pom)^{1/r-1/p} \Vert f \Vert_{M^{1,p}(\sigma)} \lesssim 1
	\end{align*}
	using that $f$ is a $(\diam\pom, \infty, r)$ atom.
\end{remark}

Let $f$ be a $(\infty,\infty,r)$ atom, that is, let $f$ be a Lipschitz function such that $\supp f \subset B=B(x_0, R)$, $x_0 \in \pom$, and $\left(\fint_{B\cap\partial\Omega} |\nabla_H f|^p \,d\sigma\right)^{1/p} \leq \sigma(B)^{-1/r}$ for some $\nabla_H f \in D(f)\cap L^p$ ({if $\pom$ is compact and $\Omega$ is unbounded, we assume that $B\cap \pom \subsetneq \pom$}). Let $u$ be the solution to the continuous Dirichlet problem in $\Omega$ for $\mathcal L$ with boundary data $f$.
Then, by solvability of $(R_p^{\mathcal L})$, we have 
\[
\Vert \wt N(\nabla u) \Vert_p \lesssim \Vert \nabla_H f \Vert_p \leq \sigma(B)^{1/p-1/r}.
\]

\begin{claim*}
	For $x \in (2^{k+1}B\backslash 2^k B)\cap\Omega=:A_k$, $k\geq1$, we have $|u(x)|\lesssim r(B) \sigma(B)^{-1/r}$.
\end{claim*}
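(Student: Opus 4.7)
The plan is to deduce the claim from two simple ingredients: an $L^\infty$ bound on the atomic boundary data $f$, and the maximum principle for $\mathcal L$-harmonic functions in $\Omega$.

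First I would establish the sup-norm bound on $f$. Since $f$ is a Lipschitz $(\infty,\infty,r)$-atom supported in $B$, we have $\Vert \nabla_H f\Vert_{L^\infty(\sigma)} \leq \sigma(B)^{-1/r}$. The pointwise \haj~gradient property \eqref{eq:def_haj_grad} then shows $f$ is Lipschitz on $\pom$ with constant $\lesssim \sigma(B)^{-1/r}$. Because $\supp f \subset B \subsetneq \pom$ and $\pom$ is uniformly perfect (Remark \ref{rmk:uniformly_perfect}), for each $x \in B \cap \pom$ we may find $z \in (\lambda B \setminus B) \cap \pom$ with $f(z)=0$ and $|x-z| \lesssim r(B)$, yielding
\[
|f(x)| = |f(x)-f(z)| \lesssim r(B)\,\sigma(B)^{-1/r}.
\]
(This is essentially the $p \to \infty$ instance of the second Poincaré inequality in Theorem \ref{thm:poincare_compact_supp} applied to $f$.)

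Next, since $u$ solves the continuous Dirichlet problem with boundary data $f$, the maximum principle gives $\Vert u \Vert_{L^\infty(\Omega)} \leq \Vert f \Vert_{L^\infty(\pom)} \lesssim r(B)\,\sigma(B)^{-1/r}$, which in particular bounds $|u(x)|$ on $A_k$ and proves the claim. In fact, a sharper decay version is available and is what one expects to use downstream: applying Lemma \ref{lemma:AGMT} with the ball $B$ to the bounded $\mathcal L$-harmonic function $u$ (which vanishes continuously on $\pom \setminus B$ and, if $\Omega$ is unbounded, vanishes at infinity), and using $r(B)+\dist(x,B) \approx 2^k r(B)$ for $x \in A_k$, $k \geq 1$, one obtains the stronger bound
\[
|u(x)| \;\lesssim\; 2^{-k(n-1+\alpha)}\, r(B)\,\sigma(B)^{-1/r}.
\]

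The claim itself presents essentially no obstacle; the real work lies in how it is combined with interior Caccioppoli estimates on each annulus $A_k$ to control $\wt N(\nabla u)$ outside $2B$. The decay factor $2^{-k(n-1+\alpha)}$ in the strengthened form will be crucial there, since it must offset the $(n+1)$-dimensional volume growth of the annuli and the surface-measure growth $\sigma(2^{k+1}B\cap\pom)\approx 2^{kn}\sigma(B)$ when one sums the resulting contributions over $k \geq 1$ to establish \eqref{eq:bound_nontangential_atoms} for $r$ slightly below $1$.
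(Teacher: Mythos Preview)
Your proof of the claim is correct and is in fact more elementary than the paper's. The paper argues via a Green function representation (as in the localization theorem): it reduces by the maximum principle to bounding $|u|$ on $(3B\setminus 2B)\cap\Omega$, then writes $(u\phi)(x)$ against the Green function, applies Caccioppoli and the Poincar\'e inequality of Corollary~\ref{coro:Poincare_functions_vanishing_pom}, and finally invokes the assumed solvability of $(R_p^{\mathcal L})$ together with the atom's $L^p$ gradient bound to reach $|u(x)|\lesssim R\,\sigma(B)^{-1/r}$. You instead exploit the full strength of the $(\infty,\infty,r)$-atom hypothesis, namely the bound $\Vert\nabla_H f\Vert_{L^\infty(\sigma)}\le\sigma(B)^{-1/r}$, to get $\Vert f\Vert_{L^\infty(\sigma)}\lesssim r(B)\,\sigma(B)^{-1/r}$ directly from the compactly supported Poincar\'e inequality (Theorem~\ref{thm:poincare_compact_supp}), and then conclude globally via the maximum principle. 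Your route is shorter and avoids calling on $(R_p^{\mathcal L})$ at this step; the paper's route, by contrast, only uses the weaker $L^p$ gradient bound on the atom and would therefore go through for $(\infty,p,r)$-atoms as well. Since Theorem~\ref{thm:atomic_decomposition} genuinely produces Lipschitz $(\infty,\infty,r)$-atoms, your shortcut is legitimate here. Your remark that Lemma~\ref{lemma:AGMT} then yields the decaying bound is exactly how the paper proceeds after the claim.
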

\begin{proof}[Proof of the claim.]
By the maximum principle, it is enough to show this for $x\in (3B\backslash 2B)\cap\Omega=A(x_0,2R,3R)\cap\Omega$ {since $u$ vanishes on $\pom \backslash B$ and also at $\infty$ if $\Omega$ is unbounded}. Let $\phi$ be a smooth cutoff function for $A(x_0,1.5R,3.5R)$ with support in $B(x_0, 4R)$. 
	As in the proof of \eqref{eq:bound_utimesphi_localization} and the following paragraphs in the {localization theorem} (Theorem \ref{thm:localization}), we can bound
	\begin{align*}
		(u\, \phi)(x) &\lesssim \frac 1 {R} \sup_{z \in B(x_0,4R)\backslash A(x_0, 1.4R, 3.6R)} G(x,z) \int_{A(x_0, 1.4R, 3.6R)\cap\Omega} |\nabla u| \,dy \\
		&\leq R \fint_{A(x_0, 1.4R, 3.6R)\cap\Omega} |\nabla u{(y)}|\,dy 
		\leq R \left( \fint_{4B\cap\Omega}|\nabla u|^p dy\right)^{1/p} \\
		&\lesssim R \left(\fint_{8B\cap\Omega} |\wt N(\nabla u)|^p\, d\sigma\right)^{1/p}  
		\lesssim R \sigma(B)^{-1/p} \left(\int_{\pom} |\nabla_H f|^p d\sigma\right)^{1/p}\\
		&\lesssim R \sigma(B)^{-1/r}
	\end{align*}
	where we have used that $G(x,z) \lesssim |x-z|^{1-n}$ and the solvability of $(R_p^{\mathcal L})$.
\end{proof}
On one hand, near $B\cap\pom$,  {thanks to the solvability of $(R_p^{\mathcal L})$, we have the bound
\begin{align*}
	\left (\fint_{8B\cap\partial\Omega} |\wt N(\nabla u)|^r d\sigma \right)^{1/r} &\leq 
	\left ( \fint_{8B\cap\partial\Omega} |\wt N(\nabla u)|^p d\sigma\right)^{1/p} \\
	&\lesssim_{(R_p^{\mathcal L})} \left ( \fint_{B\cap\partial\Omega} |\nabla_H f|^p d\sigma \right )^{1/p}  \leq \sigma(B)^{-1/r}.
\end{align*}
}

On the other hand, far from $B$, we have that
\begin{equation}
	\label{eq:bound_u_outside_2B}
	|u(x)| \lesssim \frac{R^{n-1+\alpha}}{|x_0-x|^{n-1+\alpha}} R\sigma(B)^{-1/r} = \frac{R^{\alpha+n-n/r}}{|x_0-x|^{n-1+\alpha}}, \quad \forall x \in \Omega \cap (2B)^c
\end{equation}
by the previous claim and Lemma \ref{lemma:AGMT}.
Note that the constant $\alpha$ depends only on the dimension $n$, $n$-Ahlfors regularity constants of $\partial\Omega$, and the ellipticity constant of $\mathcal L$.
Using Caccioppoli and \eqref{eq:bound_u_outside_2B}, for $x$ such that $|x_0-x| \approx \delta_\Omega(x)$ and $\delta_\Omega(x) \geq 3R$, we get 
\begin{equation}
	\label{eq:bound_nablau_far_bdry}
	\left(\fint_{B(x,\delta_\Omega(x)/4)}|\nabla u|^2\,dm\right)^{1/2} \lesssim \frac{R^{\alpha+n-n/r}}{|x_0-x|^{n +\alpha}}.
\end{equation}
Then,  {in $A_k \cap\pom$ for $k\geq 3$}, we obtain 
\[
\int_{A_k\cap\partial\Omega} |\wt N(\nabla u)|^r d\sigma \lesssim \int_{A_{k}\cap\partial\Omega} \left(|\wt N_{2^{k-2} R} (\nabla u)|^r + \left(c2^{-k(n+\alpha)}R^{-n/r}\right)^r \right)d\sigma
\]
using the bound for $\nabla u$ far from the boundary in \eqref{eq:bound_nablau_far_bdry}.
Using the {localization theorem} (Theorem \ref{thm:localization}), that $r\leq1$, {H\"older and boundary Caccioppoli inequalities (since  $u$ vanishes {on $A_k\cap\pom$}}), and the bound for $|u|$ for $x\notin 2B$ from \eqref{eq:bound_u_outside_2B}, we obtain
\begin{align*}
	\left ( \fint_{A_{k}\cap\partial\Omega} |\wt N_{2^{k-2} R}(\nabla u)|^r d\sigma \right)^{1/r} &\lesssim \fint_{A_k'\cap\partial\Omega} |\nabla u|\, dx
	\lesssim \left (\fint_{A_k'\cap\partial\Omega} |\nabla u|^2\, dx\right)^{1/2}\\
	&\lesssim 
	(2^k R)^{-1}\left (\fint_{A_{k}''\cap\Omega} |u|^2 dx \right)^{1/2}
	\lesssim 2^{-k(n+\alpha)}\sigma(B)^{-1/r}{,}
\end{align*}
where $A_k'$ and $A_k''$ are small enlargements of $A_k$ and $A_k'$ respectively. 

Summing over $k$, we obtain
\begin{align*}
	\int_{\partial\Omega} |\wt N(\nabla u)|^r d\sigma &\leq \int_{8B\cap\partial\Omega} |\wt N(\nabla u)|^r d\sigma+ \sum_{k=3}^\infty \int_{A_k \cap \partial\Omega} |\wt N(\nabla u)|^r\, d\sigma\\
	&\lesssim 1 + \sum_{k=3}^\infty \int_{A_{k}\cap\partial\Omega} \left(|\wt N_{2^{k-2}R} (\nabla u)|^r + \left(c2^{-k(n+\alpha)} \sigma(B)^{-1/r}\right)^r \right)d\sigma \\
	&\lesssim 1 + \sum_{k=3}^\infty \left(c2^{-k(n+\alpha)} \sigma(B)^{-1/r}\right)^r \sigma(A_k\cap\partial\Omega) \\
	&\lesssim 1 + \sum_{k=3}^\infty \left(2^{-k(n+\alpha)}\sigma(B)^{-1/r}\right)^r 2^{kn}R^n \\
	&= 1 + \sum_{k=3}^\infty 2^{-k(n+\alpha)r} 2^{kn} \lesssim 1.
\end{align*}
as long as $-(n+\alpha)r+n<0$.
Thus, choosing $ \epsilon = \frac{n}{n+\alpha} $ where $\alpha$ comes from Lemma \ref{lemma:AGMT} and depends only on the dimension $n$, the Ahlfors regularity constants of $\partial\Omega$, and the ellipticity constant of $\mathcal L$, the theorem follows.
\end{proof}
\color{black}

\subsection{\texorpdfstring{$(R^{\mathcal L}_1)$ implies $(R^{\mathcal L}_{1+\tilde\epsilon})$}{R1 implies R(1+e)}}
\label{section:extrapolation_up} 
In this section we aim to prove a converse statement to Theorem \ref{thm:extrapolation} in the endpoint case $p=1$. In particular, we will prove that solvability {$(R_1^{\mathcal L})$} implies that {elliptic} measure $\omega_{\mathcal L^*}$ is in weak-$\mathcal A_\infty(\sigma)$. {Using this fact, we will also show that {$(R_{1+\tilde\epsilon}^{\mathcal L})$} is solvable for some small $\tilde\epsilon>0$}.

The following lemma gives an easy to check characterization of the weak-$\mathcal A_\infty$ property. It appeared in this form in \cite[Lemma 3.2]{HL} but traces its roots {to} \cite{BL}.
Given a point $x \in \Omega$, let $\hat x$ be a ``touching point" of $x$, that is $\hat x\in\partial\Omega$ such that $|x-\hat x| = \delta_\Omega(x)$. Then, we define the boundary ball
\begin{equation}
	\label{eq:def_boundary_ball}
\Delta_x := B(\hat x, 10\delta_\Omega(x))\cap\partial\Omega.
\end{equation}
\begin{lemma}
\label{lemma:weak_A_infty} 
Let $\partial \Omega$ be $n$-Ahlfors regular, and suppose that there are constants $c_0, \eta \in(0,1)$, such that for each $x \in \Omega$, with $\delta_\Omega(x)<\operatorname{diam}(\partial \Omega)$, and for every Borel set $F \subset \Delta_x$,
\[
\sigma(F) \geq(1-\eta) \sigma(\Delta_x) \implies \omega_{\mathcal L^*}^x(F) \geq c_0 .
\]
Then $\omega^y_{\mathcal L^*}\in$ weak-$\mathcal A_{\infty}(\Delta)$, where $\Delta=B \cap \partial \Omega$, for every ball $B=B(\xi, r)$, with $\xi \in \partial \Omega$ and $0<r<\operatorname{diam}(\partial \Omega)$, and for all $y \in \Omega \backslash 4 B$. Moreover, the parameters in the weak-$\mathcal A_{\infty}$ condition depend only on $n$, the {Ahlfors} regularity constants, $\eta, c_0$, and the ellipticity parameter $\lambda$ of the divergence form operator $\mathcal L^*$.
\end{lemma}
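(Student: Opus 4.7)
My plan is to first reformulate the hypothesis in a slightly more convenient contrapositive form, and then reduce the claim to a self-improving ``one-step'' estimate which, upon iteration, yields the power gain required by the weak-$\mathcal A_\infty$ condition. Taking complements inside $\Delta_x$, the hypothesis is equivalent to the statement that for every $x\in\Omega$ with $\delta_\Omega(x)<\diam(\pom)$ and every Borel $G\subset\Delta_x$,
\[
\sigma(G)\leq \eta\,\sigma(\Delta_x) \implies \omega^x_{\mathcal L^*}(G)\leq 1-c_0,
\]
since $\omega^x_{\mathcal L^*}(\Delta_x)\leq 1$ and one may decompose $\Delta_x=G\sqcup(\Delta_x\setminus G)$.

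Next, fix a ball $B=B(\xi,r)$ with $\xi\in\pom$ and $0<r<\diam(\pom)$, set $\Delta=B\cap\pom$, and fix $y\in\Omega\setminus 4B$ and a Borel set $F\subset \Delta$. The goal is
\[
\omega^y_{\mathcal L^*}(F)\leq C\left(\frac{\sigma(F)}{\sigma(\Delta)}\right)^{\!\theta}\omega^y_{\mathcal L^*}(2\Delta).
\]
To obtain this, the plan is to prove the following one-step improvement: there exist $\eta'\in(0,\eta]$ and $c_1\in(0,1)$, depending only on $n$, the Ahlfors regularity constants, the corkscrew constant, $\lambda$, $\eta$ and $c_0$, such that whenever $F\subset \Delta$ is Borel with $\sigma(F)\leq \eta'\sigma(\Delta)$ one has
\[
\omega^y_{\mathcal L^*}(F)\leq (1-c_1)\,\omega^y_{\mathcal L^*}(2\Delta).
\]
To prove this step I would choose a corkscrew point $x_\Delta\in B\cap\Omega$ with $\delta_\Omega(x_\Delta)\approx r$, so that $\Delta_{x_\Delta}\subset 2\Delta$ and $\sigma(\Delta_{x_\Delta})\approx \sigma(\Delta)$. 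Applying the reformulated hypothesis at $x_\Delta$ with $G:=F$ (enlarging $\eta'$ or $\Delta_{x_\Delta}$ by a bounded factor if needed, which is harmless by Ahlfors regularity) gives $\omega^{x_\Delta}_{\mathcal L^*}(F)\leq 1-c_0$, while Bourgain's estimate (Lemma \ref{lemma:Bourgain}) yields $\omega^{x_\Delta}_{\mathcal L^*}(2\Delta)\geq c>0$; combining these,
\[
\omega^{x_\Delta}_{\mathcal L^*}(F)\leq (1-c_1')\,\omega^{x_\Delta}_{\mathcal L^*}(2\Delta)
\]
for some $c_1'>0$. The remaining task is to transfer this inequality from the pole $x_\Delta$ to the arbitrary exterior pole $y\in\Omega\setminus 4B$.

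For the transfer, since we do not have Harnack chains in this generality, I would argue using the maximum principle together with the decay at infinity provided by Lemma \ref{lemma:AGMT}, applied to the bounded $\mathcal L^*$-harmonic function $u(z):=\omega^z_{\mathcal L^*}(F) - (1-c_1')\,\omega^z_{\mathcal L^*}(2\Delta)$, which vanishes on $\pom\setminus 2\Delta$ (and at infinity if $\Omega$ is unbounded) and satisfies $u(x_\Delta)\leq 0$. A standard comparison at the boundary of a Whitney-type region near $B$, together with Bourgain, then propagates the one-step improvement to $y$, possibly with a slightly worse constant $c_1$. Iterating the one-step improvement $N$ times over nested Borel subsets of $\Delta$ with $\sigma$-density decreasing by the factor $\eta'$ at each stage produces $\omega^y_{\mathcal L^*}(F)\leq (1-c_1)^N\,\omega^y_{\mathcal L^*}(2\Delta)$ whenever $\sigma(F)/\sigma(\Delta)\leq (\eta')^N$, which rearranges into the desired weak-$\mathcal A_\infty$ inequality with $\theta=-\log(1-c_1)/\log(1/\eta')$.

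I expect the main obstacle to be precisely the transfer from $x_\Delta$ to the exterior pole $y$: without Harnack chains one must argue quantitatively via the maximum principle and the decay estimate of Lemma \ref{lemma:AGMT}, verifying that the gain $c_1'$ at the interior corkscrew pole survives this comparison up to a universal factor. The rest of the argument is a routine covering/iteration scheme, and the choice of enlargement factors (between $\Delta$, $2\Delta$, $\Delta_{x_\Delta}$, $4B$) only needs to be compatible with $n$-Ahlfors regularity.
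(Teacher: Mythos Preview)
The paper does not prove this lemma; it is quoted from \cite[Lemma 3.2]{HL} (with roots in \cite{BL}), so there is no in-paper argument to compare against. I will therefore comment on the soundness of your sketch.

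The contrapositive reformulation is correct, and the inequality $\omega^{x_\Delta}_{\mathcal L^*}(F)\le(1-c_1')\,\omega^{x_\Delta}_{\mathcal L^*}(2\Delta)$ at the corkscrew pole does follow from the hypothesis together with Bourgain's estimate. The real gap is the transfer to the exterior pole $y$. Your comparison function
\[
u(z)=\omega^{z}_{\mathcal L^*}(F)-(1-c_1')\,\omega^{z}_{\mathcal L^*}(2\Delta)
\]
has boundary trace $\chi_F-(1-c_1')\chi_{2\Delta}$, which is \emph{positive on $F$} and negative on $2\Delta\setminus F$; it has no sign. Knowing $u(x_\Delta)\le 0$ at a single interior point says nothing about $u(y)$ in the absence of Harnack chains, and Lemma~\ref{lemma:AGMT} controls only $|u|$, not its sign. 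Concretely, for $z\in\Omega$ approaching a density point of $F$, boundary H\"older regularity forces $\omega^z_{\mathcal L^*}(F)\to 1$ and $\omega^z_{\mathcal L^*}(2\Delta)\to 1$, so $u(z)\to c_1'>0$; hence $u$ is genuinely positive on parts of $\partial(cB)\cap\Omega$ for every $c\in(1,2)$, and no ``standard comparison at the boundary of a Whitney-type region'' will make $u\le 0$ there.

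The argument in \cite{HL,BL} avoids any pole change by applying the hypothesis not at a single corkscrew point but at \emph{every} point $z$ on an interior boundary chosen so that the density condition $\sigma(F\cap\Delta_z)/\sigma(\Delta_z)\le\eta$ holds. This is arranged by first running a stopping-time (Calder\'on--Zygmund type) decomposition of $F$ into pieces at the scale where the relative density first reaches a fixed fraction of $\eta$, and then applying the maximum principle on $\Omega$ minus the associated Whitney/Carleson boxes. On the resulting interior boundary the hypothesis at $z$ itself, combined with Bourgain, gives $\omega^z_{\mathcal L^*}(2\Delta\setminus F)\ge c_0\ge c_0\,\omega^z_{\mathcal L^*}(2\Delta)$ uniformly, so the one-step improvement at the exterior pole $y$ follows directly from the maximum principle, with no transfer needed. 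Your outline is missing this decomposition; without it the one-step at $y$ cannot be established, and the iteration you describe at the end (which in any case must be carried out over sub-balls at varying scales rather than ``nested Borel subsets of $\Delta$'') never gets started.
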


The next lemma is a H\"older inequality for the Orlicz space $L \log L$ involving the centered Hardy-Littlewood maximal operator.
\begin{lemma}[$L \log L$-H\"older inequality]
\label{lemma:holder_inequality_maximal}
Let $\partial\Omega \subset \mathbb R^{n+1}$ be $n$-Ahlfors regular. Let $B$ be a ball centered on $\partial\Omega$, $f\in L^1(\partial\Omega)$, and $E\subset B\cap\partial\Omega$ with $\sigma(E)>0$. Then, we have
\[
\int_E f \,d\sigma \lesssim \log\left(1+\frac{\sigma(B\cap\pom)}{\sigma(E)}\right)^{-1} \int_{B\cap\partial\Omega} M_c(f\chi_B)\, d\sigma
\]
with constants independent of $E, f$, and the ball $B$.
\end{lemma}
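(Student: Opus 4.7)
The strategy is to derive the lemma from a rearrangement-based argument that reduces it to a local Stein $L\log L$ endpoint estimate for the Hardy--Littlewood maximal operator on the doubling space $(\pom,\sigma)$. Write $\sigma_B := \sigma(B\cap\pom)$ and $\sigma_E := \sigma(E)$; after replacing $f$ by $|f|\chi_B$ (which leaves the right-hand side unchanged, since $M_c(f\chi_B)=M_c(|f|\chi_B)$, and only increases the left-hand side once the absolute value is moved inside the integral), I may assume $f\geq 0$ and $\supp f \subset B\cap\pom$. The case $\sigma_E \geq \sigma_B/2$ is immediate, since then $\int_E f\,d\sigma \leq \int_{B\cap\pom} f\,d\sigma \leq \int_{B\cap\pom} M_c(f\chi_B)\,d\sigma$ while $\log(1+\sigma_B/\sigma_E) \leq \log 3$. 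Thus I assume $\sigma_E < \sigma_B/2$ in what follows, noting that $\log(1+\sigma_B/\sigma_E) \approx \log(\sigma_B/\sigma_E)$ in this range.

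Let $f^{*}$ denote the nonincreasing rearrangement of $f$ on $B\cap\pom$. The Hardy--Littlewood rearrangement inequality yields $\int_E f\,d\sigma \leq \int_0^{\sigma_E} f^{*}(t)\,dt$. Since $\log(\sigma_B/t) \geq \log(\sigma_B/\sigma_E)$ for every $t \leq \sigma_E$, I obtain the elementary inequality
\[
\int_0^{\sigma_E} f^{*}(t)\,dt \;\leq\; \frac{1}{\log(\sigma_B/\sigma_E)}\int_0^{\sigma_B} f^{*}(t)\log(\sigma_B/t)\,dt,
\]
so the entire lemma reduces to the local endpoint Stein estimate
\[
\int_0^{\sigma_B} f^{*}(t)\log(\sigma_B/t)\,dt \;\lesssim\; \int_{B\cap\pom} M_c(f\chi_B)\,d\sigma.
\]
Integration by parts shows that the left-hand side equals $\int_0^{\sigma_B} f^{**}(s)\,ds$ with $f^{**}(s) := s^{-1}\int_0^s f^{*}$, which in turn is comparable to $\sigma_B f_B + \int_{B\cap\pom} f\log^{+}(f/f_B)\,d\sigma$, where $f_B := \fint_{B\cap\pom} f\,d\sigma$; this is precisely Stein's classical $L\log L$ lower bound for the maximal operator.

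To prove it in the present setting, I will run a Calder\'on--Zygmund stopping-time decomposition of $f$ on $B\cap\pom$ at the dyadic heights $\lambda_k = 2^k f_B$ for $k\geq 0$. At each such height this produces a family of balls $\{B_{k,j}\}_j$ with bounded overlap satisfying $f\lesssim \lambda_k$ off $\bigcup_j B_{k,j}$ and $\fint_{B_{k,j}} f\,d\sigma \approx \lambda_k$; in particular $M_c(f\chi_B) \gtrsim \lambda_k$ on each $B_{k,j}$. Summing these lower bounds over $k$,
\[
\int_{B\cap\pom} M_c(f\chi_B)\,d\sigma \;\gtrsim\; \sigma_B f_B \,+\, \sum_{k\geq 0}\lambda_k \sum_j \sigma(B_{k,j}) \;\approx\; \sigma_B f_B \,+\, \int_{B\cap\pom} f\log^{+}(f/f_B)\,d\sigma,
\]
which is the required Stein bound. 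Only the doubling property of $\sigma$, guaranteed by the $n$-Ahlfors regularity of $\pom$, is used in this construction, so all implicit constants depend solely on that regularity constant. The single nontrivial step is therefore the endpoint Stein estimate, but in a doubling metric setting it is classical.
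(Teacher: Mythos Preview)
Your proposal is correct and is essentially the argument the paper has in mind: the paper does not give its own proof but cites \cite[Section 2.3]{L}, where precisely this rearrangement reduction to Stein's local $L\log L$ endpoint characterization of $M_c$ is carried out (in $\R^n$; as the paper notes, only doubling is used). One small point: your ``summing these lower bounds over $k$'' step is not literally a sum of lower bounds---it works because $\lambda_k=2^k f_B$ is geometric and the stopping sets $U_k=\bigcup_j B_{k,j}$ are nested, so that $M_c(f\chi_B)(x)\gtrsim \lambda_{K(x)}\approx \sum_{k\le K(x)}\lambda_k$ pointwise; you acknowledge this is the classical Stein argument, but it is worth saying explicitly.
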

A proof for $\R^n$ with Lebesgue measure can be found in \cite[Section 2.3]{L}, but the same ideas extend to {Ahlfors} regular metric spaces.

The following lemma gives a reverse Holder inequality in terms of the modified non-tangential maximal operator, which will be essential for our purposes.
\begin{lemma}
	\label{lemma:reverse_holder_nontangential}
	Let $\Omega \subset \mathbb{R}^{n+1}$ be a corkscrew domain with $n$-Ahlfors regular boundary. Then, for any $p \in\left[1,1+\frac{1}{n}\right)$, any function $v: \Omega \rightarrow \mathbb{R}$, and any ball $B$ centered in $\partial \Omega$, we have
	\[
	\fint_{B \cap \Omega} |v(y)|^p \,d y \lesssim
	\bigg( \fint_{2 B \cap \partial \Omega} \wt N\color{black} v\, d \sigma \bigg)^p {,}
	\]
	where $\wt N$ is the modified \color{black} non-tangential maximal operator with aperture large enough depending on the Ahlfors regularity of $\partial \Omega$.
\end{lemma}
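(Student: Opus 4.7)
The plan is to reduce this reverse H\"older inequality to an $L^1 \to L^p$ bound for a Poisson-type integral operator from $\pom$ to $\Omega$, exploiting the sharp exponent $p < 1+1/n$ that is familiar from Sobolev embedding. Define the auxiliary averaged function
$$V(y) := \left(\fint_{B(y, \delta_\Omega(y)/4)} |v(z)|^2\, dz\right)^{1/2}, \qquad y \in \Omega.$$
By the very definition of $\wt N$, for any $x \in \pom$ with $y \in \gamma_\alpha(x)$ we have $V(y) \le \wt N v(x)$. Taking the aperture $\alpha$ large enough depending only on the Ahlfors regularity of $\pom$, the set of such $x$ contains a surface ball $\Delta_y \subset \pom$ centered at a nearest point $\hat y \in \pom$ of $y$ and of radius comparable to $\delta_\Omega(y)$, with $\sigma(\Delta_y)\sim\delta_\Omega(y)^n$. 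Consequently,
$$V(y) \le \fint_{\Delta_y} \wt N v\, d\sigma \le \int_{2B\cap\pom} K(y,x)\, \wt N v(x)\, d\sigma(x),$$
where the kernel $K(y,x) := \sigma(\Delta_y)^{-1}\chi_{\Delta_y}(x)$ is pointwise dominated by the Poisson-like kernel $P(y,x) := \delta_\Omega(y)/(\delta_\Omega(y) + |y-x|)^{n+1}$ (provided the aperture is such that $\Delta_y \subset 2B\cap\pom$ for every $y$ in a suitable enlargement $\widetilde{B}$ of $B$).

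Next, I would pass back from $V$ to $|v|$ by Fubini. Since $p < 1+1/n \le 2$ (as $n\ge 2$), the concavity of $t\mapsto t^{p/2}$ and Jensen's inequality give
$$V(y)^p \ge \fint_{B(y,\delta_\Omega(y)/4)} |v(z)|^p\, dz.$$
Integrating over $y \in \widetilde{B}\cap\Omega$ and swapping the order of integration, the inner factor $|B(y,\delta_\Omega(y)/4)|^{-1}$ integrates (in $y$) to a constant on the set where $z \in B(y,\delta_\Omega(y)/4)$, because this set contains $B(z,\delta_\Omega(z)/5)$ and on it $|B(y,\delta_\Omega(y)/4)|\sim\delta_\Omega(z)^{n+1}$. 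This yields
$$\int_{B\cap\Omega} |v(z)|^p\, dz \lesssim \int_{\widetilde{B}\cap\Omega} V(y)^p\, dy \lesssim \int_{\widetilde{B}\cap\Omega}\!\left(\int_{2B\cap\pom} P(y,x)\wt N v(x)\, d\sigma\right)^p\! dy.$$

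The main step, and the place where the hypothesis $p < 1+1/n$ is sharp, is the Poisson-type bound
$$\int_{\widetilde{B}\cap\Omega}\left(\int_{2B\cap\pom} P(y,x) g(x)\,d\sigma\right)^p dy \lesssim r(B)^{n+1-np}\|g\|_{L^1(2B\cap\pom)}^p.$$
By Minkowski's integral inequality (applicable since $p\ge 1$) this reduces to the uniform estimate $\int_{\widetilde{B}\cap\Omega} P(y,x)^p\, dy \lesssim r(B)^{n+1-np}$ for every $x \in \pom$, which I would establish via a dyadic Whitney-slab decomposition of $\widetilde{B}\cap\Omega$ along $\delta_\Omega(y) \sim 2^{-k} r(B)$: the Ahlfors regularity of $\pom$ and the corkscrew condition give slab volumes of order $2^{-k}r(B)^{n+1}$, so each slab contributes $\sim (2^{-k}r(B))^{n+1-np}$, and geometric summability over $k\ge 0$ holds precisely when $n+1-np > 0$. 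Dividing through by $|B|\sim r(B)^{n+1}$ and using $\|\wt N v\|_{L^1(2B\cap\pom)}\sim\sigma(2B\cap\pom)\,\fint_{2B\cap\pom}\wt N v\, d\sigma \sim r(B)^n\,\fint_{2B\cap\pom}\wt N v\,d\sigma$ produces the claimed reverse H\"older inequality. The main technical obstacle is verifying the Poisson $L^p$ integrability in the non-smooth Ahlfors-regular setting, where the slab-volume estimate and the absence of a genuine normal direction must be handled by the doubling property alone.
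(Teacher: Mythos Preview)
Your approach is correct and genuinely different from the paper's. The paper argues by a Whitney cube decomposition of $\Omega$: for each Whitney cube $Q$ intersecting $B$ it picks a boundary ball $B_Q$ of radius $\sim l(Q)$ with $Q\subset\gamma(\xi)$ for all $\xi\in B_Q$, uses $m_Q(|v|^p)\lesssim \inf_{B_Q}(\wt Nv)^p\le \sigma(B_Q)^{-p}\bigl(\int_{B_Q}\wt Nv\,d\sigma\bigr)^p$, and then sums over generations $l(Q)=2^{-k}$ using bounded overlap of the $B_Q$ at each scale; the factor $l(Q)^{n+1-np}$ produces the same geometric series in $2^{-k(n+1-np)}$ that you obtain. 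Your route via the averaged function $V$, the Poisson-type kernel domination $V\lesssim P*\wt Nv$, and Minkowski's inequality is more operator-theoretic and cleanly isolates the role of the exponent as an $L^1\to L^p$ mapping property of a kernel. Both proofs hinge on the same arithmetic $n+1-np>0$; the paper's is slightly more combinatorial, yours more analytic.

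There is one imprecision worth flagging in your final step. From the slab volume $|S_k|\sim 2^{-k}r(B)^{n+1}$ alone you cannot conclude that the slab contributes $\sim(2^{-k}r(B))^{n+1-np}$: the crude bound $|S_k|\cdot\sup_{S_k}P^p$ gives $2^{-k}r(B)^{n+1}\cdot (2^{-k}r(B))^{-np}=2^{k(np-1)}r(B)^{n+1-np}$, which blows up in $k$ for every $p\ge 1$ when $n\ge 2$. The fix is immediate and in fact avoids the slab decomposition entirely: since $x\in\pom$ one has $\delta_\Omega(y)\le|y-x|$, hence $P(y,x)\le|y-x|^{-n}$, and then
\[
\int_{\widetilde B\cap\Omega}P(y,x)^p\,dy\le\int_{\widetilde B}|y-x|^{-np}\,dy\approx r(B)^{n+1-np},
\]
the last integral being finite precisely when $np<n+1$. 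With this in place your argument goes through as written.
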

\begin{proof}
Consider a Whitney cube decomposition $\mathcal{W}$ of $\Omega$ {into dyadic cubes}. For any $Q \in \mathcal{W}$, denote by $B_Q$ a boundary ball with radius $r(B_Q) \approx l(Q)$ such that $d(Q, B_Q) \approx l(Q)$, and $Q \subset \gamma(\xi)$ for every $\xi \in B_Q$ (for this we need the aperture large enough). We obtain
	\[
	\begin{aligned}
		\int_{B \cap \Omega}|v|^p d y&=  \sum_{\substack{Q \in \mathcal{W} \\
				Q \cap B \neq \varnothing}} \int_{Q \cap B}|v|^p d y \leq \sum_{\substack{Q \in \mathcal{W} \\
				Q \cap B \neq \varnothing}} m_Q(|v|^p) l(Q)^{n+1} \\
		&\lesssim \sum_{\substack{Q \in \mathcal{W} \\
				Q \cap B \neq \varnothing}} l(Q)^{n+1} \inf _{\xi \in B_Q} \wt N v(\xi)^p \color{black}\leq \sum_{\substack{Q \in \mathcal{W} \\
				Q \cap B \neq \varnothing}} \frac{l(Q)^{n+1}}{\sigma(B_Q)^p}\left(\int_{B_Q} \wt N v(\xi) d \sigma(\xi)\right)^p \\
		&\approx \sum_{\substack{Q \in \mathcal{W} \\
				Q \cap B \neq \varnothing}} l(Q)^{n+1-p n}\left(\int_{B_Q} \wt N v(\xi) d \sigma(\xi)\right)^p
	\end{aligned}
	\]
	where we have used H\"older's inequality and that $\sigma(B_Q) \approx l(Q)^n$ thanks to the Ahlfors regularity of $\partial \Omega$. Now, we use that $\eta:=n+1-p n>0$, and we rewrite the previous sum as a sum over generations of Whitney cubes from some starting generation {$k_0 \approx-\log _2(r(B))$. Then, we have that}
	\[
	\begin{aligned}
		\sum_{\substack{Q \in \mathcal{W} \\
				Q \cap B \neq \varnothing}} l(Q)^{n+1-p n}\left(\int_{B_Q} \wt N v(\xi) d \sigma(\xi)\right)^p & {\leq}\sum_{k \geq k_0} \sum_{\substack{Q \in \mathcal{W} \\
				Q \cap B \neq \varnothing \\
				l(Q)=2^{-k}}} 2^{-k \eta}\left(\int_{B_Q} \wt N v(\xi) d \sigma(\xi)\right)^p \\
		& \lesssim \sum_{k \geq k_0} 2^{-k \eta}\left(\int_{2 B \cap \partial \Omega} \wt N v(\xi) d \sigma(\xi)\right)^p \\
		& \approx\left(\int_{2 B \cap \partial \Omega} \wt N v(\xi) d \sigma(\xi)\right)^p 2^{-k_0 \eta}
	\end{aligned}
	\]
	where we have used that the boundary balls $B_Q$ are all contained in $2 B$, they have finite superposition in every generation $k, p>1$, and the sum is geometric. Taking into account that $2^{-k_0 \eta} \approx \frac{|B \cap \Omega|}{\sigma(2 B \cap \partial \Omega)^p}$ thanks to the corkscrew property and the $n$-Ahlfors regularity of $\partial \Omega$ finishes the proof.
\end{proof}
The following lemma is key to prove {that the elliptic measure $\omega_{\mathcal L^*}$ satisfies the assumptions of Lemma \ref{lemma:weak_A_infty}}.

\begin{lemma}
	\label{lemma:R1_implies_bound_for_maximal}
	Let $\Omega \subset \mathbb R^{n+1}$ be a corkscrew domain with $n$-Ahlfors regular boundary such that $(R_1^{\mathcal L})$ is solvable. Then, there exist constants $C>0$ depending on $n$, the corkscrew constant, the {Ahlfors} regularity constant, the ellipticity constant of $\mathcal L$, and the $(R_1^\mathcal L)$ constant,  and $C'>0$ depending only on $n$ and the {Ahlfors}  regularity constant, such that for all $x\in\Omega$, we have 
	\[
	\int_{\Delta_x} M_{c, \sigma, \delta_{\Omega}(x) / C'}(\omega_{\mathcal L^*}^x)(\xi) \,d \sigma(\xi) \leq C,
	\]
	where $M_{c, \sigma, \delta_{\Omega}(x) / C'}$ {is the truncated Hardy-Littlewood maximal operator \eqref{eq:def_maximal_truncat}} and $\Delta_x$ is the boundary ball corresponding to $x$ (see display \eqref{eq:def_boundary_ball}).
\end{lemma}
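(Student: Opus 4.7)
The plan is to first establish the pointwise bound
\[
M_{c,\sigma,\delta_\Omega(x)/C'}(\omega^x_{\mathcal L^*})(\xi) \lesssim \wt N_{c\delta_\Omega(x)}(\nabla G_{\mathcal L}(\cdot,x))(\xi), \qquad \xi \in \Delta_x,
\]
where $G_{\mathcal L}(\cdot,x)$ denotes the Green function of $\mathcal L$ with pole at $x$, and then to control $\int_{\Delta_x}\wt N_{c\delta_\Omega(x)}(\nabla G_{\mathcal L}(\cdot,x))\,d\sigma$ by means of $(R_1^{\mathcal L})$ through a localization argument in the spirit of Theorem~\ref{thm:localization}.

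For the pointwise bound, fix $\xi \in \Delta_x$ and $r \in (0, \delta_\Omega(x)/C')$ with $C'$ large enough that $x \notin B(\xi,4r)$, and let $A_\xi^r$ be a corkscrew point for $B(\xi,r)\cap\Omega$. The CFMS-type comparison of Lemma~\ref{lemma:bound_green_harmonic_measure} applied to $\mathcal L^*$, together with the identity $G_{\mathcal L^*}(x,y)=G_{\mathcal L}(y,x)$, would yield $\omega^x_{\mathcal L^*}(B(\xi,r)\cap\pom) \lesssim r^{n-1}G_{\mathcal L}(A_\xi^r,x)$. Since $G_{\mathcal L}(\cdot,x)$ vanishes continuously on $\pom$ and solves $\mathcal L G=0$ in a neighbourhood of $A_\xi^r$, an application of Corollary~\ref{coro:Poincare_functions_vanishing_pom} (Poincar\'e for functions vanishing on an $n$-Ahlfors regular set) combined with Moser's $L^\infty$ bound would give
\[
G_{\mathcal L}(A_\xi^r,x) \lesssim r\fint_{B(A_\xi^r,Cr)\cap\Omega}|\nabla G_{\mathcal L}(\cdot,x)|\,dm \lesssim r\,\wt N_{c\delta_\Omega(x)}(\nabla G_{\mathcal L}(\cdot,x))(\xi),
\]
provided the aperture of $\wt N$ is taken large enough that $B(A_\xi^r,Cr)$ fits inside $\gamma_\alpha(\xi)$. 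Dividing by $\sigma(B(\xi,r))\approx r^n$ and taking the supremum over admissible $r$ produces the claimed pointwise estimate.

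With this reduction in hand, the remaining task is to prove $\int_{\Delta_x}\wt N_{c\delta_\Omega(x)}(\nabla G_{\mathcal L}(\cdot,x))\,d\sigma \leq C$. I would cover $\Delta_x$ by finitely many boundary balls $B_i = B(x_i, R)$ with $x_i \in \Delta_x$ and $R \approx \delta_\Omega(x)/C''$ small enough so that $x \notin 4B_i$, and then mimic the proof of Theorem~\ref{thm:localization} on each $B_i$ applied to $u = G_{\mathcal L}(\cdot,x)$ with the constant (zero) datum on $2B_i\cap\pom$. Crucially, since we are in the process of \emph{proving} weak-$\mathcal A_\infty$, the reverse H\"older step for $\omega_{\mathcal L^*}$ used in the proof of Theorem~\ref{thm:localization} is not available. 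Instead, $(R_1^{\mathcal L})$ would be applied directly to suitable Lipschitz bumps supported in $3B_i\cap\pom$ (constant on $2B_i\cap\pom$) and combined with Fubini and Lemma~\ref{lemma:reverse_holder_nontangential} to convert the interior averages of $|\nabla G_{\mathcal L}(\cdot,x)|$ in the annular region $A_{B_i}$ into boundary averages of $\wt N(\nabla G_{\mathcal L}(\cdot,x))$. The AGMT-type decay of Lemma~\ref{lemma:AGMT}, applied to $G_{\mathcal L}(\cdot,x)$ away from the pole, would then provide the geometric factors needed when summing over the $B_i$.

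The main obstacle will be closing the argument at the endpoint $p=1$: the naive pairing of the boundary $L^1$ norm of $\wt N(\nabla G_{\mathcal L}(\cdot,x))$ against an $L^\infty$ factor involving $M_{c,\sigma}\omega^y_{\mathcal L^*}$ is forbidden because the Hardy-Littlewood maximal operator is unbounded on $L^1(\sigma)$, and no reverse H\"older for $\omega_{\mathcal L^*}$ is yet available. This is exactly the point at which the $L\log L$-H\"older inequality of Lemma~\ref{lemma:holder_inequality_maximal} must be used to trade the forbidden $L^1$-$L^\infty$ duality for the sharper $L\log L$-$\exp L$ pairing, thereby absorbing the logarithmic loss into the factor $\log(1+\sigma(B_i)/\sigma(E))^{-1}$ and producing an absolute constant on the right-hand side. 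The proof would then follow closely the arguments in the appendix of \cite{MT}, adapted to handle the lack of connectivity of $\Omega$.
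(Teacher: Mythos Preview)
Your overall strategy --- reduce the truncated maximal function of $\omega_{\mathcal L^*}^x$ to a nontangential quantity and then invoke $(R_1^{\mathcal L})$ on auxiliary Lipschitz data --- is the right instinct, and is close in spirit to what the paper does. However, several of the concrete steps have gaps, and the device you propose to close the endpoint issue (Lemma~\ref{lemma:holder_inequality_maximal}) is not the one that actually works here.

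First, your pointwise reduction has two problems. Lemma~\ref{lemma:bound_green_harmonic_measure} gives $G\lesssim r^{1-n}\omega$, i.e.\ the \emph{opposite} inequality to the one you claim; the bound you need, $\omega_{\mathcal L^*}^x(B(\xi,r))/\sigma(B(\xi,r))\lesssim \fint_{B(\xi,2r)\cap\Omega}|\nabla_y G(x,y)|\,dy$, is \cite[Lemma~2.6]{AGMT} and involves an average of $|\nabla G|$ over a ball touching the boundary, not $G$ at a corkscrew point. More seriously, even after Moser and Corollary~\ref{coro:Poincare_functions_vanishing_pom}, the ball $B(A_\xi^r,Cr)$ over which you average $|\nabla G_{\mathcal L}(\cdot,x)|$ necessarily meets $\pom$, so it contains points $y$ with $|y-\xi|\approx r$ and $\delta_\Omega(y)$ arbitrarily small; no finite aperture makes $B(A_\xi^r,Cr)\subset\gamma_\alpha(\xi)$, and the passage to $\wt N(\nabla G)(\xi)$ fails. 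What one actually obtains is $M_{c,\sigma,r_x}(\omega_{\mathcal L^*}^x)(\xi)\lesssim M_{c,\sigma,Cr_x}\big(\wt N(\nabla G)\big)(\xi)$, which after integration still faces the $L^1$-unboundedness of $M_{c,\sigma}$. Lemma~\ref{lemma:holder_inequality_maximal} goes the wrong way for this: it bounds $\int_E f$ by $\int M_c f$, not vice versa.

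The paper circumvents both issues simultaneously, and the key device is Lemma~\ref{lemma:reverse_holder_nontangential} rather than the $L\log L$ inequality. One covers $\Delta_x$ by finitely many balls $B(\xi_i,2r_x)$ and, for each $i$, constructs a Lipschitz bump $\varphi_i$ equal to $1$ on an \emph{annulus} $A(\xi_i,9r_x,Kr_x)$ and vanishing on $B(\xi_i,8r_x)$. Boundary H\"older continuity plus Harnack along the sphere $\partial B(\xi_i,r_i)$ force $u_i:=\int\varphi_i\,d\omega_{\mathcal L}^{\,\cdot}\geq c$ there, so the maximum principle gives the pointwise comparison $G(x,\cdot)\lesssim\delta_\Omega(x)^{1-n}u_i$ on $B(\xi_i,r_i)$. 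Now, applying Lemma~\ref{lemma:reverse_holder_nontangential} with $v=|\nabla u_i|^{1/p}$ and $p=1+\tfrac{1}{2n}\in(1,1+\tfrac1n)$ yields
\[
\fint_{B(\xi,4r)}|\nabla u_i|\,dm \lesssim \Big(\fint_{B(\xi,8r)\cap\pom}\wt N(|\nabla u_i|)^{1/p}\,d\sigma\Big)^{p}\leq M_{c,\sigma,8r}\big(\wt N(|\nabla u_i|)^{1/p}\big)(\xi)^{p},
\]
and the exponent $p>1$ is exactly what allows the use of the strong $L^p$-boundedness of $M_{c,\sigma}$ after integrating in $\xi$, collapsing to $\int\wt N(|\nabla u_i|)\,d\sigma\lesssim\|\varphi_i\|_{\dot M^{1,1}}\approx r_x^{n-1}$ by $(R_1^{\mathcal L})$. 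Summing over the bounded number of $i$ finishes. The $L\log L$ lemma is used only \emph{afterwards}, in the proof of Theorem~\ref{thm:equivalence_R1_weakAinf}, to pass from the conclusion of Lemma~\ref{lemma:R1_implies_bound_for_maximal} to the hypothesis of Lemma~\ref{lemma:weak_A_infty}.
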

\begin{proof} 
	We set $r_x:= \delta_\Omega(x)/4K$ for some $K$ depending on the Ahlfors regularity constants, which will be chosen momentarily. We consider a maximal collection of points $\{\xi_i\}_{i=1}^{m_0} \subset \pom \cap \Delta_x$ which are $r_x$-separated. Then, for every $\xi \in \Delta_x$ and $r \in (0,r_x]$, we can find $i \in \{1,\hdots,m_0\}$ for which
	\[
	B(\xi,r) \subset B(\xi_i, 2r_x).
	\]
	We fix $\xi \in \Delta_x$, $r\in (0,r_x]$, and $i\in\{1,\hdots,m_0\}$. We take $K>128$ big enough (depending only on the $n$-Ahlfors regularity constant) so that there is at least one point $\xi_0 \in (B(\xi_i, (K-1)r_x)\backslash B(\xi_i, 10 r_x))\cap \pom \neq \varnothing$ (see Remark \ref{rmk:uniformly_perfect}). Then we consider a Lipschitz function $\varphi_i : \mathbb R^{n+1} \to \mathbb R$ satisfying:
	\begin{itemize}
		\item $\varphi_i=1$ in $A(\xi_i, 9r_x, K r_x)$,
		\item $\varphi_i = 0$ in $\mathbb R^{n+1} \backslash A(\xi_i, 8r_x, (K+1)r_x)$,
		\item $0\leq \varphi_i \leq 1$ and $\|\varphi_i\|_{\operatorname{Lip}} \leq r_x^{-1}$.
	\end{itemize}
	
	From these properties and the fact that $g_i := r_x^{-1} \chi_{B(\xi_i, (K+1) r_x)}$ is a \haj~upper gradient for $\varphi_i$, it follows that 
	\[
	\Vert \varphi_i \Vert_{\dot M^{1,1}(\sigma)} \lesssim_K r_x^{n-1}.
	\]
	Let $u_i$ be the solution of the continuous Dirichlet problem for $\mathcal L$ in $\Omega$ with data $\varphi_i$ given by 
	\[
	u_i(y) = \int_{\pom} \varphi_i \, d\omega_\mathcal{L}^y.
	\]
	
	We define $B_0 := B(\xi_0, r_x/4)$. If we set $v_i := 1 - u_i$, it is direct to check that $\mathcal L v_i=0$ in $4B_0$, $v_i=0$ on $4B_0 \cap \pom$, and $0\leq v_i \leq 1$. Therefore, by boundary H\"older continuity of $v_i$ (see \cite[Lemma 2.10]{AGMT} for example), it holds that
	\[
	1-u_i(y)=v_i(y) \leq C\left(\delta_{\Omega}(y) /  r_x\right)^\alpha \sup _{2 B_0 \cap \Omega} v_i \leq 1 / 2
	\]
	for all $y \in B_0$ such that $\delta_{\Omega}(y) \leq(2 C)^{-\alpha}  r_x$. Therefore, we have that $u_i(y) \geq \frac{1}{2}$ for $y \in B_0$ such that $\delta_{\Omega}(y) \leq(2 C)^{-\alpha} r_x=: r_0$. We define $r_i:=\left|\xi_i-\xi_0\right| \approx r_x$ and set $B_i:=B\left(\xi_i, r_i\right)$. Then, we can cover $\partial B_i \cap \partial \Omega$ by a uniformly bounded number of balls $\widetilde{B}_k$ centered at $\partial B_i \cap \partial \Omega$ with radius $r_0$ and by the same argument as before we can show that $u_i(y) \geq \frac{1}{2}$ for any $y \in \widetilde{B}_k$ such that $\delta_{\Omega}(y) \leq r_0$. This implies that
	\[
	u_i(y) \geq 1 / 2, \quad \text { for any } y \in\left\{y \in \partial B_i \cap \Omega: \delta_{\Omega}(y) \leq r_0\right\} .
	\]
	Since every $y \in\left\{y \in \partial B_i \cap \Omega: \delta_{\Omega}(y)>r_0\right\}$ can be connected by a Harnack chain of balls centered at $\partial B_i \cap \Omega$ with radii $\approx r_0$ to a point $z \in\left\{y \in \partial B_i \cap \Omega\right.$ : $\left.\delta_{\Omega}(y) \leq r_0\right\}$, then, by Harnack's inequality, we obtain that there exists a uniform constant $c \in (0,1/2)$ such that $u_i(y) \geq c$ for every $y \in \partial B_i \cap \Omega$. Hence, since the Green function for $\mathcal L^*$ satisfies $G(x,y) \lesssim r_x^{1-n}$ for every $y \in \partial B_i \cap \Omega$, by the maximum principle,
	\begin{equation}
		\label{eq:upper_bound_Green_function_in_terms_annulus}
		G(x,y) \lesssim \delta_\Omega(x)^{1-n} u_i(y) \quad\mbox{for every } y \in B_i. 
	\end{equation}
	Thus, by the Poincar\'e inequality for functions vanishing on $n$-Ahlfors regular sets in Corollary \ref{coro:Poincare_functions_vanishing_pom} as $u_i$ vanishes on $B(\xi,4r) \cap \pom$, Lemma \ref{lemma:reverse_holder_nontangential} and H\"older's inequality, we have that
	\begin{align}
		\label{eq:bound_u_annulus_by_N}
		\fint_{B(\xi, 4r)} u_i(y)\, dy &\lesssim r \fint_{B(\xi,4r)} |\nabla u_i|\, dy \nonumber\\
		&\lesssim r \left(\fint_{B(\xi, 8r)} \wt N(|\nabla u_i|^{1/p}) \,d\sigma\right)^{p}
		\leq r M_{c, \sigma, 8r}\left( \wt N(|\nabla u_i|^{1/p})\right)(\xi)^p\\
		&\leq r M_{c, \sigma, 8r}\left( \wt N(|\nabla u_i|)^{1/p}\right)(\xi)^p \nonumber
	\end{align}
	for $p = 1+\frac 1 {2n}\in(1,2)$.
	
	Therefore, by \cite[Lemma 2.6]{AGMT}, Cauchy-Schwarz, Caccioppoli's
	inequality, Moser's estimate at the boundary, \eqref{eq:upper_bound_Green_function_in_terms_annulus}, and \eqref{eq:bound_u_annulus_by_N}, we obtain
	\begin{align*}
		\frac{\omega^x_{\mathcal L^*}(B(\xi, r))}{\sigma(B(\xi, r))} &\lesssim \fint_{B(\xi, 2 r) \cap \Omega}|\nabla_y G(x,y)| \,d y \lesssim r^{-1} \fint_{B(\xi, 4 r) \cap \Omega} G(x,y)\, dy \\
		&\lesssim \delta_{\Omega}(x)^{1-n} r^{-1} \fint_{B(\xi, 4 r) \cap \Omega} u_i\, dy \lesssim \delta_{\Omega}(x)^{1-n} M_{c,\sigma,8r}\left(\wt{{N}}(|\nabla u_i|)^{1/p}\right)(\xi)^{p}.
	\end{align*}
	Consequently, for any fixed $\xi \in \Delta_x$, if we take supremum over all $r \leq r_x$, we get
	\[
	M_{c, \sigma, r_x} \omega_{\mathcal L^*}^x(\xi) \lesssim \delta_{\Omega}(x)^{1-n} M_{c,\sigma, 8r_x}\left(\wt{N}\left(\nabla u_i\right)^{1/p}\right)(\xi)^{ p},
	\]
	and, by the strong $L^p$ boundedness of the Hardy-Littlewood maximal operator and the solvability of $(R_1^{\mathcal L})$ in $\Omega$, we have
	\begin{align*}
		\int_{\Delta_x} M_{c, \sigma, r_x} \omega_{\mathcal L^*}^x(\xi)\, d\sigma(\xi) &\lesssim 
		\delta_{\Omega}(x)^{1-n} \sum_{j=1}^{m_0} \int_{\Delta_x} {M}_{c,\sigma, 8r_x}\left(\widetilde{{N}}\left(|\nabla u_j|\right)^{1/p}\right)^{p}\, d\sigma \\
		&\lesssim 
		\delta_{\Omega}(x)^{1-n} \sum_{j=1}^{m_0} \int_{\Delta_x} \widetilde{{N}}\left(|\nabla u_j|\right) \, d\sigma
		\lesssim \delta_{\Omega}(x)^{1-n} \sum_{j=1}^{m_0}\left\|\varphi_j\right\|_{\dot M^{1, 1}(\sigma)} \\
		&\lesssim m_0 \delta_{\Omega}(x)^{1-n} r_x^{n-1} \lesssim 1
	\end{align*}		
	which finishes the proof. 
	\color{black}
\end{proof}

Finally, we are ready to prove Theorem \ref{thm:equivalence_R1_weakAinf}.
\begin{proof}[Proof of Theorem \ref{thm:equivalence_R1_weakAinf}]
We will show that {under the assumption that $(R_1^{\mathcal L})$ is solvable,} the {elliptic measure $\omega_{\mathcal L^*}$} satisfies the properties of Lemma \ref{lemma:weak_A_infty}.

Let $\eta>0$ be small enough to be chosen soon, and take $F\subset \Delta_x$ with $\sigma(F) \geq (1-\eta)\sigma(\Delta_x)$ and $\tilde F = \Delta_x \backslash F$.
By the so-called {Bourgain's lemma} (Lemma \ref{lemma:Bourgain}), there exists $C>0$ such that 
\begin{equation}
    \label{eq:Bourgain}
    \omega_{\mathcal L^*}^x(\Delta_x) \geq C.
\end{equation}
By Lemma \ref{lemma:R1_implies_bound_for_maximal}, we also have
\begin{align}
	\label{eq:proof_thm_1.5}
\int_{\Delta_x} M_{c}\left(\frac{d\omega_{\mathcal L^*}^x}{d\sigma}\chi_{\Delta_x}\right) d\sigma &\leq 
\int_{\Delta_x} \left(M_{c, \delta_\Omega(x)/C'} \left(\frac{d\omega_{\mathcal L^*}^x}{d\sigma}\chi_{\Delta_x}\right)  
+ \sup_{r > \delta_\Omega(x)/C'} \frac{\omega_{\mathcal L^*}^x(B(\xi,r)\cap\Delta_x)}{\sigma(B(\xi,r))} \right) \,d\sigma \nonumber \\
&\lesssim \int_{\Delta_x} M_{c, \delta_\Omega(x)/C'} \left(\frac{d\omega_{\mathcal L^*}^x}{d\sigma}\chi_{\Delta_x}\right) d\sigma + \int_{\Delta_x}\frac{\omega_{\mathcal L^*}^x(\Delta_x)}{\delta_\Omega(x)^n}\, d\sigma(\xi) \lesssim  1, 
\end{align}
where $M_{c, \delta_\Omega(x)/C'}$ is the  centered and truncated Hardy-Littlewood maximal operator.
Then, using \eqref{eq:Bourgain}, Lemma \ref{lemma:holder_inequality_maximal}, and \eqref{eq:proof_thm_1.5}, we obtain
\begin{align*}
    \omega_{\mathcal L^*}^x(F) &= \omega_{\mathcal L^*}^x(\Delta_x) - \omega_{\mathcal L}^x(\tilde F)
    \geq C - \int_{\tilde F} \frac{d\omega_{\mathcal L^*}^x}{d\sigma} d\sigma \\
    &\geq C -  C_1 \log(1+\eta^{-1})^{-1} \int_{\Delta_x} M_{c}\left(\frac{d\omega_{\mathcal L^*}^x}{d\sigma}\chi_{\Delta_x}\right) d\sigma \\
	&\geq C -  C_2 \log(1+\eta^{-1})^{-1} \geq \frac C 2
\end{align*}
if $\eta$ is small enough.
\end{proof}

\begin{proof}[Proof of Theorem \ref{thm:extrapolation_1}]
Under the assumption that $(R_1^{\mathcal L^*})$ is solvable, we have
	\begin{equation}
	\label{eq:localization_p=1}
	\fint_{B\left(x_0, R / 2\right) \cap \pom} \wt{N}_{R / 2}(\nabla u) \,d \sigma \lesssim \fint_{B\left(x_0, 3 R\right) \cap \pom}\left|\nabla_H f\right| \,d \sigma+\fint_{A\left(x_0, R, 2 R\right) \cap \Omega}|\nabla u| \,d m
	\end{equation}
for all $f \in M^{1, 1}(\partial \Omega) \cap C(\partial \Omega)$, $\nabla_H f \in D(f) \cap L^1(\partial \Omega)$, and $u$ solution of the continuous Dirichlet problem with boundary data $f$.
The proof of \eqref{eq:localization_p=1} is analogous to the one of the Localization Theorem \ref{thm:localization} but using Lemma \ref{lemma:R1_implies_bound_for_maximal} instead of the weak reverse $p$-H\"older inequality for $ \frac{d\omega_{\mathcal L^*}}{d\sigma}$ on \eqref{eq:localization_thm_important_bound}.

The rest of the approach is that of \cite[Theorem 5.3]{KP} (see also \cite{DK1}). Let $f$ be a continuous function in $M^{1,1}(\pom)$, $\nabla_H f\in D(f) \cap L^1(\pom)$, $u$ be the solution to the continuous Dirichlet problem for $\mathcal L$ with boundary data $f$, and $\wt N_\alpha$ be the modified non-tangential maximal operator with aperture $\alpha$ large enough. Let $\lambda>0$ and $B(\xi_0, r)$ be a ball centered on $\pom$ with $r<\diam(\pom)/4$ such that $M_{\sigma}(\nabla_H f)(\xi_2) \leq \lambda$ for some $\xi_2 \in \pom \cap B(\xi_0, r)$ and $\wt N_\alpha (\nabla u)(\xi_3) \leq \lambda$ for some $\xi_3 \in  \pom \cap B(\xi_0, 2r) \backslash B(\xi_0,r)$.
Then, the estimate
\begin{equation}
	\label{eq:good_lambda}
\fint_{B(\xi_0,r)\cap \pom} \wt N(\nabla u) \, d\sigma \leq C \alpha^{-\eta} \fint_{B(\xi_0,3r)\cap \pom} \wt N(\nabla u) \, d\sigma + C \lambda
\end{equation}
with constants $C$ and $\eta>0$ independent of $\lambda$ and $f$ implies solvability of $(R_{1 + \tilde \epsilon}^{\mathcal L})$ for some $\tilde \epsilon>0$ (see \cite[Theorem 2.13]{DK1}). Inequality \eqref{eq:good_lambda} follows from \eqref{eq:localization_p=1} and the argument in \cite{KP}.

\end{proof}

\subsection{Extrapolation of solvability of the modified Poisson regularity problem}\label{section:extrapolation_Poisson}
In this section, we will prove Theorem \ref{thm:extrapolation_Poisson}.
First, we state a localization theorem for this setting. 
\begin{theorem}[Localization theorem for solutions to the Poisson problem]
	\label{thm:localization_Poisson}
	Let $1<p\leq2$,
	and $\Omega \subset \R^{n+1}$ be a corkscrew domain with {$n$-Ahlfors regular} boundary $\pom$ such that the problem $(\wt{PR}^{\mathcal L}_p)$ is solvable.  
	Let $x_0 \in \pom$, $0<R<\diam\pom$, $B= B(x_0,R)$, $H\in L^\infty_c(\Omega)$, and $\boldsymbol{\Xi}\in L^\infty_c(\Omega;\,\R^{n+1})$  with both $H$ and $\boldsymbol \Xi$ supported in $\Omega \cap B(x_0, 2R)^c$.
	Then,
	\[
	\fint_{B(x_0,R/2)\cap\partial\Omega} \wt N_{R/2}(\nabla u) \,d\sigma \lesssim 
	\fint_{A(x_0,R,2R)\cap\Omega} |\nabla u|\, dm  
	\]
	{where  ${A(x_0, R, 2R)=B(x_0,2R)\backslash B(x_0,R)}$,  $\wt N_{R/2}$ is the modified non-tangential maximal operator truncated at height $R/2$, and $u$ is the solution of the Poisson problem \eqref{eq:definition_Poisson_pb} for the operator $\mathcal L$ with Poisson data $H$ and $\boldsymbol{\Xi}$.}
\end{theorem}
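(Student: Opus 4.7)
The plan is to mirror the first case of Theorem~\ref{thm:localization} (where the boundary data vanishes on $B(x_0, 2R) \cap \pom$), since the hypotheses here put $u$ in exactly that situation locally. Namely, $u \in Y_0^{1,2}(\Omega)$ has vanishing trace on $\pom$, and the assumption $\supp H \cup \supp \boldsymbol\Xi \subset \Omega \cap B(x_0, 2R)^c$ ensures $\mathcal L u = 0$ in $B(x_0, 2R) \cap \Omega$. Consequently, on $B(x_0, 2R)$ the function $u$ behaves identically to an $\mathcal L$-harmonic function vanishing on $B(x_0, 2R) \cap \pom$, which is precisely the starting point of the case $f \equiv 0$ in the proof of Theorem~\ref{thm:localization}.

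Following that proof, I would fix $\xi \in B(x_0, R/2)\cap\pom$, $x \in \gamma(\xi) \cap B(\xi, R/2)$, and a smooth cutoff $\phi$ with $\phi \equiv 1$ on $B(x_0, 3R/2)$, $\supp \phi \subset B(x_0, 1.8R)$, and $|\nabla \phi| \lesssim R^{-1}$. Since $u\phi$ has vanishing trace on $\pom$ and is $\mathcal L$-harmonic near $x$, the Green's function representation
\[
-u(x) = -(u\phi)(x) = \int_{\Omega} A^*(y)\nabla_y G(x,y) \cdot \nabla (u(y)\phi(y))\, dy
\]
is valid, where $G$ is the Green's function for $\mathcal L^*$ in $\Omega$. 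Splitting the integrand into the $u\nabla\phi$ and $\phi\nabla u$ contributions and applying interior Caccioppoli on a small Whitney ball around $x$ leads, exactly as in \eqref{eq:bound_utimesphi_localization}, to two terms $f_1(x) + f_2(x)$. For $f_2(x)$, the integration by parts that removes $\nabla u$ in favor of $u \nabla \phi$ is valid because $\mathcal L u = 0$ on $\supp \phi \subset B(x_0, 2R)$ (this is where the support condition on $H$ and $\boldsymbol\Xi$ is essential), and the boundary terms vanish since $G(x,\cdot)$ vanishes on $\pom$ and $\phi$ vanishes on $\partial B(x_0, 1.8R)$. Bounding $G(x,y) \lesssim \omega_{\mathcal L^*}^y(\Delta_\xi)\,\delta_\Omega(x)^{1-n}$ via Lemma~\ref{lemma:bound_green_harmonic_measure}, then integrating in $\xi$ and using Fubini, one arrives (for any $q > 1$ for which the Hardy--Littlewood maximal operator is $L^q$-bounded) at the analog of \eqref{eq:localization_thm_important_bound}:
\[
\fint_{B(x_0,R/2)\cap\pom} \wt N_{R/2}(\nabla u)^q\, d\sigma \lesssim \left(\fint_{1.1 A_B} \fint_{B(x_0, CR)\cap\pom}\left(\frac{d\omega_{\mathcal L^*}^y}{d\sigma}\right)^q d\sigma\, dy\right)\left(\fint_{1.1 A_B} |\nabla u|\, dm\right)^q.
\]

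The main obstacle, and the only step where the Poisson hypothesis enters essentially, is to justify a weak reverse $q$-H\"older inequality for $d\omega_{\mathcal L^*}/d\sigma$ for some $q > 1$. In Theorem~\ref{thm:localization} this was provided directly by the assumption $(D^{\mathcal L^*}_{p'})$; here I would derive it via the chain $(\wt{PR}^{\mathcal L}_p) \Rightarrow (\wt D^{\mathcal L^*}_{p'})$ (Theorem~\ref{thm:mod_reg_implies_mod_Dirichlet}) $\Rightarrow \omega_{\mathcal L^*} \in \mbox{weak-}\mathcal A_\infty(\sigma)$ (Proposition~\ref{prop:dirichlet_tent_implies_weak_A_infty}), and then invoke the standard Gehring-type self-improvement of weak-$\mathcal A_\infty$ to obtain weak reverse $q$-H\"older for some $q > 1$. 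With this in hand, the elliptic-measure factor is uniformly bounded (recall $\omega_{\mathcal L^*}^y$ is a probability measure), giving
\[
\fint_{B(x_0,R/2)\cap\pom} \wt N_{R/2}(\nabla u)^q\, d\sigma \lesssim \left(\fint_{A(x_0,R,2R)\cap\Omega} |\nabla u|\, dm\right)^q,
\]
and the stated $L^1$ estimate then follows by H\"older's inequality on the left-hand side together with the inclusion $1.1 A_B \subset A(x_0,R,2R)$.
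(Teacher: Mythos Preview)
Your proposal is correct and follows essentially the same approach as the paper: the paper states that the proof is identical to the first case of Theorem~\ref{thm:localization} since $u$ is $\mathcal L$-harmonic in $B(x_0,2R)\cap\Omega$ and vanishes on $B(x_0,2R)\cap\pom$, with the weak reverse H\"older inequality for $\omega_{\mathcal L^*}$ supplied precisely via the chain $(\wt{PR}^{\mathcal L}_p)\Rightarrow(\wt D^{\mathcal L^*}_{p'})\Rightarrow\omega_{\mathcal L^*}\in\text{weak-}\mathcal A_\infty(\sigma)$ that you describe. Your observation that one first obtains an $L^q$ estimate for some $q>1$ and then passes to $L^1$ by H\"older is exactly the right way to account for the $L^1$ (rather than $L^p$) formulation of the conclusion here.
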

Note that solvability of $(\widetilde{{PR}}^{\mathcal L}_{q})$ implies that the elliptic measure $\omega_{\mathcal L^*}$ is in weak-$\mathcal A_\infty(\sigma)$ (by Theorem \ref{thm:mod_reg_implies_mod_Dirichlet} and Proposition \ref{prop:dirichlet_tent_implies_weak_A_infty}).
This theorem has the same proof as the first part of the {Localization Theorem} \ref{thm:localization} (see also the proof of Theorem \ref{thm:extrapolation_1}), without any modification needed, as $u$ solves $\mathcal Lu = 0$ in $B(x_0, 2R)\cap\Omega$ and $u\equiv0$ on $B(x_0,2R)\cap\pom$.

\vvv

Now, the proof of Theorem \ref{thm:extrapolation_Poisson} is very similar to the proof of {Theorem \ref{thm:extrapolation}} as we only need to show 
\[
\int_\pom |\wt N(\nabla u)|^r \, d\sigma \lesssim 1
\] 
for $r \in (1-\epsilon, 1]$, and $u$ solution to \eqref{eq:definition_Poisson_pb} with Poisson data $\boldsymbol \Xi$ and $H$ such that $\delta_\Omega H$ and $\boldsymbol\Xi$ are $T^r_2$ atoms with support in the same ball $B$. As it is the case in the proof of {Theorem \ref{thm:extrapolation}}, we decompose
\[
\int_{\pom} |\wt N(\nabla u)|^r\, d\sigma = \int_{8B \cap \pom} |\wt N(\nabla u)|^r\, d\sigma + \sum_{k \geq 4} \int_{A_k \cap \pom} |\wt N(\nabla u)|^r\, d\sigma
\] 
where $A_k$ are the annuli $2^k B\backslash 2^{k-1}B$. The first term in the right hand side is bounded using that $(\wt{PR}^{\mathcal L}_p)$ is solvable and H\"older's inequality, and the other terms are controlled using the Localization Theorem \ref{thm:localization_Poisson}.
{We leave checking the details to the interested reader.}

\color{black}

\end{document}